\newcommand{\dis}{\displaystyle}
\numberwithin{equation}{section}
\newtheorem{Theorem}{Theorem}[section]
\newtheorem{Lemma}[Theorem]{Lemma}
\newtheorem{Corollary}{Corollary}[section]
\theoremstyle{plian}
\newtheorem{Definition}{Definition}[section]
\newtheorem{remark}[Theorem]{Remark}
\newtheorem{Proposition}[Theorem]{Proposition}
\newcommand{\R}{\mathbb{R}}
\newcommand{\FP}{\mathbf{P}}
\newcommand{\FL}{\mathbf{L}}
\newcommand{\FI}{\mathbf{I}}
\newcommand{\mf}{\mathbf{f}}
\newcommand{\mv}{\mathfrak{v}}
\newcommand{\fu}{\mathfrak{u}}
\newcommand{\bp}{\partial_{t,\phi}}
\def\v{\varepsilon}
\def\f{\frac}
\def\d{\delta}
\newcounter{RomanNumber}
\begin{document}

	\title [Hilbert Expansion of BE in 2D Disk] {Hilbert expansion of the Boltzmann equation on a 2-dimensional disk with specular boundary condition}
	
	\author[F.M. Huang]{Feimin Huang}
	\address[F.M. Huang]{State Key Laboratory of Mathematical Sciences, Academy of Mathematics and Systems Science, Chinese Academy of Sciences, Beijing 100190, China; School of Mathematical Sciences, University of Chinese Academy of Sciences, Beijing 100049, China}
	\email{fhuang@amt.ac.cn}
	
	\author[J. Ouyang]{Jing Ouyang}
	\address[J. Ouyang]{Academy of Mathematics and Systems Science, Chinese Academy of Sciences, Beijing 100190, China; School of Mathematical Sciences, University of Chinese Academy of Sciences, Beijing 100049, China}
	\email{ouyangjing@amss.ac.cn}
	
	\author[Y. Wang]{Yong Wang}
	\address[Y. Wang]{State Key Laboratory of Mathematical Sciences, Academy of Mathematics and Systems Science, Chinese Academy of Sciences, Beijing 100190, China; School of Mathematical Sciences, University of Chinese Academy of Sciences, Beijing 100049, China}
	\email{yongwang@amss.ac.cn}
	
\begin{abstract}
In the present paper, we concern the hydrodynamic limit of Boltzmann equation with specular reflection boundary condition in a two-dimensional  disk  to the compressible Euler equations.
Due to the non-zero curvature and non-zero tangential velocity of compressible Euler solution on the boundary, new difficulties arise in the construction of Knudsen boundary layer.
By employing the geometric correction, and  an innovative and refined $L^2-L^\infty$ method, we establish the existence and space-decay for a truncated Knudsen boundary layer.
Then, by the Hilbert expansion of multi-scales, we successfully justify the hydrodynamic limit of Boltzmann equation with specular reflection boundary condition to the compressible Euler equations in the two-dimensional  disk.
\end{abstract}
	
	\keywords{Boltzmann equation, compressible Euler equations, hydrodynamic limit, Hilbert expansion, geometric correction, Knudsen boundary layer}
	
	\date{\today}
	\maketitle
	\setcounter{tocdepth}{1}
	\tableofcontents

\section{Introduction}\label{sec1}
	
\subsection{Background}
The Boltzmann equation was proposed by Boltzmann \cite{Boltzmann-1872} in 1872 to study the motion and interaction of particles, especially in non-equilibrium conditions. The seminal work by Maxwell \cite{Maxwell} and Boltzmann \cite{Boltzmann-1872} established the intrinsic connection between the Boltzmann equation and fluid dynamics, including both compressible and incompressible flow systems.
	
\smallskip
	
The hydrodynamic limit of the Boltzmann equation has been a central topic in the kinetic theory. In 1912, Hilbert introduced a systematic asymptotic expansion for the Knudsen number $\mathscr{K}_n\ll1$, which significantly advanced the study of the Boltzmann equation. Later, Chapman and Enskog independently proposed a different expansion methods in 1916 and 1917. These expansion techniques have successfully derived various fluid equations, including the compressible Euler and Navier-Stokes systems, the incompressible Euler and Navier-Stokes(-Fourier) systems, as well as the acoustic system, among others. 
It is still a challenging problem to rigorously justify these formal approximation, especially for the problems with physical boundaries.
	
	\smallskip

Based on the truncated Hilbert expansion, Caflish \cite{Caflish} rigorously justified the hydrodynamic limit from the Cauchy solutions of Boltzmann equation to the compressible Euler equations under the framework of smooth solutions, see also \cite{Grad,Lachowicz,Nishida,Ukai-Asano} and \cite{Guo Jang Jiang-1,Guo-2010-CPAM}  via the $L^2$-$L^\infty$ method. The hydrodynamic limit of Boltzmann to wave patterns have been proved \cite{HJWY,HWY,HWY2,HWWY,XZ,Yu} in one dimensional case, see also \cite{WWZ} for planar rarefaction wave in $\mathbb{R}^3$. Studies on the hydrodynamic limit to acoustic equations can be found in \cite{ Diper,Guo-2010-CPAM,GHW-2021-ARMA,Jang-Jiang} and related works.
For the derivation of incompressible Navier-Stokes system and incompressible Euler equations, we refer to \cite{Bardos, Bardos-2,Bardos-Ukai,CJK-2023-JDE,CEMP,Diper,EGMW,ELM-1994,E-Guo-M,E-Guo-K-M,Golse-Saint-Raymond,Guo2006,Jang-Kim,Jiang-Masmoudi,Liu-Yu,Masmoudi-Raymond,Saint-Raymond2009,Wu-2016-JDE} and references cited therein. 
	
\smallskip
	
Most studies have been conducted in $\R^3$ or $\mathbb{T}^3$, the investigation of the Boltzmann equation with boundary condition remains a crucial yet challenging task in the field of fluid kinetic theory. This is primarily due to the intricate dependence of boundary condition on the solution structure and the added complexity in establishing stable solutions.
	
\smallskip
	
In 2010, Guo \cite{Guo-2010-ARMA} utilized the $L^2-L^\infty$ method to demonstrate the exponential decay of solutions to the Boltzmann equation in a smooth domain with one of four common boundary conditions, see also \cite{Guo-2016-ARMA,Guo-2017-Inv} for the regularities in strictly convex domain and non-convex domain. In 2021, Guo-Huang-Wang \cite{GHW-2021-ARMA} established the validity of the Hilbert expansion for the Boltzmann equation with specular reflection boundary condition, which leads to derivations of compressible Euler equations and acoustic equations in half-space. 
Later, Jiang-Luo-Tang \cite{Jiang-2024-Trans} derived the hydrodynamic limit from the Boltzmann equation with diffuse reflection boundary condition to the compressible Euler equations in half-space. 
	
\smallskip
	
In 2016, by introducing a geometrically modified Knudsen boundary layer, Wu \cite{Wu-2016-JDE} justified the hydrodynamic limit of the Boltzmann equation with an incoming boundary condition to the incompressible Navier-Stokes equations in a two-dimensional disk, see very recent results \cite{Wu-2020-Arxiv,Wu-2023-arxiv}  on general smooth domains. We remark that the geometric correction was firstly introduced in the study of neutral transport equation \cite{GW-2015}. 
	
	
\smallskip
	
We point out that there is no geometric influence in \cite{GHW-2021-ARMA,Jiang-2024-Trans} for the compressible Euler limit of Boltzmann equation since the boundary of half-space  is flat. In this paper, we mainly concern the hydrodynamic limit of Boltzmann equation with specular reflection boundary condition to the compressible Euler equations within a two-dimensional disk, and  the interaction between non-zero boundary curvature and non-zero tangential flow velocity of compressible Euler equations introduces new challenges in analyzing Knudsen boundary layer problems. 
	
	\smallskip
	
In this paper, we consider the  Boltzmann equation
	\begin{equation}\label{BE}
		F_t+v\cdot\nabla_xF=\frac{1}{\mathscr{K}_n}Q(F,F),
	\end{equation}
	where $F(t,x,v)$ is the density distribution function for the gas particles with position $x\in B_1:=\{x\in\R^2:|x|\leq 1\}$ and velocity $v\in\R^2$ at time $t\geq 0$. The Boltzmann collision term $Q(F,G)$ is defined by
	\begin{align}\label{1.2}
		Q(F,G)=\int_{\mathbb{S}^1}\int_{\mathbb{R}^2}B(|v-u|,\omega)[F(v')G(u')-F(v)G(u)]dud\omega,
	\end{align}
	where the relationship between the post-collision velocity $v',u'$ of two particles with the pre-collision velocity $v,u$ is given by
	\begin{align*}
		u'=u+[(v-u)\cdot \omega]\cdot \omega,\quad v'=v-[(v-u)\cdot \omega]\cdot \omega,
	\end{align*}
	for $\omega\in \mathbb{S}^1$, which can be determined by conservation laws of momentum and energy
	\begin{align*}
		v'+u'=v+u,\quad |v'|^2+|u'|^2=|v|^2+|u|^2.
	\end{align*}

	In the present paper, we consider hard sphere case, i.e.
	\begin{align}
		B(|v-u|,\omega)=|(v-u)\cdot\omega|.
	\end{align} 
	For later use, we divide the boundary $\gamma:=\partial B_1\times\mathbb{R}^2$  into incoming boundary $\gamma_-$, outgoing boundary $\gamma_+$ and grazing set $\gamma_0$:
	\begin{align}
		&\gamma_-=\{(x,v)\in\partial B_1\times\mathbb{R}^2:v\cdot \vec{n} \textless0\},\nonumber\\
		&\gamma_+=\{(x,v)\in\partial B_1\times\mathbb{R}^2:v\cdot \vec{n} \textgreater0\},\\
		&\gamma_0=\{(x,v)\in\partial B_1\times\mathbb{R}^2:v\cdot \vec{n} =0\},\nonumber
	\end{align}
	where $\vec{n}$ is the outgoing normal vector of $B_1$. In the present paper, we adopt the specular reflection boundary condition, i.e.
	\begin{align}\label{1.5}
		F(t,x,v)|_{\gamma_-}=F(t,x,R_xv),\quad \text{ where } R_xv=v-2(v\cdot \vec{n})\vec{n}.
	\end{align}

	\subsection{Formal Hilbert expansion} \label{sec1.2}
	Inspired by \cite{GHW-2021-ARMA}, we know that the thickness of viscous boundary layer is $\sqrt{\mathscr{K}_n}$.
	For later use, we denote $\v:=\sqrt{\mathscr{K}_n}$ and $F$ as $F^\v$.  Then \eqref{BE} is rewritten as
	\begin{align}\label{1.6}
		\partial_tF^\v+v\cdot\nabla_xF^\v=\frac{1}{\v^2}Q(F^\v,F^\v).
	\end{align}
	\subsubsection{Formal internal expansion}
	Define the internal expansion by $F^\v\sim \sum\limits_{k=0}^\infty\v^kF_k(t,x,v)$, then one has 
	\begin{align}\label{1.7}
		\sum\limits_{k=0}^\infty\v^k(\partial_tF_k+v\cdot\nabla_xF_k)&=\frac{1}{\v^2}Q(\sum\limits_{k=0}^\infty\v^kF_k,\sum\limits_{k=0}^\infty\v^kF_k)\nonumber\\
		&=\sum\limits_{l=0}^\infty\sum\limits_{i,j\geq0,\atop i+j=l}\v^{l-2}Q(F_i,F_j).
	\end{align}
	Comparing the coefficients to each order of $\v$, one  obtains the following equations:
	\begin{align}\label{ie}
		\begin{split}
		&\v^{-2}:\quad0=Q(F_0,F_0),\\
		&\v^{-1}:\quad0=Q(F_0,F_1)+Q(F_1,F_0),\\
		&\v^0:\quad \{\partial_t+v\cdot\nabla_x\}F_0=Q(F_0,F_2)+Q(F_2,F_0)+Q(F_1,F_1),\\
		&\vdots\\
		&\v^k:\quad\{\partial_t+v\cdot\nabla_x\}F_k=Q(F_0,F_{k+2})+Q(F_{k+2},F_0)+\sum\limits_{i,j\geq1\atop i+j=k+2}Q(F_i,F_j), \quad \text{for}\, k\geq 0.
		\end{split}
	\end{align}
	It follows from $\eqref{ie}_1$ and the celebrated H-theorem that $F_0$ is a local Maxwellian:
	\begin{equation*}
		\mu(t,x,v):=F_0(t,x,v)\equiv\frac{\rho(t,x)}{2\pi T(t,x)}\exp\left\{-\frac{|v-\mathfrak{u}|^2}{2T(t,x)}\right\},
	\end{equation*}
	where 
	\begin{align*}
		\rho=\int_{\mathbb{R}^2}F_0dv,\quad \rho \mathfrak{u} = \int_{\mathbb{R}^2}v\cdot F_0dv,\quad\rho|\mathfrak{u}|^2+2\rho T=\int_{\mathbb{R}^2}|v|^2F_0dv,
	\end{align*}
	which represent macroscopic density, velocity and temperature. \\
	
	Multiplying $\eqref{ie}_3$ by $1,v,|v|^2$, which are four collision invariants for the Boltzmann collision operator $Q(\cdot,\cdot)$, one obtains that $(\rho,\mathfrak{u},T)$ satisfies the compressible Euler system
	\begin{align}\label{1.9}
		\begin{cases}
			\dis
			\partial_t\rho+\text{div}(\rho \mathfrak{u}) = 0,\\
			\dis \partial_t(\rho \mathfrak{u})+\text{div}(\rho \mathfrak{u}\otimes \mathfrak{u})+\nabla(\rho T)=0,\\
			\dis \partial_t\left(\frac{\rho|\mathfrak{u}|^2+2\rho T}{2}\right)+\text{div}\left(\mathfrak{u}\cdot\frac{(\rho|\mathfrak{u}|^2+2\rho T)}{2}\right)+\text{div}(\mathfrak{u}\cdot\rho T)=0.
		\end{cases}
	\end{align}
	Usually, we denote $p:=\rho T$ to be the pressure function. For \eqref{1.9}, we consider the slip boundary
	\begin{align}\label{1.9-0}
		\mathfrak{u}\cdot \vec{n}=0,
	\end{align}  which satisfies the specular reflection boundary condition, i.e., $F_0(t,x,v)|_{\gamma_-}=F_0(t,x,R_xv)$. The initial data is given by 
	\begin{align}\label{1.9-1}
		(\rho,\mathfrak{u},T)(0,x)=(1+\delta \phi_0,\delta\Phi_0,1+\delta\vartheta_0)(x),
	\end{align}
	with $\|(\phi_0,\Phi_0,\vartheta_0)\|_{H^{s_0}}\leq 1$, where $0<\delta\ll1$ is a parameter and $s_0\geq 2$ is some given number. Then $1+\delta\phi_0, 1+\delta\vartheta_0$ are positive. There is a family of local-in-time classical solutions $(\rho^\delta,\mathfrak{u}^\delta,T^\delta)\in C([0,\tau^\delta];H^{s_0}(B_1))\cap C^1([0,\tau^\delta];H^{s_0-1-\d}(B_1))$ of the compressible Euler equations \eqref{1.9}--\eqref{1.9-1} such that $\rho^\d>0,T^\d>0$, here the life-span  $\tau^\d>0$ depends on $\delta^{-1}$; see Lemma \ref{lem3.1}.
	
	For later use, we define the linear collision operator $\mathbf{L}$ by
	\begin{equation}\label{1.11-1}
		\mathbf{L}g=-\frac{1}{\sqrt{\mu}}\{Q(\mu,\sqrt{\mu}g)+Q(\sqrt{\mu}g,\mu)\},
	\end{equation}
and  denote the null space of $\mathbf{L}$ as $\mathcal{N}$ which is generated by
\begin{align}\label{1.13-0}
\begin{split}
&\chi_0(v)=\frac{1}{\sqrt{\rho}}\sqrt{\mu},\\
&\chi_i(v)=\frac{v_i-\mathfrak{u}_i}{\sqrt{\rho T}}\sqrt{\mu},\quad i=1,2,\\
&\chi_3(v)=\frac{1}{2\sqrt{\rho}}\left\{\frac{|v-\mathfrak{u}|^2}{T}-2\right\}\sqrt{\mu}.
\end{split}
\end{align}
	
	For $k\geq1$, we define the macroscopic part and the microscopic part of $f_k:=\frac{F_k}{\sqrt{\mu}}$ as $\FP f_k$ and $(\FI-\FP) f_k$ where $\FP$ is the projection operator on $\mathcal{N}$. 	We denote 
	\begin{align*}
	\rho_k=\int_{\mathbb{R}^2}F_k\, dv,\quad \rho u_k=\int_{\mathbb{R}^2}(v-\mathfrak{u})\cdot F_k\, dv,\quad \rho\theta_k=\int_{\mathbb{R}^2}\left(|v-\mathfrak{u}|^2-2T\right)F_k\, dv.
	\end{align*}
then  we obtain from \eqref{ie} that
	\begin{align}\label{1.10-0}
		\frac{F_k}{\sqrt{\mu}}&=\mathbf{P}\left\{\frac{F_k}{\sqrt{\mu}}\right\}+\{\mathbf{I}-\mathbf{P}\}\Big\{\frac{F_k}{\sqrt{\mu}}\Big\}\nonumber\\
		&=\left\{\frac{\rho_k}{\sqrt{\rho}}\chi_0+\sum\limits_{j=1}^2\sqrt{\frac\rho T}u_{k,j}\cdot\chi_j+\frac{\sqrt{\rho}}{2}\frac{\theta_k}{T}\chi_3\right\}+\{\mathbf{I}-\mathbf{P}\}\Big\{\frac{F_k}{\sqrt{\mu}}\Big\}\nonumber\\
		&=\left\{\frac{\rho_k}{\rho}+\frac{u_k}{T}\cdot(v-\mathfrak{u})+\frac{\theta_k}{4T}\cdot\left(\frac{|v-\mathfrak{u}|^2}{T}-2\right)\right\}\sqrt{\mu}+\{\mathbf{I}-\mathbf{P}\}\Big\{\frac{F_k}{\sqrt{\mu}}\Big\},
	\end{align}
	with
	\begin{align}\label{1.10}
		\{\mathbf{I}-\mathbf{P}\}\Big\{\frac{F_{k+1}}{\sqrt{\mu}}\Big\}=\mathbf{L}^{-1}\left\{\frac{1}{\sqrt{\mu}}\left[-\{\partial_t+v\cdot\nabla_x\}F_{k-1}+\sum\limits_{ i,j\geq1,\atop i+j=k+1}Q(F_i,F_j)\right]\right\}.
	\end{align}
It is clear from \eqref{1.10} that $\{\mathbf{I}-\mathbf{P}\}\left\{\frac{F_{k}}{\sqrt{\mu}}\right\} $ is determined by $F_i,0\leq i\leq k-1$. Also it follows from $\eqref{ie}_2$ that $f_1\in\mathcal{N}$. 

Motivated by \cite{GHW-2021-ARMA}, we shall construct $F_k$ by solving a hyperbolic boundary value problem, see Sections 2.1 and \ref{sec3.1}.

\subsubsection{Formal viscous boundary layer expansion}
We remark that, in general, the internal solutions $F_0, F_1, ..., F_k$ in Section 1.2.1 do not satisfy the specular boundary condition \eqref{1.5}. Motivated by \cite{GHW-2021-ARMA,Sone}, we introduce the viscous boundary layers near $\partial B_1$. Since the viscous boundary layers change dramatically along the normal direction, it is necessary to use the polar coordinate, i.e.
	\begin{align*}
		x_1=r\cos\phi,\quad x_2=r\sin\phi,\quad (r,\phi)\in[0,1]\times[0,2\pi).
	\end{align*}
	Then we make change of variables $(r,\phi,v)\rightarrow(y,\phi,\bar{v})$ that 	
	\begin{align*}
		y=\frac{1-r}{\v},\quad \bar{v}=-v.
	\end{align*}
We remark that the scaled normal coordinate $y$ is essential for the viscous boundary layer. Then it is clear that
	\begin{align*}
		v\cdot\nabla_x=\frac1\v(\bar{v}\cdot \vec{n})\partial_y-\frac{1}{1-\v y}(\bar{v}\cdot\vec{\tau})\partial_\phi,
	\end{align*}
where $\vec{n}=(\cos\phi,\sin\phi),\vec{\tau}=(-\sin\phi,\cos\phi)$.

We consider the viscous boundary layer in the following form 
\begin{align}
\bar{F}^\v(t,y,\phi,\bar{v}) \sim\sum\limits_{k=1}^\infty  \v^k\bar{F}_k(t,y,\phi,\bar{v}).
\end{align}
Then substituting $F^\v+\bar{F}^\v$ into \eqref{1.6}, and using \eqref{1.7}, one obtains
\begin{align}\label{1-12}
&\partial_t\bar{F}^{\v} +\frac1\v(\bar{v}\cdot \vec{n})\partial_y \bar{F}^{\v} -\frac{1}{1-\v y}(\bar{v}\cdot\vec{\tau})\partial_\phi \bar{F}^{\v} \nonumber\\
&=\frac{1}{\v^2}\big[Q(F^{\v},\bar{F}^{\v})+Q(\bar{F}^{\v},F^{\v})+Q(\bar{F}^{\v},\bar{F}^{\v})\big].
\end{align}
	
For later use, we need to write $F_k(t,x,v)$ to be $F_k(t,r,\bar{v})$ in the polar coordinate. Since the scale of normal variables of $F_k$ and $\bar{F}_k$ are different, we take Taylor expansion for $F_k$ at $r=1$:
	\begin{align}
		\begin{split}
		\mu(t,r,\phi,v) 
		&= \mu_0 + \sum\limits_{l=1}^{\mathfrak{b}} \frac{(-\v y)^l}{l!}\partial_r^l \mu_0+ \frac{(-\v y)^{\mathfrak{b}+1}}{(\mathfrak{b}+1)!} \partial_r^{\mathfrak{b}+1}\tilde{\mu},\\
		F_k(t,r,\phi,v) &= F_k^0 + \sum\limits_{l=1}^{\mathfrak{b}}\frac{(-\v y)^l}{l!}\partial_r^l F_k^0 + \frac{(-\v y)^{\mathfrak{b}+1}}{(\mathfrak{b}+1)!} \partial_r^{\mathfrak{b}+1}\mathcal{F}_k,\quad k\geq 1,
		\end{split}
	\end{align}
	where we have denoted $\partial_r^l\mu_0:=\partial_r^l\mu|_{r=1}$ and $\partial_r^lF_k^0:=\partial_r^lF_k|_{r=1}, l=0,1,...$ It is clear that
	\begin{align}\label{mu}
		\mu_0(t,\phi,\bar{v})=\f{\rho^0}{2\pi T^0}e^{-\f{|\bar{v}+\mathfrak{u}^0|^2}{2T^0}}=\f{\rho^0}{2\pi T^0}e^{-\f{|\bar{v}-\bar{\mathfrak{u}}^0|^2}{2T^0}},
	\end{align}
	where $(\rho^0,\mathfrak{u}^0,T^0)=(\rho,\mathfrak{u},T)(t,r,\phi)|_{r=1}$ and $\bar{\mathfrak{u}}^0=-\mathfrak{u}^0$.  The notations $\partial_r^{\mathfrak{b}+1}\tilde{\mu}$ and $\partial_r^{\mathfrak{b}+1}\mathcal{F}_k$ are the corresponding Taylor remainders. 
	
	Comparing the order of $\v$ in \eqref{1-12}, one gets that
	\begin{align}\label{vs}
		\begin{split}
		\v^{-1}:&\quad0 = Q(\mu_0, \bar{F}_1) + Q(\bar{F}_1, \mu_0),\\
		\v^0:&\quad(\bar{v}\cdot \vec{n})\partial_y \bar{F}_1 = Q(\mu_0, \bar{F}_2) + Q(\bar{F}_2 ,\mu_0) + Q(F_1^0,\bar{F}_1)+Q(\bar{F}_1,F_1^0) \\
		&\quad+ Q(-y\partial_r \mu_0,\bar{F}_1) + Q(\bar{F}_1,-y\partial_r \mu_0) + Q(\bar{F}_1,\bar{F}_1),\\
		&\quad\vdots\\
		\v^k:&\quad(\bar{v}\cdot \vec{n})\partial_y \bar{F}_{k+1} = Q(\mu_0, \bar{F}_{k+2}) +Q(\bar{F}_{k+2},\mu_0)\\
		&\quad+ \sum\limits_{l,j\geq1\atop l+j=k+2} (-1)^l \frac{y^l}{l!}[Q(\partial_r^l\mu_0,\bar{F}_j) +Q(\bar{F}_j,\partial_r^l\mu_0)] + \sum\limits_{l,j\geq1\atop l+j=k+2} [Q(F_l^0,\bar{F}_j)+Q(\bar{F}_j, F_l^0)]\\
		&\quad+\sum\limits_{i,l,j\geq1\atop i+l+j=k+2}(-1)^l \frac{y^l}{l!}[Q(\partial_r^lF_i^0,\bar{F}_j) + Q(\bar{F}_j,\partial_r^lF_i^0)] +\sum\limits_{i,j\geq1\atop i+j=k+2} Q(\bar{F}_j,\bar{F}_i)\\
		&\quad-\partial_t \bar{F}_k +\f{1}{1-\v y}(\bar{v}\cdot \vec{\tau})\partial_\phi \bar{F}_k. 
		\end{split}
	\end{align}
	
	Since $\bar{F}_k$ is mainly used to compensate the boundary condition, it is natural to assume 
	\begin{align}\label{vs-1}
		\lim\limits_{y\rightarrow\infty}\bar{F}_k=0.
	\end{align}
	Noting that $\f{1}{1-\v y}$ has singularity at $y=\v^{-1}$, we define $\bar{\mathfrak{F}}_k$ to be the solution of the following approximate problem
	\begin{align}\label{1.16}
		&(\bar{v}\cdot \vec{n})\partial_y \bar{\mathfrak{F}}_{k+1} = Q(\mu_0, \bar{\mathfrak{F}}_{k+2}) +Q(\bar{\mathfrak{F}}_{k+2},\mu_0)\nonumber\\
		&\quad+ \sum\limits_{l,j\geq1\atop l+j=k+2} (-1)^l \frac{y^l}{l!}[Q(\partial_r^l\mu_0,\bar{\mathfrak{F}}_j) +Q(\bar{\mathfrak{F}}_j,\partial_r^l\mu_0)] + \sum\limits_{l,j\geq1\atop l+j=k+2} [Q(F_l^0,\bar{\mathfrak{F}}_j)+Q(\bar{\mathfrak{F}}_j, F_l^0)] \nonumber\\
		&\quad+\sum\limits_{i,l,j\geq1\atop i+l+j=k+2}(-1)^l \frac{y^l}{l!}[Q(\partial_r^lF_i^0,\bar{\mathfrak{F}}_j) + Q(\bar{\mathfrak{F}}_j,\partial_r^lF_i^0)] +\sum\limits_{i,j\geq1\atop i+j=k+2} Q(\bar{\mathfrak{F}}_j,\bar{\mathfrak{F}}_i)\nonumber\\
		&\quad-\partial_t \bar{\mathfrak{F}}_k +\mathcal{H}^\v(y)(\bar{v}\cdot \vec{\tau})\partial_\phi \bar{\mathfrak{F}}_k,
	\end{align}
	where $\mathcal{H}^\v(y)=\f{1}{1-\v y}\Upsilon(\v^{\f12}y)$ with $\Upsilon(z)$ being a smooth and monotone function such that
		\begin{align*}
		\Upsilon(z)=
		\begin{cases}
			1,\quad z\leq \f12,\\
			0,\quad z\geq 1.
		\end{cases}
	\end{align*}
	
	For later use, we define
	\begin{equation}\label{1.17-1}
		\begin{split}
		\mathbf{L}_0(t,\phi,\bar{v}):=\mathbf{L}(t,\phi,y,\bar{v})|_{y=0},\\
		\quad\mathbf{P}_0(t,\phi,\bar{v}):=\mathbf{P}(t,\phi,y,\bar{v})|_{y=0},\\
		\mathcal{N}_0(t,\phi,y,\bar{v}):=\mathcal{N}(t,\phi,y,\bar{v})|_{y=0}.
		\end{split}
	\end{equation}
	For $k\geq1$, we define $\bar{f}_k:=\frac{\bar{\mathfrak{F}}_k}{\sqrt{\mu_0}}$, and consider the decomposition
	\begin{align}\label{1.12-0}
		\bar{f}_k&\equiv \frac{\bar{\mathfrak{F}}_k}{\sqrt{\mu_0}}=\mathbf{P}_0\left\{\frac{\bar{\mathfrak{F}}_k}{\sqrt{\mu_0}}\right\}+\{\mathbf{I}-\mathbf{P}_0\}\left\{\frac{\bar{\mathfrak{F}}_k}{\sqrt{\mu_0}}\right\}\nonumber\\
		&=\left\{\frac{\bar{\rho}_k}{\rho^0}+\frac{\bar{u}_k}{T^0}\cdot(\bar{v}-\bar{\mathfrak{u}}^0)+\frac{\bar{\theta}_k}{4T^0}\cdot\left(\frac{|\bar{v}-\bar{\mathfrak{u}}^0|^2}{T^0}-2\right)\right\}\sqrt{\mu_0}+\{\mathbf{I}-\mathbf{P}_0\}\left\{\frac{\bar{\mathfrak{F}}_k}{\sqrt{\mu_0}}\right\},
	\end{align}
	where  
\begin{align*}
	\bar{\rho}_k=\int_{\mathbb{R}^2}\bar{\mathfrak{F}}_k\, dv,\quad\rho^0 \bar{u}_k=\int_{\mathbb{R}^2}(\bar{v}-\bar{\mathfrak{u}}^0)\cdot \bar{\mathfrak{F}}_k\, dv,\quad\rho^0\bar{\theta}_k=\int_{\mathbb{R}^2}\left(|\bar{v}-\bar{\mathfrak{u}}^0|^2-2T^0\right)\bar{\mathfrak{F}}_k\, dv,
\end{align*}
	and
	\begin{align}\label{1.12}
		&\{\mathbf{I}-\mathbf{P}_0\}\left\{\frac{\bar{\mathfrak{F}}_{k+1}}{\sqrt{\mu_0}}\right\}=\mathbf{L}_0^{-1}\left\{\frac{-(\bar{v}\cdot \vec{n})\partial_y\bar{\mathfrak{F}}_k+\sum_{ i,j\geq1,\atop i+j=k+1}Q(\bar{\mathfrak{F}}_i,\bar{\mathfrak{F}}_j)}{\sqrt{\mu_0}}\right.\nonumber\\
		&\qquad \left. +\sum_{l,j\geq1\atop l+j=k+1} (-1)^l \frac{y^l}{l!}\f{[Q(\partial_r^l\mu_0,\bar{\mathfrak{F}}_j) +Q(\bar{\mathfrak{F}}_j,\partial_r^l\mu_0)]}{\sqrt{\mu_0}} +\sum_{l,j\geq1\atop l+j=k+1}\f{  [Q(F_l^0,\bar{\mathfrak{F}}_j)+Q(\bar{\mathfrak{F}}_j, F_l^0)]}{\sqrt{\mu_0}} \right.\nonumber\\
		&\qquad \left.+\sum_{i,l,j\geq1\atop i+l+j=k+1}(-1)^l \frac{y^l}{l!}\f{[Q(\partial_r^lF_i^0,\bar{\mathfrak{F}}_j) + Q(\bar{\mathfrak{F}}_j,\partial_r^lF_i^0)] }{\sqrt{\mu_0}}-\f{\partial_t \bar{\mathfrak{F}}_{k-1} -\mathcal{H}^\v(y)(\bar{v}\cdot \vec{\tau})\partial_\phi \bar{\mathfrak{F}}_{k-1}}{\sqrt{\mu_0}}\right\}.
	\end{align}
It is clear from \eqref{1.12} that $\{\mathbf{I}-\mathbf{P}_0\}\left\{\frac{\bar{\mathfrak{F}}_{k+1}}{\sqrt{\mu_0}}\right\} $ is determined by $F_i,0\leq i\leq k$ and $\bar{\mathfrak{F}}_j,1\leq j\leq k$. Noting $\eqref{vs}_1$, one has $\bar{f}_1\in \mathcal{N}_0$. 

Motivated by \cite{GHW-2021-ARMA}, we shall construct $\bar{\mathfrak{F}}_k$  by solving a parabolic system, see Sections 2.2 and \ref{sec3.2} for more details. Once we obtain $\bar{\mathfrak{F}}_k$, then we further define approximate the viscous layer  
\begin{align}
\bar{\mathfrak{F}}^\v(t,y,\phi,\bar{v})\sim\sum_{k=1}^\infty\bar{\mathfrak{F}}_k(t,y,\phi,\bar{v}).
\end{align}
We remark that, although the approximate viscous layer $\bar{\mathfrak{F}}_k$ does not exactly solve the original problem \eqref{vs}, but it is enough because the error has a very high order $\v$-decay rate due to the space decay.

	\subsubsection{Formal Knudsen boundary layer expansion}
	From \eqref{1.10} and \eqref{1.12}, we notice that $F^\v+\bar{\mathfrak{F}}^\v$ may still not satisfy the specular boundary. In fact, both the internal solutions and the viscous boundary layers (governed by parabolic and hyperbolic systems respectively) only carry finite-dimensional information of $v$. To capture the remaining infinite dimensional information of $v$, it is essential to introduce the Knudsen layer, which is in fact a steady Boltzmann equation, see \cite{GHW-2021-ARMA,Sone}. 
	
For the Knudsen boundary layer, we define the new scaled normal coordinate as $$\eta:=\f{y}{\v}=\f{1-r}{\v^2}.$$ Following \cite{Wu-2020-Arxiv,Wu-2016-JDE}, due to the geometric effect of the disk and the poor normal regularity of Boltzmann equation,  it is necessary to select the essential tangential variable to establish the high order Hilbert expansion.	
Thus we define $$\mv:=(\mv_1,\mv_2)=(\bar{v}\cdot \vec{n},\bar{v}\cdot\vec{\tau}),$$ 
which yields $\partial_\phi\rightarrow\partial_\phi+\mv_2\partial_{\mv_1}-\mv_1\partial_{\mv_2}$ and
	\begin{align*}
		&v\cdot\nabla_x=\frac{1}{\v^2}\mv_1\partial_\eta-\frac{1}{1-\v^2\eta}\left(\mv_2\partial_\phi+\mv_2^2\frac{\partial}{\partial{\mv_1}}-\mv_2 \mv_1\frac{\partial}{\partial {\mv_2}}\right).
	\end{align*}

	Inspired by \cite{GHW-2021-ARMA,Sone}, we consider the Knudsen layer of the following form 
	\begin{align}
	\hat{F}^\v(t,\eta,\phi, \mathfrak{v})\sim\sum\limits_{k=1}^\infty\v^k\hat{F}_k(t,\eta,\phi, \mathfrak{v}).
	\end{align}
	 Substituting $F^\v+\bar{\mathfrak{F}}^\v+\hat{F}^\v$ to \eqref{1.6}, using \eqref{1.7} and \eqref{1.16},  and ignoring some terms with high order of $\v$, one has
	\begin{align}\label{1-19}
		&\partial_t\hat{F}^\v+\frac{1}{\v^2}\mv_1\partial_\eta\hat{F}^\v-\frac{1}{1-\v^2\eta}\left(\mv_2\partial_\phi\hat{F}^\v+\mv_2^2\frac{\partial \hat{F}^\v}{\partial{\mv_1}}-\mv_2 \mv_1\frac{\partial\hat{F}^\v}{\partial {\mv_2}}\right)\nonumber\\
		&\qquad=\frac{1}{\v^2}[Q(\hat{F}^\v,F^\v+\bar{\mathfrak{F}}^\v)+Q(F^\v+\bar{\mathfrak{F}}^\v,\hat{F}^\v)+Q(\hat{F}^\v,\hat{F}^\v)].
	\end{align}
	Similarly, 
	we need the Taylor expansions of $\bar{\mathfrak{F}}_k$ near $y=0$:
	\begin{align*}
		\bar{\mathfrak{F}}_k=\bar{\mathfrak{F}}_k^0+\sum\limits_{l=1}^{\mathfrak{b}}\frac{(\v\eta)^l}{l!}\partial_y^l\bar{\mathfrak{F}}_k^0+\frac{(\v\eta)^{\mathfrak{b}+1}}{(\mathfrak{b}+1)!} \partial_y^{\mathfrak{b}+1}\bar{\mathcal{F}}_k,
	\end{align*}
	where $\partial_y^l\bar{\mathfrak{F}}_k^0:=\partial_y^l\bar{\mathfrak{F}}_k|_{y=0}, l=0,1,\cdots$ and $\partial_y^{\mathfrak{b}+1}\bar{\mathcal{F}}_k$ is the remainder. Also, the internal solutions should also be rewritten as
	\begin{align*}
		&\mu=\mu_0+\sum\limits_{l=1}^{\mathfrak{b}}\frac{(-\v^2\eta)^l}{l!}\partial_r^l\mu_0+\frac{(-\v^2\eta)^{\mathfrak{b}+1}}{(\mathfrak{b}+1)!}\partial_r^{\mathfrak{b}+1}\tilde{\mu},\\
		&F_k=F_k^0+\sum\limits_{l=1}^{\mathfrak{b}}\frac{(-\v^2\eta)^l}{l!}\partial_r^lF_k^0+\frac{(-\v^2\eta)^{\mathfrak{b}+1}}{(\mathfrak{b}+1)!}\partial_r^{\mathfrak{b}+1}\mathcal{F}_k,
	\end{align*}
where we point out that $\mu_0$ in \eqref{mu} can also be written as $\mu_0=\f{\rho^0}{2\pi T^0}\exp\big\{-\f{|\mv-\bar{\fu}^0|^2}{2T^0}\big\}$ with $\bar{\fu}^0=(0,u_\tau^0)$ and $ u_\tau^0=\bar{\mathfrak{u}}^0\cdot\vec{\tau}$.
	
As pointed out in \cite{Wu-2016-JDE}, it is hard  to make high order expansion if the geometric correction is ignored. Hence, we compare the order of $\v$ in \eqref{1-19} and take the geometric effect into account,  one obtains that 
	\begin{align}\label{KL}
		\v^{-1}:&\quad \mv_1\partial_\eta\hat{F}_1-\f{\v^2}{1-\v^2\eta}\left\{\mv_2^2\f{\partial \hat{F}_1}{\partial \mv_1}-\mv_1 \mv_2 \f{\partial \hat{F}_1}{\partial \mv_2}\right\}=Q(\mu_0,\hat{F}_1)+Q(\hat{F}_1,\mu_0),\nonumber\\
		\v^0:&\quad \mv_1\partial_\eta\hat{F}_2-\f{\v^2}{1-\v^2\eta}\left\{\mv_2^2\f{\partial \hat{F}_2}{\partial \mv_1}-\mv_1 \mv_2 \f{\partial \hat{F}_2}{\partial \mv_2}\right\}=Q(\mu_0,\hat{F}_2)+Q(\hat{F}_2,\mu_0)\nonumber\\
		&\qquad\qquad+Q(F_1^0,\hat{F}_1)+Q(\bar{\mathfrak{F}}_1^0,\hat{F}_1)+Q(\hat{F}_1,\bar{\mathfrak{F}}_1^0)+Q(\hat{F}_1,\hat{F}_1),\nonumber\\
		\v^1:&\quad \mv_1\partial_\eta\hat{F}_3-\f{\v^2}{1-\v^2\eta}\left\{\mv_2^2\f{\partial \hat{F}_3}{\partial \mv_1}-\mv_1 \mv_2 \f{\partial \hat{F}_3}{\partial \mv_2}\right\}=Q(\mu_0,\hat{F}_3)+Q(\hat{F}_3,\mu_0)\nonumber\\
		&\qquad+Q(F_1^0,\hat{F}_2)+Q(\bar{\mathfrak{F}}_1^0,\hat{F}_2)+Q(\hat{F}_2,\bar{\mathfrak{F}}_1^0)+Q(\hat{F}_2,\hat{F}_1)\nonumber\\
		&\qquad+Q(F_2^0-\eta\partial_r\mu_0+\bar{\mathfrak{F}}_2^0+\eta\partial_y\bar{\mathfrak{F}}_1^0,\hat{F}_1)+Q(\hat{F}_1,F_2^0-\eta\partial_r\mu_0+\bar{\mathfrak{F}}_2^0+\eta\partial_y\bar{\mathfrak{F}}_1^0)\nonumber\\
		&\qquad+Q(\hat{F}_1,\hat{F}_2)+Q(\hat{F}_2,\hat{F}_1)-\partial_t\hat{F}_1+\f{1}{1-\v^2\eta}\mv_2\partial_\phi\hat{F}_1,\\
		...\nonumber\\
		\v^k:&\quad  \mv_1\partial_\eta\hat{F}_{k+2}-\f{\v^2}{1-\v^2\eta}\left\{\mv_2^2\f{\partial \hat{F}_{k+2}}{\partial \mv_1}-\mv_1 \mv_2 \f{\partial \hat{F}_{k+2}}{\partial \mv_2}\right\}=Q(\mu_0,\hat{F}_{k+2})+Q(\hat{F}_{k+2},\mu_0)\nonumber\\
		&\qquad\qquad+\sum\limits_{l,j\geq1\atop 2l+j=k+2}\frac{(-\eta)^l}{l!}[Q(\partial_r^l\mu_0,\hat{F}_j)+Q(\hat{F}_j,\partial_r^l\mu_0)]\nonumber\\
		&\qquad\qquad+\sum\limits_{i,l,j\geq1\atop 2l+i+j=k+2}\frac{(-\eta)^l}{l!}[Q(\partial_r^lF_i^0,\hat{F}_j)+Q(\hat{F}_j,\partial_r^lF_i^0)]\nonumber\\
		&\qquad\qquad+\sum\limits_{i,j\geq1\atop i+j=k+2}[Q(F_i^0+\bar{\mathfrak{F}}_i^0,\hat{F}_j)+Q(\hat{F}_j,F_i^0+\bar{\mathfrak{F}}_i^0)]\nonumber\\
		&\qquad\qquad+\sum\limits_{i,l,j\geq1\atop l+i+j=k+2}\frac{\eta^l}{l!}[Q(\partial_y^l\bar{\mathfrak{F}}_i^0,\hat{F}_j)+Q(\hat{F}_j,\partial_y\bar{\mathfrak{F}}_i^0)]+\sum\limits_{i,j\geq1\atop i+j=k+2}Q(\hat{F}_i,\hat{F}_j)\nonumber\\
		&\qquad\qquad-\partial_t\hat{F}_k+\frac{1}{1-\v^2\eta}\mv_2\partial_\phi\hat{F}_k.\nonumber
	\end{align}

For later use, we can write	 the equation of $\hat{F}_k$ in \eqref{KL} in the following general form
	\begin{align}\label{1.18}
		\mv_1\partial_\eta\hat{F}_k-\f{\v^2}{1-\v^2\eta}\left\{\mv_2^2\f{\partial \hat{F}_k}{\partial \mv_1}-\mv_1 \mv_2 \f{\partial \hat{F}_k}{\partial \mv_2}\right\}-[Q(\mu_0,\hat{F}_k)+Q(\hat{F}_k,\mu_0)]=\hat{\mathcal{S}}_{k},
	\end{align}
	where  
	\begin{align}\label{1.17-0}
		\hat{\mathcal{S}}_{k}&=\sum\limits_{\substack{i+j=k\\i,j\geq1}}[Q(F^0_i+\bar{\mathfrak{F}}_i^0,\hat{F}_j)+Q(\hat{F}_j,F^0_i+\bar{\mathfrak{F}}_i^0)+Q(\hat{F}_i,\hat{F}_j)]\nonumber\\
		&\quad+\sum\limits_{\substack{j+2l=k\\b\geq l\geq 1,j\geq 1}}\frac{(-\eta)^l}{l!}\cdot[Q(\partial_r^l\mu_0,\hat{F}_j)+Q(\hat{F}_j,\partial_r^l\mu_0)]\nonumber\\
		&\quad+\sum\limits_{\substack{i+j+2l=k\\i,j\geq1,b\geq l\geq 1}}\frac{(-\eta)^l}{l!}\cdot Q(\partial_r^lF^0_i,\hat{F}_j)+Q(\hat{F}_j,\partial_r^lF^0_i)]\nonumber\\
		&\quad+\sum\limits_{\substack{i+j+l=k\\i,j\geq1,b\geq l\geq 1}}\frac{\eta^l}{l!}[Q(\partial_y^l\bar{\mathfrak{F}}_i^0,\hat{F}_j)+Q(\hat{F}_j,\partial_y^l\bar{\mathfrak{F}}_i^0)]\nonumber\\
		&\quad-\partial_t\hat{F}_{k-2}+\frac{1}{1-\v^2\eta}\mv_2\partial_\phi\hat{F}_{k-2}.
	\end{align} 
From the last term on RHS of \eqref{1.17-0}, the estimate on $\partial_{\phi}\hat{F}_{k-2}$ is needed. 
It is noted that $\hat{\mathcal{S}}_k$ is determined by $F_i,0\leq i\leq k-1$ and $\bar{\mathfrak{F}}_j,\hat{F}_j, 1\leq j\leq k-1$, and thus it can be regarded as known functions when solving $\hat{F}_k$. 
To solve \eqref{1.18}, we denote $\hat{f}_k:=\f{\hat{F}_k}{\sqrt{\mu_0}}$, then \eqref{1.18} is rewritten as 
	\begin{align}\label{1.18-0}
		\mv_1\partial_\eta\hat{f}_k-\f{\v^2}{1-\v^2\eta}\left\{\mv_2^2\f{\partial \hat{f}_k}{\partial \mv_1}-\mv_1 \mv_2 \f{\partial \hat{f}_k}{\partial \mv_2}\right\}+\f{u_\tau^0}{2T^0}\f{\v^2}{1-\v^2\eta}\mv_1\mv_2\hat{f}_k+\FL_0\hat{f}_k=\f{\hat{\mathcal{S}}_{k}}{\sqrt{\mu_0}}=:\hat{S}_k.
	\end{align}
	
The Knudsen boundary layer problem \eqref{1.18-0} (or equivalent \eqref{1.18}) is indeed a steady Boltzmann equation with geometric correction. Extensive research has been conducted on classical Knudsen boundary layers(the curvature of domain is zero, then no geometric correction term arises). The seminal work by Bardos et al. \cite{Bardos-1986-CPAM} established existence, uniqueness, and asymptotic behavior properties for linearized Boltzmann equations under Dirichlet-type boundary condition, see also \cite{Coron} for a classification of Knudsen layer problems. Later, the existence and stability were proved in  \cite{UYY,UYY1} where the Mach number of at far field plays an important role.  For the diffuse boundary condition, the existence of steady Boltzmann solution is proved in \cite{DHWZ,EKGM,HW-2022-SIAM}. For the specular reflection condition, Golse-Perthame-Sulem \cite{GPS} proved the existence, uniqueness and asymptotic behavior in a certain functional space. Huang-Jiang-Wang \cite{HJW} obtained the existence and exponential space decay of steady boundary layer solution in $L^2_{x,v}\cap L^\infty_{x,v}.$ We also refer to \cite{BG} for the boundary layer  with transition,  \cite{DWY} for pointwise estimate.

For non-flat domain, the Knudsen boundary layer with geometric correction plays an important role in the hydrodynamic limit of Boltzmann equation. In fact, for the case of $u^0_{\tau}=0$,  Wu \cite{Wu-2016-JDE} established the existence and space decay of Knudsen boundary layer solution of \eqref{1.18-0} with inflow boundary condition, then successfully justified the hydrodynamic limit of Boltzmann equation to the incompressible Navier-Stokes equations in 2D disk.  

In the case of compressible Euler framework, it is not reasonable to assume  $u_\tau^0\equiv 0$ in general. We point out that it is very hard to solve the Knudsen boundary layer problem \eqref{1.18-0} in the case of $u_\tau^0\neq 0$ because a serious difficulty arises from $\f{u_\tau^0}{2T^0}\f{\v^2}{1-\v^2\eta}\mv_1\mv_2\hat{f}_k$ in the $L^2$ energy estimate.
Such term also appears in the study of shear flow \cite{Duan-2021-ARMA,Duan-2023-JEMS}, but only polynomial decay of particle velocity are obtained, and it is hard for us to construct the Knudsen boundary layers of  high order.
In fact, one of the core contributions of this paper is to establish  the well-posedness and space decay of above  Knudsen layer problem with $u_\tau^0\neq 0$.

Due to the new term $\f{u_\tau^0}{2T^0}\f{\v^2}{1-\v^2\eta}\mv_1\mv_2\hat{f}_k$, it is hard to solve it in $\eta\in (0,+\infty)$.
By noting that the Knudsen layer only plays a key role near the boundary, then it should be enough to consider a truncated problem on $\eta\in (0,d)$.
That is 
\begin{align}\label{K}		
	\begin{cases}
		\dis \mv_1 \partial_\eta f+G(\eta)\left(\mv_2^2\f{\partial f}{\partial \mv_1}-\mv_1 \mv_2\f{\partial f}{\partial \mv_2}\right)-\f{ u_\tau^0}{2T^0}\cdot G(\eta)\mv_1 \mv_2 f+\FL_0 f=\mathfrak{S},\\
		\dis f(0,\mv)|_{\mv_1>0}=f(0,R\mv)+f_b(R\mv),\\
		\dis f(d,\mv)|_{\mv_1<0}=0,
	\end{cases}
\end{align}
where $G(\eta):=-\f{\v^2}{1-\v^2\eta}$, $\eta\in (0,d)$ with $d=\v^{-\mathfrak{a}}$, $ R\mv=(-\mv_1,\mv_2)$. The function $f_b(\mv$) is only defined for $\mv_1\textless0$, and we always assume that it is extended to be $0$ for $\mv_1>0$. Since we hope that $f(\eta,\mathfrak{v})$ has fast decay with respect to $\eta$, it is reasonable to give the boundary condition $f(d,\mv)|_{\mv_1<0}=0$, in fact it plays an important role in the space decay estimate.
		
	
\smallskip
	
We shall apply the $L^2-L^\infty$-estimate developed in \cite{Guo-2010-ARMA} to study the well-posedness of Knudsen layer problem \eqref{K}. For later use,  we introduce a global Maxwellian
	\begin{align*}
		\mu_M:=\frac{1}{2\pi T_M}\exp\{-\frac{|v|^2}{2T_M}\},
	\end{align*}
	where 
	\begin{align}\label{ta}
	0<	T_M<\min_{x\in B_1}T(t,x)\leq\max_{x\in B_1}T(t,x)\textless2T_M.
	\end{align}
Under assumption \eqref{ta}, it is easy to know that there exist positive constant $C>0$ and   $\frac12<\iota<1$ such that
	\begin{align}\label{1.23}
		\frac{1}{C}\mu_M\leq\mu(t,x,v)\leq C\mu_M^{\iota}.
	\end{align}
We define weight function
\begin{align*}
	w_\beta=(1+|\mv|^2)^{\frac{\beta}{2}}e^{\zeta|\mv|^2},
\end{align*}
where $0<\zeta<\f{1}{4T_M}$. 

\begin{Theorem}\label{thm2.2}
Let $\beta>3$. Assume $ \mathfrak{S}\in\mathcal{N}_0^{\perp}$ and there exists a positive constant $\sigma_0>0$ such that
\begin{align}\label{2.18}
&\Big\|\nu^{-1}w_\beta e^{\sigma_0\eta}\f{\sqrt{\mu_0}}{\sqrt{\mu_M}}\mathfrak{S} \Big\|_{L_{\eta,\mv}^\infty} + \Big|w_{\beta+1} \f{\sqrt{\mu_0}}{\sqrt{\mu_M}}f_b\Big|_{L_{\mv}^\infty}\nonumber\\
&+\Big|\nu^{-1}w_\beta\f{\sqrt{\mu_0}}{\sqrt{\mu_M}}(\mv_2^2\f{\partial}{\partial \mv_1}-\mv_1 \mv_2\f{\partial}{\partial \mv_2})f_b \Big|_{L^\infty_{\mv}} < \infty.
\end{align}
Suppose
\begin{align}\label{2.19}
\begin{split}
\int_{\mathbb{R}^2}\mv_1 f_b(\mv)\sqrt{\mu_0}\, d\mv\equiv0,\\
\int_{\mathbb{R}^2}\mv_1 \mv_2 f_b(\mv)\sqrt{\mu_0}\, d\mv\equiv0,\\
\int_{\mathbb{R}^2}\mv_1|\mv|^2f_b(\mv)\sqrt{\mu_0}\, d\mv\equiv0.
\end{split}
\end{align}
Then the problem \eqref{K} has a unique solution $f$ satisfying
\begin{align}\label{2.20}
&\Big\|w_\beta e^{\sigma\eta}\f{\sqrt{\mu_0}}{\sqrt{\mu_M}}f(\cdot,\cdot)\Big\|_{L^\infty_{\eta,\mv}} +\Big|w_\beta e^{\sigma\eta}\f{\sqrt{\mu_0}}{\sqrt{\mu_M}}f(0,\cdot)\Big|_{L^\infty_{\mv}}\nonumber\\
&\leq\frac{C}{\sigma(\sigma_0-\sigma)}\bigg\{ \Big\|\nu^{-1}w_\beta e^{\sigma_0\eta}\f{\sqrt{\mu_0}}{\sqrt{\mu_M}}\mathfrak{S}\Big\|_{L_{\eta,\mv}^\infty} + \Big|w_{\beta+1} \f{\sqrt{\mu_0}}{\sqrt{\mu_M}}f_b \Big|_{L_{\mv}^\infty} \nonumber\\
&\quad  +\Big|\nu^{-1}w_\beta\f{\sqrt{\mu_0}}{\sqrt{\mu_M}}(\mv_2^2\f{\partial}{\partial \mv_1}-\mv_1 \mv_2\f{\partial}{\partial \mv_2})f_b \Big|_{L^\infty_{\mv}}  \bigg\},
\end{align}
where $C>0$ is a constant independent of $\v$ and $\sigma \in (0,\sigma_0)$. Moreover, if $\mathfrak{S}$ is continuous in $[0,d]\times \R^2$ and $f_b$ is continuous in $\{\mv\in\R^2\}$, then $f$ is continuous away from grazing set $\{(\eta,\mv):\eta=0,\mv_1=0\}$.
\end{Theorem}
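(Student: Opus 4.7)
The plan is to adapt the $L^2$-$L^\infty$ framework of \cite{Guo-2010-ARMA} to the truncated, geometrically corrected, specularly reflecting Knudsen layer \eqref{K}, using the smallness of $G(\eta)$ on the truncated slab $\eta\in(0,d)$ as the key technical lever. For existence I first introduce a penalization $\la f_\la$ to make the transport operator coercive, solve the penalized problem by a standard $\nu-K$ iteration scheme, and pass $\la\to 0$ using the uniform bounds below. A convenient preliminary reduction is to absorb the inhomogeneous boundary datum $f_b$ by subtracting a pure-transport lift (supported near $\eta=0$, satisfying $\mv_1\partial_\eta+G(\cdots)$ plus damping and matching the specular jump), which turns \eqref{K} into a problem with homogeneous specular boundary at $\eta=0$, zero incoming datum at $\eta=d$, and a modified source still controlled by the right-hand side of \eqref{2.18} and still satisfying \eqref{2.19}.

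For the $L^2$ step I multiply the equation by $f$ and integrate over $[0,d]\times\R^2$. The transport term yields boundary integrals at $\eta=0$ and $\eta=d$; the $\eta=d$ piece has a good sign because $f(d,\cdot)|_{\mv_1<0}=0$, and the $\eta=0$ piece collapses to a quadratic form in $f_b$ alone, via the specular relation and the invariance $\mu_0(R\mv)=\mu_0(\mv)$ (valid since $\bar{\fu}^0=(0,u_\tau^0)$). The geometric transport $G(\eta)\bigl(\mv_2^2\partial_{\mv_1}-\mv_1\mv_2\partial_{\mv_2}\bigr)$ contributes $\tfrac12\int_0^d G(\eta)\int\mv_1 f^2\,d\mv\,d\eta$ after integration by parts in $\mv$, which is $O(\v^{2-\mathfrak{a}})$ since $\int_0^d|G|\,d\eta=|\log(1-\v^{2-\mathfrak{a}})|$. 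The dangerous non-dissipative term $-\tfrac{u_\tau^0}{2T^0}\int_0^d G(\eta)\int\mv_1\mv_2 f^2\,d\mv\,d\eta$ is controlled identically: the same $\eta$-integrability of $G$ on the truncated slab supplies a factor $\v^{2-\mathfrak{a}}\ll 1$ that is absorbed by the coercivity $\langle\FL_0 f,f\rangle\gtrsim\|(\FI-\FP_0)f\|_\nu^2$ once the macroscopic part is controlled. To bound $\FP_0 f$ I invoke the compatibility conditions \eqref{2.19}: taking $d\mv$-moments of \eqref{K} against the three specular-compatible invariants $\sqrt{\mu_0}$, $\mv_2\sqrt{\mu_0}$, $|\mv|^2\sqrt{\mu_0}$, the fluxes $\int\mv_1 f\sqrt{\mu_0}\,d\mv$, $\int\mv_1\mv_2 f\sqrt{\mu_0}\,d\mv$, $\int\mv_1|\mv|^2 f\sqrt{\mu_0}\,d\mv$ at $\eta=0$ reduce (after the specular change of variables) to the three linear functionals of $f_b$ that vanish by \eqref{2.19}, and this is exactly the decoupling needed to control $\FP_0 f$ by $(\FI-\FP_0)f$ and $\mathfrak{S}$.

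For the $L^\infty$ step I set $h:=w_\beta e^{\sigma\eta}\sqrt{\mu_0/\mu_M}\,f$ and rewrite the equation in the form $(\mv_1\partial_\eta+\nu)h=\mathcal{K}_w h+\text{(source and lower order)}$, where $\mathcal{K}_w$ is the standard weighted kernel from the $\nu-K$ splitting of $\FL_0$. The characteristics of $\mv_1\partial_\eta+G(\eta)(\mv_2^2\partial_{\mv_1}-\mv_1\mv_2\partial_{\mv_2})$ preserve $|\mv|^2$ and transport $\mv_2\mapsto\mv_2/(1-\v^2\eta)$, which is a harmless $O(1)$ deformation on $\eta<d$, so the weight $w_\beta$ is essentially conserved along them. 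I integrate Duhamel forward to the exit point $\eta_{\rm ex}\in\{0,d\}$, handle the specular reflection at $\eta=0$ by unfolding, and iterate twice so that the compact kernel $\mathcal{K}_w$ acts on both sides; the standard change-of-variables argument (cf.\ \cite{Guo-2010-ARMA,Wu-2016-JDE}) reduces the resulting double integral to an $L^2$ norm of $f$ on a slab of bounded thickness, which closes the loop via the $L^2$ bound of the previous step. The loss of one power of $w$ in the $(\mv_2^2\partial_{\mv_1}-\mv_1\mv_2\partial_{\mv_2})f_b$ contribution in \eqref{2.18} mirrors the single application of this vector field to the boundary datum along the characteristic flow.

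The main obstacle is precisely the non-dissipative term $-\tfrac{u_\tau^0}{2T^0}G(\eta)\mv_1\mv_2 f$: it has no sign in the energy estimate and it enters with an $O(1)$ weight in $\mv$, so the problem is closable only on the truncated slab $\eta\in(0,\v^{-\mathfrak{a}})$ with $\mathfrak{a}<2$, since only then does $\int_0^d|G|\,d\eta=O(\v^{2-\mathfrak{a}})$ provide the quantitative smallness needed. The prefactor $\sigma^{-1}(\sigma_0-\sigma)^{-1}$ in \eqref{2.20} is tracked by running the weighted estimate with $e^{\sigma\eta}$ and noting that $\partial_\eta e^{\sigma\eta}=\sigma e^{\sigma\eta}$ generates a $\sigma h$ term absorbable by $\FL_0$ for small $\sigma$, while the factor $(\sigma_0-\sigma)^{-1}$ arises when one compares $e^{\sigma\eta}\mathfrak{S}$ with $e^{\sigma_0\eta}\mathfrak{S}$ via Hölder in $\eta$. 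Continuity of $f$ off the grazing set $\{\eta=0,\mv_1=0\}$ is finally read off the explicit integral-along-characteristics representation: once $f$ is bounded and $\mathfrak{S},f_b$ are continuous, the backward characteristic depends continuously on $(\eta,\mv)$ away from $\mv_1=0$, and the Duhamel integrand inherits this continuity.
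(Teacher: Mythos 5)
Your high-level architecture --- lift $f_b$ to homogenize the specular boundary, a coupled $L^2$--$L^\infty$ bootstrap, exponential weights $e^{\sigma\eta}$ with a $(\sigma,\sigma_0)$ comparison for the decay constant, smallness of $\int_0^d|G|\,d\eta\sim\v^{2-\mathfrak{a}}$ from the truncation, and continuity read off the Duhamel representation --- matches the paper's Section~4, and you correctly identify both the role of \eqref{2.19} in killing the conserved fluxes and the reason the problem is only solvable on the truncated slab. Two gaps, however, would prevent the argument from closing as written.

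\textbf{The coercivity claim for the $\mv_1\mv_2$ term fails.} You assert that $-\tfrac{u_\tau^0}{2T^0}\int_0^d G(\eta)\int\mv_1\mv_2 f^2\,d\mv\,d\eta$ is, after the $\v^{2-\mathfrak{a}}$ smallness, ``absorbed by the coercivity $\langle\FL_0 f,f\rangle\gtrsim\|(\FI-\FP_0)f\|_\nu^2$ once the macroscopic part is controlled.'' For the microscopic part this cannot work: $|\mv_1\mv_2|\sim|\mv|^2$ grows one full power faster than $\nu(\mv)\sim 1+|\mv|$, so $\int\mv_1\mv_2|(\FI-\FP_0)f|^2\,d\mv$ is \emph{not} controlled by $\|(\FI-\FP_0)f\|_\nu^2$, no matter how small the prefactor, because the comparison fails pointwise in $\mv$ for large $|\mv|$. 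The paper instead bounds the whole integral by $C\v^{2-\mathfrak{a}}\|w_\beta\mf\|_{L^\infty_{\eta,\mv}}^2$, i.e.\ defers the extra velocity power to the weighted $L^\infty$ norm and lets the $L^2$ and $L^\infty$ estimates close each other. This is also where the actual threshold $\mathfrak{a}<\tfrac23$ (not your $\mathfrak{a}<2$) enters: controlling $\FP_0 f$ via the test functions $\FL_0^{-1}\{\mv_1(\mv_2-u_\tau^0)\sqrt{\mu_0}\}$, etc., integrated from $\eta$ to $d$ with the zero incoming condition at $\eta=d$, costs a factor $d^2=\v^{-2\mathfrak{a}}$, so the net gain is $\v^{\min\{2-3\mathfrak{a},1-\mathfrak{a}\}}$, which is only small when $\mathfrak{a}<\tfrac23$.

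\textbf{The infinite specular cycling near grazing is not addressed.} The characteristics conserve $|\mv|^2$ and $\mv_2 e^{-W(\eta)}$, so whenever $|\mv|^2=\mv_2^2 e^{2W(\eta_+)-2W(\eta)}$ for some $\eta_+\le d$ the backward trajectory is trapped in $[0,\eta_+]$ and makes infinitely many specular bounces at $\eta=0$, with $\eta_+\to 0$ near grazing. Your ``iterate the Duhamel formula twice and unfold'' sketch does not say how this infinite sum is terminated; the paper chooses the number of bounces $k\sim \eta_+^{-1}$ so that $e^{-2(k-1)\eta_+}\le\tfrac18$, which then forces a case decomposition of the velocity domain (removing small $|\mv_1|$, small $|\mv_1/\mv_2|$, large $|\mv|$) before the $ds\,d\mv\to dX\,dV$ change of variables in the compact iterate is legitimate. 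At the existence stage the same issue bites: with reflection coefficient $1$ a naive $\nu$--$K$ iteration need not converge near grazing, so the paper first weakens the boundary operator to $1-\tfrac1n$, proves convergence for each $n$, and passes $n\to\infty$ using lower bounds on $\eta_+$ valid when $|\mv_1|\ge\tilde m$ or $\eta\ge\tilde m'$. Your ``penalization $\la f_\la$'' differs from the paper's homotopy $\mathcal{L}_\la$, which keeps the full $\nu$ and interpolates only the compact part $\la K$, but the more substantive issue is that no mechanism is supplied for handling the cycling.
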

\begin{remark}\label{rmk2.4}
We also remark that the restriction $\mathfrak{a}<\f23$ is due to some technical requirement in the proof. With the help of Theorem \ref{thm2.2}, we can construct the Knudsen boundary layers $\hat{F}_{k}(t,\eta, \phi,\mathfrak{v})$ with  $\eta \in (0,d)$ or equivalently $r\in (1-\v^{2-\mathfrak{a}}, 1)$. That means the Knudsen layers are only defined in domain near the boundary of disk. For later use, we introduce a truncation so that it is meaningful in the whole disk. In fact, we define
\begin{align}\label{1.34}
\hat{\mathscr{F}}_k=\Upsilon(\v^{\mathfrak{a}}\eta)\hat{F}_k,
\end{align} 
and denote $\hat{\mathscr{F}}^\v\sim \sum_{k=1}^\infty \v^k\hat{\mathscr{F}}_k.$  Due to the exponential decay on $\eta$, the error generated from the truncation  has a very high order $\v$-decay rate, and it is enough for us to close remainder estimate.
\end{remark}
 
\begin{remark}
Based on our above constructions,  we hope that the specular reflection boundary condition holds for each order, that is
\begin{align*}
	(F_k+\bar{\mathfrak{F}}_k+\hat{F}_k)(0,\phi,\mv_1,\mv_2)|_{\mv_1 >0}=(F_k+\bar{\mathfrak{F}}_k+\hat{F}_k)(0,\phi,-\mv_1,\mv_2),\quad k\geq 1,
\end{align*}
which implies the form of boundary condition $\eqref{K}_2$.
\end{remark}
\begin{remark}
	The construction of Knudsen layer and Hilbert expansion requires the estimate for the derivative $\partial_t^l\hat{F}_k$, $\partial_\phi^j\hat{F}_k$, $l,j\in\mathbb{N}_+$. See \eqref{1.17-0} and \eqref{eq1.22} for details. Taking $\partial_t^l$ and $\partial_\phi^j$ relatively to \eqref{1.18-0}, since we separate out the real tangential term by considering geometric correction, we get the equations of $\partial_t^l\hat{f}_k$ and $\partial_\phi^j \hat{f}_k$ which keep the structure as the equation of $\hat{f}_k$. Then we apply Theorem \ref{thm2.2} to obtain the estimates for $\partial_t^l\hat{f}_k$ and $\partial_\phi^j \hat{f}_k$.
\end{remark}

\subsection{Truncated Hilbert expansion and main result}
Motivated by \cite{GHW-2021-ARMA,Sone}, we consider the Boltzmann solution of \eqref{1.5}--\eqref{1.6} in the following Hilbert expansion with multi-scales 
\begin{align}
F^\v(t,x,v)&=\mu(t,x,v)+\sum\limits_{i=1}^N\v^iF_i(t,x,v)+\sum\limits_{i=1}^N\v^i\bar{\mathfrak{F}}_i(t,\phi,y,v) \nonumber\\
&\quad +\sum\limits_{i=1}^N\v^i\hat{\mathscr{F}}_i(t,\phi,\eta,\mv_1,\mv_2)+\v^5F_R^\v(t,x,v),
\end{align}
which, noting the constructions of $F_i, \bar{\mathfrak{F}}_i, \hat{\mathscr{F}}_i$ in section \ref{sec1.2}, yields that 
	\begin{align}\label{re}
		&\partial_tF_R^\v+v\cdot\nabla_x F_R^\v-\frac{1}{\v^2}[Q(F_R^\v,\mu)+Q(\mu,F_R^\v)]\\
		&=\v^3Q(F_R^\v,F_R^\v)+\sum\limits_{i=1}^N\v^{i-2}\{Q(F_i+\bar{\mathfrak{F}}_i+\hat{\mathscr{F}}_i,F_R^\v)+Q(F_R^\v,F_i+\bar{\mathfrak{F}}_i+\hat{\mathscr{F}}_i)\}\nonumber\\
		&\quad +R^\v+\bar{R}^\v+\hat{R}^\v,\nonumber
	\end{align}
	where
	\begin{align}\label{eq1.21}
		R^\v&=-\v^{N-6}\{\partial_tF_{N-1}+v\cdot\nabla_x F_{N-1}+\v\partial_tF_N+\v v\cdot\nabla_x F_N\}+\v^{N-6}\sum\limits_{1\leq i,j\leq N\atop i+j\geq N+1}\v^{i+j-N-1}Q(F_i,F_j),\nonumber\\
		\bar{R}^\v&=-\v^{N-6}\left\{\partial_t\bar{\mathfrak{F}}_{N-1}-\mathcal{H}^\v(y)(\bar{v}\cdot\vec{\tau})\partial_\phi\bar{\mathfrak{F}}_{N-1}+\v\partial_t\bar{\mathfrak{F}}_N-\v\mathcal{H}^\v(y)(\bar{v}\cdot\vec{\tau})\partial_\phi\bar{\mathfrak{F}}_N+(\bar{v}\cdot \vec{n})\partial_y\bar{\mathfrak{F}}_N\right\}\nonumber\\
		&\quad+\v^{N-6}\sum\limits_{1\leq l,j\leq N\atop l+j\geq N+1}\v^{l+j-N-1}\frac{(-y)l}{l!}[Q(\partial_r^l\mu_0,\bar{\mathfrak{F}}_j)+Q(\bar{\mathfrak{F}}_j,\partial_r^l\mu_0)]\nonumber\\
		&\quad+\v^{N-6}\sum\limits_{1\leq l,j\leq N\atop l+j\geq N+1}\v^{l+j-N-1}[Q(F_l^0,\bar{\mathfrak{F}}_j)+Q(\bar{\mathfrak{F}}_j,F_l^0)]\\
		&\quad+\v^{N-6}\sum\limits_{1\leq i,l,j\leq N\atop i+l+j\geq N+1}\v^{i+l+j-N-1}\frac{(-y)^l}{l!}[Q(\partial_r^lF_i^0,\bar{\mathfrak{F}}_j)+Q(\bar{\mathfrak{F}}_j,\partial_r^lF_i^0)]\nonumber\\
		&\quad+\v^{N-6}\sum\limits_{1\leq i,j\leq N\atop i+j\geq N+1}\v^{i+j-N-1}[Q(\bar{\mathfrak{F}}_i,\bar{\mathfrak{F}}_j)+Q(\bar{\mathfrak{F}}_j,\bar{\mathfrak{F}}_i)]\nonumber\\
		&\quad+\v^{\mathfrak{b}-5}\frac{(-y)^{\mathfrak{b}+1}}{(\mathfrak{b}+1)!}\sum\limits_{j=1}^N\v^{j-1}[Q(\partial_r^{\mathfrak{b}+1}\tilde{\mu},\bar{\mathfrak{F}}_j)+Q(\bar{\mathfrak{F}}_j,\partial_r^{\mathfrak{b}+1}\tilde{\mu})]\nonumber\\
		&\quad+\v^{\mathfrak{b}-4}\frac{(-y)^{\mathfrak{b}+1}}{(\mathfrak{b}+1)!}\sum\limits_{i,j=1}^N\v^{i+j-2}[Q(\partial_r^{\mathfrak{b}+1}\mathcal{F}_i,\bar{\mathfrak{F}}_j)+Q(\bar{\mathfrak{F}}_j,\partial_r^{\mathfrak{b}+1}\mathcal{F}_i)]\nonumber\\
		&\quad -\sum\limits_{i=1}^N\v^{i-5}\f{1}{1-\v y}(1-\Upsilon(\v y))(\bar{v}\cdot\vec{\tau})\partial_\phi \bar{\mathfrak{F}}_i,\nonumber
	\end{align}
	and
	\begin{align}\label{eq1.22}
		\hat{R}^\v&=-\v^{N-6}\Upsilon(\v^{\mathfrak{a}}\eta)\left(\partial_t\hat{F}_{N-1}-\frac{1}{1-\v^2\eta}\mv_2\partial_\phi\hat{F}_{N-1} +\v\partial_t\hat{F}_N-\frac{\v}{1-\v^2\eta}\mv_2\partial_\phi\hat{F}_N\right)\nonumber\\
		&\quad+\v^{N-6}\Upsilon(\v^{\mathfrak{a}}\eta)\sum\limits_{1\leq l,j\leq N\atop 2l+j\geq N+1}\v^{2l+j-N-1}\frac{(-\eta)^l}{l!}[Q(\partial_r^l\mu_0,\hat{F}_j)+Q(\hat{F}_j,\partial_r^l\mu_0)]\nonumber\\
		&\quad+\v^{N-6}\Upsilon(\v^{\mathfrak{a}}\eta)\sum\limits_{1\leq i,j,l\leq N\atop 2l+i+j\geq N+1}\v^{2l+i+j-N-1}\frac{(-\eta)^l}{l!}[Q(\partial_r^lF_i^0,\hat{F}_j)+Q(\hat{F}_j,\partial_r^lF_i^0)]\nonumber\\
		&\quad+\v^{N-6}\Upsilon(\v^{\mathfrak{a}}\eta)\sum\limits_{1\leq i,j\leq N\atop i+j\geq N+1}\v^{i+j-N-1}[Q(F_i+\bar{\mathfrak{F}}_i,\hat{F}_j)+Q(\hat{F_j},F_i+\bar{\mathfrak{F}}_i)+Q(\hat{F}_i,\hat{F}_j)]\\
		&\quad+\v^{N-6}\Upsilon(\v^{\mathfrak{a}}\eta)\sum\limits_{1\leq i,j,l\leq N\atop i+j+l\geq N+1}\v^{i+j+l-N-1}\frac{\eta^l}{l!}[Q(\partial_y^l\bar{\mathfrak{F}}_i^0,\hat{F}_j)+Q(\hat{F}_j,\partial_y^l\bar{\mathfrak{F}}_i^0)]\nonumber\\
		&\quad+\v^{2\mathfrak{b}-4}\Upsilon(\v^{\mathfrak{a}}\eta)\sum\limits_{j=1}^N\v^{j-1}\frac{(-\eta)^{\mathfrak{b}+1}}{(\mathfrak{b}+1)!}[Q(\partial_r^{\mathfrak{b}+1}\tilde{\mu},\hat{F}_j)+Q(\hat{F}_j,\partial_r^{\mathfrak{b}+1}\tilde{\mu})]\nonumber\\
		&\quad+\v^{2\mathfrak{b}-3}\Upsilon(\v^{\mathfrak{a}}\eta)\sum\limits_{i,j=1}^N\v^{i+j-2}\frac{(-\eta)^{\mathfrak{b}+1}}{(\mathfrak{b}+1)!}[Q(\partial_r^{\mathfrak{b}+1}\mathcal{F}_i,\hat{F}_j)+Q(\hat{F}_j,\partial_r^{\mathfrak{b}+1}\mathcal{F}_i)]\nonumber\\
		&\quad+\v^{\mathfrak{b}-4}\Upsilon(\v^{\mathfrak{a}}\eta)\sum\limits_{i,j=1}^N\v^{i+j-2}\frac{\eta^{\mathfrak{b}+1}}{(\mathfrak{b}+1)!}[Q(\partial_y^{\mathfrak{b}+1}\bar{\mathcal{F}}_i,\hat{F}_j)+Q(\hat{F}_j,\partial_y^{\mathfrak{b}+1}\bar{\mathcal{F}}_i)]\nonumber\\
		&\quad-\sum_{i=1}^N\v^{i-7+\mathfrak{a}}(\mv_1\partial_z\Upsilon(z))\hat{F}_k.\nonumber
	\end{align}
	
	
For later use, we define
$\tilde{w}_{\mathbf{k}}:=(1+|v|^2)^{\f{\mathbf{k}}{2}}$ with $\mathbf{k}$ to be determined. Our main result is
	\begin{Theorem}\label{thm}
	 Let $\mathbf{k}\geq 6,N\geq6$ and $\mathfrak{b}\geq 5$. We assume the initial data 
		\begin{align*}
			F^\v(0,x,v)=&\mu(0,x,v)+\sum\limits_{i=1}^N\v^i\{F_i(0,x,v)+\bar{\mathfrak{F}}_i(0,y,\phi,\bar{v})+\hat{\mathscr{F}}_i(0,\eta,\phi,\mv_1,\mv_2)\}\nonumber\\
			&+\v^5F_R^\v(0)\geq0,
		\end{align*}
		and $F_i(0),\bar{\mathfrak{F}}_i(0),\hat{\mathscr{F}}_i(0),i=0,1,...,N$ satisfy the regularity and compatibility condition described in Proposition \ref{prop}, and
		\begin{align*}
			\|(\frac{F_R^\v}{\sqrt{\mu}})(0)\|_{L^2_{x,v}}+\v^3\|(\tilde{w}_{\mathbf{k}}\frac{F_R^\v}{\sqrt{\mu_M}})(0)\|_{L_{x,v}^\infty}\textless\infty.
		\end{align*}
		Then there exists a small positive constant $\v_0\textgreater0$ such that equation \eqref{1.5}--\eqref{1.6} has a unique solution for $\v\in(0,\v_0]$ over the time interval $t\in[0,\tau^\d]$ in the following form of expansion
		\begin{align*}
			F^\v(t,x,v)=&\mu(t,x,v)+\sum\limits_{i=1}^N\v^i\{F_i(t,x,v)+\bar{\mathfrak{F}}_i(t,\phi,\frac{1-r}{\v},\bar{v})+\hat{\mathscr{F}}_i(t,\phi,\frac{1-r}{\v^2},\mv_1,\mv_2)\}\nonumber\\
			&+\v^5F_R^\v\geq0,
		\end{align*}
		with
		\begin{align}\label{1.31}
			\sup_{t\in[0,\tau^\d]}\Big(\|(\frac{F_R^\v}{\sqrt{\mu}})(t)\|_{L^2_{x,v}}+\v^3\|(\tilde{w}_{\mathbf{k}}\frac{F_R^\v}{\sqrt{\mu_M}})(t)\|_{L_{x,v}^\infty}\Big)\leq C(\tau^\d)\textless\infty,
		\end{align}
		where $\tau^\d>0$ is the life-span of smooth solution of compressible Euler equation \eqref{1.9}--\eqref{1.9-1}. Here the functions $F_i(t,x,v),\bar{\mathfrak{F}}_i(t,\phi,y,v),\hat{\mathscr{F}}_i(t,\phi,\eta,\mv)$ are respectively the internal expansion, viscous and Knudsen boundary layers constructed in Section  \ref{sec5.1}.
	\end{Theorem}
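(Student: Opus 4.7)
The plan is to follow the truncated Hilbert expansion strategy of Caflisch \cite{Caflish} and Guo-Huang-Wang \cite{GHW-2021-ARMA}: the profile functions $F_i$, $\bar{\mathfrak{F}}_i$ and $\hat{\mathscr{F}}_i$ are constructed (Sections 2 and 3, with the Knudsen layers supplied by Theorem \ref{thm2.2} and the cut-off \eqref{1.34}), all matched so that the specular reflection \eqref{1.5} holds order by order; then the remainder $F_R^\v$ is shown to satisfy \eqref{1.31} by a simultaneous $L^2$ coercivity estimate and a weighted $L^\infty$ estimate along specular characteristics. Because each profile carries tangential/temporal regularity and rapid decay in $y$ (resp.\ $\eta$) uniformly on $[0,\tau^\d]$, one first checks that the full forcing $R^\v+\bar R^\v+\hat R^\v$ in \eqref{re} is $O(\v^{\kappa_0})$-small in $L^2$ and weighted $L^\infty$: the explicit $\v^{N-6}$, $\v^{\mathfrak{b}-5}$ and $\v^{\mathfrak{b}-4}$ prefactors combined with $N\geq 6$, $\mathfrak{b}\geq 5$ and the Knudsen-layer exponential decay $e^{-c\eta}$ dominate every term, including the truncation error $\v^{i-7+\mathfrak{a}}(\mv_1\partial_z\Upsilon)\hat F_k$, which gains a factor of order $e^{-c\v^{-\mathfrak{a}}}$ from its support.

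For the $L^2$ step, set $g:=F_R^\v/\sqrt{\mu}$ and split $g=\FP g+(\FI-\FP)g$. Testing \eqref{re}$/\sqrt{\mu}$ against $g$, the specular condition kills the boundary term $\int_{\gamma}(v\cdot\vec n)|g|^2$, and the standard coercivity $\langle\FL g,g\rangle\geq c_0\|\nu^{1/2}(\FI-\FP)g\|^2$, magnified by the $\v^{-2}$ prefactor, produces microscopic dissipation. The singular coupling $\v^{i-2}Q(F_i+\bar{\mathfrak{F}}_i+\hat{\mathscr{F}}_i,F_R^\v)$ for $i=1$ is the delicate point: it is absorbed by noting $f_1\in\mathcal N$, $\bar f_1\in\mathcal N_0$ and $\hat f_1\in\ker\FL_0$, so all pairings against $\FP g$ vanish and the leftover $\v^{-1}\|(\FI-\FP)g\|\|\FP g\|$ contributions can be absorbed into dissipation via Cauchy--Schwarz. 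The macroscopic part $\FP g$ is then controlled through the local conservation laws of the remainder equation, and Gronwall closes the $L^2$ bound for $g$ on $[0,\tau^\d]$.

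For the $L^\infty$ step, set $h:=\tilde w_{\mathbf{k}} F_R^\v/\sqrt{\mu_M}$ and iterate the Duhamel formula along backward characteristics bouncing specularly off $\partial B_1$. The compact kernel piece $\v^{-2}Kh$ gains $\v^{-2}\int_0^t e^{-\nu_0(t-s)/\v^2}\,ds\sim 1$ per application, and after a double iteration its contribution is estimated by feeding in the $L^2$ bound already obtained on $\FP g$ (the $L^2\to L^\infty$ mechanism of \cite{Guo-2010-ARMA,Guo-2010-CPAM,GHW-2021-ARMA}); the trajectory analysis for specular characteristics in a disk is classical and preserves the weight $\tilde w_{\mathbf{k}}$ as long as $\mathbf{k}\geq 6$, while $\sqrt{\mu}/\sqrt{\mu_M}\leq C\mu_M^{\iota-1/2}$ from \eqref{1.23} bridges the two norms. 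Tracking prefactors shows $\|h\|_{L^\infty}\leq C\v^{-3}$, precisely matching the $\v^3$ in \eqref{1.31}; this scaling is forced by the self-interaction $\v^3 Q(F_R^\v,F_R^\v)$. A bootstrap then closes $L^2$ and $L^\infty$ simultaneously on $[0,\tau^\d]$, and positivity $F^\v\geq 0$ follows from the decomposition in the theorem with $\v^5\|h\|_{L^\infty}\ll \inf\mu$ once $\v$ is small.

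The main obstacle is the interaction between the curvature-generated term $\tfrac{u_\tau^0}{2T^0}G(\eta)\mv_1\mv_2 \hat f_k$ inherited from the Knudsen-layer equation \eqref{1.18-0} and the $\v^{-1}$-singular coupling on the right of \eqref{re}: unlike the flat-boundary case of \cite{GHW-2021-ARMA}, this term survives the truncation and propagates into $\hat R^\v$ through the $\partial_\phi\hat F_{N-1}$ and $\partial_\phi\hat F_N$ contributions in \eqref{eq1.22}, so its control relies on the refined Theorem \ref{thm2.2} with its explicit $\sigma_0/\sigma$-loss together with the restriction $\mathfrak{a}<\tfrac{2}{3}$ from Remark \ref{rmk2.4}. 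Balancing these $\v$-powers against the $\v^{-2}$ on the left of \eqref{re} is exactly what dictates the thresholds $\mathbf{k}\geq 6$, $N\geq 6$ and $\mathfrak{b}\geq 5$ in the hypotheses.
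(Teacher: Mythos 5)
Your overall strategy --- truncated expansion plus a coupled $L^2$--$L^\infty$ bootstrap, with the scaling heuristic $\v^3\|h^\v_R\|_{L^\infty}\le C$ forced by $\v^3\Gamma(f^\v_R,f^\v_R)$ and the role of the thresholds $N\ge6$, $\mathfrak b\ge5$, $\mathbf k\ge6$ --- matches the paper's route. The $L^2$ step is essentially right, though the cancellation you invoke for the $\v^{-1}$-singular coupling is more cleanly attributed to $\Gamma(g_1,g_2)\in\mathcal N^\perp$ for \emph{any} arguments (so the pairing projects onto $(\FI-\FP)f^\v_R$ and a factor $\v^{-1}$ is absorbed into the microscopic dissipation via Cauchy--Schwarz), rather than to $f_1\in\mathcal N$; this is exactly how the paper handles $S_{R,2}$ in Lemma~\ref{lem6.1}.

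There is, however, a genuine gap at the $L^\infty$ step. The assertion that ``the trajectory analysis for specular characteristics in a disk is classical'' dismisses precisely the new technical content. After a bounce off $\partial B_1$, the backward characteristics are no longer straight lines whose Jacobian is simply $(t-s)^2$: the reflection depends on the boundary point, and in the scaled regime one must perform the change of variables $(s,\bar{\mv})\mapsto(\mathcal X(s),\Phi(s))$ along circular-arc trajectories. The paper computes this Jacobian explicitly in polar coordinates in Lemma~\ref{lem6.2-0},
\begin{align*}
\mathcal X\left|\f{\partial(\mathcal X,\Phi)}{\partial\bar{\mv}}(s)\right|
=\f{|\bar{\mv}_1|}{|\bar{\mv}_2|}(s-t_{1,0})(t-s)\cdot
\left|\f{2}{\sqrt{\f{1}{r^2}\f{|\bar{\mv}|^2}{|\bar{\mv}_2|^2}-1}}-\f{|\bar{\mv}_2|}{|\bar{\mv}_1|}-\f{1}{\sqrt{\f{\mathcal X^2}{r^2}\f{|\bar{\mv}|^2}{|\bar{\mv}_2|^2}-1}}\right|,
\end{align*}
which degenerates when $\mathcal X/r\approx1$, near grazing $|\bar{\mv}_1/\bar{\mv}_2|\ll1$, or when $s$ is near $t_{1,0}$ or $t$. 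This forces the $\v$-dependent case analysis (Cases 1--6 of Lemma~\ref{lem6.2}), the calibrated choice of parameters $m_1=\bar m\,\v^2$, $m_2=\bar m^4\v^2$, $\bar m=\mathfrak N^{-2}$, and a count of bounces $k\lesssim\mathfrak Nt/\bar m^2$. Without Lemma~\ref{lem6.2-0} and that excision the change of variables cannot be justified, and the $L^\infty$ bound does not close. A secondary misattribution: the curvature term $\frac{u_\tau^0}{2T^0}G(\eta)\mv_1\mv_2\hat f_k$ is the obstacle in \emph{constructing} the Knudsen layers (Theorem~\ref{thm2.2}, Section~\ref{sec4}); once the $\hat{\mathscr F}_i$ are built it plays no further role in the remainder estimate. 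The obstacle peculiar to the disk remainder is the Jacobian degeneration just described, not that curvature term.
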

	\begin{remark}
		From \eqref{1.31} and Section \ref{sec5.1}, we have 
		\begin{align*}
			\sup\limits_{t\in[0,\tau^\d]}\left\{\left\|\left(\f{F^\v-\mu}{\sqrt{\mu}}\right)(t)\right\|_{L^2(B_1\times\R^2)}+\left\|\tilde{w}_{\mathbf{k}}\left(\f{F^\v-\mu}{\sqrt{\mu_M}}\right)(t)\right\|_{L^\infty(B_1\times\R^2)}\right\}\leq C\v\to 0.
		\end{align*}
	That means we have justified the validity of  hydrodynamic limit from the Boltzmann equation to the compressible Euler system for the specular boundary problem in 2D disk.
	\end{remark}

	\subsection{Key points of the proof}
	We now make some comments on the key points of this paper.
	
	 In the disk, due to the coexistence of nonzero boundary curvature and nonzero tangential velocity $u_\tau^0\neq 0$ at the boundary,  it is highly nontrivial to construct the solution for geometrically modified Knudsen layers. 
	 On the other hand, to control the remainder term, we would like to use the $L^2-L^\infty$ arguments in  \cite{Guo-2010-ARMA} where it is important to remove some singular region with small measure (it may depend on $\v$) so that the change of variable can be applied. Since the backward characteristics are complex, it is not an easy job to describe the $\v$-dependent singular region. 
	 In this paper, by using the polar coordinate,  we give an explicit description for the backward characteristic of the remainder, revealing finite collision times under $|\bar{\mv}|\leq N, \ |\bar{\mv}_1/\bar{\mv}_2|\geq\bar{m}$,  which facilitates removing the singular part.
We explain the key ideas of our proof in the following.

\subsubsection{The existence of Knudsen layer with geometric correction and nonzero $u^0_{\tau}\neq0$.} We consider the following boundary value equation
\begin{align}\label{1-49}
	\begin{cases}
		\dis \mathcal{L}_\lambda f= S, \\
		\dis	f(0,\mv)|_{\gamma_{-}}=f\left(0,-\mv_1, \mv_2\right),\\
		\dis f(d, \mv)|_{\gamma_-}=0.
	\end{cases}
\end{align}
where
\begin{equation}\label{1-49-0}
\mathcal{L}_\lambda f:=	\mv_1\partial_\eta f +G(\eta)\left(\mv_2^2\f{\partial f}{\partial \mv_1}-\mv_1 \mv_2\f{\partial f}{\partial \mv_2}\right)+(1-\lambda)\nu f +\lambda \FL_0f -\f{u_\tau^0}{2T^0}G(\eta)\mv_1 \mv_2 f.
\end{equation}
In the following, we shall denote the solution of \eqref{1-49} as $f_{\lambda}:=\mathcal{L}_\lambda^{-1}S$. We aim to establish the existence of  $\mathcal{L}_\lambda^{-1}S$, especially $\mathcal{L}_1^{-1}S$ which is in fact our original Knudsen boundary layer problem \eqref{K}. 
Here we assume zero incoming boundary at $\eta=d$, which is essential for us to derive the exponential decay of $\eta$.

{\it Step 1.} To construct  $\mathcal{L}_1^{-1}S$, we first consider the existence of $\mathcal{L}_0^{-1}S$. It is a standard approach to consider the following approximate BVP: 
		\begin{align}\label{1-1}
			\begin{cases}
				\mathcal{L}_0 f =S,\\
				f(0,\mv_1,\mv_2)|_{\mv_1>0}=(1-\f1n)f(0,-\mv_1,\mv_2), \\
				f(d,\mv)|_{\mv_1<0}=0.
			\end{cases}
		\end{align}
We denote the solution of \eqref{1-1} as $f^n$ and define $\dis \mf^n=\f{\sqrt{\mu_0}}{\sqrt{\mu_M}}f^n$. In fact, for given $n\geq2$, we can obtain the solution  $f^n$ relatively easy by an iterative arguments. The coefficient plays an important role in the construction of solution $f^n$, see steps 1 \& 2 in the proof of Lemma \ref{lem1.1}.
	
Then we consider the limit $n\to \infty$ and it is better to use the equation of $\mathbf{f}^n$. For $n_1>1,n_2>1$, it holds that 
		\begin{align}
			\begin{cases}
				\dis	\mv_1 \partial_\eta (\mf^{n_1}-\mf^{n_2})+G(\eta)\left(\mv_2^2 \frac{\partial\left(\mf^{n_1}-\mf^{n_2}\right)}{\partial \mv_1}-\mv_1 \mv_2 \frac{\partial\left(\mf^{n_1}-\mf^{n_2}\right)}{\partial \mv_2}\right) +\nu\left(\mf^{n_1}-\mf^{n_2}\right)=0, \\
				\dis	\left(\mf^{n_1}-\mf^{n_2}\right)(0, \mv)|_{\gamma_{-}}=\left(1-\frac{1}{n_1}\right)\left(\mf^{n_1}-\mf^{n_2}\right)\left(0,-\mv_1, \mv_2\right)+\left(\frac{1}{n_2}-\frac{1}{n_1}\right) \mf^{n_2}\left(0,-\mv_1, \mv_2\right), \\
				\dis	\left(\mf^{n_1}-\mf^{n_2}\right)(d, \mv)|_{\gamma_-}=0.
			\end{cases}
		\end{align}
		Along its backward characteristic   in \eqref{5-6}, one has 
		\begin{align}\label{1.32}
			w_\beta(\mf^{n_1}-\mf^{n_2})(\eta,\mv)=&(1-\f{1}{n_1})^k
			w_\beta(\mf^{n_1}-\mf^{n_2})(0,v_k)e^{-\nu(\mathfrak{t}-\mathfrak{t}_k)}\nonumber\\
			&+\sum_{l=1}^k(1-\f{1}{n_1})^{l-1}(\f{1}{n_2}-\f{1}{n_1})w_\beta\mf^{n_2}(0,v_l)e^{-\nu(\mathfrak{t}-\mathfrak{t}_l)}.
		\end{align}
Due to the geometric correction in \eqref{1-49-0}, its backward characteristics are very complex  which brings  difficulty in the $L^\infty$ estimate of \eqref{1.32}.
In fact, the particles may oscillate in $[0,\eta_+]$ where $\eta_+$ is the one defined in \eqref{22.4} (see also \eqref{22.5} for more detail).

It is noted that $\eta_+$ may approach zero if $\eta\ll 1$ and $|\mv_1/\mv_2|\ll 1$. Thus, we have to take the number of collision $k$ sufficiently large so that $\dis e^{-\nu(\mathfrak{t}-\mathfrak{t}_k)}$ will provide a small coefficient for the first term on RHS of \eqref{1.32}. However, for $k\gg1$, difficulty arises in   the second term on RHS of \eqref{1.32}. Fortunately, by noting the normal velocity $V_{cl,1}(X(s))$ decreases with $X(s)$, then it holds that $|v_{k,1}|\geq \tilde{m}$ if $|\mv_1|\geq \tilde{m}$. Also, we find that $(1+|\mv|^2)\eta_+$ has a lower bound if $|\mv_1|\geq \tilde{m}$. Thus by choosing $k=(1+|\mv|^2)k_0(\tilde{m})$ with $k_0(\tilde{m})\gg 1$, we have
 \begin{align*}
|\mathbf{1}_{\{|\mv_1|\geq \tilde{m}\}}w_{\beta}(\mf^{n_1}-\mf^{n_2})(0,v_k)|	e^{-\nu(\mathfrak{t}-\mathfrak{t}_k)}
  &\leq |\mathbf{1}_{\{|\mv_1|\geq \tilde{m}\}}w_{\beta}(\mf^{n_1}-\mf^{n_2})|_{L^\infty(\gamma_+)} e^{-2(k-1)\eta_+}\nonumber\\
  &\leq \f18  |\mathbf{1}_{\{|\mv_1|\geq \tilde{m}\}}w_{\beta}(\mf^{n_1}-\mf^{n_2})|_{L^\infty(\gamma_+)},
  \end{align*}
which yields that 
		\begin{align}\label{1.52}
			&\|\mathbf{1}_{\{|\mv_1|\geq \tilde{m}\}} w_{\beta}(\mf^{n_1}-\mf^{n_2})\|_{L^\infty_{\eta,\mv}}+|\mathbf{1}_{\{|\mv_1|\geq \tilde{m}\}}w_{\beta}(\mf^{n_1}-\mf^{n_2})|_{L^\infty(\gamma_+)}\nonumber\\
			&\leq  C_{\tilde{m}}(\f{1}{n_1}+\f{1}{n_2})|w_{\beta+2}\mf^{n_2}|_{L^\infty(\gamma_+)}\leq C_{\tilde{m}}(\f{1}{n_1}+\f{1}{n_2})\|\nu^{-1}w_{\beta+2 }\bar{S}\|_{L^\infty_{\eta,\mv}}\,\to 0,
		\end{align} 
	as $ n_1,n_2\to \infty$.
Noting $\tilde{m}$ can be any positive constant, we get the limit function $\mf_0$ and the continuity of $\mf_0$ in $[0,d]\times \{\R^2\backslash {\mv_1=0}\}$. 
	
Next we consider the limit at $(0,d]\times \R^2$. It is better to rewrite \eqref{1.32} in the following form
	\begin{align}\label{1.51}
		w_\beta(\mf^{n_1}-\mf^{n_2})(\eta,\mv)&=(1-\f1n)^kw_\beta(\mf^{n_1}-\mf^{n_2})(\eta,V_{cl}(\mathfrak{t}_c))e^{-\nu(\mathfrak{t}-\mathfrak{t}_k)-\nu(\mathfrak{t}_k-\mathfrak{t}_c)}\nonumber\\
		&+\sum_{l=1}^k(1-\f1n)^{l-1}w_\beta\mf^{n_2}(0,v_l)e^{-\nu(\mathfrak{t}-\mathfrak{t}_l)},
	\end{align}
	where we have denoted $\mathfrak{t}_c\in(\mathfrak{t}_{k+1},\mathfrak{t}_k)$ to be the time so that the backward characteristic moves from $X(\mathfrak{t}_k)=0$ to $X(\mathfrak{t}_c)=\eta$. 
	For $\eta\geq \tilde{m}'$, it is key to note that $\eta_+\geq\eta\geq \tilde{m}'$, then we choose $k=k_0'(\tilde{m}')$ with $k_0'(\tilde{m}')\gg 1$ such that
	\begin{align*}
		e^{-\nu(\mathfrak{t}-\mathfrak{t}_k)}\leq e^{-(k-1)\int_{\mathfrak{t}_2}^{\mathfrak{t}_1}|V_{cl,1}|\,d\tau }= e^{-2(k-1)\eta_+}\leq \f18.
	\end{align*}
	By some detailed analysis, one obtains from \eqref{1.51} that 
	\begin{align}\label{1.53}
		\|\mathbf{1}_{\{\eta\geq \tilde{m}'\}} w_\beta (\mf^{n_1}-\mf^{n_2})\|_{L^\infty_{\eta,\mv}}&\leq \f18\|\mathbf{1}_{\{\eta\geq \tilde{m}'\}} w_\beta (\mf^{n_1}-\mf^{n_2})\|_{L^\infty_{\eta,\mv}}+C_{\tilde{m}'}(\f{1}{n_1}+\f{1}{n_2})|w_\beta \mf^{n_2}|_{L^\infty(\gamma_+)}\nonumber\\
		&\leq C_{\tilde{m}'}(\f{1}{n_1}+\f{1}{n_2})|w_\beta\mf^{n_2}|_{L^\infty(\gamma_+)}\, \to 0,\quad n_1,n_2\to\infty.
	\end{align}
	It follows from \eqref{1.52} and \eqref{1.53} that $w_\beta\mf^{n} \to w_\beta\mathbf{f}_0$ in $L^\infty(\tilde{m}',d]\times \R^2)$ and  $w_{\beta-2}\mf^{n} \to w_{\beta-2}\mathbf{f}_0$ in $L^\infty([0,d]\times \{\mv\in\R^2:|\mv_1|\geq \tilde{m}\})$.	Since $\tilde{m},\tilde{m}'$ can be arbitrary small, it is clear to know that $w_{\beta-2}\mf^{n} \to w_{\beta-2}\mathbf{f}_0$ a.e. in $\Omega_d\times\R^2$ and also $f^n-f_0 \to 0$ in $L^2(\Omega_d\times\R^2)$ with $f_0=\f{\sqrt{\mu_M}}{\sqrt{\mu_0}}\mathbf{f}_0$. 
	
	Finally, due to the arbitrary smallness of $\tilde{m},\tilde{m}'$ and the continuity of $\mathbf{f}^n$, by noting \eqref{1.52} and \eqref{1.53}, it is clear to see that $\mf_0$ (or $f_0$) is continuous away from grazing set $\{(\eta,\mv):\eta=0,\mv_1=0\}$. 
	
\smallskip

{\it Step 2.} To extend $\mathcal{L}_0^{-1}S$ to $\mathcal{L}_1^{-1}S$, we need some $L^2$	and $L^\infty$ uniform-in-$\lambda$ estimates for $\lambda\in[0,1]$.

{\it Step 2.1. The a priori $L^2$-estimate.} We point out that it is almost impossible to control $\dis G(\eta)\f{u_\tau^0}{2T^0}\mv_1\mv_2 f_\lambda$ in pure $L^2$-estimate. By noting the smallness of $G(\eta)\sim \v^2$ for $\eta\in [0,d]$ with $d=\v^{-\mathfrak{a}}$, the key idea is to control it by  weighted $L^\infty$-norm. In fact, multiplying \eqref{1-49} by $f_\lambda$, one has 
\begin{align}\label{1-50}
	&\f12|f_\lambda(d)|_{L^2(\gamma_+)}^2+(1-\lambda)\int_0^d\|f_\lambda\|_{L^2_\mv}^2\, d\eta+\lambda c_0\int_0^d\|(\FI-\FP_0)f_\lambda\|_{\nu}^2\, d\eta\nonumber\\
	&\leq C\v^{2-\mathfrak{a}}\|w\mf_\lambda\|_{L^\infty_{\eta,\mv}}^2+C\int_0^d\int_{\R^2}|f_\lambda\cdot S|\, d\mv \, d\eta,
\end{align}
where the small coefficient $\v^{2-\mathfrak{a}}$ plays an important role.

To derive uniform-in-$\lambda$ $L^2$-estimate, we divide the discussion into two cases:

{\it Case 1. $\lambda \in [0,1-\v]$.} It is clear that  $1-\lambda\geq \v$. By utilizing the smallness of $\v$, we directly have from \eqref{1-50} that
\begin{align}\label{1-53}
	\left\|f_\lambda\right\|_{L_{\eta,\mv}^2}\leq C\v^{\f12(1-\mathfrak{a})}\left\|w_\beta \mf_\lambda\right\|_{L_{\eta,\mv}^{\infty}}+C_d\|S\|_{L_{\eta,\mv}^2}.
\end{align}

{\it Case 2. $\lambda \in [1-\v,1]$.} Since $\lambda\geq 1-\v$, we shall utilize  the good term $\dis \int_0^d\|(\FI-\FP_0)f_\lambda\|_{\nu}^2\, d\eta$ to derive the desired $L^2$ estimate. In fact, by taking some suitable test functions as in \cite{GPS,HJW}, we can control the macroscopic part  $\|\mathbf{P}_0f_\lambda\|_{L^2}$ by the microscopic part, and then we obtain
\begin{align}\label{1-52-0}
	\|f_\lambda\|_{L^2_{\eta,\mv}}\leq C\v^{\f12(\min\{2-3\mathfrak{a},1-\mathfrak{a}\})}\|w_\beta\mf_\lambda\|_{L^\infty_{\eta,\mv}}+C_{\v,d}\|S\|_{L^2_{\eta,\mv}}.
\end{align}
We refer to Lemma \ref{lem1.2-1} for details.

\smallskip
	
{\it Step 2.2. The a priori $L^\infty$-estimate.} We shall use $\mf_\lambda\equiv \f{\sqrt{\mu_0}}{\sqrt{\mu_M}}f_\lambda$ to consider the $L^\infty$-estimate. By some detailed analysis, we have 
	\begin{align*}
	\|w_{\beta} \mathbf{f}_{\lambda} \|_{L^\infty_{\eta,\mv}}+|w_{\beta} \mathbf{f}_{\lambda}|_{L^\infty(\gamma_+)} 
 \leq C \Big|\int_{\R^2}e^{2\zeta|\mv|^2}|\mf_\lambda|^2(\eta,\mv)d\mv\Big|_{L^\infty_{\eta}} + C\|\nu^{-1}w_{\beta}\bar{S}\|_{L^\infty_{\eta,\mv}}.
	\end{align*} 
	Then we need to estimate
	$\dis \int_{\R^2}e^{2\zeta|\mv|^2}|\mf_\lambda|^2(\eta,\mv)d\mv$. Noting the mild formulation of $\mathbf{f}_{\lambda}$ in \eqref{4.31-1}--\eqref{5.10-0}, we have 
		\begin{align}\label{1-52-1}
		& \int_{\R^2}e^{2\zeta|\mv|^2}|\mf_\lambda|^2(\eta,\mv)d\mv \lesssim o(1) \big(\|w_{\beta} \mathbf{f}_{\lambda} \|_{L^\infty_{\eta,\mv}}+|w_{\beta} \mathbf{f}_{\lambda}|_{L^\infty(\gamma_+)} \big)^2 + \|\nu^{-1}w_\beta\bar{S}\|^2_{L^\infty_{\eta,\mv}}\nonumber\\
		&+ \lambda \int_{\R^2}\mathbf{1}_{\{|\mv|\leq \mathfrak{N},|\mv_1|\geq m,|\mv_1/\mv_2|\geq m\}} e^{2\zeta|\mv|^2}\left\{\int_{\mathfrak{t}_k}^\mathfrak{t}e^{-\nu(\mv)(\mathfrak{t}-s)}|K_M\mf_\lambda|(X_{cl}(s),V_{cl}(s))ds\right\}^2\,d\mv,
	\end{align}
	where we have removed some singular region with small measure. We remark that $o(1)$ on RHS of \eqref{1-52-1} depends on the smallness of such singular region, and $k>0$ is a number depending on $m^{-1}$.
 
To control the last term on RHS of \eqref{1-52-1}, we shall use  the change of variables $ds d\mv\to dXdV$ where we denote $(X_{cl},V_{cl})$ as $(X,V)$. It is a challenging task to directly calculate the Jacobian determinant
$\dis\f{\partial (X,V)}{\partial(s,\mv)}$, and thus it is advisable to compute the Jacobian in a stepwise manner. We note 
	\begin{align*}
		X(s)=\eta_{l}-\int_s^{\mathfrak{t}_{l}}|V_{cl,1}(\tau)|d\tau,\quad s\in(\mathfrak{t}_{l+1},\mathfrak{t}_{l}].
	\end{align*}
Since $\eta_l=0$ or $d$, it holds that 
	\begin{align*}
		\left|\f{dX(s)}{ds}\right|=|V_{cl,1}(s)|=\sqrt{|\mv|^2-\mv_2^2\left(e^{-2W(\eta)+2W(X(s))}\right)}.
	\end{align*}
Then, in $\{\mv\in\R^2:|\mv|\leq \mathfrak{N},|\mv_1|\geq m,|\mv_1/\mv_2|\geq m\}$, we obtain from above formula that 
	\begin{align}\label{1-52-2}
	\left|\f{dX(s)}{ds}\right|^2 =	|V_{cl,1}|^2\geq m^2+|\mv_2|^2\left\{1-\left(\f{1-\v^2\eta}{1-\v^2X(s)}\right)^2\right\} 
		\geq m^2-\mathfrak{N}^2\v^{2-\mathfrak{a}}\geq \f{m^2}{4},
	\end{align}
where the smallness of $\v>0$ is used.
	
Using \eqref{1-52-2}, we first perform the change of variable $ds\to dX$ and express $V_{cl}$ as a function of $X$ and $\mv$, i.e., $V_{cl}=V_{cl}(X,\mv)$. Now it can be checked that $d\mv\sim dV$. 

Combining above estimates, we can  establish 
	\begin{align}\label{1-57} 
		\|w_\beta\mf_\lambda\|_{L^\infty_{\eta,\mv}}+|w_\beta\mf_\lambda|_{L^\infty(\gamma_+)}\leq C\|f_\lambda\|_{L^2_{\eta,\mv}}+C\|\nu^{-1}w_\beta \bar{S}\|_{L^\infty_{\eta,\mv}}.
	\end{align}
We remark that the constant on RHS of \eqref{1-57} is independent of $d>0$. We refer to Lemma \ref{lem1.2} for details of proof in {\it Step 2.2}.

\smallskip

{\it Step 2.3.} From {\it Steps 2.1} \& {\it 2.2}, we can obtain the uniform-in-$\lambda$ {\it a priori} estimate
\begin{align}
\|w_\beta\mf_\lambda\|_{L^\infty_{\eta,\mv}}+|w_\beta\mf_\lambda|_{L^\infty(\gamma_+)}\leq C_{\v,d}\|\nu^{-1}w_\beta \bar{S}\|_{L^\infty_{\eta,\mv}}.
\end{align}
Then, by the continuity arguments, we can extend the existence result and energy estimate from $\mathcal{L}_0^{-1}$ to $\mathcal{L}_1^{-1}$, see Lemma \ref{lem1.3} for details.

\smallskip

{\it Step 3. Exponential decay of Knudsen layer solutions.} For the solution $f=\mathcal{L}_1^{-1}S$ obtained in Step 2, 
using the zero incoming boundary condition at $\eta=d$ and the special test functions in \cite{GPS,HJW}, we obtain the exponential decay estimate
\begin{align}
	\|e^{\sigma\eta}f\|_{L^2_{\eta,\mv}}+e^{\sigma d}|f(d)|_{L^2(\gamma_+)}\leq \frac{C}{\sqrt{\sigma}}\v^{1-\f{\mathfrak{a}}{2}}\|e^{\sigma\eta}w\mf\|_{L^\infty_{\eta,\mv}}+\f{C}{\sqrt{\sigma(\sigma_2-\sigma)}}\|e^{\sigma_2\eta}S\|_{L^2_{\eta,\mv}},
\end{align}
which yields immediately that 
\begin{align}
	\|e^{\sigma\eta}w\mf\|_{L^\infty_{\eta,\mv}}+|e^{\sigma\eta}w\mf|_{L^\infty(\gamma_+)}\leq \f{C}{\sigma(\sigma_0-\sigma)}\|e^{\sigma_0\eta}\nu^{-1}w\bar{S}\|_{L^\infty_{\eta,\mv}},
\end{align}
see Section 4.2 for details. 
We point out that above decay estimate is crucial for us to close the estimation of remainder of Hilbert expansion.
	

\subsubsection{ The remainder estimate.} 
To control the remainder term, since the collisions are very complicate, we use the polar coordinate to study its backward characteristics, see \eqref{6.4}. By  tedious calculations, we derive 
\begin{align}\label{1-6-32}
	\dis	\mathcal{X}\left |\f{\partial(\mathcal{X},\Phi)}{\partial \bar{\mv}}(s)\right | 
	=&\f{|\bar{\mv}_1|}{|\bar{\mv}_2|}(s-t_{1,0})(t-s) \cdot\left|\f{2}{\sqrt{\f{1}{r^2}\f{|\bar{\mv}|^2}{|\bar{\mv}_2|^2}-1}}-\f{|\bar{\mv}_2|}{|\bar{\mv}_1|}-\f{1}{\sqrt{\f{\mathcal{X}^2}{r^2}\f{|\bar{\mv}|^2}{|\bar{\mv}_2|^2}-1}}\right|.
\end{align}
See Lemma \ref{lem6.2-0} for details.

With the help of \eqref{1-6-32}, we divide the $L^2-L^\infty$ estimate into several cases, and remove some  regions with small measure which is singular for \eqref{1-6-32}.
Then we need only to  consider the change of variable in the domain 
\begin{align}\label{1.62}
\Big\{(s,\bar{\mv},v',\mathcal{X})&\in[0,t]\times\R^2\times\R^2\times[0,1]:\, s-t_{1,0}\geq m_1, t-s\geq m_1, \nonumber\\
& |\bar{\mv}|\leq \mathfrak{N},|v'|\leq 2\mathfrak{N}, |\bar{\mv}_{1}/\bar{\mv}_{2}|\geq \bar{m}, |\bar{\mv}_2|\geq \bar{m},\, \text{and}\quad
|\f{\mathcal{X}}{r}-1|\geq m_2\Big\}.
\end{align}
Under \eqref{1.62}, it holds that  
\begin{align}
	\mathcal{X}\left |\f{\partial(\mathcal{X},\Phi)}{\partial \bar{\mv}}(s)\right |\geq  \f{m_2}{2}m_1^2.
\end{align}
Then, by the change of variables $d\mv\to \mathcal{X}d\mathcal{X} d\Phi$, we have 
\begin{align*}
	\sup_{0\leq s\leq t}\|h^\v_R(s)\|_{L^\infty}^2\leq &C(\|h^\v_R(0)\|_{L^\infty}^2+1)+ \left\{\f{1}{\mathfrak{N}}+C\left(\bar{m}\mathfrak{N}+Ce^{-\f{1}{3\v^2}\bar{m}^2}\right)\right\}\sup_{0\leq s\leq t}\|h^\v_R(s)\|_{L^\infty}^2\nonumber\\
	&+\left(Cm_1^2+\f{C\mathfrak{N}t}{\bar{m}^6}m_2^2\right)\cdot\f{1}{\v^4}\sup_{0\leq s\leq t}\|h^\v_R(s)\|_{L^\infty}^2+\f{C_\mathfrak{N}}{ m_1^2m_2}\sup_{0\leq s\leq t}\|f^\v_R(s)\|_{L^2}^2.
\end{align*}
Choosing $\bar{m}=\f{1}{\mathfrak{N}^2}$ with $\mathfrak{N}>1$ suitably large, and $m_1=\bar{m}\v^2, m_2=\bar{m}^4\v^2$, one obtains that
\begin{align}
\sup_{0\leq s\leq t}\|\v^3h^\v_R(s)\|_{L^\infty}\leq  C(t)\{\|\v^3h^\v_R(0)\|_{L^\infty}+\v^{N-1}+\v^{\mathfrak{b}}\}+C\sup_{0\leq s\leq t}\|f^\v_R(s)\|_{L^2},
\end{align}
which, together with the $L^2$ energy estimate, yields that 
\begin{align}
	\sup_{0\leq s\leq t}(\v^3\|h_R^\v(s)\|_{L^\infty})\leq C(t).
\end{align}
	See Section 5.2 for more details.
	
	\smallskip
	
	The paper is organized as follows:
	In Section \ref{sec2}, we reformulate the interior, viscous and Knudsen boundary layers. Also we derive the corresponding boundary conditions so that the formulations of interior expansion, the viscous and Knudsen boundary layers are all well-posed. Section \ref{sec3} is devoted to existence theories for a linear hyperbolic system with characteristic boundary and a linear parabolic system with degenerate viscosity, which are used for the construction of $F_i$ and $\bar{\mathfrak{F}}_i$. Section \ref{sec4} provides a complete proof of Theorem \ref{thm2.2}, where we establish the existence of Knudsen boundary layer problem and obtain spatial exponential decay. In Section \ref{sec5}, we construct each layer and control the remainder estimate, which completes the proof of Theorem \ref{thm}.
	
	\smallskip
	
\noindent{\bf Notations.} Throughout this paper,  $C>0$ denotes a constant. It is
	referred to as universal and can change from one inequality to another. When we write $C(z)$ or $C_z$, it means a certain positive constant depending on the quantity $z$. We use $\langle\cdot ,\cdot \rangle$ to denote the standard $L^2$ inner product in $\R^2_v$ and $\|\cdot\|_{\nu}=\|\sqrt{\nu}\cdot\|_{L^2_v}.$
$\|\cdot\|_{L^2}$ denotes the standard $L^2(B_1\times\mathbb{R}^2_v)$-norm, and $\|\cdot\|_{L^\infty}$ denotes the $L^\infty(B_1\times\mathbb{R}^2_v)$-norm.
	
	\section{Reformulation and boundary conditions}\label{sec2}
	Firstly we introduce the existence result on the compressible Euler equations.
	\begin{Lemma}[\cite{Schochet}]\label{lem3.1}
		Let $s_0\geq 2$ be some positive number. Consider the IBVP of compressible Euler equations \eqref{1.9}--\eqref{1.9-1}. Choose $\delta\in(0,\delta_1]$ so that for any $\d\in(0,\d_1],$ the positivity of $\rho_0$ and $T_0$ is guaranteed. Then for $\d\in(0,\d_1]$, there is a family of classical solutions $(\rho^\d,\mathfrak{u}^\d,T^\d)\in C([0,\tau^\d];H^{s_0}(B_1))\cap C^1([0,\tau^\d];H^{s_0-1}(B_1))$ of IBVP \eqref{1.9}--\eqref{1.9-1} such that $\rho^\d>0$ and $T^\d>0$, and the following estimate holds:
		\begin{align}\label{3.0}
			\|(\rho^\d-1,\mathfrak{u}^\d,T^\d-1)\|_{C([0,\tau^\d];H^{s_0-}(B_1))\cap C^1([0,\tau^\d];H^{s_0-1-}(B_1))}\leq C_0.
		\end{align}
		The constant $C_0$ is independent of $\d$, depending only on the $H^{s_0}$ norm of $(\phi_0,\Phi_0,\vartheta_0)$.
	\end{Lemma}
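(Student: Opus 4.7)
\medskip

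\noindent\textbf{Proof proposal for Lemma \ref{lem3.1}.}
The plan is to reduce the compressible Euler IBVP \eqref{1.9}--\eqref{1.9-1} on the disk $B_1$ with slip condition $\mathfrak{u}\cdot\vec n=0$ to the standard local existence theory for quasilinear symmetric hyperbolic systems with a characteristic boundary, in the spirit of Schochet's work. I would first rewrite \eqref{1.9} in the perturbation variables $(\phi,\Phi,\vartheta)$ defined by $\rho=1+\delta\phi$, $\mathfrak{u}=\delta\Phi$, $T=1+\delta\vartheta$. Substituting into \eqref{1.9} and dividing by $\delta$ produces a quasilinear system of the form $A_0(\delta U)\partial_t U+\sum_{j=1}^2 A_j(\delta U)\partial_{x_j}U=0$, where $U=(\phi,\Phi,\vartheta)^{\top}$, the matrices $A_j$ are smooth functions of their arguments, and (after multiplying by an appropriate positive symmetrizer built from the pressure law $p=\rho T$) the $A_j$ can be taken symmetric with $A_0$ uniformly positive definite for $\delta$ small. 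The slip boundary condition becomes $\Phi\cdot\vec n|_{\partial B_1}=0$, which is maximally non-negative and characteristic of constant multiplicity (two nontrivial acoustic modes carry information through the boundary, while the two entropy/vorticity modes are tangential).

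Having put the problem in symmetric hyperbolic form, I would construct the solution by a classical Picard-type iteration: given $U^{(n)}$, solve the linear problem
\begin{equation*}
A_0(\delta U^{(n)})\partial_t U^{(n+1)}+\sum_j A_j(\delta U^{(n)})\partial_{x_j}U^{(n+1)}=0,\qquad \Phi^{(n+1)}\cdot\vec n|_{\partial B_1}=0,
\end{equation*}
with initial data $U^{(n+1)}(0)=(\phi_0,\Phi_0,\vartheta_0)$. For the linear problem with characteristic boundary, existence in $L^2$ follows from a Friedrichs-type symmetric estimate (the characteristic boundary gives $\int_{\partial B_1}\langle A_{\vec n}U,U\rangle=0$ for $U$ satisfying $\Phi\cdot\vec n=0$, so no boundary term pollutes the energy identity). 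Higher regularity is obtained by commuting the system with tangential differential operators $Z$ (vector fields tangent to $\partial B_1$, e.g.\ $\partial_\phi$ in polar coordinates together with purely temporal derivatives), which preserve the boundary condition; one gets estimates for $\|Z^\alpha U\|_{L^2}$ with $|\alpha|\le s_0$ by induction on $|\alpha|$.

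The key obstacle, and the reason this is a Schochet-type result rather than a direct Friedrichs argument, is the recovery of full $H^{s_0}$ regularity from the conormal $Z^\alpha$ estimates on a characteristic boundary. The idea is to use the equation itself to trade normal derivatives for tangential ones: the normal component of the acoustic flux is non-characteristic, so one can solve algebraically for $\partial_{\vec n}(\phi+\vartheta)$ and $\partial_{\vec n}(\Phi\cdot\vec n)$ in terms of $\partial_t U$ and tangential derivatives, then iterate. The vorticity and entropy components, which are advected by the characteristic field $\mathfrak{u}\cdot\vec n=0$ at the boundary, require separate treatment via transport along the boundary; here one exploits that $\mathfrak{u}\cdot\vec n$ vanishes to all tangential orders on $\partial B_1$ so the transport equation for these modes is tangential. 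Compatibility conditions at $t=0$ on $\partial B_1$ to order $s_0-1$ must be imposed on $(\phi_0,\Phi_0,\vartheta_0)$ to ensure the iterates are $H^{s_0}$ on $[0,\tau]\times B_1$; these compatibility conditions are automatically satisfied for the class of perturbations \eqref{1.9-1} once $\delta$ is small.

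Finally, the uniform-in-$\delta$ energy estimate. Because the perturbation has been scaled out, the $n$-th energy $E_n(t)=\sum_{|\alpha|\le n}\|Z^\alpha U(t)\|_{L^2}^2$ satisfies an inequality of the form $\frac{d}{dt}E_{s_0}(t)\le C\delta\,(1+E_{s_0}(t))^{s_0/2}\,E_{s_0}(t)$, because every commutator involves at least one factor of $\delta U$ coming from $\partial A_j(\delta U^{(n)})$. A standard Gr\"onwall-type continuation argument then yields a solution on $[0,\tau^\delta]$ with $\tau^\delta\sim \delta^{-1}$ and a bound $E_{s_0}(t)\le C_0$ independent of $\delta$, determined only by $\|(\phi_0,\Phi_0,\vartheta_0)\|_{H^{s_0}}$. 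Positivity of $\rho^\delta=1+\delta\phi^\delta$ and $T^\delta=1+\delta\vartheta^\delta$ on $[0,\tau^\delta]$ is immediate from the $L^\infty\hookleftarrow H^{s_0}$ embedding and the smallness of $\delta$. Convergence of the iteration is proved by the same energy method at one regularity level lower, giving $U^{(n)}\to U$ in $C([0,\tau^\delta];H^{s_0-1})$; the equation then provides $\partial_t U\in C([0,\tau^\delta];H^{s_0-1})$. The main technical obstacle throughout is the characteristic boundary, which forces the conormal/normal splitting described above.
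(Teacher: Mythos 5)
The paper does not actually prove Lemma \ref{lem3.1}: it is quoted directly from Schochet's work (with Chen's theory of symmetric hyperbolic systems with characteristic boundary invoked for the boundary regularity, cf.\ the remark and Section 3.1), and your sketch — symmetrization in the scaled variables $\rho=1+\delta\phi$, $\mathfrak{u}=\delta\Phi$, $T=1+\delta\vartheta$, conormal/tangential energy estimates for the characteristic slip boundary, recovery of normal derivatives from the non-characteristic acoustic components, and a Gr\"onwall argument giving a $\delta$-independent bound on a time interval of length $\sim\delta^{-1}$ — is essentially the same route as those references. The one inaccuracy worth flagging is your claim that the compatibility conditions at $\partial B_1$ are ``automatically satisfied \dots once $\delta$ is small'': they are structural constraints on $(\phi_0,\Phi_0,\vartheta_0)$ (starting with $\Phi_0\cdot\vec n=0$ and its higher-order analogues obtained from the equations) that must be assumed, exactly as in Schochet, and smallness of $\delta$ does not produce them.
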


We refer \cite{Schochet} for the local existence of the IBVP of compressible Euler equation \eqref{1.9}-\eqref{1.9-1}, see also \cite{Chen} and the references cited therein. 
\begin{remark}
	By the analysis in Section 3.1, we know that the following holds for 2D-disk problem:
	 	\begin{align}
	 	\|(\rho^\d-1,\mathfrak{u}^\d,T^\d-1)\|_{C([0,\tau^\d];H^{s_0}(B_1))\cap C^1([0,\tau^\d];H^{s_0-1}(B_1))}\leq C_0.
	 \end{align}
\end{remark}
	\subsection{Reformulation of internal solutions} We introduce the linear hyperbolic systems for the macroscopic variables $(\rho_{k+1},u_{k+1},\theta_{k+1})$ of internal solutions $f_{k+1}$.
	\begin{Lemma}[\cite{GHW-2021-ARMA}]\label{lem2.2}
	Let $(\rho,\mathfrak{u},T)(t)$ be some smooth solution of compressible Euler equation \eqref{1.9}--\eqref{1.9-1}. For each given non-negative integer $k$, assume $F_k$ is found. Then the macroscopic part $(\rho_{k+1},u_{k+1},\theta_{k+1})$ satisfy the following: 
	\begin{align}\label{2.1}
			\begin{cases}
			\partial_t\rho_{k+1}+\text{\rm div}(\rho u_{k+1}+\rho_{k+1}\mathfrak{u})=0,\\
				\dis 	\rho\{\partial_tu_{k+1}+(u_{k+1}\cdot\nabla_x)\mathfrak{u}+(\mathfrak{u}\cdot\nabla_x)u_{k+1}\}-\frac{\rho_{k+1}}{\rho}\nabla(\rho T)+\nabla_x\left(\frac{\rho\theta_{k+1}+2\rho_{k+1}T}{2}\right)=h_k,\\
				\dis	\rho\{\partial_t\theta_{k+1}+(\theta_{k+1}\text{\rm div}u+2T\text{\rm div}u_{k+1})+\mathfrak{u}\cdot\nabla_x\theta_{k+1}+2u_{k+1}\cdot\nabla_xT\}=g_k,
			\end{cases}
		\end{align}
		where
		\begin{align*}
			h_{k,i}&=-\sum\limits_{j=1}^2\partial_{x_j}\left(T\int_{\mathbb{R}^2}\left[\frac{(v_i-\mathfrak{u}_i)(v_j-\mathfrak{u}_j)}{T}-\frac{1}{2T}|v-\mathfrak{u}|^2\delta_{ij}\right]F_{k+1}dv\right),\\
			g_k&=-\sum\limits_{j=1}^2\partial_{x_j}\left\{\int_{\R^2}(v_j-\mathfrak{u}_j)(|v-\mathfrak{u}|^2-4T)F_{k+1}dv+2\mathfrak{u}_i\int_{\R^2}[(v_i-\mathfrak{u}_i)(v_j-\mathfrak{u}_j)\right.\nonumber\\
			&\qquad\qquad\qquad\qquad-\left.\frac12|v-\mathfrak{u}|^2\delta_{ij}]F_{k+1}dv\right\}+2\mathfrak{u}\cdot h_k.
		\end{align*}
		We have used  the subscript $k$ for the source terms $h_{k}$ and $g_{k}$ in order to emphasize that the right hand side depends only on $F_i$'s for $0\leq i\leq k$.
	\end{Lemma}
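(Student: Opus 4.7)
The plan is to derive the linearized Euler system for $(\rho_{k+1},u_{k+1},\theta_{k+1})$ by taking moments of the $\v^{k+1}$-level equation in \eqref{ie} against the collision invariants $1, v, |v|^2$. Writing that equation as
\begin{equation*}
\{\partial_t+v\cdot\nabla_x\}F_{k+1}=Q(F_0,F_{k+3})+Q(F_{k+3},F_0)+\sum_{i+j=k+3,\,i,j\geq1}Q(F_i,F_j),
\end{equation*}
the right-hand side integrates to zero against $1,v,|v|^2$, since $Q(F,G)+Q(G,F)$ does (and the sum can be symmetrized in $i,j$). Hence we reduce to pure conservation identities
\begin{equation*}
\partial_t\!\int\psi_\alpha F_{k+1}\,dv+\nabla_x\cdot\!\int v\,\psi_\alpha F_{k+1}\,dv=0,\qquad \psi_\alpha\in\{1,v_1,v_2,|v|^2\}.
\end{equation*}

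The second step is to evaluate every moment by decomposing $F_{k+1}=\mathbf{P}F_{k+1}\sqrt\mu+\sqrt\mu(\mathbf{I}-\mathbf{P})f_{k+1}$ as in \eqref{1.10-0}. The macroscopic piece contributes moments explicitly in $(\rho_{k+1},u_{k+1},\theta_{k+1})$, because $\{\chi_0,\chi_1,\chi_2,\chi_3\}$ is dual to the moments $(\rho_k,\rho u_k,\rho\theta_k)$. The microscopic piece is orthogonal to $1$, $v-\mathfrak{u}$ and $|v-\mathfrak{u}|^2-2T$ (times $\sqrt\mu$), so it only shows up in the flux moments through the traceless stress tensor
$A_{ij}(v-\mathfrak{u})=(v_i-\mathfrak{u}_i)(v_j-\mathfrak{u}_j)-\tfrac12|v-\mathfrak{u}|^2\delta_{ij}$
(one must check this is in $\mathcal{N}^\perp$ by a direct Gaussian moment calculation in $2$D) and through the heat flux vector $B_i(v-\mathfrak{u})=(v_i-\mathfrak{u}_i)(|v-\mathfrak{u}|^2-4T)$. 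This yields
\begin{align*}
\int F_{k+1}\,dv&=\rho_{k+1},\qquad \int v_iF_{k+1}\,dv=\rho_{k+1}\mathfrak{u}_i+\rho u_{k+1,i},\\
\int v_iv_jF_{k+1}\,dv&=\rho_{k+1}\mathfrak{u}_i\mathfrak{u}_j+\rho(\mathfrak{u}_iu_{k+1,j}+\mathfrak{u}_ju_{k+1,i})+\tfrac12(2\rho_{k+1}T+\rho\theta_{k+1})\delta_{ij}+\int A_{ij}F_{k+1}\,dv,
\end{align*}
and similarly for $\int v_i|v|^2F_{k+1}\,dv$, where the micro contribution is captured by $\int B_i F_{k+1}\,dv$ plus $2\mathfrak{u}_j\int A_{ij}F_{k+1}\,dv$ (from the expansion $|v|^2=|v-\mathfrak{u}|^2+2\mathfrak{u}\cdot(v-\mathfrak{u})+|\mathfrak{u}|^2$).

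The third step is to substitute these moments into the three conservation identities and use the Euler system \eqref{1.9} for the background $(\rho,\mathfrak{u},T)$ to cancel background derivatives and pull out a clean linearized Euler operator acting on $(\rho_{k+1},u_{k+1},\theta_{k+1})$. Concretely, expanding $\partial_t(\rho_{k+1}\mathfrak{u}+\rho u_{k+1})$ by the product rule and substituting $\partial_t(\rho\mathfrak{u})=-\mathrm{div}(\rho\mathfrak{u}\otimes\mathfrak{u})-\nabla(\rho T)$ allows one to absorb all background time-derivative terms, collapsing the flux contributions $\mathfrak{u}_i\rho u_{k+1,j}+\mathfrak{u}_j\rho u_{k+1,i}+\mathfrak{u}_i\mathfrak{u}_j\rho_{k+1}$ into the standard transport structure and leaving the pressure-like term $\nabla_x\!\left(\tfrac{\rho\theta_{k+1}+2\rho_{k+1}T}{2}\right)-\tfrac{\rho_{k+1}}{\rho}\nabla(\rho T)$ appearing in $\eqref{2.1}_2$. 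An entirely parallel manipulation on the energy moment, using $\partial_t(\rho|\mathfrak{u}|^2+2\rho T)+\ldots=0$, produces $\eqref{2.1}_3$; the factor $2\mathfrak{u}\cdot h_k$ in $g_k$ arises precisely from the $2\mathfrak{u}_j\int A_{ij}F_{k+1}$ piece created when $|v|^2$ is expanded around $\mathfrak{u}$.

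Finally, one observes that because $A_{ij}$ and $B_i$ both lie in $\mathcal{N}^\perp$, the moments $\int A_{ij}F_{k+1}\,dv$ and $\int B_iF_{k+1}\,dv$ equal the same moments of $(\mathbf{I}-\mathbf{P})F_{k+1}$, which by \eqref{1.10} is already determined by $F_0,\ldots,F_k$. This justifies writing the sources as $h_k,g_k$ depending only on $F_i$ with $i\leq k$. The main obstacle is purely bookkeeping: one must juggle the expansion $|v|^2=|v-\mathfrak{u}|^2+2\mathfrak{u}\cdot(v-\mathfrak{u})+|\mathfrak{u}|^2$ consistently so that the macro part of $F_{k+1}$ reassembles into a clean linearized Euler operator while every residual piece lands in $h_k$ or $g_k$; the Gaussian moment identities in $2$D (in particular $\int|v-\mathfrak{u}|^4\mu\,dv=8\rho T^2$) must be computed carefully in order to obtain the exact coefficients $\tfrac12(\rho\theta_{k+1}+2\rho_{k+1}T)$ and the temperature equation of \eqref{2.1}.
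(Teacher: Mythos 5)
The paper itself gives no proof of this lemma; it explicitly defers to \cite{GHW-2021-ARMA} for the ``comprehensive derivation'' and omits the calculation. Your moment-hierarchy proposal---pairing the $\v^{k+1}$-level equation of \eqref{ie} with the collision invariants $1,v,|v|^2$, splitting $F_{k+1}$ into its $\FP$ and $\FI-\FP$ parts via \eqref{1.10-0}, and using the background Euler system \eqref{1.9} to collapse the flux terms into the linearized operator---is precisely the standard derivation the cited reference carries out, and the facts you rely on (orthogonality of the traceless stress $A_{ij}\sqrt{\mu}$ and heat flux $B_i\sqrt{\mu}$ to $\mathcal{N}$, the 2D fourth moment $\int|v-\mathfrak{u}|^4\mu\,dv=8\rho T^2$, and that $(\FI-\FP)F_{k+1}$ is determined by $F_0,\dots,F_k$ through \eqref{1.10}) are all correct.
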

 The detailed calculations leading to \eqref{2.1} are omitted here for brevity; we refer readers to \cite{GHW-2021-ARMA} for a comprehensive derivation. The microscopic part $(\FI-\FP)f_{k+1}$ can be determined by \eqref{1.10}.
 
 \begin{remark}\label{rmk-ie}
 	To solve \eqref{ie}, the problem equivalently reduces to solving the linear hyperbolic system \eqref{2.1}. As an initial-boundary value problem, proper boundary condition must be specified for \eqref{2.1}. However, the boundary data cannot be arbitrarily assigned: to ensure that each component $F_k+\bar{F}_k+\hat{F}_k$ satisfies the specular reflection boundary condition and guarantees solvability of the Knudsen layer, careful constraints are required.
 	Determining the boundary condition constitutes a highly technical procedure, the details of which are systematically addressed in Section 2.4. 
 \end{remark}
\begin{remark}
	With suitable boundary condition, the existence of smooth solution of hyperbolic equations \eqref{2.1} can be found in Section 3.1.
\end{remark}
  
	\subsection{Reformulation of viscous boundary layers}
	By $\eqref{vs}_1$, we have $\bar{f}_1 \in \mathcal{N}_0$, i.e.
	\begin{equation}
		\bar{f}_1 = \mathbf{P}_0\bar{f}_1 = \left\{\frac{\bar{\rho}_1}{\rho^0} + \bar{u}_1 \cdot \frac{\bar{v}-\bar{\mathfrak{u}}^0}{T^0} + \frac{\bar{{\theta}}_1}{4T^0}(\frac{|\bar{v}-\bar{\mathfrak{u}}^0|^2}{T^0} -2) \right\}\sqrt{\mu_0}.
	\end{equation}
	The far-field assumption \eqref{vs-1} becomes 
	\begin{equation}\label{2.3}
		\lim\limits_{y\rightarrow \infty} \bar{f}_k(t,y,\phi,\bar{v}) = 0.
	\end{equation} 
	Multiplying $\eqref{vs}_2$ by $1,(\bar{v}\cdot \vec{n})$  and integrating on $v$, one has 
\begin{align*}
	&\partial_y \langle\bar{v}\cdot \vec{n},\bar{F}_1\rangle  = \partial_y(\rho^0\bar{u}_1\cdot \vec{n}) = \rho^0\partial_y(\bar{u}_1\cdot \vec{n}) = 0,\\
	&\partial_y \langle|\bar{v}\cdot \vec{n}|^2,\bar{F}_1\rangle  =\f12 \partial_y(\bar{\rho}_1T^0 + 2\rho^0\bar{\theta}_1) = \partial_y \bar{p}_1 = 0,
\end{align*}
which, together with \eqref{2.3}, yields that
\begin{equation}\label{2.4}
	\bar{u}_1\cdot \vec{n}\equiv 0,\quad \bar{p}_1(t,y,\phi,\bar{v})\equiv 0,\quad \forall (t,y,\phi,\bar{v})\in[0,\tau^\d) \times [0,\infty)\times[0,2\pi) \times \mathbb{R}^2.
\end{equation}

We now formulate the linear parabolic system for $(\bar{u}_k\cdot \vec{n},\bar{\theta}_k)$ and the equations for $(\bar{u}_{k+1}\cdot \vec{n},\bar{p}_{k+1})$.
\begin{Lemma}[\cite{GHW-2021-ARMA}]\label{lem2.3}
	The parabolic system on $(\bar{u}_k\cdot\vec{\tau},\bar{\theta}_k)$ is as following:
	\begin{align}\label{bu-0}
		\begin{cases}
			\rho^0\partial_t(\bar{u}_k\cdot\vec{\tau})-\kappa_1(T^0)\partial_{yy}(\bar{u}_k\cdot\vec{\tau})\\
			\quad -\mathcal{H}^\v(y)\rho^0u_\tau^0\partial_\phi(\bar{u}_k\cdot\vec{\tau})+\rho^0(-y\partial_r\bar{\mathfrak{u}}^0\cdot \vec{n}-u^0_1\cdot \vec{n})\partial_y(\bar{u}_k\cdot\vec{\tau})\\
			\quad +(\bar{u}_k\cdot\vec{\tau})\cdot\left\{\left(1-\mathcal{H}^\v(y)\right)u_\tau^0\partial_\phi\rho^0+\left(1-2\mathcal{H}^\v(y)\right)\rho^0\partial_\phi u_\tau^0 \right\}\\
			\quad -\f{\rho^0}{2T^0}\bar{\theta}_k\cdot\left\{\left(1-\mathcal{H}^\v(y)\right) u_\tau^0\partial_\phi  u_\tau^0+\partial_\phi T^0+\f{T^0}{\rho^0}\partial_\phi\rho^0\right\}=\bar{h}_{k-1},\\
			\\
			\rho^0\partial_t\bar{\theta}_k-\kappa_2(T^0)\partial_{yy}\bar{\theta}_k+\rho^0(-y\partial_r\bar{\mathfrak{u}}^0\cdot \vec{n}-u_1^0\cdot \vec{n})\partial_y\bar{\theta}_k-\mathcal{H}^\v(y)\rho^0u_\tau^0\partial_\phi\bar{\theta}_k\\
			\quad +\bar{\theta}_k\left\{\left(1-\mathcal{H}^\v(y)\right)\partial_\phi(\rho^0u_\tau^0)-\f{\rho^0}{T^0}\left(1-\mathcal{H}^\v(y)\right)u_\tau^0\partial_\phi T^0-\rho^0\partial_\phi u_\tau^0-\rho^0\partial_r\bar{\mathfrak{u}}^0\cdot \vec{n}\right\}\\
			\quad +\rho^0(\bar{u}_k\cdot \vec{\tau})\left\{\left(1-2\mathcal{H}^\v(y)\right)\partial_\phi T^0+\f{T^0}{\rho^0}\partial_\phi\rho^0+\left(1-\mathcal{H}^\v(y)\right)u_\tau^0\partial_\phi u_\tau^0\right\}=\bar{g}_{k-1},
		\end{cases}
	\end{align}
	where
	\begin{align*}
		\bar{h}_{k-1}:=	&-\f{\bar{p}_k}{T^0}\cdot\left\{\left(1-\mathcal{H}^\v(y)\right) u_\tau^0\partial_\phi  u_\tau^0+\partial_\phi T^0+\f{T^0}{\rho^0}\partial_\phi\rho^0\right\}\nonumber\\
		&+\mathcal{H}^\v(y)\partial_\phi\bar{p}_k+\left(\mathcal{H}^\v(y)u_\tau^0+\partial_r\bar{\mathfrak{u}}^0\cdot\vec{\tau}\right)\rho^0(\bar{u}_k\cdot \vec{n})\nonumber\\
		&-\rho^0(-y\partial_r \bar{\mathfrak{u}}^0\cdot \vec{\tau}-u^0_1\cdot\vec{\tau}+\bar{u}_1\cdot\vec{\tau})\partial_y(\bar{u}_k\cdot \vec{n})+\bar{W}_{k-1}-T^0\partial_y\langle \mathcal{A}_{12}^0,\bar{J}_{k-1}\rangle,\\
		\bar{g}_{k-1}:=&\, \partial_t\bar{p}_k-\mathcal{H}^\v(y)u_\tau^0\partial_\phi\bar{p}_k-\f{\bar{p}_k}{T^0}\left\{\left(2-2\mathcal{H}^\v(y)\right)u_\tau^0\partial_\phi T^0+2T^0\text{\rm div}\bar{\mathfrak{u}}^0\right\}\nonumber\\
		&-\rho^0\left\{\mathcal{H}^\v(y)|u_\tau^0|^2-2\partial_rT^0\right\}(\bar{u}_k\cdot \vec{n})-\rho^0(-2y\partial_rT^0+\theta_1^0+\bar{\theta}_1)\partial_y(\bar{u}_k\cdot \vec{n})\nonumber\\
		&+\f{1}{2}\bar{H}_{k-1}-\f{1}{2}(2T^0)^{\f32}\partial_y\langle \mathcal{B}_1^0,\bar{J}_{k-1}\rangle,
	\end{align*}
	and $\bar{W}_{k-1},\bar{H}_{k-1}$, $\bar{J}_{k-1}$ and $\kappa_1(T^0),\kappa_2(T^0)$ are defined by \eqref{7.4}, \eqref{7.12}, \eqref{7.55}, \eqref{7.88} and \eqref{6.1-1}. 
	
	One also has the equations for $\bar{u}_{k+1}\cdot \vec{n}$ and $\bar{p}_{k+1}$ as
	\begin{align}
		&\partial_y(\bar{u}_{k+1}\cdot \vec{n})=-\frac{1}{\rho^0}\partial_t\bar{\rho}_k+\mathcal{H}^\v(y)\frac{1}{\rho^0}\partial_\phi(\rho^0\bar{u}_k\cdot\vec{\tau}+\bar{\rho}_ku_\tau^0)+\mathcal{H}^\v(y)\bar{u}_k\cdot \vec{n}, \label{2.8}\\
		\smallskip
		&\partial_y\bar{p}_{k+1}=-\partial_t(\rho^0\bar{u}_k\cdot \vec{n})+\mathcal{H}^\v(y)\partial_\phi\left\{\rho^0u_\tau^0(\bar{u}_k\cdot \vec{n})+T^0\langle\mathcal{A}_{12}^0,(\FI-\FP_0)\bar{f}_k\rangle \right\}\nonumber\\
		&\qquad\qquad-\mathcal{H}^\v(y)\left\{(2\rho^0u_\tau^0\bar{u}_k\cdot\vec{\tau}+\bar{\rho}_k|u_\tau^0|^2)+2T^0\langle\mathcal{A}_{22}^0,(\FI-\FP_0)\bar{f}_k\rangle\right\}\nonumber\\
		&\qquad\qquad-T^0\partial_y\langle \mathcal{A}_{11}^0,(\FI-\FP_0)\bar{f}_{k+1}\rangle. \label{2.9}
	\end{align}
	The boundary condition of \eqref{2.8} and \eqref{2.9} is given
	\begin{align}\label{2.9-1}
		\lim_{y\to\infty}\bar{u}_{k+1}\cdot \vec{n}=0,\quad \lim_{y\to\infty}\bar{p}_{k+1}\cdot \vec{n}=0.
	\end{align}
\end{Lemma}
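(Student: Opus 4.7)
My plan is to derive the four macroscopic equations by taking moments of the approximate viscous layer equation \eqref{1.16} against the five collision invariants $1,\, \bar{v}\cdot\vec{n},\, \bar{v}\cdot\vec{\tau},\, \tfrac12|\bar{v}|^2$, which span the null space of $\mathbf{L}_0$. Since $Q$ is self-adjoint with respect to these invariants, every collision term $Q(\mu_0,\bar{\mathfrak{F}}_{k+2})+Q(\bar{\mathfrak{F}}_{k+2},\mu_0)$ drops out after integration in $\bar{v}$; the $\bar{\mathfrak{F}}_{k+2}$ at the top order never appears in the resulting moment equations, so at order $\v^k$ we obtain a closed system for the macroscopic part of $\bar{\mathfrak{F}}_{k+1}$ in terms of lower-order objects. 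I will repeatedly use $\bar{\mathfrak{u}}^0\cdot\vec{n}=0$, which follows from the slip condition \eqref{1.9-0}, and the identities \eqref{2.4} obtained at the first order.

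The moments against $1$ and $\bar{v}\cdot\vec{n}$ will deliver \eqref{2.8} and \eqref{2.9} directly. For \eqref{2.8} I integrate $\eqref{vs}_k$ against $1$: the leading term becomes $\partial_y\langle \bar{v}\cdot\vec{n},\bar{\mathfrak{F}}_{k+1}\rangle=\rho^0\partial_y(\bar{u}_{k+1}\cdot\vec{n})$ because $\bar{\mathfrak{u}}^0\cdot\vec{n}=0$, while the tangential derivative $\mathcal{H}^\v(y)(\bar{v}\cdot\vec{\tau})\partial_\phi\bar{\mathfrak{F}}_k$ contributes the $\partial_\phi(\rho^0\bar{u}_k\cdot\vec{\tau}+\bar{\rho}_k u_\tau^0)$ and $\bar{u}_k\cdot\vec{n}$ terms after writing $\vec{\tau}$ in the polar frame. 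For \eqref{2.9} I integrate against $\bar{v}\cdot\vec{n}$; the key is that $\langle(\bar{v}\cdot\vec{n})^2,\sqrt{\mu_0}\rangle=\rho^0T^0$ produces the pressure $\bar{p}_{k+1}$, and the non-hydrodynamic part of $\langle (\bar{v}\cdot\vec{n})^2,\bar{\mathfrak{F}}_{k+1}\rangle$ is precisely $T^0\langle \mathcal{A}_{11}^0,(\mathbf{I}-\mathbf{P}_0)\bar{f}_{k+1}\rangle$ via the Burnett function identity $\mathcal{A}_{ij}=\big(\tfrac{(\bar{v}-\bar{\mathfrak{u}}^0)_i(\bar{v}-\bar{\mathfrak{u}}^0)_j}{T^0}-\tfrac12|\bar{v}-\bar{\mathfrak{u}}^0|^2\delta_{ij}\big)\sqrt{\mu_0}$.

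For the parabolic system \eqref{bu-0} I take moments against $\bar{v}\cdot\vec{\tau}$ and against $\tfrac12|\bar{v}|^2$ (or equivalently the energy invariant). The transport term $\partial_y\langle(\bar{v}\cdot\vec{n})(\bar{v}\cdot\vec{\tau}),\bar{\mathfrak{F}}_{k+1}\rangle$ and its energy analogue decompose into a hydrodynamic piece, which produces the convective coefficients involving $u_\tau^0$, $\partial_r\bar{\mathfrak{u}}^0\cdot\vec{n}$, and a dissipative piece $T^0\partial_y\langle \mathcal{A}_{12}^0,(\mathbf{I}-\mathbf{P}_0)\bar{f}_{k+1}\rangle$ and $(2T^0)^{3/2}\partial_y\langle\mathcal{B}_1^0,(\mathbf{I}-\mathbf{P}_0)\bar{f}_{k+1}\rangle$. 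Now substituting the explicit formula \eqref{1.12} for $(\mathbf{I}-\mathbf{P}_0)\bar{f}_{k+1}$ and isolating the contribution of $-(\bar{v}\cdot\vec{n})\partial_y\bar{\mathfrak{F}}_k/\sqrt{\mu_0}$, the identity
\begin{equation*}
\mathbf{L}_0^{-1}\!\Big\{\tfrac{(\bar{v}\cdot\vec{n})(\bar{v}-\bar{\mathfrak{u}}^0)_\tau}{\sqrt{\mu_0}}\sqrt{\mu_0}\Big\}\ \text{and}\ \mathbf{L}_0^{-1}\!\Big\{\tfrac{(\bar{v}\cdot\vec{n})(|\bar{v}-\bar{\mathfrak{u}}^0|^2-4T^0)}{\sqrt{\mu_0}}\sqrt{\mu_0}\Big\}
\end{equation*}
gives rise to the viscosity and heat conductivity coefficients $\kappa_1(T^0),\,\kappa_2(T^0)$ paired with $\partial_{yy}(\bar{u}_k\cdot\vec{\tau})$ and $\partial_{yy}\bar{\theta}_k$ respectively. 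All remaining contributions are collected into the source terms $\bar{h}_{k-1},\bar{g}_{k-1}$; in particular $\bar{W}_{k-1},\bar{H}_{k-1},\bar{J}_{k-1}$ absorb the Taylor-expanded internal pieces $(-y)^l\partial_r^l\mu_0/l!$, the nonlinear terms $Q(\bar{\mathfrak{F}}_i,\bar{\mathfrak{F}}_j)$ at $i+j\le k$ that survive in moments because moments of $Q$ vanish only when paired with collision invariants of the full operator, and the $\partial_t\bar{\mathfrak{F}}_{k-1}$ term.

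The main obstacle is bookkeeping rather than a single hard estimate: the geometric correction $\mathcal{H}^\v(y)$ in front of $\partial_\phi$ mixes with $\partial_\phi(\rho^0,T^0,u_\tau^0)$ arising from the $\bar{v}$-moments of $\sqrt{\mu_0}$, and one must keep careful track of which terms absorb the cutoff $\mathcal{H}^\v$ and which carry the bare factor $1$ from direct $\bar{v}$-integration of $\sqrt{\mu_0}$-coefficients. Matching these combinations is exactly what produces the asymmetric coefficients $(1-\mathcal{H}^\v(y))$ and $(1-2\mathcal{H}^\v(y))$ in \eqref{bu-0}. The boundary condition \eqref{2.9-1} follows directly from \eqref{2.3} by integrating the ODEs \eqref{2.8}--\eqref{2.9} in $y$ from $\infty$ and using the decay of every source term built from $\bar{\mathfrak{F}}_i$. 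Once the computation is laid out this way, all the routine algebra is parallel to \cite{GHW-2021-ARMA}, with the additional $\partial_\phi$ structure and the cutoff $\mathcal{H}^\v(y)$ being the only genuinely new feature.
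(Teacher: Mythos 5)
Your outline follows essentially the same route as the paper's Appendix 6.1: take moments of \eqref{1.16} against $1,\,\bar{v}\cdot\vec{n},\,\bar{v}\cdot\vec{\tau},\,|\bar{v}|^2$; use the Burnett identities \eqref{7-1} to peel off the pressure/microscopic parts of the fluxes $\partial_y\langle(\bar{v}\cdot\vec{n})\psi,\bar{\mathfrak{F}}_{k+1}\rangle$; feed in the formula \eqref{1.12} for $(\FI-\FP_0)\bar{f}_{k+1}$ to produce the $\kappa_1(T^0)\partial_{yy}$ and $\kappa_2(T^0)\partial_{yy}$ dissipation terms; and substitute the Euler equations in polar coordinates \eqref{7.11} to eliminate $\partial_t(\rho^0,u_\tau^0,T^0)$ — that last substitution, which you do not mention explicitly but which is unavoidable, is what actually generates the asymmetric $(1-\mathcal{H}^\v(y))$ and $(1-2\mathcal{H}^\v(y))$ coefficients, not merely ``tracking which terms absorb the cutoff.'' The paper also takes the specific linear combinations $(\bar{v}\cdot\vec{\tau}-u_\tau^0)$ and $|\bar{v}-\bar{\mathfrak{u}}^0|^2-4T^0$ (i.e., combines the moments after centering at $\bar{\mathfrak{u}}^0$), which is the natural choice matching the Burnett functions $\mathcal{A}^0_{12}$ and $\mathcal{B}^0_1$; this is implicit in your ``or equivalently the energy invariant'' but worth making precise because it is what isolates $(\bar{u}_k\cdot\vec{\tau},\bar{\theta}_k)$ cleanly.

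There is, however, a genuine conceptual error in your description of the collision terms. You claim that the nonlinear terms $Q(\bar{\mathfrak{F}}_i,\bar{\mathfrak{F}}_j)$ ``survive in moments because moments of $Q$ vanish only when paired with collision invariants of the full operator.'' This is false: for the hard-sphere kernel, the symmetrized expression $\int_{\R^2}\big[Q(F,G)+Q(G,F)\big]\psi\,d\bar{v}=0$ for \emph{every} $F,G$ and every $\psi\in\{1,\bar{v}_1,\bar{v}_2,|\bar{v}|^2\}$, by the standard symmetrization of the collision integral together with conservation of momentum and energy in a collision; this has nothing to do with which Maxwellian one linearizes around. Since every $Q$-term in \eqref{1.16} occurs in such a symmetrized pair (including the $\sum_{i+j}Q(\bar{\mathfrak{F}}_j,\bar{\mathfrak{F}}_i)$ and the Taylor terms $Q(\partial_r^l\mu_0,\bar{\mathfrak{F}}_j)+Q(\bar{\mathfrak{F}}_j,\partial_r^l\mu_0)$), \emph{all} collision terms drop out of the moment equations. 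The route by which $Q(\bar{\mathfrak{F}}_i,\bar{\mathfrak{F}}_j)$ and the Taylor pieces actually enter the macroscopic system is different: they sit inside the $\mathbf{L}_0^{-1}\{\cdot\}$ in the expression \eqref{1.12} for $(\FI-\FP_0)\bar{f}_{k+1}$, which then appears in the flux terms $\partial_y\langle\mathcal{A}_{12}^0,(\FI-\FP_0)\bar{f}_{k+1}\rangle$ and $\partial_y\langle\mathcal{B}_1^0,(\FI-\FP_0)\bar{f}_{k+1}\rangle$, and so they land in $\bar{J}_{k-1}$ as in \eqref{7.55}. If you carried out the derivation believing the $Q$-moments survived you would obtain incorrect source terms $\bar{h}_{k-1},\bar{g}_{k-1}$, so this is not harmless sloppiness; recognizing that the microscopic part of $\bar{f}_{k+1}$ is the only channel for the lower-order $Q$-terms is the key structural point that makes the bookkeeping work.
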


For brevity, the explicit calculations leading to \eqref {bu-0} are omitted here. As the derivation involves geometric terms, the full details are provided in Appendix 6.1 to ensure completeness.
 \begin{remark}
	To solve \eqref{1.16}, the problem equivalently reduces to solving the linear parabolic system \eqref{bu-0}--\eqref{2.9-1}. As an initial-boundary value problem, proper boundary condition must be specified for \eqref{bu-0}. As mentioned in Remark \ref{rmk-ie}, the boundary data cannot be arbitrarily assigned. One can see the details on obtaining boundary condition in Section 2.4. 
\end{remark}
\begin{remark}
	With suitable boundary condition, the existence of smooth solution of parabolic equations \eqref{bu-0} can be found in Section 3.2. Subsequently, with prior knowledge of $(\bar{u}_k\cdot \vec{n})$ and $\bar{p}_k$, the residual component $(\mathbf{I-P}_0)\bar{f}_{k+1}$ can be uniquely determined by \eqref{1.12}.
\end{remark}
 
	\subsection{Reformulation of Knudsen boundary layers}
    To solve \eqref{1.18-0}, we decompose $\hat{S}_k = \hat{S}_{k,1}+\hat{S}_{k,2}$ with
	\begin{align}\label{2.11}
		\hat{S}_{k,1}=-\mathbf{P}_0\left\{\f{1}{\sqrt{\mu_0}}\left(\partial_t\hat{F}_{k-2}-\frac{1}{1-\v^2\eta}\mv_2\frac{\partial}{\partial \phi}\hat{F}_{k-2}\right)\right\},
	\end{align}
	and
	\begin{align}
		\hat{S}_{k,2}&=\sum\limits_{\substack{i+j=k\\i,j\geq1}}\frac{1}{\sqrt{\mu_0}}[Q(F^0_i+\bar{\mathfrak{F}}_i^0,\sqrt{\mu_0}\hat{f}_j)+Q(\sqrt{\mu_0}\hat{f}_j,F^0_i+\bar{\mathfrak{F}}_i^0)+Q(\sqrt{\mu_0}\hat{f}_i,\sqrt{\mu_0}\hat{f}_j)]\nonumber\\
		&\quad+\sum\limits_{\substack{j+2l=k\\b\geq l\geq 1,j\geq 1}}\frac{(-\eta)^l}{l!}\cdot\frac{1}{\sqrt{\mu_0}}[Q(\partial_r^l\mu_0,\sqrt{\mu_0}\hat{f}_j)+Q(\sqrt{\mu_0}\hat{f}_j,\partial_r^l\mu_0)]\nonumber\\
		&\quad+\sum\limits_{\substack{i+j+2l=k\\i,j\geq1,b\geq l\geq 1}}\frac{(-\eta)^l}{l!}\cdot\frac{1}{\sqrt{\mu_0}}[Q(\partial_r^l\bar{\mathfrak{F}}^0_i,\sqrt{\mu_0}\hat{f}_j)+Q(\sqrt{\mu_0}\hat{f}_j,\partial_r^l\bar{\mathfrak{F}}^0_i)]\nonumber\\
		&\quad+\sum\limits_{\substack{i+j+l=k\\i,j\geq1,b\geq l\geq 1}}\frac{\eta^l}{l!}\frac{1}{\sqrt{\mu_0}}[Q(\partial_y^l\bar{\mathfrak{F}}_i^0,\sqrt{\mu_0}\hat{f}_j)+Q(\sqrt{\mu_0}\hat{f}_j,\partial_y^l\bar{\mathfrak{F}}_i^0)]\nonumber\\
		&\quad-(\mathbf{I}-\FP_0)\left\{\f{1}{\sqrt{\mu_0}}\left(\partial_t\hat{F}_{k-2}-\frac{1}{1-\v^2\eta}\mv_2\frac{\partial}{\partial \phi}\hat{F}_{k-2}\right)\right\}.
	\end{align} 
	For later use, we define $F_k=0, \bar{\mathfrak{F}}_k=0$ and  $\hat{F}_k=0$ for $k<0.$ It is easy to know that $\hat{S}_{k,1}\in\mathcal{N}_0$, $\hat{S}_{k,2}\in\mathcal{N}_0^{\perp}$. As $\hat{F}_{-1}=\hat{F}_0=0$, it also holds that
	\begin{equation}
		\hat{S}_1=\hat{S}_{1,1}=\hat{S}_{1,2}=\hat{S}_{2,1}=0.
	\end{equation}

		Noting \eqref{2.11}, we can rewrite $\hat{S}_{k,1}$ as 
		\begin{align}\label{eq2.22}
			\hat{S}_{k,1}=\left\{\hat{a}_k+\f{\hat{b}_{k,1}}{T^0}\mv_1+\f{\hat{b}_{k,2}}{T^0}(\mv_2-u_\tau^0)+\f{\hat{c}_k}{T^0}(|v-\bar{\mathfrak{u}}^0|^2-2T^0)\right\}\sqrt{\mu_0}.
		\end{align}
		\begin{Lemma}\label{2.2}
			Assume 	there exists some positive constant $\sigma_1>0$ such that
			\begin{align*}
				\left|e^{\sigma_1\eta}(\hat{a}_k,\hat{b}_k,\hat{c}_k)(t,\phi,\eta)\right|<\infty,
			\end{align*}
			then there exists a function
			\begin{align}\label{eq2.23}
				\hat{f}_{k,1}=\left\{\f{\hat{A}_k}{T^0}\mv_1+\f{\hat{B}_k}{T^0}\mv_1 (\mv_2-u_\tau^0)+\f{\hat{C}_k}{T^0}\mv_1(|\mv-\bar{\mathfrak{u}}^0|^2-4T^0)+\hat{D}_k\right\}\sqrt{\mu_0},
			\end{align}
			such that
			\begin{align}\label{eq2.24}
				\mv_1\partial_\eta\hat{f}_{k,1}+G(\eta)\left(\mv_2^2\frac{\partial\hat{f}_{k,1}}{\partial \mv_1}-\mv_1 \mv_2\frac{\partial\hat{f}_{k,1}}{\partial \mv_2}-\f{ u_\tau^0}{2T^0}\mv_1 \mv_2 \hat{f}_{k,1}\right)-\hat{S}_{k,1}\in\mathcal{N}_0^\perp,
			\end{align}
		and 
		\begin{align*}
		 &\left|\mv_1\partial_\eta\hat{f}_{k,1}+G(\eta)(\mv_2^2\frac{\partial\hat{f}_{k,1}}{\partial \mv_1}-\mv_1 \mv_2\frac{\partial\hat{f}_{k,1}}{\partial \mv_2})-G(\eta)\f{ u_\tau^0}{2T^0}\mv_1 \mv_2 \hat{f}_{k,1}+\FL_0\hat{f}_{k,1}-\hat{S}_{k,1}\right|\nonumber\\
		 &\leq C\int_\eta^d|(\hat{a}_k,\hat{b}_k,\hat{c}_k)|(t,\phi,z)dz\cdot (1+|\mv|)^4\sqrt{\mu_0},|\hat{f}_{k,1}(t,\phi,\eta,\mv)|\nonumber\\ &\leq C\int_\eta^d|(\hat{a}_k,\hat{b}_k,\hat{c}_k)|dz\cdot (1+|\mv|)^3\sqrt{\mu_0},
		\end{align*}
		where $(\hat{A}_k,\hat{B}_k,\hat{C}_k,\hat{D}_k)(t,\phi,r)$ solves a ODEs and has explicit expression decided by $(\hat{a}_k,\hat{b}_{k},\hat{c}_k)$. See Appendix 6.2 for details.
		\end{Lemma}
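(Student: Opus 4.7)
The strategy is to treat \eqref{eq2.24} as a moment-matching problem. Since $\mathcal{N}_0$ is four-dimensional and $\hat{S}_{k,1}$ in \eqref{eq2.22} carries exactly four scalar data $(\hat{a}_k,\hat{b}_{k,1},\hat{b}_{k,2},\hat{c}_k)$, while the ansatz \eqref{eq2.23} depends on four scalar unknowns $(\hat{A}_k,\hat{B}_k,\hat{C}_k,\hat{D}_k)$, requiring the $\mathcal{N}_0$-projection of the left-hand side of \eqref{eq2.24} to equal the projection of $\hat{S}_{k,1}$ yields a closed $4\times 4$ first-order ODE system in $\eta$, which I would integrate backward from the outer cutoff with $(\hat{A}_k,\hat{B}_k,\hat{C}_k,\hat{D}_k)(d)=0$ so that $\hat{f}_{k,1}(d)=0$, consistent with the zero boundary at $\eta=d$ imposed in \eqref{K}.

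First, I compute $\mv_1\partial_\eta \hat{f}_{k,1}$ explicitly; since $T^0, u_\tau^0, \bar{\mathfrak{u}}^0$ are independent of $\eta$, this reduces to differentiating only the four scalar coefficients, producing four polynomial-in-$\mv$ terms multiplied by $\sqrt{\mu_0}$. Projecting each against the basis $\{\chi_0,\chi_1,\chi_2,\chi_3\}$ of $\mathcal{N}_0$ via Gaussian moments of $\mu_0$ (which now has mean $(0,u_\tau^0)$) yields, by direct computation, a lower-triangular system: the $\chi_0$-projection gives $\hat{A}_k'=\hat{a}_k$, the $\chi_1$-projection gives $T^0\hat{D}_k'=\hat{b}_{k,1}$, the $\chi_2$-projection gives $T^0\hat{B}_k'=\hat{b}_{k,2}$, and the $\chi_3$-projection relates $4T^0\hat{C}_k'$ to $\hat{c}_k$ and $\hat{A}_k'$. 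The specific choice of the prefactor $|\mv-\bar{\mathfrak{u}}^0|^2-4T^0$ in the $\hat{C}_k$ term (rather than $-2T^0$) is designed precisely so that its $\chi_0$-projection vanishes, which is what keeps the system triangular.

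Second, I add the geometric-correction contributions $G(\eta)(\mv_2^2\partial_{\mv_1}-\mv_1\mv_2\partial_{\mv_2})\hat{f}_{k,1}$ and the new term $-G(\eta)\f{u_\tau^0}{2T^0}\mv_1\mv_2\hat{f}_{k,1}$. Because $|G(\eta)|\le C\v^2$ uniformly on $\eta\in[0,d]$ with $d=\v^{-\mathfrak{a}}$ and $\mathfrak{a}<2/3$, these act as $O(\v^2)$ zeroth-order perturbations of the principal system; they introduce small algebraic couplings among the coefficients but do not disturb the triangular solvability. The full system takes the form $\mathbf{X}'+\v^2 M(t,\phi,\eta)\mathbf{X}=\mathbf{R}(t,\phi,\eta)$ with $\mathbf{X}=(\hat{A}_k,\hat{B}_k,\hat{C}_k,\hat{D}_k)^{\top}$ and $\mathbf{R}$ linear in $(\hat{a}_k,\hat{b}_{k,1},\hat{b}_{k,2},\hat{c}_k)$, which I integrate backward from $\eta=d$ by Picard iteration. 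Each coefficient then admits the integral bound $|\hat{X}_j(\eta)|\le C\int_\eta^d |(\hat{a}_k,\hat{b}_k,\hat{c}_k)|(z)\,dz$, and the pointwise estimate on $\hat{f}_{k,1}$ follows at once from the ansatz together with the polynomial factor $(1+|\mv|)^3$. For the residual in \eqref{eq2.24}, the construction forces its $\mathcal{N}_0$-projection to vanish, and what remains is $\{\mathbf{I}-\mathbf{P}_0\}[\mv_1\partial_\eta\hat{f}_{k,1}+G(\eta)(\cdots)]+\mathbf{L}_0\hat{f}_{k,1}$, which inherits the same integral control with at most one extra power of $|\mv|$ coming from $\mathbf{L}_0$, giving the $(1+|\mv|)^4\sqrt{\mu_0}$ bound.

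The main obstacle is the nonzero tangential velocity $u_\tau^0\neq 0$: because $\mu_0$ has a nonzero mean in the $\mv_2$ direction, the Gaussian moments mixing $\mv_1$ and $\mv_2-u_\tau^0$ no longer vanish in the usual odd/even sense, and the new term $G(\eta)\f{u_\tau^0}{2T^0}\mv_1\mv_2\hat{f}_{k,1}$ generates additional off-diagonal couplings among the four coefficients that are absent in the flat-boundary analysis of \cite{GHW-2021-ARMA}. Verifying that all such couplings remain $O(\v^2)$ and preserve the lower-triangular solvability of the ODE system is the central bookkeeping task, which is addressed in detail in Appendix 6.2 as indicated in the statement.
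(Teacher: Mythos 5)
Your proposal matches the paper's proof (Appendix 6.2): both project \eqref{eq2.24} against the four collision invariants $(1,\mv_1,\mv_2,|\mv|^2)\sqrt{\mu_0}$ to obtain a closed first-order ODE system for $(\hat{A}_k,\hat{B}_k,\hat{C}_k,\hat{D}_k)$, integrated backward from $\eta=d$ with zero data, so that the estimates follow from $\bigl|\hat{X}_j(\eta)\bigr|\le C\int_\eta^d|(\hat{a}_k,\hat{b}_k,\hat{c}_k)|\,dz$. The only cosmetic difference is that the paper solves the system exactly via integrating factors $e^{-W(\eta)},\,e^{|u_\tau^0|^2W(\eta)/T^0},\,e^{-2W(\eta)}$ (see \eqref{7.14}), which absorbs the $O(\v^2)$ geometric coupling into a bounded multiplicative factor, whereas you invoke Picard iteration; both work since $W(\eta)=O(\v^{2-\mathfrak{a}})$ is uniformly small on $[0,d]$.
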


		\begin{remark}\label{rmk2.7}
			Based on \eqref{2.11}, we observe that $\hat{S}_{k,1}$ is exclusively determined by $\hat{f}_{k-2}.$ This implies that $\hat{f}_{k,1}$ only depends on $\hat{f}_{k-2}$, which is known in our existence analysis for $\hat{f}_k$. We now investigate the equation for  $\hat{f}_{k,2}$:
			\begin{align}\label{2-24}
				&\mv_1\frac{\partial\hat{f}_{k,2}}{\partial\eta}+G(\eta)(\mv_2^2\frac{\partial\hat{f}_{k,2}}{\partial \mv_1}-\mv_1 \mv_2\frac{\partial\hat{f}_{k,2}}{\partial \mv_2})-\f{ u_\tau^0}{2T^0}G(\eta)\mv_1 \mv_2 \hat{f}_{k,2} +\mathbf{L}_0\hat{f}_{k,2}\\
				&=\hat{S}_{k,2}-\mathbf{L}_0\hat{f}_{k,1}-\left\{\mv_1\frac{\partial\hat{f}_{k,1}}{\partial \eta}+G(\eta)(\mv_2^2\frac{\partial\hat{f}_{k,1}}{\partial \mv_1}-\mv_1 \mv_2\frac{\partial\hat{f}_{k,1}}{\partial \mv_2})-\f{ u_\tau^0}{2T^0}G(\eta)\mv_1 \mv_2 \hat{f}_{k,2}-\hat{S}_{k,1}\right\}
				\in\mathcal{N}_0^\perp.\nonumber
			\end{align}
			With suitable boundary condition, one solves $\hat{f}_{k,2}$ by applying Theorem \ref{thm2.2}. Then
			\begin{equation}\label{2.17}
				\hat{f}_k=\hat{f}_{k,1}+\hat{f}_{k,2}
			\end{equation}
			is the solution to Knudsen layer problem \eqref{1.18-0}.
		\end{remark}
		\subsection{Boundary conditions}
	To construct the solutions for  interior expansion, viscous and Knudsen boundary layers, the remain problem is to determine suitable boundary conditions for well-posedness. As mentioned in Remark \ref{rmk-ie}, we require that each $F_k+\bar{F}_k+\hat{F}_k$ satisfies the specular reflection boundary condition, i.e.,
		\begin{equation}
			(\sqrt{\mu_0}f_k+\sqrt{\mu_0}\bar{f}_k+\sqrt{\mu_0}\hat{f}_k)|_{\gamma_-}=(\sqrt{\mu_0}f_k+\sqrt{\mu_0}\bar{f}_k+\sqrt{\mu_0}\hat{f}_k)(t,x,R_xv),
		\end{equation}
		which, together with \eqref{2.17}, yields 
		\begin{align}
			\hat{f}_{k,2}(t,0,\phi,\mv_1,\mv_2)|_{\mv_1>0}&=\hat{f}_{k,2}(t,0,\phi,-\mv_1,\mv_2)+f_k(t,1,\phi,R_xv)+\bar{f}_k(t,0,\phi,R_y\bar{v})\nonumber\\
			&+\hat{f}_{k,1}(t,0,\phi,-\mv_1,\mv_2)-f_k(t,1,\phi,v)-\bar{f}_k(t,0,\phi,\bar{v})-\hat{f}_{k,1}(t,0,\phi,\mv_1,\mv_2).\nonumber
		\end{align}
		Denote
		\begin{align}\label{2.25}
			\hat{g}_k(t,\phi,\mv_1,\mv_2)=
			\begin{cases}
				0,\quad \mv_1\textgreater0,\\
				f_k(t,1,\phi,v)+\bar{f}_k(t,0,\phi,\bar{v})-f_k(t,1,\phi,R_xv)-\bar{f}_k(t,0,\phi,R_y\bar{v})\\\
				+\hat{f}_{k,1}(t,0,\phi,\mv)-\hat{f}_{k,1}(t,0,\phi,-\mv_1,\mv_2),\quad \mv_1<0.
			\end{cases}
		\end{align}
		We derive the boundary condition for \eqref{2-24} as
		\begin{align}\label{2-18}
			\hat{f}_{k,2}(0,\mv)|_{\mv_1>0}=\hat{f}_{k,2}(0,R\mv)+\hat{g}_k(R\mv),\quad \hat{f}_{k,2}(d,\mv)|_{\mv_1<0}=0.
		\end{align}
	
	 Noting from Theorem \ref{thm2.2}, to solve \eqref{2-24} and \eqref{2-18}, we need $\hat{g}_k$ to satisfy \eqref{2.19}, i.e.
		\begin{equation}\label{2.23}
			\int_{\mathbb{R}^2}\mv_1\hat{g}_k\sqrt{\mu_0}\, d\mv=\int_{\mathbb{R}^2}\mv_1 \mv_2\hat{g}_k\sqrt{\mu_0}\, d\mv=\int_{\mathbb{R}^2}\mv_1|\mv|^2\hat{g}_k\sqrt{\mu_0}\, d\mv=0,
		\end{equation}
	which yields the following lemma on boundary conditions.
		\begin{Lemma}[\cite{GHW-2021-ARMA}]
			The boundary condition for \eqref{2.1} is 
			\begin{align}\label{2.26}
				(u_k\cdot \vec{n})|_{r=1}&=\int_0^\infty \Big\{\frac{1}{\rho^0}\partial_t\bar{\rho}_{k-1}-\mathcal{H}^\v(y)[\partial_\phi(\rho^0\bar{u}_{k-1}\cdot\vec{\tau})+\partial_\phi(\bar{\rho}_{k-1}u_\tau^0)]\nonumber\\
				&\qquad\qquad -\mathcal{H}^\v(y)(\bar{u}_{k-1}\cdot \vec{n})\Big\}(y)dy +\hat{A}_k(0),
			\end{align} 
			which is decided by $f_i,\bar{f}_i(i\leq k-1),\hat{f}_j(j\leq k-2).$

			The boundary condition for \eqref{bu-0} is
			\begin{align}\label{2.27}
				\begin{cases}
				\dis 	\partial_y(\bar{u}_{k-1}\cdot\vec{\tau})(t,\phi,0)=\frac{1}{\kappa_1(T^0)}
					\Big\{\left[\rho^0(\bar{u}_{k-1}\cdot \vec{n})(-u^0_1\cdot\vec{\tau}+\bar{u}_1\cdot\vec{\tau})\right]|_{y=0}+T^0\langle\mathcal{A}_{12}^0,\bar{J}_{k-2}\rangle|_{y=0}\\
				\dis 	\quad\quad\quad\quad\quad\quad\quad\quad\quad\quad\quad\quad+T^0\langle\mathcal{A}_{12}^0,(\mathbf{I-P}_0)f_k\rangle|_{r=1} +\rho^0T^0\hat{B}_{k}(0)\Big\},\\
				\dis 	\partial_y\bar{\theta}_{k-1}(t,\phi,0)=\frac{1}{\kappa_2(T^0)}\Big\{\rho^0(\bar{u}_{k-1}\cdot \vec{n})(\theta^0_1+\bar{\theta}_1)|_{y=0}+\f12(2T^0)^{\frac32}\langle\mathcal{B}_1^0,\bar{J}_{k-2}\rangle|_{y=0}\\
				\dis	\quad\quad\quad\quad\quad\quad\quad\quad\quad\quad-\f12(2T^0)^{\frac32}\langle\mathcal{B}_1^0,(\FI-\FP_0)f_k\rangle  |_{r=1} +4\rho^0(T^0)^2\hat{C}_k(0)\Big\}.
				\end{cases}
			\end{align}
			The RHS of \eqref{2.27} is decided by $f_i(i\leq k-1),\bar{f}_j,\hat{f}_l(j,l\leq k-2)$. The notations $\hat{A}_k(0),\hat{B}_k(0),\hat{C}_k(0)$ are defined in \eqref{eq2.23}.
		\end{Lemma}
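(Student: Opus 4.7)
The plan is to derive \eqref{2.26} and \eqref{2.27} by systematically evaluating the three solvability conditions in \eqref{2.23} applied to the explicit $\hat{g}_k$ defined by \eqref{2.25}. Since $\hat{g}_k$ vanishes for $\mv_1>0$, each condition $\int_{\R^2}\mv_1 \psi(\mv)\hat{g}_k\sqrt{\mu_0}\,d\mv=0$ (with $\psi=1,\mv_2,|\mv|^2$) can be rewritten, upon the change of variable $\mv_1\mapsto -\mv_1$ on the incoming half-space, as an integral over all of $\R^2$ of the odd-in-$\mv_1$ part of $(f_k+\bar{f}_k+\hat{f}_{k,1})|_{r=1}$ against $\mv_1\psi(\mv)\sqrt{\mu_0}$. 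This is the key reduction, because only the normal-velocity-odd macroscopic and microscopic moments survive.

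For $\psi=1$, the odd-in-$\mv_1$ projection of the decomposition \eqref{1.10-0} for $f_k$ contributes exactly $(u_k\cdot\vec n)|_{r=1}$; that of \eqref{1.12-0} for $\bar f_k$ contributes $(\bar u_k\cdot\vec n)|_{y=0}$; and that of the explicit form \eqref{eq2.23} of $\hat{f}_{k,1}$ at $\eta=0$ contributes $\hat{A}_k(0)$ (the $\mv_1$-coefficient is the only odd-in-$\mv_1$ piece that survives against the isotropic weight $\sqrt{\mu_0}$ since $\bar{\fu}^0$ has no normal component). To eliminate $(\bar u_k\cdot\vec n)|_{y=0}$ in favor of lower-order data, integrate \eqref{2.8} from $y=0$ to $y=\infty$ and use the far-field condition \eqref{2.9-1}. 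This yields \eqref{2.26}.

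For $\psi=\mv_2$ and $\psi=|\mv|^2$, the same projection argument produces the moments $\langle\mathcal{A}_{12}^0,(\FI-\FP_0)f_k\rangle|_{r=1}$, $\langle\mathcal{B}_1^0,(\FI-\FP_0)f_k\rangle|_{r=1}$ from the interior, because the hydrodynamic part of the odd-in-$\mv_1$ projection cancels apart from the normal-velocity component already accounted for; correspondingly the Knudsen layer contributes $\hat{B}_k(0)$ and $\hat{C}_k(0)$ through the explicit form \eqref{eq2.23}. The viscous-layer piece produces terms of the type $\langle\mathcal{A}_{12}^0,(\FI-\FP_0)\bar f_{k}\rangle|_{y=0}$ and $\langle\mathcal{B}_1^0,(\FI-\FP_0)\bar f_{k}\rangle|_{y=0}$, which by the recursion \eqref{1.12} are controlled by $(\bar v\cdot\vec n)\partial_y\bar{\mathfrak{F}}_{k-1}$ together with lower-order data (the quantity $\bar J_{k-2}$ in Lemma \ref{lem2.3}), and whose normal-derivative part produces the Neumann data $\partial_y(\bar u_{k-1}\cdot\vec\tau)$ and $\partial_y\bar\theta_{k-1}$ after dividing by the transport coefficients $\kappa_1(T^0)$ and $\kappa_2(T^0)$. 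Collecting these identifications yields \eqref{2.27}.

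The main obstacle will be the careful bookkeeping that ensures the right-hand sides depend only on $f_i$ with $i\le k-1$ and on $\bar f_j,\hat f_l$ with $j,l\le k-2$. In particular one must use the recursive representation \eqref{1.12} to shift the index in the $(\FI-\FP_0)\bar f_k$ contributions down by one order, use the ODE system from Lemma \ref{2.2} to express $\hat A_k(0),\hat B_k(0),\hat C_k(0)$ through $(\hat a_k,\hat b_k,\hat c_k)$ (which by \eqref{eq2.22} and \eqref{2.11} depend on $\hat f_{k-2}$ only), and use \eqref{2.8} together with the far-field condition to re-express all boundary values of $\bar u_{k-1}\cdot\vec n$. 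Once this orderly reduction is implemented, the three solvability identities produce exactly the closed formulas \eqref{2.26} and \eqref{2.27}.
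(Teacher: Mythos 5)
Your proposal is correct and follows essentially the same route as the paper: reduce the three solvability conditions \eqref{2.23} (applied to $\hat g_k$ from \eqref{2.25}) to full-space moment identities via the reflection symmetry, identify the resulting moments with $u_k\cdot\vec n$, $\bar u_k\cdot\vec n$, $\hat A_k(0)$ (for $\psi=1$) and with the $\mathcal{A}_{12}^0$, $\mathcal{B}_1^0$ microscopic moments plus $\hat B_k(0),\hat C_k(0)$ (for $\psi=\mv_2,|\mv|^2$), then eliminate $\bar u_k\cdot\vec n|_{y=0}$ by integrating \eqref{2.8} and extract the Neumann data from the leading $\partial_y\mathbf{P}_0\bar f_{k-1}$ piece of $(\FI-\FP_0)\bar f_k$ via \eqref{1.12}. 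This is precisely the argument the paper gestures at (``a delicate calculation ... see \eqref{7.8}--\eqref{7.13}''); your bookkeeping of which indices depend on which is the right point to flag as the remaining work.
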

		\begin{proof}
			A direct calculation shows that  
			\begin{align*}
				\begin{split}
					\int_{\mv_1<0}\mv_1\sqrt{\mu_0}\left\{f_k(R_xv)-f_k(v)\right\}|_{r=1}\, d\mv=-\int_{v\cdot \vec{n}>0}(v\cdot \vec{n})\sqrt{\mu_0}\left\{f_k(R_xv)-f_k(v)\right\}|_{r=1}\, dv\nonumber\\
					\qquad\qquad\qquad\qquad\qquad\qquad\qquad\qquad\quad=\int_{\R^2}(v\cdot \vec{n})\sqrt{\mu_0}f_k(v)|_{r=1}\, dv,\nonumber\\
					\int_{\mv_1<0}\mv_1\sqrt{\mu_0}\left\{\bar{f}_k(R_y\bar{v})-\bar{f}_k(\bar{v})\right\}|_{y=0}\, d\mv=-\int_{\R^2}(\bar{v}\cdot \vec{n})\sqrt{\mu_0}f_k(\bar{v})|_{y=0}\, d\bar{v},\\
					\int_{\mv_1<0}\mv_1\sqrt{\mu_0}\left\{\hat{f}_{k,1}(-\mv_1,\mv_2)-\hat{f}_k(\mv_1,\mv_2)\right\}|_{\eta=0}\, d\mv=-\int_{\R^2} \mv_1\sqrt{\mu_0}\hat{f}_{k,1}(\mv)|_{\eta=0}\, d\mv.
				\end{split}
			\end{align*}
			which,, together with \eqref{2.23}, yields
			\begin{align*}
				\int_{\R^2}(v\cdot \vec{n})\sqrt{\mu_0}f_k(v)|_{r=1}\, dv-\int_{\R^2}(\bar{v}\cdot \vec{n})\sqrt{\mu_0}\bar{f}_k(\bar{v})|_{y=0}\, d\bar{v}-\int_{\R^2} \mv_1\sqrt{\mu_0}\hat{f}_{k,1}(\mv)|_{\eta=0}\, d\mv=0.
			\end{align*}
		Similarly, we have from $\eqref{2.23}_{2,3}$ that
		\begin{align*}
			\begin{split}
				\int_{\R^2}(v\cdot \vec{n})(v\cdot\vec{\tau} -\mathfrak{u}^0\cdot\vec{\tau})\sqrt{\mu_0}f_k(v)|_{r=1}\, dv+\int_{\R^2}(\bar{v}\cdot \vec{n})(\bar{v}\cdot\vec{\tau}-u_\tau^0)\sqrt{\mu_0}\bar{f}_k(\bar{v})|_{y=0}\, d\bar{v}\\
				\qquad\qquad\qquad\qquad\qquad\qquad\qquad\qquad\qquad+\int_{\R^2} \mv_1(\mv_2-u_\tau^0)\sqrt{\mu_0}\hat{f}_{k,1}(\mv)|_{\eta=0}\, d\mv=0,\\
				\int_{\R^2}(v\cdot \vec{n})(|v-\mathfrak{u}^0|^2-4T^0)\sqrt{\mu_0}f_k(v)|_{r=1}\, dv-\int_{\R^2}(\bar{v}\cdot \vec{n})(|\bar{v}-\bar{\mathfrak{u}}^0|^2-4T^0)\sqrt{\mu_0}\bar{f}_k(\bar{v})|_{y=0}\, d\bar{v}\\
				\qquad\qquad\qquad\qquad\qquad\qquad\qquad\qquad\qquad-\int_{\R^2} \mv_1(|v-\bar{\mathfrak{u}}^0|^2-4T^0)\sqrt{\mu_0}\hat{f}_{k,1}(\mv)|_{\eta=0}\, d\mv=0.
			\end{split}
			\end{align*}
	Then, a delicate calculation shows \eqref{2.26} and \eqref{2.27}. See arguments in \eqref{7.8}--\eqref{7.13} for details.
		\end{proof}
		
			The condition \eqref{2.18} in Theorem \ref{thm2.2} requires 
		\begin{align}\label{2-27}
			|w_{\beta+1}\mv_1 \mv_2 \hat{g}_k|_{L^\infty_{\mv}}+\left|\nu^{-1}w_\beta (\mv_2^2\f{\partial}{\partial \mv_1}-\mv_1 \mv_2\f{\partial}{\partial \mv_2})\hat{g}_k(t,\phi,0,\cdot)\right|_{L^\infty_{\mv}}<\infty.
		\end{align}
		This will be proved in Proposition \ref{prop}.

		\section{Estimate on linear hyperbolic and parabolic systems}\label{sec3} 
		\subsection{Estimate on linear hyperbolic system}\label{sec3.1}
		To study the  existence of interior expansion, by noting Lemma \ref{lem2.2}, we consider the following linear problem for $(\tilde{\rho},\tilde{u},\tilde{\theta})(t,x)$:
		\begin{align}\label{3.1}
			\begin{cases}
				\dis	\partial_t\tilde{\rho}+\text{div}(\rho\tilde{u}+\tilde{\rho}\mathfrak{u})=0,\\
				\dis	\rho\{\partial_t\tilde{u}+(\tilde{u}\cdot\nabla)\mathfrak{u}+(\mathfrak{u}\cdot\nabla)\tilde{u}\}-\frac{\tilde{\rho}}{\rho}\nabla(\rho T)+\nabla(\frac{\rho\tilde{\theta}+2\tilde{\rho}T}{2})=g_1,\\
				\dis	\rho\{\partial_t\tilde{\theta}+(\tilde{\theta}\text{div}\mathfrak{u}+2T\text{div}\tilde{u})+\mathfrak{u}\cdot\nabla\tilde{\theta}+2\tilde{u}\cdot\nabla T\}=g_2,
			\end{cases}
		\end{align}
		where $(t,x)\in[0,\tau^\d]\times  B_1$. For \eqref{3.1}, we impose 
		\begin{align}\label{3.2}
			\tilde{u}\cdot \vec{n}(t,x)=d_0(t,x),\quad |x|=1, 
		\end{align}
		and 
		\begin{align}\label{3.3}
			(\tilde{\rho},\tilde{u},\tilde{\theta})(0,x)=(\tilde{\rho}_0,\tilde{u}_0,\tilde{\theta}_0)(x).
		\end{align}
		\begin{Lemma}\label{lem3.2}
			Let $(\rho,\mathfrak{u},T)$ is the smooth solution of compressible Euler equations obtained in Lemma \ref{lem3.1}. We assume that
			\begin{align*}
				\|(\tilde{\rho}_0,\tilde{u}_0,\tilde{\theta}_0)\|_{H^k(B_1)}^2+\sup_{t\in(0,\tau^\d]}\{\|(g_1,g_2)(t)\|_{H^{k+1}(B_1)}^2+\|d_0(t)\|_{H^{k+2}(B_1)}^2\}\textless\infty,
			\end{align*}
			with $k\geq 3$. If \eqref{3.2}--\eqref{3.3} satisfy the compatibility condition(the boundary condition \eqref{3.2} and the time-derivatives of initial data $(\tilde{\rho}_0,\tilde{u}_0,\tilde{\theta}_0)$ are defined through system \eqref{3.1} inductively.) Then there exists a unique smooth solution $(\tilde{\rho},\tilde{u},\tilde{\theta})$ of \eqref{3.1}--\eqref{3.3} satisfying
			\begin{align}\label{H.4}
				\sup_{t\in[0,\tau^\d]}	\|(\tilde{\rho},\tilde{u},\tilde{\theta})\|_{H^k(B_1)}^2\leq & C(\tau^\d,E_{k+2})\left\{\|(\tilde{\rho}_0,\tilde{u}_0,\tilde{\theta}_0)\|_{H^k(B_1)}^2\right.\nonumber\\
				&\left.+\sup_{t\in(0,\tau^\d]}\left[\|(g_1,g_2)(t)\|_{H^{k+1}(B_1)}^2+\|d_0(t)\|_{H^{k+2}}^2\right]\right\}.
			\end{align}
			where $$E_k:=\sup_{t\in[0,\tau^\d]}\|(\rho-1,\mathfrak{u},T-1)(t)\|_{H^k}=\sum_{0\leq i+j+l\leq k}\|(\partial_t^i\partial_{x_1}^j\partial_{x_2}^l)(\rho-1,\mathfrak{u},T-1)\|_{L^2}.$$
		\end{Lemma}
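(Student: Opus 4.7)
\smallskip

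\noindent\textbf{Proof plan.} System \eqref{3.1} is a linear symmetric-hyperbolic system for $(\tilde\rho,\tilde u,\tilde\theta)$; because the background $(\rho,\mathfrak{u},T)$ satisfies $\mathfrak{u}\cdot\vec n=0$ on $\partial B_1$, the boundary is characteristic of constant multiplicity. The plan is to establish the a priori estimate \eqref{H.4} in four steps and then obtain existence and uniqueness via a standard Friedrichs mollification (or Galerkin) scheme combined with the compatibility conditions at $t=0$. First I would homogenize the boundary data by choosing a smooth lift $\psi(t,x)$ supported near $\partial B_1$ with $\psi\cdot\vec n|_{|x|=1}=d_0$ and $\|\psi\|_{H^{k+2}(B_1)}\le C\|d_0\|_{H^{k+2}(\partial B_1)}$. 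Setting $\hat u:=\tilde u-\psi$, the system for $(\tilde\rho,\hat u,\tilde\theta)$ retains the same principal structure with a modified source $(\tilde g_1,\tilde g_2)$ bounded in $H^{k+1}$ by $\|(g_1,g_2)\|_{H^{k+1}}+C(E_{k+2})\|d_0\|_{H^{k+2}}$ and homogeneous boundary condition $\hat u\cdot\vec n=0$.

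Next I would carry out the symmetric-hyperbolic $L^2$ energy estimate by multiplying the three equations by $T\tilde\rho/\rho$, $\hat u$, and $\tilde\theta/(4T)$ respectively and integrating over $B_1$. After integration by parts the net boundary flux reduces to a linear combination of
\[
\int_{\partial B_1}\Big\{T\tilde\rho\,(\hat u\cdot\vec n)+\tfrac12\rho\tilde\theta\,(\hat u\cdot\vec n)\Big\}\,dS,
\]
which vanishes identically thanks to $\hat u\cdot\vec n=0$, while the commutators generated by derivatives of $(\rho,\mathfrak{u},T)$ are absorbed into $C(E_{k+2})\|(\tilde\rho,\hat u,\tilde\theta)\|_{L^2}^2$; Gronwall's inequality then yields the $L^2$-bound. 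To promote this to an $H^k$-bound I apply $\partial_t^i\partial_\phi^j$ with $i+j\le k$: since both $\partial_t$ and $\partial_\phi$ commute with the characteristic boundary condition, the same energy identity closes for each $\partial_t^i\partial_\phi^j(\tilde\rho,\hat u,\tilde\theta)$, modulo commutators controlled by $C(E_{k+2})$ times already-estimated norms. (Near the center of the disk, where polar coordinates degenerate, one switches to Cartesian derivatives, which cause no boundary issue there.) This controls all mixed time-and-tangential derivatives up to order $k$.

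The remaining normal derivatives are not produced by any energy estimate, since the boundary is characteristic. I would instead read them off the equations themselves: the continuity equation, the normal component of the momentum equation, and the temperature equation form a $3\times 3$ linear algebraic system
\[
M(\rho,T)\begin{pmatrix}\partial_r\tilde\rho\\ \partial_r(\hat u\cdot\vec n)\\ \partial_r\tilde\theta\end{pmatrix}=\mathcal{R}\big[\partial_t,\partial_\phi;\,\tilde\rho,\hat u,\tilde\theta;\,\tilde g_1,\tilde g_2\big],
\]
with $M$ uniformly invertible on $[0,\tau^\d]\times B_1$ because $\rho,T>0$ by Lemma \ref{lem3.1}, while the tangential component of the momentum equation supplies $\partial_r(\hat u\cdot\vec\tau)$ directly. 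Iterating this bootstrap $k$ times and combining it with Step $3$ yields \eqref{H.4}; uniqueness follows from the $L^2$ identity, and existence follows from the a priori bound by the standard linear-symmetric-hyperbolic construction.

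The principal obstacle is precisely the characteristic nature of the boundary, which causes the classical loss of one normal derivative in purely tangential energy estimates; the success of the whole argument hinges on the uniform invertibility of $M(\rho,T)$ in the last step. Tracking both this loss and the boundary lift is also what forces \eqref{H.4} to require two extra derivatives of $d_0$ and one extra derivative of $(g_1,g_2)$ beyond the $H^k$ regularity of the solution.
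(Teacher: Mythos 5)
Your overall architecture (homogenize $d_0$, symmetrize, tangential/time energy estimates with vanishing boundary flux, then recover normal derivatives) matches the paper's Steps 1--3, and your observation about the derivative loss in $d_0$ and $(g_1,g_2)$ is on the mark. But your last step contains a genuine error, and it is the one on which, by your own account, the whole argument hinges. You claim that the continuity, normal-momentum and temperature equations form a $3\times3$ system $M(\rho,T)(\partial_r\tilde\rho,\partial_r(\hat u\cdot\vec n),\partial_r\tilde\theta)^\top=\mathcal{R}$ with $M$ uniformly invertible because $\rho,T>0$. Collect the normal-derivative coefficients from \eqref{3.1}: the continuity equation contributes $(\mathfrak{u}\cdot\vec n)\partial_r\tilde\rho+\rho\,\partial_r(\hat u\cdot\vec n)$, the normal momentum contributes $T\partial_r\tilde\rho+\rho(\mathfrak{u}\cdot\vec n)\partial_r(\hat u\cdot\vec n)+\tfrac{\rho}{2}\partial_r\tilde\theta$, and the temperature equation contributes $2\rho T\,\partial_r(\hat u\cdot\vec n)+\rho(\mathfrak{u}\cdot\vec n)\partial_r\tilde\theta$. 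So
\begin{equation*}
M=\begin{pmatrix}\mathfrak{u}\cdot\vec n & \rho & 0\\ T & \rho(\mathfrak{u}\cdot\vec n) & \rho/2\\ 0 & 2\rho T & \rho(\mathfrak{u}\cdot\vec n)\end{pmatrix},
\qquad
\det M=\rho^2(\mathfrak{u}\cdot\vec n)\bigl[(\mathfrak{u}\cdot\vec n)^2-2T\bigr],
\end{equation*}
which vanishes identically on $\partial B_1$ because $\mathfrak{u}\cdot\vec n=0$ there. The degeneracy of $M$ at the boundary \emph{is} the characteristic nature of the boundary; they are not two obstacles but the same one, so $M$ cannot be made invertible by any hypothesis on $\rho,T$. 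Likewise the tangential-momentum equation carries $\partial_r(\hat u\cdot\vec\tau)$ only with the prefactor $\rho(\mathfrak{u}\cdot\vec n)$, so it does not "supply $\partial_r(\hat u\cdot\vec\tau)$ directly" at $\partial B_1$. What the equations actually give you pointwise on the boundary is only $\partial_r\tilde p$ (from the normal momentum equation, coefficient $1$) and $\partial_r(\hat u\cdot\vec n)$ (from continuity, coefficient $\rho$); the normal derivatives of the characteristic pair $(\hat u\cdot\vec\tau,\tilde\theta)$ cannot be recovered algebraically.

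The paper handles this by exploiting the block structure of the normal coefficient matrix $\tilde A_1$ in \eqref{H.17} in the variables $\tilde U=(\tilde p,\tilde w\cdot\vec n,\tilde w\cdot\vec\tau,\tilde\theta)$: on $\partial B_1$ the lower-right $2\times2$ block and both off-diagonal blocks vanish (all proportional to $\mathfrak{u}\cdot\vec n$), so the boundary flux in the energy estimate for the characteristic components $(\tilde w\cdot\vec\tau,\tilde\theta)$ is zero even after applying $\partial_r^l\partial_{t,\phi}^\alpha$, while the non-characteristic pair $(\tilde p,\tilde w\cdot\vec n)$ is controlled via the uniformly invertible $2\times2$ upper-left block, following the half-space scheme of \cite{GHW-2021-ARMA}; this is exactly what produces the $\|\mathcal{G}\|_{H^{k+1}}$ loss recorded in \eqref{H.44}. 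To repair your proof you must split the unknowns into the non-characteristic pair (recovered algebraically from the $2\times 2$ block) and the characteristic pair (estimated by a second energy estimate in which the boundary flux still vanishes for normal-weighted derivatives), rather than inverting a full $3\times3$ or $4\times4$ normal matrix.
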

		\begin{proof} We divide the proof into several steps.
			
			{\noindent \it Step 1: Reformulation.}	Define $\tilde{p}=\frac{\rho\tilde{\theta}+2\tilde{\rho}T}{2}$. To deal with the boundary terms, it is more convenient to use the variables $(\tilde{p},\tilde{u},\tilde{\theta})$. Then \eqref{3.1} becomes
			\begin{align*}
				\begin{cases}
					\dis \partial_t\tilde{p}+\mathfrak{u}\cdot\nabla\tilde{p}+2p\,\text{div}\tilde{u}+2\tilde{p}\, \text{div}\mathfrak{u}+\nabla{p}\cdot\tilde{u}=\frac12g_2,\\
					\dis \rho\partial_t\tilde{u}+\rho \mathfrak{u}\cdot\nabla\tilde{u}+\nabla\tilde{p}-\frac{\nabla p}{p}\tilde{p}+\rho\tilde{u}\cdot\nabla \mathfrak{u}+\frac{\nabla p}{2T}\tilde{\theta}=g_1,\\
					\dis \rho\partial_t\tilde{\theta}+\rho \mathfrak{u}\cdot\nabla\tilde{\theta}+2p\,\text{div}\tilde{u}+2\rho\tilde{u}\cdot\nabla T+\rho\tilde{\theta}\, \text{div}\mathfrak{u}=g_2.
				\end{cases}
			\end{align*}
			Let $\chi(s)$ be a monotone smooth cut-off function such that
			\begin{align}\label{H.4-1}
				\chi(s)=
				\begin{cases}
					1,\quad s\in[\f34,1],\\
					0,\quad s\in[0,\f12].
				\end{cases}
			\end{align}
			Define
			\begin{align*}
				u_d(t,x)\cdot \vec{n}=d_0(t,x)\chi(|x|), \quad u_d(t,x)\cdot\vec{\tau}=0,
			\end{align*}
			and $\tilde{w}=\tilde{u}-u_d.$ Then \eqref{3.1} can be rewritten as 
			\begin{align}\label{H.5}
				\begin{cases}
					\partial_t\tilde{p}+\mathfrak{u}\cdot\nabla\tilde{p}+2p\,\text{div}\tilde{w}+2\tilde{p}\,\text{div}\mathfrak{u}+\nabla{p}\cdot\tilde{w}=G_0,\\
					\rho\partial_t\tilde{w}+\rho (\mathfrak{u}\cdot\nabla)\tilde{w}+\nabla\tilde{p}-\frac{\nabla p}{p}\tilde{p}+\rho(\tilde{w}\cdot\nabla) \mathfrak{u}+\frac{\nabla p}{2T}\tilde{\theta}=G_1,\\
					\rho\partial_t\tilde{\theta}+\rho \mathfrak{u}\cdot\nabla\tilde{\theta}+2p\, \text{div}\tilde{w}+2\rho\tilde{w}\cdot\nabla T+\rho\tilde{\theta}\,\text{div}\mathfrak{u}=G_2,
				\end{cases}
			\end{align}
			where 
			\begin{align*}
				&G_0=\frac12g_2-2p\,\text{div}u_d-\nabla p\cdot u_d,\\
				&G_1=g_1-\rho\partial_tu_d-\rho (\mathfrak{u}\cdot\nabla) u_d-\rho (u_d\cdot\nabla) \mathfrak{u},\\
				&G_2=g_2-2p\,\text{div}u_d-2\rho\, u_d\cdot\nabla T.
			\end{align*}
			The boundary condition of \eqref{H.5} becomes
			\begin{align}\label{H.6}
				\tilde{w}\cdot \vec{n}=0,\quad |x|=1.
			\end{align}
			
			Now we display \eqref{H.5} with symmetric structure:
			\begin{align}\label{H.7}
				A_0\partial_tU+\sum\limits_{i=1}^2A_i\partial_iU+A_3U=\mathcal{G},
			\end{align}
			where
			\begin{align*}
				&U=
				\begin{pmatrix}
					\tilde{p}\\
					\tilde{w}\\
					\tilde{\theta}
				\end{pmatrix}
				\quad
				,\quad
				A_0=
				\begin{pmatrix}
					\frac32 & 0 & 0 & -\rho\\
					0 & \rho p & 0 & 0\\
					0 & 0 & \rho p & 0\\
					-\rho & 0 & 0 & \rho^2
				\end{pmatrix}
				,\\
				&A_1=
				\begin{pmatrix}
					\frac32\mathfrak{u}_1 & p & 0 & -\rho \mathfrak{u}_1\\
					p & \rho p\mathfrak{u}_1 & 0 & 0\\
					0 & 0 & \rho p\mathfrak{u}_1 & 0\\
					-\rho \mathfrak{u}_1 & 0 & 0 & \rho^2\mathfrak{u}_1
				\end{pmatrix}
				\quad
				,\quad
				A_2=
				\begin{pmatrix}
					\frac32\mathfrak{u}_2 & 0 & p & -\rho \mathfrak{u}_2\\
					0 & \rho p\mathfrak{u}_2 & 0 & 0\\
					p & 0 & \rho p\mathfrak{u}_2 & 0\\
					-\rho \mathfrak{u}_2 & 0 & 0 & \rho^2\mathfrak{u}_2
				\end{pmatrix}.
				\nonumber
			\end{align*}
				The matrix $A_3$ and column vector $\mathcal{G}$ can be easily write down, and we do not give the details here. It is easy to check that $A_0$ is positive.

				Since $A_n:=\sum_{i=1}^2A_ix_i,$ is singular at the boundary $\partial B_1$ due to $\mathfrak{u}\cdot \vec{n}|_{\partial B_1}=0$, hence the problem \eqref{H.7} with \eqref{H.6} is a linear hyperbolic system with characteristic boundary. We refer to \cite{Chen,Schochet,Se} for the local existence of smooth solutions. 
				
				It follows from Newtonian-Leibniz formula that
				\begin{align}\label{H.12}
					\|U(t)\|_{H^{k-1}}^2&\leq \|U_0\|_{H^{k-1}}^2+2\int_0^t\|\partial_tU(s)\|_{H^{k-1}}\|U(s)\|_{H^{k-1}}ds\nonumber\\
					&\leq \|U_0\|_{H^{k-1}}^2+\int_0^t\|U(s)\|_{H^{k}}^2ds.
				\end{align}
				Hence we need only to close the highest order derivatives estimates. 
				
				{\noindent \it Step 2: Internal estimate.}
				Let $l+|\alpha|=k$ and applying $\partial_t^l\partial_x^\alpha$ to \eqref{H.7}, we obtain 
				\begin{align}\label{H.13}
					&A_0 \partial_t\partial_t^l\partial_x^\alpha U+\sum_{i=1}^2 A_i \partial_i \partial_t^l\partial_x^\alpha U\nonumber\\
					&=\partial_t^l\partial_x^\alpha \mathcal{G}-\partial_t^l\partial_x^\alpha (A_3U)-[\partial_t^l\partial_x^\alpha ,A_0] \partial_t U
					-\sum_{i=1}^2[\partial_t^l\partial_x^\alpha,A_i] \partial_i U,
				\end{align}
				where and whereafter the notation $[\cdot, \, \cdot]$ denote the commutator operator, i.e.,
				\begin{align}\nonumber
					[\partial^{\alpha},f]g=\partial^\alpha (fg)-f\partial^\alpha g=\sum_{|\alpha_1|+|\alpha_2|=\alpha, |\alpha_1|\geq1} C_{\upsilon,\gamma}\partial^{\alpha_1} f\cdot \partial^{\alpha_2} g.
				\end{align}
				Multiplying \eqref{H.13} by $(\partial_t^l\partial_x^\alpha (U^\top))\cdot(1-\chi(|x|))^2$ and integrating the resultant equation over $[0,t]\times B_1$,  we obtain
				\begin{align}\label{H.14}
					\|((1-\chi)\partial_t^l\partial_x^\alpha U)(t)\|_{L^2}^2&\leq C\|\partial_t^l\partial_x^\alpha U(0)\|_{L^2}^2+C(E_{k+1})\int_0^t\|(U,\mathcal{G})(s)\|_{H^k}^2ds,
				\end{align}
				where the boundary term vanishes since $1-\chi=0$ on the boundary.  
				
				{\noindent \it Step 3: Estimate near the boundary.}
				Taking polar coordinate for \eqref{H.7}, then it is rewritten as
				\begin{align}\label{H.16}
					\tilde{A}_0\partial_t\tilde{U}+\tilde{A}_1\partial_r\tilde{U}+\tilde{A}_2\partial_\phi\tilde{U}+\tilde{A}_3\tilde{U}= \tilde{\mathcal{G}}.
				\end{align}
				Here $\tilde{A}_0=A_0, \tilde{\mathcal{G}}=(G_0,G_1\cdot \vec{n},G_1\cdot \vec{\tau}, G_2)^\top$ and $\tilde{U}=(\tilde{p},\tilde{w}\cdot \vec{n}, \tilde{w}\cdot\vec{\tau},\tilde{\theta})^\top$. By direct calculation, one obtains
				\begin{align}\label{H.17}
					\tilde{A}_1=
					\begin{pmatrix}
						\f32(\mathfrak{u}\cdot \vec{n}) & p & 0 & -\rho(\mathfrak{u}\cdot \vec{n})\\
						p & \rho p(\mathfrak{u}\cdot \vec{n}) & 0 & 0\\
						0 & 0 & \rho p (\mathfrak{u}\cdot \vec{n}) & 0\\
						-\rho(\mathfrak{u}\cdot \vec{n}) & 0 & 0 &\rho^2(\mathfrak{u}\cdot \vec{n})
					\end{pmatrix},
				\end{align}
				and 
				\begin{align*}
					\tilde{A}_2=
					\begin{pmatrix}
						\f{3}{2r}(\mathfrak{u}\cdot\vec{\tau}) & 0 & \f{p}{r} & -\f{1}{r}\rho(\mathfrak{u}\cdot\vec{\tau})\\
						0 & \f{1}{r}\rho p(\mathfrak{u}\cdot\vec{\tau}) & 0 & 0\\
						\f{p}{r} & 0 & \f{1}{r}\rho p(\mathfrak{u}\cdot\vec{\tau}) & 0\\
						-\f{1}{r}\rho(\mathfrak{u}\cdot\vec{\tau}) & 0 & 0 & \f{1}{r}\rho^2(\mathfrak{u}\cdot\vec{\tau})
					\end{pmatrix}.
				\end{align*}
				
				We notice that 
				\begin{align*}
					\tilde{A}_3=A_3+
					\begin{pmatrix}
						0 &  \f{p}{r} & 0 & 0\\
						0 & 0 & -\f{1}{r}\rho p(\mathfrak{u}\cdot\vec{\tau}) & 0\\
						0 &  -\f{1}{r}\rho p(\mathfrak{u}\cdot\vec{\tau}) & 0 & 0\\
						0 & 0 & 0 & 0
					\end{pmatrix}.
				\end{align*}
				The expression of $\tilde{A}_2$ and $\tilde{A}_3$ has singularity of $r^{-1}$. We deal with it by using the cut-off function $\chi(r)$ which is  $0$ when  $r\leq\f12$. We also notice that when $r>\f12$, one has 
				\begin{align}\label{H.45}
					\left|\frac{\partial(x_1,x_2)}{\partial(r,\phi)}\right|=r\sim1.
				\end{align}
				
			The procedure to get energy estimate for \eqref{H.16} is almost the same as half space problem in \cite{GHW-2021-ARMA} and we omit it here for brevity. We obtain 
			\begin{align}\label{H.44}
				\sum_{|\alpha|=k-l}\|\chi(\bp^\alpha\partial_r^l \tilde{U})(t)\|_{L^2}^2
				&\leq C(E_{k+1}) \int_0^t\|U(s)\|_{H^k}^2+\|\mathcal{G}(s)\|_{H^{k+1}}^2ds \nonumber\\
				&\quad+C(E_{k+1}) \|(U,\mathcal{G})(0)\|_{H^k}^2.
			\end{align}
		Hence we conclude from \eqref{H.14} and \eqref{H.44} that \eqref{H.4} is true and the proof of Lemma \ref{lem3.2} is finished.
			\end{proof}

			\subsection{Estimate on linear parabolic system}\label{sec3.2}
			To construct viscous boundary layer solutions, noting Lemma \ref{lem2.3}, we consider linear parabolic system for $(u_\tau,\theta)$:
			\begin{align}\label{P.1}
				\begin{cases}
					\rho^0\partial_tu_\tau-\kappa_1(T^0)\partial_{yy}u_\tau\\
					\quad -\mathcal{H}^\v(y)\rho^0u_\tau^0\partial_\phi u_\tau+\rho^0(-y\partial_r\bar{\mathfrak{u}}^0\cdot \vec{n})\partial_yu_\tau\\
					\quad +u_\tau\cdot\left\{\left(1-\mathcal{H}^\v(y)\right)u_\tau^0\partial_\phi\rho^0+\left(1-2\mathcal{H}^\v(y)\right)\rho^0\partial_\phi u_\tau^0 \right\}\\
					\quad-\f{\rho^0}{2T^0}\theta\cdot\left\{\left(1-\mathcal{H}^\v(y)\right) u_\tau^0\partial_\phi  u_\tau^0+\partial_\phi T^0+\f{T^0}{\rho^0}\partial_\phi\rho^0\right\}=\bar{g}_1,\\
					\\
					\rho^0\partial_t\theta-\kappa_2(T^0)\partial_{yy}\theta-\mathcal{H}^\v(y)\rho^0u_\tau^0\partial_\phi\theta+\rho^0(-y\partial_r\bar{\mathfrak{u}}^0\cdot \vec{n})\partial_y\theta\\
					\quad+\theta\cdot\left\{\left(1-\mathcal{H}^\v(y)\right)\partial_\phi(\rho^0u_\tau^0)-\f{\rho^0}{T^0}\left(1-\mathcal{H}^\v(y)\right)u_\tau^0\partial_\phi T^0-\rho^0\partial_\phi u_\tau^0-\rho^0\partial_r\bar{\mathfrak{u}}^0\cdot \vec{n}\right\}\\
					\quad+\rho^0u_\tau\cdot\left\{\left(1-2\mathcal{H}^\v(y)\right)\partial_\phi T^0+\f{T^0}{\rho^0}\partial_\phi\rho^0+\left(1-\mathcal{H}^\v(y)\right)u_\tau^0\partial_\phi u_\tau^0\right\}=\bar{g}_2,
				\end{cases}
			\end{align}
			where $\mathcal{H}^\v(y)=\f{1}{1-\v y}\Upsilon(\v^{\f12 }y)$ and$(t,\phi,y)\in[0,\tau^\d)\times[0,2\pi]\times\mathbb{R}_{+}$, $(\rho^0,u_\tau^0,\partial_r\bar{\mathfrak{u}}^0,\partial_rp^0,\partial_\phi p^0)$ represents values at $r=1$, and are independent of $y$. We impose non-homogeneous Neumann boundary condition for \eqref{P.1}, i.e.
			\begin{align}
				\begin{cases}
					\partial_yu_\tau(t,\phi,y)|_{y=0}=b(t,\phi),\quad\partial_y\theta(t,\phi,y)|_{y=0}=a(t,\phi),\\
					\lim\limits_{y\rightarrow\infty}(u_\tau,\theta)(t,\phi,y)=0,\nonumber
				\end{cases}
			\end{align}
			We also impose the initial data
			\begin{align}\label{P.2}
				u_\tau(t,\phi,y)|_{t=0}=u_0(\phi,y),\quad\theta(t,\phi,y)|_{t=0}=\theta_0(\phi,y).
			\end{align}
			The initial data $(u_0,\theta_0)$ should satisfy the compatible condition.
			
			Let $l\geq 0$, denote 
			\begin{align*}
				&\|f\|_{L^2_l} = \iint (1+y)^l|f(\phi,y)|^2d\phi dy\nonumber\\
				&\bar{x}:=(\phi,y),\quad\nabla_{\bar{x}}=(\partial_\phi,\partial_y),
			\end{align*}
			we have the following lemma. For the proof of Lemma \ref{lem3.3}, we refer the readers to \cite{GHW-2021-ARMA}, which establishes existence and regularity for parabolic systems in 3D half-space. While the original work focuses on three dimensions, the framework extends to the 2D case \eqref{P.1} with minor geometric adjustments. These modifications preserve the core analytical structure of the proof; full details are omitted here for brevity.
			\begin{Lemma}[\cite{GHW-2021-ARMA}]\label{lem3.3}
				Assume $l\geq0,k\geq2$, and the compatibility condition for the initial data \eqref{P.1} is satisfied. Assume
				\begin{equation*}
					\sup_{t\in[0,\tau^\d]}\left\{\sum\limits_{\alpha+2i\leq k+2}\|\partial_\phi^\alpha\partial_t^i(a,b)(t)\|_{L^2(\mathbb{R}^2)}^2+\sum\limits_{j=0}^k\sum\limits_{|\alpha|+2i=j}\|\nabla_{\bar{x}}^\alpha\partial_t^i(\bar{g}_1,\bar{g}_2)(t)\|_{L_{l_j}^2}^2\right\}<\infty,
				\end{equation*}
				where $l_j:=l+2(k-j),0\leq j\leq k.$ Then there is a unique smooth solution $(u_\tau,\theta)$ of \eqref{P.1}--\eqref{P.2} which satisfies
				\begin{align}
					&\sum\limits_{j=0}^k\sum\limits_{|\alpha|+2i=j}\sup_{t\in[0,\tau^\d]}\left\{\|\partial_t^i\nabla_{\bar{x}}^\alpha(u_\tau,\theta)(t)\|_{L^2_{l_j}}^2+\int_0^t\|\partial_t^i\nabla_{\bar{x}}^\alpha\partial_y(u_\tau,\theta)\|_{L_{l_j}^2}^2ds\right\}\nonumber\\
					&\leq C(\tau,E_{k+3})\left\{\sum\limits_{j=0}^k\sum\limits_{|\alpha|+2i=j}\|\partial_t^i\nabla_{\bar{x}}^\alpha(u_\tau,\theta)(0)\|_{L^2_{l_j}}^2\right\}\nonumber\\
					&+\sup_{t\in(0,\tau^\d]}\left\{\sum\limits_{\alpha+2i\leq k+2}\|\partial_\phi^\alpha\partial_t^i(a,b)(t)\|_{L^2(\mathbb{R}^2)}^2+\sum\limits_{j=0}^k\sum\limits_{|\alpha|+2i=j}\|\nabla_{\bar{x}}^\alpha\partial_t^i(\bar{g}_1,\bar{g}_2)(t)\|_{L_{l_j}^2}^2\right\},
				\end{align}
				where the notation $E_{k+3}$ is defined in Lemma \ref{lem3.2}.
			\end{Lemma}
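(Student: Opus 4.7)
The plan is to treat \eqref{P.1}--\eqref{P.2} as a linear, uniformly parabolic system in the half-strip $(\phi,y)\in\mathbb{T}\times\mathbb{R}_+$ with variable but smooth coefficients (depending on the already-constructed Euler profile $(\rho^0,u_\tau^0,T^0)$), and to run a weighted Galerkin/energy scheme. The key observation is that, apart from the two principal second-order terms $-\kappa_1(T^0)\partial_{yy}u_\tau$ and $-\kappa_2(T^0)\partial_{yy}\theta$, every other term is either a multiplier or a first-order term whose coefficient is bounded in all variables (including the $-y\,\partial_r\bar{\mathfrak{u}}^0\cdot\vec n$ factor, since after pairing it with $u_\tau$ and integrating by parts the prefactor $y$ only interacts with the weight). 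The factor $\mathcal H^\v(y)$ is uniformly bounded in $y$ because of the cut-off $\Upsilon(\v^{1/2}y)$, so the geometric corrections do not affect parabolicity.

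First, I would reduce to a homogeneous Neumann problem. Pick a smooth cut-off $\chi(y)$ that equals $1$ near $y=0$ and vanishes for $y\ge 1$, and define the lifts $U_a(t,\phi,y):=-y\chi(y)a(t,\phi)$, $U_b(t,\phi,y):=-y\chi(y)b(t,\phi)$, so that $\partial_y U_b|_{y=0}=b$ and $\partial_y U_a|_{y=0}=a$. Setting $\tilde u_\tau=u_\tau-U_b$, $\tilde\theta=\theta-U_a$ absorbs the boundary data into modified source terms $\tilde g_1,\tilde g_2$ whose weighted norms are controlled by the assumed norms of $(a,b)$ and of the Euler background. The resulting system satisfies $\partial_y\tilde u_\tau|_{y=0}=\partial_y\tilde\theta|_{y=0}=0$ and decays at $y=\infty$.

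Second, I would construct solutions via Faedo--Galerkin on the tangential variable $\phi$ (Fourier series on $\mathbb{T}$) combined with a smooth truncation of the $y$-domain to $[0,R]$ with homogeneous Neumann data at $y=R$, and pass to the limit. The basic energy identity is obtained by multiplying the $\tilde u_\tau$-equation by $(1+y)^{l_0}\tilde u_\tau$ and the $\tilde\theta$-equation by $(1+y)^{l_0}\tilde\theta$ and adding: the principal part gives $\|\sqrt{\kappa_i(T^0)}\,\partial_y\tilde u_\tau\|_{L^2_{l_0}}^2+\|\sqrt{\kappa_i(T^0)}\,\partial_y\tilde\theta\|_{L^2_{l_0}}^2$ up to commutator errors with the weight; the transport term $-y\,\partial_r\bar{\mathfrak u}^0\cdot\vec n\,\partial_y$ produces, after integration by parts, only zeroth-order terms of order $\|(\tilde u_\tau,\tilde\theta)\|_{L^2_{l_0}}^2$ that are absorbed by Gr\"onwall's inequality; all remaining zeroth-order couplings (including the $\mathcal H^\v$-terms) are bounded by $\|(\rho^0-1,\mathfrak u^0,T^0-1)\|_{H^{k+3}}$ times $\|(\tilde u_\tau,\tilde\theta)\|_{L^2_{l_0}}^2$, hence controlled by $E_{k+3}$. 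Higher regularity is obtained inductively: for $|\alpha|+2i=j$, apply $\partial_t^i\nabla_{\bar x}^\alpha$ (tangential and time derivatives only) to the equations, and repeat the weighted energy estimate with weight $l_j=l+2(k-j)$. The two-derivative drop in weight per $\partial_t$ is exactly what allows the commutator $[\partial_t^i,\kappa_i(T^0)\partial_{yy}]\cdot(1+y)^{l_j}$ to be reabsorbed, since each $\partial_t$ can fall on a coefficient that is controlled in $L^\infty_tH^{k+3}_x$. Finally, the equation itself is used to recover normal derivatives $\partial_{yy}$ of the highest order in terms of tangential ones.

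The main obstacle is the combination of the unbounded coefficient in the $y\,\partial_y$ drift with the polynomial weights: when $(1+y)^{l_j}\tilde u_\tau$ is used as a test function, the weight-commutator produces a term $l_j y(1+y)^{l_j-1}\tilde u_\tau\cdot\partial_y\tilde u_\tau$ that scales like $(1+y)^{l_j}|\tilde u_\tau||\partial_y\tilde u_\tau|$, only marginally absorbable by Cauchy--Schwarz against the parabolic gain $\|\partial_y\tilde u_\tau\|_{L^2_{l_j}}^2$; this is what dictates the precise weight hierarchy $l_j=l+2(k-j)$ and forces us to estimate $\partial_t$-derivatives together with $\nabla_{\bar x}$-derivatives (rather than separately) so that the top-order $\partial_{yy}$ only appears inside the principal part. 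A secondary difficulty is that, since $\eqref{P.1}$ comes with Neumann data, the initial data $(u_0,\theta_0)$ and $(a,b)$ must satisfy a chain of compatibility conditions read off from differentiating the equations in $t$ and restricting to $t=0$, $y=0$; these are precisely the identities used in \cite{GHW-2021-ARMA} and they are preserved here because the only new feature, the multiplier $\mathcal H^\v(y)$, is smooth and equals $1+O(\v y)$ near $y=0$. Uniqueness then follows from the same energy estimate applied to the difference of two solutions, completing the proof.
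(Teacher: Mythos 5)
The paper does not prove this lemma: it states that the argument is carried out in \cite{GHW-2021-ARMA} for the analogous 3D half-space problem and that the 2D case requires only minor geometric adjustments. Your plan --- lift the inhomogeneous Neumann data, reduce to a half-strip, run a Faedo--Galerkin weighted $L^2_{l_j}$ energy scheme, propagate tangential and temporal regularity by commutation with the weight hierarchy $l_j=l+2(k-j)$, then recover $\partial_{yy}$ from the equation --- is essentially the same approach, so I consider it correct in outline.

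Two small corrections. First, with $\chi(0)=1$ your lift should be $U_b=+\,y\chi(y)\,b$, not $-\,y\chi(y)\,b$: as written, $\partial_y U_b|_{y=0}=-b$ and the subtraction $u_\tau-U_b$ does not cancel the Neumann data. Second, the commutator term you single out, $l_j\,y(1+y)^{l_j-1}\tilde u_\tau\,\partial_y\tilde u_\tau$, does not in fact scale like $(1+y)^{l_j}|\tilde u_\tau||\partial_y\tilde u_\tau|$: the weight-hitting commutator from the diffusion $\kappa_i(T^0)\partial_{yy}$ produces only $(1+y)^{l_j-1}\tilde u_\tau\,\partial_y\tilde u_\tau$, which is directly absorbable by Cauchy--Schwarz (one power of $(1+y)$ to spare), while the weight-hitting commutator from the drift $-y\,\partial_r\bar{\mathfrak u}^0\cdot\vec n\,\partial_y$ produces, after integration by parts, a purely zeroth-order term $\sim l_j(1+y)^{l_j}|\tilde u_\tau|^2$ absorbed by Gr\"onwall. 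The true source of the weight hierarchy is commuting $\partial_t^i\nabla_{\bar x}^\alpha$ past the $y$-linear drift coefficient: each time a derivative lands on $-y\,\partial_r\bar{\mathfrak u}^0\cdot\vec n$ it leaves a source of the form $y\cdot(\text{bounded})\cdot\partial_y(\text{lower-order derivative of } u_\tau)$, and pairing this against $(1+y)^{l_j}(\text{order-}j\text{ derivative})$ costs two extra powers of the weight on the lower-order factor; the decreasing sequence $l_j=l+2(k-j)$ supplies exactly this slack so that the factor sits in a norm already controlled at the previous level. With these adjustments your sketch matches the reference's scheme and would close.
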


			\section{The Existence of Knudsen boundary layer problem}\label{sec4}
			In this section, we recall that $\mv=(\mv_1,\mv_2)$ and $\bar{\fu}^0=(0,u_\tau^0)$. 
			For \eqref{K}, we define $f^c(\eta,\mv):=f(\eta,\mv)+\Upsilon(\eta)f_b(\mv)$, then the equation for $f^c$ is
			\begin{align}\label{K-1}
				\begin{cases}
					\dis \mv_1 \partial_\eta f^c+G(\eta)\left(\mv_2^2\f{\partial f^c}{\partial \mv_1}-\mv_1 \mv_2\f{\partial f^c}{\partial \mv_2}\right)-\f{ u_\tau^0}{2T^0}\cdot G(\eta)\mv_1 \mv_2 f^c+\FL_0 f^c=S,\\
					\dis f^c(0,\mv)|_{\mv_1>0}=f^c(0,R\mv),\\
					f^c(d,\mv)|_{\mv_1<0}=0,
				\end{cases}
			\end{align} 
			where 
			\begin{align}\label{K-1.1}
				S=&\mathfrak{S}+\partial_\eta\Upsilon(\eta)\mv_1 f_b+G(\eta)\Upsilon(\eta)\left(\mv_2^2\f{\partial f_b}{\partial \mv_1}-\mv_1 \mv_2\f{\partial f_b}{\partial \mv_2}\right)\nonumber\\
				&-\f{u_\tau^0}{2T^0}G(\eta)\Upsilon(\eta)\mv_1 \mv_2 f_b+\Upsilon(\eta)\FL_0f_b.
			\end{align}
		 The solvable condition \eqref{2.19} and $\mathfrak{S}\in\mathcal{N}_0^\perp$ in Theorem \ref{thm2.2} are actually
		 	\begin{align}\label{5.1-1}
		 	\langle S,(1,\mv_2,|\mv|^2)\sqrt{\mu_0}\rangle \equiv0.
		 \end{align}
		Then one considers
			\begin{align}\label{5.1}
				\begin{cases}
					\dis	\mv_1\partial_\eta f+G(\eta)(\mv_2^2\frac{\partial f}{\partial \mv_1}-\mv_1 \mv_2 \frac{\partial f}{\partial \mv_2})-\frac{u_\tau^0}{2T^0}G(\eta)\mv_1 \mv_2 f+\FL_0f=S,\\
					\dis	f(0,\mv)|_{\gamma_-}=f(0,-\mv_1,\mv_2),\\
					\dis	f(d,\mv)|_{\gamma_-}=0,
				\end{cases}
			\end{align}
			where $ (\eta,\mv)\in\Omega_d:=[0,d]\times\R^2$ with $d=\v^{-\mathfrak{a}}$ and $\mathfrak{a}<\f23$, and $G(\eta)=-\f{\v^2}{1-\v^2\eta}$. The source term $S$ satisfies \eqref{5.1-1}. The notation $\gamma_-$ here denotes $\{(0,\mv)|\mv_1>0\}\cup\{(d,\mv)|\mv_1<0\}$. 
			
			
			We proceed to specify how the refined $L^2-L^\infty$ framework, combined with the smallness of $\v$ and the constructed zero incoming condition at $\eta=d$, systematically establishes the existence of solutions to \eqref{5.1}.
			
			Noting the definition of $\FL_0$ in \eqref{1.17-1}, one has that
			\begin{align*}
				\FL_0f&=\nu(\mv) f-Kf=\nu(\mv) f-(K_1f-K_2f)\\
				&=\nu(\mv) f-\left(\int_{\R^2}k_1(\mv, u)f(u)\, du-\int_{\R^2}k_2(\mv,u)f(u)\, du\right),
			\end{align*} 
			where $	\nu(\mv)=\int_{\R^2}|\mv-u|\mu_0(u)\,du\cong 1+|\mv|$ and
			\begin{align*}
				K_1f&=\int_{\R^2}k_1(\mv,u)f(u)\, du=\int_{\R^2}|\mv-u|\sqrt{\mu_0(\mv)}\sqrt{\mu_0(u)}f(u)\, du\\
				K_2f&=\int_{\R^2}k_2(\mv,u)f(u)\, du=2\int_{\R^2}|\mv-u|\sqrt{\mu_0(u)}\sqrt{\mu_0(v')}f(u')\, du.
			\end{align*}	
			One has from \cite{Gl} that
			\begin{align}\label{5.0-1}
				\begin{split}
				k_1(\mv,u)\lesssim |\mv-u|\exp\left\{-\f{1}{4T^0}(|\mv-\bar{\mathfrak{u}}^0|^2+|u-\bar{\mathfrak{u}}^0|^2)\right\},\\
				k_2(\mv,u)\lesssim \exp\left\{-\f{1}{8T^0}\left(|\mv-u|^2+\f{\left||\mv-\bar{\mathfrak{u}}^0|^2-|u-\bar{\mathfrak{u}}^0|^2\right|^2}{|\mv-u|^2}\right)\right\},
			    \end{split}
			\end{align}
			and $k_i(\mv,u)\in L_u^1\cap L^2_u,\, i=1,2$.
			
			For $L^\infty$ estimate, we define $\mf=\f{\sqrt{\mu_0}}{\sqrt{\mu_M}}f$, then \eqref{5.1} is rewritten as
			\begin{align}\label{5.2}
				\begin{cases}
					\dis	\mv_1\partial_\eta \mathbf{f}+G(\eta)(\mv_2^2\frac{\partial \mathbf{f}}{\partial \mv_1}-\mv_1 \mv_2 \frac{\partial \mathbf{f}}{\partial \mv_2})+\FL_M\mathbf{f}=\f{\sqrt{\mu_0}}{\sqrt{\mu_M}}S=:\bar{S},\\
					\dis 	\mathbf{f}(0,\mv)|_{\gamma_-}=\mathbf{f}(0,-\mv_1,\mv_2),\\
					\dis	\mathbf{f}(d,\mv)|_{\gamma_-}=0,
				\end{cases}
			\end{align}
			where
			\begin{align*}
				\FL_M \mathbf{f}=-\f{1}{\sqrt{\mu_M}}\left[Q(\mu_0,\sqrt{\mu_M}\mathbf{f})+Q(\sqrt{\mu_M}\mathbf{f},\mu_0)\right]=\nu \mathbf{f}-K_M\mathbf{f}.
			\end{align*}
			
			From the definition of $f$ and $\mathbf{f}$, one has
			\begin{align*}
				K_M\mathbf{f}=&K_{M,1}\mathbf{f}-K_{M,2}\mathbf{f}\nonumber\\
				=&\int_{\R^2}k_{M,1}(\mv,u)\mathbf{f}(u)du-\int_{\R^2}k_{M,2}(\mv,u)\mathbf{f}(u)du,
			\end{align*}
			with
			\begin{align*}
				k_{M,1}(\mv,u)=k_1(\mv,u)\f{\sqrt{\mu_0(\mv)}\sqrt{\mu_M(u)}}{\sqrt{\mu_0(u)}\sqrt{\mu_M(\mv)}},\quad k_{M,2}(\mv,u)=k_2(\mv,u)\f{\sqrt{\mu_0(\mv)}\sqrt{\mu_M(u)}}{\sqrt{\mu_0(u)}\sqrt{\mu_M(\mv)}}.
			\end{align*}
			\begin{Lemma}\label{lem4.1}
				For $k_M(\mv,u)$, one has that
				\begin{align}\label{5.2-0}
					\int_{\R^2}k_M(\mv,u)(1+|u|^2)^{\f{\beta}{2}}e^{\zeta|u|^2}du\leq C(1+|\mv|^2)^{\f{\beta-1}{2}}e^{\zeta|\mv|^2},\quad \beta\geq 0,\zeta<\f{1}{4T_M}.
				\end{align}
			\end{Lemma}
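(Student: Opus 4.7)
The plan is to treat $k_{M,1}$ and $k_{M,2}$ separately and, in both cases, combine the Gaussian factors in the kernel bound \eqref{5.0-1} with the weight ratio
$$
\mathcal{R}(\mv,u):=\frac{\sqrt{\mu_0(\mv)}\sqrt{\mu_M(u)}}{\sqrt{\mu_0(u)}\sqrt{\mu_M(\mv)}}
=\exp\!\Big\{\tfrac{|u-\bar{\fu}^0|^2-|\mv-\bar{\fu}^0|^2}{4T^0}+\tfrac{|\mv|^2-|u|^2}{4T_M}\Big\}\cdot\mathrm{const},
$$
so that the combined $u$-exponent has coefficient strictly less than $-1/(4T_M)$ on $|u|^2$, which, together with $\zeta<1/(4T_M)$, ensures that $e^{\zeta|u|^2}$ is absorbed and the $u$-integral converges.

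\textbf{Step 1 ($k_{M,1}$).} Using the first bound in \eqref{5.0-1}, a direct computation shows that
$$
k_{M,1}(\mv,u)\lesssim |\mv-u|\exp\!\Big\{-\tfrac{|\mv-\bar{\fu}^0|^2}{2T^0}+\tfrac{|\mv|^2-|u|^2}{4T_M}\Big\}.
$$
Since $T^0<2T_M$ by \eqref{ta}, the coefficient of $|\mv|^2$ in the resulting $\mv$-exponent is $-\tfrac{1}{2T^0}+\tfrac{1}{4T_M}<0$, so after bringing in $e^{\zeta|\mv|^2}(1+|\mv|^2)^{(\beta-1)/2}$ on the right-hand side we obtain an arbitrarily negative Gaussian decay in $\mv$ which dominates any polynomial loss. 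The $u$-integral is bounded using
$$
\int_{\R^2}|\mv-u|(1+|u|^2)^{\beta/2}e^{-(1/(4T_M)-\zeta)|u|^2}\,du\leq C(1+|\mv|)^{\beta+1},
$$
which, combined with the Gaussian decay in $\mv$, gives the desired bound with room to spare.

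\textbf{Step 2 ($k_{M,2}$).} This is the main obstacle. Setting $z=\mv-u$, we use the identity $|\mv-\bar{\fu}^0|^2-|u-\bar{\fu}^0|^2=z\cdot(\mv+u-2\bar{\fu}^0)$ to rewrite the second bound in \eqref{5.0-1} as
$$
k_{M,2}(\mv,u)\lesssim \exp\!\Big\{-\tfrac{|z|^2}{8T^0}-\tfrac{\big(\hat z\cdot(\mv+u-2\bar{\fu}^0)\big)^2}{8T^0}\Big\}\cdot\mathcal{R}(\mv,u).
$$
The classical Grad-type argument now introduces polar coordinates centered at $\mv$: write $u=\mv-z$, split $z=r\hat z$ and decompose $\mv+u-2\bar{\fu}^0$ into its components parallel and perpendicular to $\hat z$. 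The parallel direction is controlled by the second Gaussian above, while integration over the perpendicular direction produces a factor $\lesssim (1+|\mv|)^{-1}$ after completing the square in $u_\perp$. Combining this with the combined Gaussian decay of $\mathcal{R}(\mv,u)$ and the kernel in the $|z|^2$ variable, and absorbing $e^{\zeta|u|^2}$ using the strict inequality $\zeta<1/(4T_M)$, yields
$$
\int_{\R^2}k_{M,2}(\mv,u)(1+|u|^2)^{\beta/2}e^{\zeta|u|^2}\,du\leq C(1+|\mv|^2)^{(\beta-1)/2}e^{\zeta|\mv|^2}.
$$

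\textbf{Step 3.} Adding the estimates from Steps 1 and 2 and using $|k_M|\leq k_{M,1}+k_{M,2}$ gives \eqref{5.2-0}. The main obstacle is the careful tracking of exponents in Step 2: one must verify that the quadratic forms obtained after combining the three exponential factors (the Grad kernel, the weight ratio $\mathcal{R}$, and $e^{\zeta|u|^2}$) remain negative definite in $u$ and produce on the $\mv$ side exactly the target exponent $\zeta|\mv|^2$, with the extra $(1+|\mv|)^{-1}$ factor arising from the transverse Gaussian integration.
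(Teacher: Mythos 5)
Your proposal follows the same route as the paper: split $k_M$ into $k_{M,1}$ and $k_{M,2}$, track how the weight ratio $\mathcal{R}(\mv,u)=\frac{\sqrt{\mu_0(\mv)}\sqrt{\mu_M(u)}}{\sqrt{\mu_0(u)}\sqrt{\mu_M(\mv)}}$ modifies the kernel bounds \eqref{5.0-1}, absorb $e^{\zeta|u|^2}$ using $\zeta<\frac{1}{4T_M}$ and $T^0\leq 2T_M$, and then run a Grad-type polar-coordinate argument in $z=\mv-u$ to extract the $(1+|\mv|)^{-1}$ gain for $k_{M,2}$. Step~1 is essentially identical to the paper's computation.

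Two places in Step~2 need sharpening. First, the decisive arithmetic is flagged but not carried out: after multiplying $k_2$, $\mathcal{R}$, and $e^{\zeta(|u|^2-|\mv|^2)}$, the residual factor is $\exp\bigl\{\bigl(\frac{1}{4T_M}-\frac{1}{4T^0}-\zeta\bigr)\bigl(|\mv-\bar{\fu}^0|^2-|u-\bar{\fu}^0|^2\bigr)\bigr\}$ up to a harmless linear drift, and one must verify $\bigl|\frac{1}{4T_M}-\frac{1}{4T^0}-\zeta\bigr|<\frac{1}{4T^0}$ so that the $-\frac{1}{8T^0}\frac{D^2}{|z|^2}$ piece of $k_2$ (with $D=|\mv-\bar{\fu}^0|^2-|u-\bar{\fu}^0|^2$) absorbs the exponentially growing factor by Young's inequality, at the cost of a fraction of the $e^{-|z|^2/(8T^0)}$ decay. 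Both halves of this inequality chain use precisely $T^0\leq 2T_M$ from \eqref{ta} and $\zeta<\frac{1}{4T_M}$; without this check the "negative-definiteness" you invoke is an assertion, not a proof. Second, your description of the angular integration is internally inconsistent: with $z=\mv-u=r\omega$, the integration variables are $r$ and the polar angle $\theta$ of $\omega$, and in two dimensions there is no separate "$u_\perp$" direction to complete a square in. The $(1+|\mv|)^{-1}$ factor arises, as in the paper, by completing the square in $r$ first (leaving $\exp\{-c|\mv-\bar{\fu}^0|^2\cos^2\theta\}$) and then estimating the one-dimensional $\theta$-integral. Your high-level plan is right and matches the paper; the gaps are the omitted coefficient verification and the misdescribed change of variables.
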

		\begin{remark}
			Since we have not found a direct reference which gives Lemma \ref{lem4.1}, so we present details of calculation in the Appendix 6.3.
		\end{remark} 
		It is known that $\FL_0$ is a self adjoint linear operator and for $g\in\mathcal{N}_0^{\perp}$, there is some positive constant $c_0>0$ such that
			\begin{align*}
				\langle g,\FL_0g\rangle \geq c_0\|(\FI-\FP_0)g\|_{\nu}^2.
			\end{align*}
			Define $\FL_0^{-1}:\mathcal{N}_0^\perp\to \mathcal{N}_0^\perp$ as the pseudo-inverse operator of $\FL_0$. In the following, we will write $w_\beta$ as $w$ for simplicity when no confusion arises.
			\subsection{  Existence of solution}
			\begin{Definition}
				We define the backward characteristic for \eqref{5.2} as 
				\begin{align}\label{5-6}
					\begin{cases}
						\dis	\f{dX}{ds}=V_1,\quad\f{dV_1}{ds}=G(X(s))V_2^2,\quad \f{dV_2}{ds}=-G(X(s))V_1V_2,\\
						\dis	(X,V)(\mathfrak{t})=(\eta,\mv).
					\end{cases}
				\end{align}
			\end{Definition}
			For late use, we define an increasing function $W(\eta)$ as
			\begin{align*}
				W(\eta)=\int_0^\eta -G(z)dz= -\ln(1-\v^2\eta).
			\end{align*}
			It is clear that
			\begin{align*}
				0\leq W(\eta)	\leq \f{\v^2\eta}{1-\v^2\eta},\quad e^{W(\eta)}-1=\f{1}{1-\v^2\eta}-1=\f{\v^2\eta}{1-\v^2\eta}.
			\end{align*}
			As in \cite{Wu-2016-JDE}, along the characteristics, it holds that
			\begin{align}\label{5-7}
				E_1:=V_1^2+V_2^2\equiv \mv_1^2+\mv_2^2=|\mv|^2,\quad E_2:=V_2(s)e^{-W(X(s))}\equiv \mv_2\cdot e^{-W(\eta)}.
			\end{align}
			For each $(\mathfrak{t},\eta,\mv)$ with $\eta\in\overline{\Omega}_d,\mv_1\neq0$, we define its backward time $\mathfrak{t}_{\mathbf{b}}(\mathfrak{t},\eta,\mv)\geq0$, 
			\begin{align*}
				\mathfrak{t}_{\mathbf{b}}(\mathfrak{t},\eta,\mv):=\min\left\{z\geq0: \left(\eta-\int_{\tau}^{\mathfrak{t}}V_1(s) ds,V(\tau)\right)\in\gamma_-,\  \mathfrak{t}-\tau=z\right\}.
			\end{align*}
			We also define $\eta_{\mathbf{b}}(\mathfrak{t},\eta,\mv)=\eta(\mathfrak{t})-\int_{\mathfrak{t}-\mathfrak{t}_{\mathbf{b}}}^{\mathfrak{t}} V_1(s) ds\in\partial\overline{\Omega}_d $. Let $\eta\in\overline{\Omega}_d,\ (\eta,\mv)\notin\{\gamma_0\cup\gamma_-\},\ (\mathfrak{t}_0,\eta_0,v_0)=(\mathfrak{t},\eta,\mv)$. We inductively define for $k\geq0$ that 
			\begin{equation*}
				(\mathfrak{t}_{k+1},\eta_{k+1})=(\mathfrak{t}_{k}-\mathfrak{t}_{\mathbf{b}}(\eta_{k},v_{k}),\eta_{\mathbf{b}}(\eta_{k},v_k)).
			\end{equation*}
			The definition of $v_{k+1}$ can be obtained in the following backtracking process. In fact, we divide the discussion into two cases.\\
			
			{\it Case 1. $E_2>|V_2(d)|^2=\mv_2^2\cdot e^{2W(d)-2W(\eta)}$.} It would be discussed for $\mv_1>0$ and $\mv_1<0$, respectively.\\
			{\it Case 1.1. $E_1> \mv_2^2\cdot e^{2W(d)-2W(\eta)}, \mv_1<0.$} In this type, the backward characteristic line is traced back from $(\eta,\mv)$ to $X(s)=d$. Then one has 
			\begin{align*}
				\begin{cases}
				\dis X_{cl}(s)=\eta-\int_{s}^{\mathfrak{t}}V_{cl,1}(z)dz,\\
				V_{cl}=\left(-\sqrt{E_1-E_2^2e^{2W(X(s))}},E_2e^{W(X(s))}\right).
				\end{cases}
			\end{align*}
			{\it Case 1.2. $E_1> \mv_2^2\cdot e^{2W(d)-2W(\eta)}, \mv_1>0.$} At this time, the backward characteristic line is traced back to $X(s)=0$ first, and then traced back to $X(s)=d$ due to the specular reflection condition. Then we obtain from \eqref{5-6}--\eqref{5-7} that
			\begin{align*}
				X_{cl}(s)&=\mathbf{1}_{(\mathfrak{t}_{1},\mathfrak{t}]}(s)\left\{\eta-\int_{s}^{\mathfrak{t}}V_{cl,1}(z)dz\right\}+\mathbf{1}_{(\mathfrak{t}_2,\mathfrak{t}_1]}(s)\left\{-\int_{s}^{\mathfrak{t}_1}V_{cl,1}(z)dz\right\},\\
				V_{cl}(s)&=\mathbf{1}_{(\mathfrak{t}_{1},\mathfrak{t}]}(s)\left(\sqrt{E_1-E_2^2e^{2W(X(s))}},E_2e^{W(X(s))}\right)\nonumber\\
				&\quad+\mathbf{1}_{(\mathfrak{t}_{2},\mathfrak{t}_1]}(s)\left(-\sqrt{E_1-E_2^2e^{2W(X(s))}},E_2e^{W(X(s))}\right),
			\end{align*}
			and the backward velocity $v_1=(-\sqrt{E_1-E_2^2}, E_2)$ representing the velocity after collision with $\eta_1=0$. See the figures below for {\it Case 1}.
			\begin{figure}[H]\label{fig1}
				\centering
				\subfigure[Case 1.1 $\mv_1<0$]{
					\begin{tikzpicture}
						\draw (0,0.5)--(2,0.5);
						\draw (0,3)--(2,3);
						\draw[black,->] (1,2)--(1,1.5);
						\draw (0.2,3)--(0.6,3.2);
						\draw (0.6,3)--(1.0,3.2);
						\draw (1.0,3)--(1.4,3.2);
						\draw (1.4,3)--(1.8,3.2);
						\draw (0.2,0.3)--(0.6,0.5);
						\draw (0.6,0.3)--(1.0,0.5);
						\draw (1.0,0.3)--(1.4,0.5);
						\draw (1.4,0.3)--(1.8,0.5);
						\draw[dashed] (1,2)--(1,3);
						\fill[black] (1,2)circle (.04);
						\node [left] at (1,2) {$(\eta,\mathfrak{v})$};
						\node [right] at (2.2,0.5) {$\eta=0$};
						\node [right] at (2.2,3) {$\eta=d$};						
						\draw[->] (1,2.6)--(1,2.4);
				\end{tikzpicture}}
				\qquad \qquad \qquad
				\subfigure[Case 1.2 $\mv_1>0$]{
					\begin{tikzpicture}
						\draw (0,0.5)--(2,0.5);
						\draw (0,3)--(2,3);
						\draw[black,->] (1,1.5)--(1,2);
						\draw (0.2,3)--(0.6,3.2);
						\draw (0.6,3)--(1.0,3.2);
						\draw (1.0,3)--(1.4,3.2);
						\draw (1.4,3)--(1.8,3.2);
						\draw (0.2,0.3)--(0.6,0.5);
						\draw (0.6,0.3)--(1.0,0.5);
						\draw (1.0,0.3)--(1.4,0.5);
						\draw (1.4,0.3)--(1.8,0.5);
						\draw[dashed] (1,0.5)--(1,1.5);
						\draw[dashed] (1.5,0.5)--(1.5,3);
						\draw[blue,->] (1.5,0.5)--(1.5,0.2);
						\fill[black] (1,1.5)circle (.04);
						\node [left] at (1,1.5) {$(\eta,\mathfrak{v})$};
						\fill (1.5,0.5)circle (.04);
						\node [right] at (1.5,0.7) {$(\eta_1,\mathfrak{v}_1)$};
						\draw[->] (1,1)--(1,1.2);
						\draw[->] (1.5,2.3)--(1.5,2.1);
				\end{tikzpicture}}
				\caption{The backward progress for {\it Case 1}}
			\end{figure}

			{\it Case 2. $E_1=V_2^2(\eta_+)$ for some $0<\eta_+\leq d$, which is equivalent to
			\begin{align}\label{22.4}
				\mv_1^2+\mv_2^2=\mv_2^2\cdot e^{2W(\eta_+)-2W(\eta)}.
		\end{align}}
			Thus, the characteristic line cannot reach $\gamma_-$ on $X(s)=d$, then it will cycle between $0$ and $\eta_+$. We have $\eta_k=0$ for all $k\geq 1$, then define $v_k=v_1=(-\sqrt{E_1-E_2^2}, E_2)$.\\
			{\it Case 2.1.} For $\mv_1>0$, we define the backward cycle as
			\begin{equation}\label{Eq3.11}
				\left\{\begin{aligned}
					X_{cl}(s;t,\eta,\mv)&=\sum\limits_{k\geq 0}\mathbf{1}_{(\mathfrak{t}_{k+1},\mathfrak{t}_k]}(s)\left\{\eta_k-\int_{s}^{\mathfrak{t}_k} V_{cl,1}(z) dz\right\},\\
					V_{cl}(s;t,\eta,\mv)&=\mathbf{1}_{(\mathfrak{t}_{1},\mathfrak{t}]}(s)\left(\sqrt{E_1-E_{2}^{2}e^{2W(X(s))}},E_{2}e^{W(X(s))}\right)\nonumber\\
					&\quad +\sum\limits_{k\geq 1}\mathbf{1}_{(\mathfrak{t}_{k,0},\mathfrak{t}_{k}]}(s)\left(-\sqrt{E_1-E_{2}^{2}e^{2W(X(s))}},E_{2}e^{W(X(s))}\right)\nonumber\\	
					&\quad +\sum\limits_{k\geq 1}\mathbf{1}_{(\mathfrak{t}_{k+1},\mathfrak{t}_{k,0}]}(s)\left(\sqrt{E_1-E_{2}^{2}e^{2W(X(s))}},E_{2}e^{W(X(s))}\right).
				\end{aligned}\right.
			\end{equation}
			The notation $\mathfrak{t}_{k,0}$ means that on the $k$-th cycle, $V_{1}(s)=0$ at $s=\mathfrak{t}_{k,0}\in[\mathfrak{t}_{k+1},\mathfrak{t}_k]$.\\
			{\it Case 2.2.} For $\mv_1<0$, the backward cycle is
				\begin{equation}
				\left\{\begin{aligned}
					X_{cl}(s;t,\eta,\mv)&=\sum\limits_{k\geq 0}\mathbf{1}_{(\mathfrak{t}_{k+1},\mathfrak{t}_k]}(s)\left\{\eta_k-\int_{s}^{\mathfrak{t}_k} V_{cl,1}(z) dz\right\},\\
					V_{cl}(s;t,\eta,\mv)&=\sum\limits_{k\geq 0}\mathbf{1}_{(\mathfrak{t}_{k,0},\mathfrak{t}_{k}]}(s)\left(-\sqrt{E_1-E_{2}^{2}e^{2W(X(s))}},E_{2}e^{W(X(s))}\right)\nonumber\\
					&\quad+\sum\limits_{k\geq 0}\mathbf{1}_{(\mathfrak{t}_{k+1},\mathfrak{t}_{k,0}]}(s)\left(\sqrt{E_1-E_{2}^{2}e^{2W(X(s))}},E_{2}e^{W(X(s))}\right).
				\end{aligned}\right.
			\end{equation}
			See the following figure for {\it Case 2}:
			\begin{figure}[H]\label{fig2}
				\centering
				\subfigure[Case 2.1 $\mv_1>0$]{
					\begin{tikzpicture}
						\draw (0,0.5)--(3,0.5);
						\draw (0,4)--(3,4);
						\node at (3.5,0.5) {$X=0$};
						\node at (3.5,4) {$X=d$};
						\draw (0.2,4)--(0.6,4.2);
						\draw (0.6,4)--(1.0,4.2);
						\draw (1.0,4)--(1.4,4.2);
						\draw (1.4,4)--(1.8,4.2);
						\draw (1.8,4)--(2.2,4.2);
						\draw (2.2,4)--(2.6,4.2);
						\draw (2.6,4)--(3.0,4.2);
						\draw (0.2,0.3)--(0.6,0.5);
						\draw (0.6,0.3)--(1.0,0.5);
						\draw (1.0,0.3)--(1.4,0.5);
						\draw (1.4,0.3)--(1.8,0.5);
						\draw (1.8,0.3)--(2.2,0.5);
						\draw (2.2,0.3)--(2.6,0.5);
						\draw (2.6,0.3)--(3.0,0.5);
						\draw[densely dashed] (0,3.7)--(3,3.7);
						\node at (3.6,3.6) {$X=\eta_+$};
						\draw[->] (0.5,2)--(0.5,2.5);		
						\fill[black] (0.5,2)circle (.04);
						\node [left] at (0.5,2) {$(\eta,\mathfrak{v})$};
						\draw[dashed] (0.5,0.5)--(0.5,2);
						\draw[dashed] (0.8,0.5)--(0.8,3.7);
						\draw[blue,->] (0.8,0.5)--(0.8,0.2);
						\fill (0.8,0.5)circle (.04);
						\node  at (0.8,0.1) {$(\eta_1,v_1)$};
						\draw[->] (1.1,3.5)--(1.1,3.7);
						\draw[dashed] (1.1,0.5)--(1.1,3.5);
						\draw[dashed] (1.4,0.5)--(1.4,3.7);
						\draw[blue,->] (1.4,0.5)--(1.4,0.2);
						\fill (1.4,0.5)circle (.04);	
						\draw[loosely dotted] (1.4,2)--(2.2,2);
						\draw[blue,->] (2.2,0.5)--(2.2,0.2);
						\fill (2.2,0.5)circle (.04);
						\node  at (2.2,0.1) {$(\eta_k,v_k)$};
						\draw[dashed] (2.2,0.5)--(2.2,3.7);
						\draw[->] (2.5,3.5)--(2.5,3.7);
						\draw[dashed] (2.5,0.5)--(2.5,3.5);
						\draw[->] (0.5,1)--(0.5,1.2);
						\draw[->] (0.8,2.3)--(0.8,2.1);
						\draw[->] (1.1,1)--(1.1,1.2);
						\draw[->] (1.4,2.3)--(1.4,2.1);
						\draw[->] (2.2,2.3)--(2.2,2.1);
						\draw[->] (2.5,1)--(2.5,1.2);
				\end{tikzpicture}}
				\qquad\qquad
				\subfigure[Case 2.2 $\mv_1<0$]{
					\begin{tikzpicture}
						\draw (0,0.5)--(3,0.5);
						\draw (0,4)--(3,4);
						\node at (3.5,0.5) {$X=0$};
						\node at (3.5,4) {$X=d$};
						\draw (0.2,4)--(0.6,4.2);
						\draw (0.6,4)--(1.0,4.2);
						\draw (1.0,4)--(1.4,4.2);
						\draw (1.4,4)--(1.8,4.2);
						\draw (1.8,4)--(2.2,4.2);
						\draw (2.2,4)--(2.6,4.2);
						\draw (2.6,4)--(3.0,4.2);
						\draw (0.2,0.3)--(0.6,0.5);
						\draw (0.6,0.3)--(1.0,0.5);
						\draw (1.0,0.3)--(1.4,0.5);
						\draw (1.4,0.3)--(1.8,0.5);
						\draw (1.8,0.3)--(2.2,0.5);
						\draw (2.2,0.3)--(2.6,0.5);
						\draw (2.6,0.3)--(3.0,0.5);
						\draw[densely dashed] (0,3.7)--(3,3.7);
						\node at (3.6,3.6) {$X=\eta_+$};
						\draw[->] (0.2,2.5)--(0.2,2);		
						\fill[black] (0.2,2.5)circle (.04);
						\node [left] at (0.2,2.5) {$(\eta,\mathfrak{v})$};
						\draw[dashed] (0.2,2.5)--(0.2,3.7);
						\draw[->] (0.5,3.5)--(0.5,3.7);
						\draw[dashed] (0.5,0.5)--(0.5,3.5);
						\draw[blue,->] (0.8,0.5)--(0.8,0.2);
						\fill (0.8,0.5)circle (.04);
						\node  at (0.8,0.1) {$(\eta_1,v_1)$};
						\draw[dashed] (0.8,0.5)--(0.8,3.7);
						\draw[->] (1.1,3.5)--(1.1,3.7);
						\draw[dashed] (1.1,0.5)--(1.1,3.7);
						\draw[blue,->] (1.4,0.5)--(1.4,0.2);
						\draw[dashed] (1.4,0.5)--(1.4,3.7);
						\fill (1.4,0.5)circle (.04);	
						\draw[loosely dotted] (1.4,2)--(2.2,2);
						\draw[blue,->] (2.2,0.5)--(2.2,0.2);
						\fill (2.2,0.5)circle (.04);
						\node  at (2.2,0.1) {$(\eta_k,v_k)$};
						\draw[dashed] (2.2,0.5)--(2.2,3.7);
						\draw[->] (2.5,3.5)--(2.5,3.7);
						\draw[dashed] (2.5,0.5)--(2.5,3.5);
						\draw[->] (0.5,1)--(0.5,1.2);
						\draw[->] (0.8,2.3)--(0.8,2.1);
						\draw[->] (1.1,1)--(1.1,1.2);
						\draw[->] (1.4,2.3)--(1.4,2.1);
						\draw[->] (2.2,2.3)--(2.2,2.1);
						\draw[->] (2.5,1)--(2.5,1.2);
				\end{tikzpicture}}
				\caption{The backward progress for {\it Case 2}}
			\end{figure}
			
			For later use, we define:
			\begin{align*}
				\mathcal{L}_\lambda f_\lambda:= & 	\mv_1\partial_\eta f_\lambda+G(\eta)\left(\mv_2^2\f{\partial f_\lambda}{\partial \mv_1}-\mv_1 \mv_2\f{\partial f_\lambda}{\partial \mv_2}\right)+\nu f_\lambda -\lambda Kf_\lambda -\f{u_\tau^0}{2T^0}G(\eta)\mv_1 \mv_2 f_\lambda.
			\end{align*}
			In this subsection, we consider
			\begin{align}\label{5.23-1}
				\begin{cases}
					\mathcal{L}_\lambda f =S,\\
					f(0,\mv_1,\mv_2)|_{\mv_1>0}=f(0,-\mv_1,\mv_2), \\
					f(d,\mv)|_{\mv_1<0}=0.
				\end{cases}
			\end{align}
			We denote the solution of \eqref{5.23-1} as $\mathcal{L}_\lambda^{-1}S.$ 
			We start with the existence and uniform estimate for $\mathcal{L}_0^{-1}S$. 
			\begin{Lemma}\label{lem1.1}
				Let $\beta>3$ and  $\|\nu^{-1}w\bar{S}\|_{L^\infty_{\eta,\mv}}<\infty$. For
				\begin{align}\label{5.13-1}
					\begin{cases}
						\dis	\mathcal{L}_0f_0= 	\mv_1\partial_\eta f_0+G(\eta)\left(\mv_2^2\f{\partial f_0}{\partial \mv_1}-\mv_1 \mv_2\f{\partial f_0}{\partial \mv_2}\right)+\nu f_0-\f{u_\tau^0}{2T^0}G(\eta)\mv_1 \mv_2 f_0=S,\\
						\dis	f_0(0,\mv)|_{\gamma_-}=f_0(0,-\mv_1,\mv_2),\quad f_0(d,\mv)|_{\gamma_-}=0,
					\end{cases}
				\end{align}
				there is a unique solution $f_0$ of \eqref{5.13-1} satisfying 
				\begin{align*}
					\left\|\f{\sqrt{\mu_0}}{{\sqrt{\mu_M}}}wf_0\right\|_{L^\infty_{\eta,\mv}}+\left|\f{\sqrt{\mu_0}}{{\sqrt{\mu_M}}}wf_0\right|_{L^\infty(\gamma_+)}<C\left\|\nu^{-1}w\bar{S}\right\|_{L^\infty_{\eta,\mv}},
				\end{align*}
				where the constant $C$ is independent of $d$. Moreover, if $S$ is continuous in $[0,d]\times \R^2$, then $f_0$ is continuous away from grazing set $\{(\eta,\mv):\eta=0,\mv_1=0\}$.
			\end{Lemma}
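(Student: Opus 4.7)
The plan is to follow the penalization strategy sketched at the beginning of \textsc{Step 1} in Section 1.4.1. For each integer $n\geq 2$, I would introduce the approximate boundary value problem
\begin{align*}
\begin{cases}
\mathcal{L}_0 f^n = S,\\
f^n(0,\mv_1,\mv_2)|_{\mv_1>0} = (1-\tfrac{1}{n}) f^n(0,-\mv_1,\mv_2),\\
f^n(d,\mv)|_{\mv_1<0} = 0.
\end{cases}
\end{align*}
The strict contraction factor $1-1/n<1$ is the entry point for existence: along each backward characteristic constructed in Cases~1--2 above, the $m$-th iterate picks up a geometric factor $(1-1/n)^k$ after $k$ specular collisions, which combines with $e^{-\nu(\mv)(\mathfrak{t}-\mathfrak{t}_k)}$ from the $\nu f$ term to produce a Banach contraction in the weighted $L^\infty$ norm $\|\f{\sqrt{\mu_0}}{\sqrt{\mu_M}} w\,\cdot\,\|_{L^\infty_{\eta,\mv}}$. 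This yields $f^n$ together with a (possibly $n$-dependent) bound controlled by $\|\nu^{-1}w\bar S\|_{L^\infty_{\eta,\mv}}$.

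To pass $n\to\infty$, I would work with $\mf^n := \f{\sqrt{\mu_0}}{\sqrt{\mu_M}}f^n$ and integrate the linear transport equation for $\mf^{n_1}-\mf^{n_2}$ along its backward characteristics. This produces the series representation \eqref{1.32}, in which the first term carries the factor $(1-1/n_1)^k$ and the second term the factor $1/n_2-1/n_1$. The core difficulty is that in Case~2 the characteristics oscillate in $[0,\eta_+]$, where $\eta_+$ from \eqref{22.4} may be arbitrarily small when both $\eta$ and $|\mv_1/\mv_2|$ are small; neither the number of collisions nor the collision time can be controlled uniformly across the grazing set.

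I would resolve this by the two-regime splitting outlined in Section 1.4.1. On $\{|\mv_1|\geq \tilde m\}$, the conservation law $E_1=|\mv|^2$ together with monotonicity of $V_{cl,1}(X(s))$ along the trajectory guarantees $|v_{k,1}|\geq \tilde m$ for all $k$, while $(1+|\mv|^2)\eta_+$ admits a lower bound depending only on $\tilde m$. Choosing $k=(1+|\mv|^2)k_0(\tilde m)$ with $k_0(\tilde m)\gg 1$ forces $e^{-\nu(\mathfrak{t}-\mathfrak{t}_k)}\leq e^{-2(k-1)\eta_+}\leq 1/8$, which closes \eqref{1.52}. On $\{\eta\geq \tilde m'\}$, I would instead use $\eta_+\geq \eta\geq \tilde m'$ and take $k=k_0'(\tilde m')$, leading to \eqref{1.53}. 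Both bounds are of order $(1/n_1+1/n_2)\|\nu^{-1}w_{\beta+2}\bar S\|_{L^\infty_{\eta,\mv}}$ and vanish as $n_1,n_2\to\infty$.

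Because $\tilde m,\tilde m'>0$ are arbitrary, the two bounds combine to give pointwise (indeed locally uniform) convergence of $w_{\beta-2}\mf^n$ on $[0,d]\times \R^2$ off the grazing set $\{\eta=0,\mv_1=0\}$ and an $L^2(\Omega_d\times\R^2)$ limit $f_0=\f{\sqrt{\mu_M}}{\sqrt{\mu_0}}\mf_0$. Continuity of $f_0$ away from the grazing set is inherited from continuity of the iterates together with local uniform convergence; uniqueness follows by applying the same characteristic-integration argument to the difference of any two solutions. Passing $n\to\infty$ in the uniform bound on $\mf^n$ then yields the stated estimate, with a constant independent of $d$ because all of the above applies to both \eqref{1.52}--\eqref{1.53} with $\tilde m,\tilde m'$ depending only on the splitting, not on $d$. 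The main obstacle I expect is precisely the joint degeneracy $\eta\to 0$ and $|\mv_1|\to 0$: uniform convergence is not available there, so it is essential that the two cut-offs $\tilde m,\tilde m'$ be exploited in separate regimes and sent to zero only after the limit $\mf_0$ has been extracted and estimated.
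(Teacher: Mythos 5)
Your proposal follows the same strategy as the paper's proof: penalize the specular boundary with the factor $1-1/n$, solve the approximate problem via a contraction (the paper phrases this as an explicit iteration in a secondary index $i$ and shows Cauchyness using $(1-1/n)^i$, but that is the same mechanism you describe), and then pass $n\to\infty$ using exactly the two-regime splitting $\{|\mv_1|\geq\tilde m\}$ vs.\ $\{\eta\geq\tilde m'\}$ with the collision counts $k=(1+|\mv|^2)k_0(\tilde m)$ and $k=k_0'(\tilde m')$ respectively, before sending $\tilde m,\tilde m'\to 0$. Your identification of the key obstacle (the joint degeneracy $\eta\to 0$, $|\mv_1|\to 0$ precluding uniform convergence near the grazing set) and the accompanying $w_2$ velocity-weight loss is exactly what the paper exploits, so the argument is sound and matches the source.
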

			\begin{proof}
				Since the proof is very long, we divide it into several steps.\\
				 {\noindent \it Step 1.} Consider the iterative scheme for \eqref{5.13-1} with $n\in\mathbb{N}$:
				 	\begin{align*}
				 	\begin{cases}
				 		\dis	 \mv_1 \partial_\eta f^{i+1}+G(\eta)\left(\mv_2^2 \frac{\partial f^{i+1}}{\partial \mv_1}-\mv_1 \mv_2 \frac{\partial f^{i+1}}{\partial \mv_2}\right)+\nu f^{i+1}-\f{u_\tau^0}{2T^0}G(\eta)\mv_1 \mv_2 f^{i+1}=S, \\
				 		\dis	f^{i+1}(0, \mv)|_{\gamma_{-}}=\left(1-\frac{1}{n}\right) f^i\left(0,-\mv_1, \mv_2\right), \quad f^{i+1}(d, \mv)|_{\gamma_-}=0 .
				 	\end{cases}
				 \end{align*}
			 where $f^0\equiv 0$ and $i=0,1,2,...$ Let $\dis \mf^i=\f{\sqrt{\mu_0}}{\sqrt{\mu_M}}f^i$, the equation for $\mf^{i+1}$ is 
				\begin{align}\label{5.16}
					\begin{cases}
						\dis	 \mv_1 \partial_\eta \mathbf{f}^{i+1}+G(\eta)\left(\mv_2^2 \frac{\partial \mathbf{f}^{i+1}}{\partial \mv_1}-\mv_1 \mv_2 \frac{\partial \mathbf{f}^{i+1}}{\partial \mv_2}\right)+\nu \mathbf{f}^{i+1}=\bar{S}, \\
						\dis	\mathbf{f}^{i+1}(0, \mv)|_{\gamma_{-}}=\left(1-\frac{1}{n}\right) \mathbf{f}^i\left(0,-\mv_1, \mv_2\right), \quad \mathbf{f}^{i+1}(d, \mv)|_{\gamma_-}=0 .
					\end{cases}
				\end{align}
			Let $h^i=w \mf^i$. Along the characteristic \eqref{5-6}, we have from $\eqref{5.16}_1$ that
				\begin{align*}
					\f{d}{ds}\left\{h^{i+1}e^{\nu s}\right\}=w\bar{S}e^{\nu s}.
				\end{align*} 
				For {\it Case 1.1}, one has 
				\begin{align}\label{4.11-1}
					h^{i+1}(\eta,\mv)= \int_{\mathfrak{t}_1}^{\mathfrak{t}} w \bar{S} \cdot e^{-\nu(\mathfrak{t}-s)} d s .
				\end{align}
				In {\it Case 1.2}, one obtains
				\begin{align}\label{4.11-2}
					h^{i+1}(\eta,\mv)= \sum_{l=0}^1(1-\f1n)^l\int_{\mathfrak{t}_{l+1}}^{\mathfrak{t}_l} w \bar{S} \cdot e^{-\nu(\mathfrak{t}-s)} d s .
				\end{align}
				Hence, for {\it Case 1}, we have from \eqref{4.11-1}--\eqref{4.11-2} that
				\begin{align}\label{5.16-1}
					h^{i+1}(\eta,\mv)\equiv h^i(\eta,\mv)\equiv ...\equiv h^1(\eta,\mv).
				\end{align}
				
				For {\it Case 2}, one has
				\begin{align*}
					h^{i+1}(\eta,\mv)e^{\nu t}&=(1-\f1n)h^i(\eta_1,v_1)e^{\nu \mathfrak{t}_1}+\int_{\mathfrak{t}_1}^{\mathfrak{t}}w\bar{S}\cdot e^{\nu s}\, ds\nonumber\\
					&=...\nonumber\\
					&=(1-\f1n)^kh^{i+1-k}(\eta_k,v_k)e^{\nu \mathfrak{t}_k}+\sum_{l=1}^k(1-\f1n)^{l-1}\int_{\mathfrak{t}_l}^{\mathfrak{t}_{l-1}}w\bar{S}\cdot e^{\nu s}\, ds,
				\end{align*}
				which yields that
				\begin{align}\label{5.17}
					h^{i+1}(\eta,\mv)=&\left(1-\frac{1}{n}\right)^k h^{i+1-k}\left(0, v_k\right) e^{-\nu\left(\mathfrak{t}-\mathfrak{t}_k\right)}\nonumber\\
					&+\sum_{l=1}^k\left(1-\frac{1}{n}\right)^{l-1} \int_{\mathfrak{t}_l}^{\mathfrak{t}_{l-1}} w \bar{S} \cdot e^{-\nu(\mathfrak{t}-s)} d s .
				\end{align}
				Choosing $k=i+1$, then the first term on the RHS of \eqref{5.17} is zero, thus we have 
				\begin{align}\label{4.13-1}
					|h^{i+1}|\leq \left|\int_{\mathfrak{t}_k}^{\mathfrak{t}}w\bar{S}\cdot e^{-\nu (\mathfrak{t}-s)}\, ds\right|\leq \|\nu^{-1}w\bar{S}\|_{L^\infty_{\eta,\mv}}.
				\end{align}
				Combining \eqref{4.11-1}, \eqref{4.11-2} and \eqref{4.13-1}, we obtain
				\begin{align}\label{5.17-2}
					\left\|h^{i+1}\right\|_{L_{\eta, \mv}^{\infty}}+\left|h^{i+1}\right|_{L^{\infty}\left(\gamma_{+}\right)} \leq C\left\|\nu^{-1} w \bar{S}\right\|_{L_{\eta, \mv}^{\infty}}, \quad i=0,1,2,\dots,
				\end{align}
				where $C>0$ does not depend on $i,n$ and $\v$.
				
				{\noindent \it Step 2.} Now we consider $h^{i+1}-h^i$. In {\it Case 1}, we have from \eqref{5.16-1} that $$h^{i+1}-h^i\equiv 0.$$ For {\it Case 2}, taking $k=i$, one has from \eqref{5.17} that
				\begin{align*}
					(h^{i+1}-h^i)(\eta,\mv)=\left(1-\frac{1}{n}\right)^ih^1(0,v_i) e^{-\nu\left(\mathfrak{t}-\mathfrak{t}_i\right)},
				\end{align*}
				which, together with \eqref{5.17-2}, yields that
				\begin{align*}
					\|h^{i+1}-h^i\|_{L_{\eta,\mv}^{\infty}}+|h^{i+1}-h^i|_{L^{\infty}(\gamma_{+})}\leq C\left(1-\frac{1}{n}\right)^i\|\nu^{-1} w \bar{S}\|_{L_{\eta, \mv}^{\infty}} .
				\end{align*}
				It follows that $\left\{h^i\right\}$ is a Cauchy sequence in $L^{\infty}$, hence there exists a $\mf^n\in L^\infty$ such that $h^i\to w\mf^n$ as $i\to \infty$. It is clear that $\mf^n$ solves
				\begin{align}\label{4.19-1}
					\begin{cases}
					\dis	 \mv_1 \partial_\eta \mathbf{f}^n+G(\eta)\left(\mv_2^2 \frac{\partial \mathbf{f}^n}{\partial \mv_1}-\mv_1 \mv_2 \frac{\partial \mathbf{f}^n}{\partial \mv_2}\right)+\nu \mathbf{f}^n=\bar{S}, \\
					\dis	\mathbf{f}^n(0, \mv)|_{\gamma_{-}}=\left(1-\frac{1}{n}\right) \mathbf{f}^n\left(0,-\mv_1, \mv_2\right), \quad \mf^n(d, \mv)|_{\gamma_-}=0,
					\end{cases}
				\end{align}
			which is 
			\begin{align}
					\begin{cases}
					\mathcal{L}_0 f^n=S, \\
					f^n(0, \mv)|_{\gamma_{-}}=\left(1-\frac{1}{n}\right) f^n\left(0,-\mv_1, \mv_2\right), \quad f^n(d, \mv)|_{\gamma_-}=0,
				\end{cases}
			\end{align}
		where $f^n=\f{\sqrt{\mu_M}}{\sqrt{\mu_0}}\mf^n$. It follows from \eqref{5.17-2} that
				\begin{align}\label{5.17-3}
					\left\|\f{\sqrt{\mu_0}}{\sqrt{\mu_M}}wf^n\right\|_{L^\infty_{\eta,\mv}}+\left|\f{\sqrt{\mu_0}}{\sqrt{\mu_M}}wf^n\right|_{L^\infty(\gamma_+)}\leq C\|\nu^{-1}w\bar{S}\|_{L^\infty_{\eta,\mv}},
				\end{align}
				where $C>0$ is a constant independent of $n,d$ and $\v$. 

				{\noindent \it Step 3.}	Now we consider the limit $n\to \infty$. Without loss of generality, we take $n_1\leq n_2$. The equation of $(\mf^{n_1}-\mf^{n_2})$ is
				\begin{align}\label{5.20-1}
					\begin{cases}
						\dis	\mv_1 \partial_\eta (\mf^{n_1}-\mf^{n_2})+G(\eta)\left(\mv_2^2 \frac{\partial\left(\mf^{n_1}-\mf^{n_2}\right)}{\partial \mv_1}-\mv_1 \mv_2 \frac{\partial\left(\mf^{n_1}-\mf^{n_2}\right)}{\partial \mv_2}\right) +\nu\left(\mf^{n_1}-\mf^{n_2}\right)=0, \\
						\dis	\left(\mf^{n_1}-\mf^{n_2}\right)(0,\mv)|_{\gamma_{-}}=\left(1-\frac{1}{n_1}\right)\left(\mf^{n_1}-\mf^{n_2}\right)\left(0,-\mv_1, \mv_2\right)+\left(\frac{1}{n_2}-\frac{1}{n_1}\right) \mf^{n_2}\left(0,-\mv_1, \mv_2\right), \\
						\dis	\left(\mf^{n_1}-\mf^{n_2}\right)(d, \mv)|_{\gamma_-}=0.
					\end{cases}
				\end{align}
				
				To estimate \eqref{5.20-1}, we first consider the following problem.
				\begin{align}\label{4.20-1}
					\begin{cases}
						\dis	\mv_1 \partial_\eta g+G(\eta)\left(\mv_2^2\f{\partial g}{\partial \mv_1}-\mv_1 \mv_2\f{\partial g}{\partial \mv_2}\right)+\nu g=0,\\
						\dis 	g(0,\mv)|_{\gamma_-}=(1-\f1n)g(0,-\mv_1,\mv_2)+r(-\mv_1,\mv_2),\\
						\dis	g(d,\mv)|_{\gamma_-}=0.
					\end{cases}
				\end{align}
			Integrating along the backward characteristic, we have
				\begin{align*}
					g(\eta,\mv)=
					\begin{cases}
						0,\quad \text{for}\,\text{\it Case 1.1},\\
						r(v_1)e^{-\nu(\mathfrak{t}-\mathfrak{t}_1)},\quad \text{for} \,\text{\it Case 1.2}.
					\end{cases}
				\end{align*}
				For {\it Case 2}, one obtains 
				\begin{align}\label{4.24}
					g(\eta,\mv)=(1-\f1n)^kg(0,v_k)e^{-\nu(\mathfrak{t}-\mathfrak{t}_k)}+\sum_{l=1}^k(1-\f1n)^{l-1}r(v_l)e^{-\nu(\mathfrak{t}-\mathfrak{t}_l)},
				\end{align}
				where the second term on the RHS brings the main difficulty since there is not a uniform bounded $k$ to bound the first term for general $(\eta,\mv)$. 
				
				In {\it Case 2}, it is clear that
				\begin{align*}
					\int_{\mathfrak{t}_{l+1}}^{\mathfrak{t}_l}|V_1|\,ds =2\eta_+.
				\end{align*}
				It follows from \eqref{22.4} that
				\begin{align}\label{22.5}
					\eta_+&=\f{1}{\v^2}\left\{1-\f{|\mv_2|}{|\mv|}\right\}+\f{|\mv_2|}{|\mv|}\eta 
					 =\f{1}{\v^2}\left\{\f{|\mv_1|^2}{|\mv|\cdot(|\mv|+|\mv_2|)}\right\}+\f{|\mv_2|}{|\mv|}\eta.
				\end{align}
				If $|\mv_1|\geq \tilde{m}>0$, one has from \eqref{22.5} that $(1+|\mv|)^2\eta_+\geq \f{C\tilde{m}^2}{\v^2}>0$, so we choose $k=(1+|\mv|^2)k_0(\tilde{m})$ with $k_0(\tilde{m})>0$ suitably large such that
				\begin{align*}
					e^{-\nu(\mathfrak{t}-\mathfrak{t}_k)}\leq e^{-(k-1)\int_{\mathfrak{t}_2}^{\mathfrak{t}_1}|V_1|\,d\tau }= e^{-2(k-1)\eta_+}\leq \f18.
				\end{align*}
				It follows from \eqref{5-7} that $V_2(X(s))$ increases with $X(s)$, then one has $|v_{k+1,1}|\geq \tilde{m}$ for $|\mv_1|\geq \tilde{m}$.	
				Combining {\it Case 1} and {\it Case 2}, one obtains
				\begin{align}\label{5.20-2}
					&\|\mathbf{1}_{\{|\mv_1|\geq \tilde{m}\}} g\|_{L^\infty_{\eta,\mv}}+|\mathbf{1}_{\{|\mv_1|\geq \tilde{m}\}}g|_{L^\infty(\gamma_+)}\nonumber\\
					&\leq \f18|\mathbf{1}_{\{|\mv_1|\geq \tilde{m}\}}g|_{L^\infty(\gamma_+)}+C_{\tilde{m}}|w_2r|_{L^\infty_\mv}\leq C_{\tilde{m}}|w_2r|_{L^\infty_\mv}.
				\end{align}		
				
				For $\eta\geq \tilde{m}'$, one has from \eqref{22.5} that 
				\begin{align}\label{22.5-1}
					\eta_+=\eta+\left(\f{1}{\v^2}-\eta\right)\cdot\left(1-\f{|\mv_2|}{|\mv|}\right)\geq \tilde{m}'.
				\end{align}
			 So we choose $k=k_0'(\tilde{m}')$ with $k_0'(\tilde{m}')>0$ suitably large such that
			 \begin{align*}
			 	e^{-\nu(\mathfrak{t}-\mathfrak{t}_k)}\leq e^{-(k-1)\int_{\mathfrak{t}_2}^{\mathfrak{t}_1}|V_1|\,d\tau }= e^{-2(k-1)\eta_+}\leq \f18.
			 \end{align*}
		  In {\it Case 2}, we have from \eqref{4.24} that 
		  \begin{align}\label{4.24-1}
		    g(\eta,\mv)&=(1-\f1n)^kg(0,v_k)e^{-\nu(\mathfrak{t}-\mathfrak{t}_k)}+\sum_{l=1}^k(1-\f1n)^{l-1}r(v_l)e^{-\nu(\mathfrak{t}-\mathfrak{t}_l)}\nonumber\\
		    &=(1-\f1n)^kg(\eta,V_{cl}(\mathfrak{t}_c))e^{-\nu(\mathfrak{t}-\mathfrak{t}_k)-\nu(\mathfrak{t}_k-\mathfrak{t}_c)}+\sum_{l=1}^k(1-\f1n)^{l-1}r(v_l)e^{-\nu(\mathfrak{t}-\mathfrak{t}_l)},
		  \end{align}
		 where $\mathfrak{t}_c\in(\mathfrak{t}_{k+1},\mathfrak{t}_k)$ is the time that the backward characteristic moves from $X(\mathfrak{t}_k)=0$ to $X(\mathfrak{t}_c)=\eta$ without reaching $\eta_+$. Combining {\it Case 1} and {\it Case 2}, one obtains from \eqref{4.24-1} that
		 	\begin{align}\label{5.20-3}
		 	\|\mathbf{1}_{\{\eta\geq \tilde{m}'\}} g\|_{L^\infty_{\eta,\mv}}\leq \f18\|\mathbf{1}_{\{\eta\geq \tilde{m}'\}} g\|_{L^\infty_{\eta,\mv}}+C_{\tilde{m}'}|r|_{L^\infty_\mv}\leq C_{\tilde{m}'}|r|_{L^\infty_\mv}.
		 \end{align}		
				Applying \eqref{5.20-2}, \eqref{5.20-3} to \eqref{5.20-1}, and using \eqref{5.17-3}, we have
				\begin{align}\label{5.21-1}
					&\|\mathbf{1}_{\{|\mv_1|\geq \tilde{m}\}} w_{\beta-2}(\mf^{n_1}-\mf^{n_2})\|_{L^\infty_{\eta,\mv}}+|\mathbf{1}_{\{|\mv_1|\geq \tilde{m}\}}w_{\beta-2}(\mf^{n_1}-\mf^{n_2})|_{L^\infty(\gamma_+)}\\
					&\leq C_{\tilde{m}}(\f{1}{n_1}+\f{1}{n_2})|w_{\beta}\mf^{n_2}|_{L^\infty(\gamma_+)}\leq C_{\tilde{m}}(\f{1}{n_1}+\f{1}{n_2})\|\nu^{-1}w_{\beta }\bar{S}\|_{L^\infty_{\eta,\mv}}\,\to 0, \, n_1,n_2\to \infty ,\nonumber
				\end{align}
			and 
			\begin{align}\label{5.21-2}
					&\|\mathbf{1}_{\{\eta\geq \tilde{m}'\}} w_{\beta}(\mf^{n_1}-\mf^{n_2})\|_{L^\infty_{\eta,\mv}}\leq C_{\tilde{m}'}(\f{1}{n_1}+\f{1}{n_2})|w_{\beta}\mf^{n_2}|_{L^\infty(\gamma_+)}\nonumber\\
					&\leq C_{\tilde{m}'}(\f{1}{n_1}+\f{1}{n_2})\|\nu^{-1}w_{\beta }\bar{S}\|_{L^\infty_{\eta,\mv}}\,\to 0, \, n_1,n_2\to \infty .
			\end{align}
					It follows from \eqref{5.20-2} and \eqref{5.21-2} that $w_\beta\mf^{n} \to w_\beta\mathbf{f}_0$ in $L^\infty(\tilde{m}',d]\times \R^2)$ and  $w_{\beta-2}\mf^{n} \to w_{\beta-2}\mathbf{f}_0$ in $L^\infty([0,d]\times \{\mv\in\R^2:|\mv_1|\geq \tilde{m}\})$. 	Since $\tilde{m},\tilde{m}'$ can be arbitrary small, it is clear to know that $w_{\beta-2}\mf^{n} \to w_{\beta-2}\mathbf{f}_0$ a.e. in $\Omega_d\times\R^2$ and also $f^n-f_0 \to 0$ in $L^2(\Omega_d\times\R^2)$ with $f_0=\f{\sqrt{\mu_M}}{\sqrt{\mu_0}}\mathbf{f}_0$. Noting $\tilde{m}$ and $\tilde{m}'$ can be any positive constants, one has from \eqref{5.21-1}--\eqref{5.21-2} that $\{\mf^n\}$ converges almost everywhere on $\Omega_d\backslash\{\gamma_0\cup\gamma_-\}$. Hence there exists a limit function $\mf_0\in L^\infty_{w_{\beta}}$ and $\dis f_0=\f{\sqrt{\mu_M}}{\sqrt{\mu_0}}\mf_0$ solves \eqref{5.13-1}. It follows immediately that $f_0\in C([0,d]\times\{\R^2\backslash\{|\mv_1|=0\}\})\cup((0,d]\times\R^2)$, which shows that $f_0$ is continuous away from $\{(\eta,\mv):\eta=0,\mv_1=0\}$. Then Lemma \ref{lem1.1} is true and the proof is finished.
			\end{proof}
			Now we give the {\it a priori} estimate for $\mathcal{L}_\lambda^{-1}S$. 	To establish the $L^2$ estimate of $\mathcal{L}_\lambda^{-1}S$, 
			we highlight a critical analytical challenge arising from the geometric curvature term $\dis -\f{u_\tau^0}{2T^0}G(\eta)\mv_1 \mv_2 f_\lambda$ , where the velocity product $\mv_1\mv_2$ resists standard energy methods, see \cite{Duan-2021-ARMA}. In fact, this term  originates from the geometric effect of the disk. By utilizing the smallness associated with $\v$ in $G(\eta)$, we use $L^\infty$--estimate to overcome the difficulty.
			\begin{Lemma}\label{lem1.2-1}
				Assume $\beta>3$. For any given $\lambda\in[0,1]$, let $f_\lambda$ be the solution of \eqref{5.23-1} with $\dis \|f_\lambda\|_{L^2_{\eta,\mv}}<\infty$. Let $\dis\mf_\lambda = \f{\sqrt{\mu_0}}{\sqrt{\mu_M}}f_\lambda$, then it holds that 
				\begin{align}\label{5.24-0}
					\left\|f_\lambda\right\|_{L^2_{\eta,\mv}}^2 \leq C\v^{\min\{2-3\mathfrak{a},1-\mathfrak{a}\}}\left\|w \mf_\lambda\right\|_{L_{\eta,\mv}^{\infty}}^2+C\max\{\v^{-2}, d^4\}\|S\|_{L_{\eta,\mv}^2}^2, 
				\end{align}
				where the constant $C>0$ is independent of $\lambda$.
			\end{Lemma}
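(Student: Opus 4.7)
The starting point is the basic $L^2$ energy identity obtained by multiplying \eqref{5.23-1}$_1$ by $f_\lambda$ and integrating over $\Omega_d\times\R^2$. The transport term $\mv_1\partial_\eta f_\lambda$ produces a boundary integral: at $\eta=0$ the specular condition $f_\lambda(0,\mv_1,\mv_2)=f_\lambda(0,-\mv_1,\mv_2)$ kills the integrand via the change $\mv_1\to -\mv_1$, while at $\eta=d$ only $\mv_1>0$ survives, giving $\tfrac12|f_\lambda(d)|_{L^2(\gamma_+)}^2$. Integration by parts in $\mv$ turns $G(\eta)(\mv_2^2\partial_{\mv_1}-\mv_1\mv_2\partial_{\mv_2})f_\lambda$ into $\tfrac12 G(\eta)\mv_1 f_\lambda^2$. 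Using $\nu f_\lambda-\lambda K f_\lambda=(1-\lambda)\nu f_\lambda+\lambda\mathbf{L}_0 f_\lambda$ together with the coercivity $\langle\mathbf{L}_0 g,g\rangle\ge c_0\|(\mathbf{I}-\mathbf{P}_0)g\|_\nu^2$, we arrive at
\begin{align*}
\tfrac12|f_\lambda(d)|^2_{L^2(\gamma_+)}+(1-\lambda)\|\sqrt\nu f_\lambda\|^2+\lambda c_0\|(\mathbf{I}-\mathbf{P}_0)f_\lambda\|_\nu^2\le C\,\mathcal R+\left|\langle S,f_\lambda\rangle\right|,
\end{align*}
where $\mathcal R$ collects the two residual integrals $\int_0^d G(\eta)\mv_1 f_\lambda^2$ and $\int_0^d G(\eta)\tfrac{u_\tau^0}{2T^0}\mv_1\mv_2 f_\lambda^2$ arising from the $G$-derivatives and from the geometric correction. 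Using $|G(\eta)|\lesssim\v^2$ on $[0,d]$ with $d=\v^{-\mathfrak a}$, bounding $|f_\lambda|^2\le w^{-2}\tfrac{\mu_M}{\mu_0}\|w\mathbf f_\lambda\|_{L^\infty}^2$ and integrating the integrable Maxwellian weight in $\mv$, we get $|\mathcal R|\le C\v^{2-\mathfrak a}\|w\mathbf f_\lambda\|_{L^\infty_{\eta,\mv}}^2$. This reduction of the worst term $G(\eta)\mv_1\mv_2 f_\lambda^2$ via the $L^\infty$ bound is the key use of $\v$-smallness.

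\textbf{Case 1: $\lambda\in[0,1-\v]$.} Here $(1-\lambda)\|\sqrt\nu f_\lambda\|^2\ge \v\nu_0\|f_\lambda\|^2$, so after applying Cauchy--Schwarz to $\langle S,f_\lambda\rangle$ with weight $\v$, i.e. $|\langle S,f_\lambda\rangle|\le\tfrac{\v\nu_0}{2}\|f_\lambda\|^2+C\v^{-1}\|S\|^2$, we absorb the quadratic term and conclude
\begin{align*}
\|f_\lambda\|_{L^2_{\eta,\mv}}^2\le C\v^{1-\mathfrak a}\|w\mathbf f_\lambda\|_{L^\infty_{\eta,\mv}}^2+C\v^{-2}\|S\|_{L^2_{\eta,\mv}}^2,
\end{align*}
which gives the claimed bound with exponent $1-\mathfrak a$.

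\textbf{Case 2: $\lambda\in[1-\v,1]$.} Now the direct energy estimate only controls the microscopic part $\|(\mathbf{I}-\mathbf{P}_0)f_\lambda\|_\nu^2\le C\v^{2-\mathfrak a}\|w\mathbf f_\lambda\|_{L^\infty}^2+C\|S\|\,\|f_\lambda\|$. To recover the macroscopic part $\mathbf{P}_0 f_\lambda=\{\bar a+\bar b\cdot(\mv-\bar{\mathfrak u}^0)+\bar c(|\mv-\bar{\mathfrak u}^0|^2-2T^0)\}\sqrt{\mu_0}$, I will follow the test-function strategy of \cite{GPS,HJW}: for each macroscopic coefficient one constructs a test function of the form $\psi(\eta)\chi(\mv)\sqrt{\mu_0}$ where $\psi$ is an antiderivative in $\eta$ of the coefficient and $\chi(\mv)$ is a polynomial in $\mv$ that isolates the moment through integration against $\mv_1\chi$. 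Testing $\mathcal L_\lambda f_\lambda=S$ against these functions and integrating by parts in $\eta$, the transport term produces the macroscopic moment back, the collision term is harmless since $\mathbf{L}_0\chi\in\mathcal N_0^\perp$, and the source plus geometric corrections can be controlled. Because the antiderivative test functions are only $L^\infty_\eta$-bounded by $d$, the estimate costs a factor of $d$, yielding $\|\mathbf P_0 f_\lambda\|^2\le Cd^2\|(\mathbf I-\mathbf P_0)f_\lambda\|_\nu^2+Cd^2|f_\lambda(d)|_{L^2(\gamma_+)}^2+Cd^2\|S\|^2$. Combining with the microscopic bound gives, after another Cauchy--Schwarz on $\langle S,f_\lambda\rangle$,
\begin{align*}
\|f_\lambda\|_{L^2_{\eta,\mv}}^2\le C\,d^2\cdot\v^{2-\mathfrak a}\|w\mathbf f_\lambda\|_{L^\infty}^2+C d^4\|S\|_{L^2}^2\le C\v^{2-3\mathfrak a}\|w\mathbf f_\lambda\|_{L^\infty}^2+C d^4\|S\|_{L^2}^2,
\end{align*}
using $d^2=\v^{-2\mathfrak a}$. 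Taking the minimum of the two exponents finishes the proof.

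\textbf{Main obstacle.} The real work is the macroscopic estimate in Case 2. The specular reflection at $\eta=0$ forces the test functions to be compatible with the symmetry $\mv_1\to-\mv_1$, while the geometric term $-\tfrac{u_\tau^0}{2T^0}G(\eta)\mv_1\mv_2 f_\lambda$ injects an off-diagonal $\mv$-moment that must be absorbed as a lower-order error; this is precisely where the $\v^{2-\mathfrak a}\cdot d^2=\v^{2-3\mathfrak a}$ combination appears and is the reason for the restriction $\mathfrak a<2/3$.
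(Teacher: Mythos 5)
Your proposal is essentially correct and follows the same two-case strategy as the paper: a direct $\nu$-coercivity bound when $1-\lambda\geq\v$, and a GPS-type macroscopic estimate paying a factor of $d^2$ when $1-\lambda<\v$, with the geometric term $G(\eta)\tfrac{u_\tau^0}{2T^0}\mv_1\mv_2 f_\lambda^2$ absorbed into the weighted $L^\infty$-norm via $|G(\eta)|\lesssim\v^2$ and $d=\v^{-\mathfrak a}$. One small presentational difference: the paper does not build antiderivative-in-$\eta$ test functions; rather it tests against the $\eta$-independent velocity polynomials $\psi_a=-\mv_1(|\mv-\bar{\mathfrak u}^0|^2-8T^0)\sqrt{\mu_0}$, $\psi_{b,2}=\mv_1(\mv_2-u_\tau^0)\sqrt{\mu_0}$, $\psi_c=\mv_1(|\mv-\bar{\mathfrak u}^0|^2-4T^0)\sqrt{\mu_0}$, $\sqrt{\mu_0}$, obtains first-order ODEs in $\eta$ for the corresponding moments, and integrates them from $\eta$ to $d$ using the zero incoming data at $\eta=d$ (with the specular symmetry at $\eta=0$ used only for the $b_1$-moment). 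The two formulations are equivalent up to an integration by parts, and yield the same $d^2$ loss. Also note the paper multiplies the energy identity by $e^{-W(\eta)}$ so that the transport and geometric-correction flux combine into a perfect $\eta$-derivative of $\tfrac12 e^{-W(\eta)}\int\mv_1 f_\lambda^2\,d\mv$; you glossed over this, which is harmless since $e^{-W(\eta)}\sim 1$ on $[0,d]$. Finally, your phrase that the collision contribution is ``harmless since $\mathbf L_0\chi\in\mathcal N_0^\perp$'' should be understood as $\langle\mathbf L_0 f_\lambda,\chi\sqrt{\mu_0}\rangle=\langle(\mathbf I-\mathbf P_0)f_\lambda,\mathbf L_0(\chi\sqrt{\mu_0})\rangle$ being controlled by the microscopic dissipation, not as vanishing; the paper keeps this term as $\mathcal I_{b,3}$, $\mathcal I_{c,3}$, etc., and bounds it by $d^{1/2}\bigl(\int_0^d\|(\mathbf I-\mathbf P_0)f_\lambda\|_\nu^2\,d\eta\bigr)^{1/2}$, which is consistent with your estimate.
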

			\begin{proof}
				The equation \eqref{5.23-1} can be rewritten as 
				\begin{align}\label{5.23-0}
					\begin{cases}
						\dis	\mv_1\partial_\eta f_\lambda+G(\eta)\left(\mv_2^2\f{\partial f_\lambda}{\partial \mv_1}-\mv_1 \mv_2\f{\partial f_\lambda}{\partial \mv_2}\right)+(1-\lambda)\nu f_\lambda +\lambda \FL_0f_\lambda -\f{u_\tau^0}{2T^0}G(\eta)\mv_1 \mv_2 f_\lambda=S, \\
						\dis	f_\lambda(0,\mv)|_{\gamma_{-}}=f_\lambda\left(0,-\mv_1, \mv_2\right), \quad f_\lambda(d, \mv)|_{\gamma_-}=0.
					\end{cases}
				\end{align}
				Multiplying \eqref{5.23-0} by  $f_\lambda$ and integrating on $\R^2$, noting the non-negativity of $\FL_0$, one obtains
				\begin{align}\label{5.31}
					&\f12\partial_\eta\left\{\int_{\R^2}\mv_1 |f_\lambda|^2\,d\mv\cdot e^{-W(\eta)}\right\}+(1-\lambda)\|f_{\lambda}\|_{\nu}^2\cdot e^{-W(\eta)}+\lambda c_0 \|(\FI-\FP_0)f_\lambda\|_{\nu}^2\cdot e^{-W(\eta)}\nonumber\\
					&\leq C \left|G(\eta)e^{-W(\eta)}\int_{\R^2}\mv_1 \mv_2 |f_\lambda|^2 \, d\mv\right|+C\left|e^{-W(\eta)}\int_{\R^2}S\cdot f_\lambda\, d\mv \right|.
				\end{align}
				A direct calculation shows that  
				\begin{align*}
					&\int_0^d\left|G(\eta)e^{-W(\eta)}\int_{\R^2}\mv_1 \mv_2 |f_\lambda|^2 \, d\mv\right|\, d\eta \leq \|w\mf_\lambda\|_{L^\infty_{\eta,\mv}}^2\cdot \int_0^d |G(\eta)e^{-W(\eta)}|\, d\eta\\
					&\leq C\v^{2-\mathfrak{a}}\|w\mf_\lambda\|_{L^\infty_{\eta,\mv}}^2.
				\end{align*}
				Then, integrating \eqref{5.31} over $[0,d]$ and observing that $e^{-W(\eta)}\sim 1$, we obtain
				\begin{align}\label{5.31-2}
					&\f12|f_\lambda(d)|_{L^2(\gamma_+)}^2+(1-\lambda)\int_0^d\|f_\lambda\|_{L^2_\mv}^2\, d\eta+\lambda c_0\int_0^d\|(\FI-\FP_0)f_\lambda\|_{\nu}^2\, d\eta\nonumber\\
					&\leq C\v^{2-\mathfrak{a}}\|w\mf_\lambda\|_{L^\infty_{\eta,\mv}}^2+C\int_0^d\int_{\R^2}|f_\lambda\cdot S|\, d\mv \, d\eta.
				\end{align}
				The rest proof are divided into several steps.\\
				{\it Step 1.} For $1-\lambda\geq \v$, we have from \eqref{5.31-2} that
				\begin{align}\label{5.32-0}
					\left\|f_\lambda\right\|_{L_{\eta,\mv}^2}^2+|f_\lambda(d)|_{L^2(\gamma_+)}^2  \leq C\v^{-2}\|S\|_{L_{\eta,\mv}^2}^2+C\v^{1-\mathfrak{a}}\left\|w \mf_\lambda\right\|_{L_{\eta,\mv}^{\infty}}^2.
				\end{align}
				
				{\noindent \it Step 2.} For $1-\lambda\leq \v$,
				denote $$\FP_0 f_\lambda=\left\{a_\lambda+b_\lambda\cdot(\mv-\bar{\mathfrak{u}}^0)+c_\lambda(|\mv-\bar{\mathfrak{u}}^0|^2-2T^0)\right\}\sqrt{\mu_0},$$
				then it holds that 
				\begin{align*}
					\mv_1\partial_\eta(\FP_0 f_\lambda)=&\, \partial_\eta a_\lambda\cdot \mv_1\sqrt{\mu_0}+\partial_\eta b_{1,\lambda}\cdot \mv_1^2\sqrt{\mu_0}+\partial_\eta b_{2,\lambda}\cdot \mv_1(\mv_2-u_\tau^0)\sqrt{\mu_0}\nonumber\\
					&+\partial_\eta c_\lambda \cdot \mv_1(|\mv-\bar{\mathfrak{u}}^0|^2-2T^0)\sqrt{\mu_0}.
				\end{align*}
				{\noindent \it Step 2.1 Estimate on $b_{2,\lambda}$.} Taking $\psi_{b,2}=\mv_1(\mv_2-u_\tau^0) \sqrt{\mu_0}$, it is clear that
				\begin{align*}
					\langle \mv_1 f_{\lambda}, \psi_{b,2}\rangle &=	\langle \mv_1 \FP_0f_{\lambda}, \psi_{b,2}\rangle+	\langle \mv_1 (\FI-\FP_0)f_{\lambda}, \psi_{b,2}\rangle\\
					&=\rho^0(T^0)^2b_{2,\lambda}+\langle \mv_1\psi_{b,2}, (\mathbf{I}-\mathbf{P}_0)f_\lambda \rangle,
				\end{align*}  
				where we have used
				\begin{align*}
					\int_{\R^2}\mv_1^2(\mv_2-u_\tau^0)^2\mu_0(\mv) \, d\mv=\rho^0(T^0)^2.
				\end{align*}
				
				Multiplying \eqref{5.23-0} by $\psi_{b,2}$ and integrating on $\R^2$, one has that
				\begin{align}\label{5.32}
					&\partial_\eta \left\{\rho^0(T^0)^2b_{2,\lambda}+\langle \mv_1\psi_{b,2}\sqrt{\mu_0},(\FI-\FP_0)f_\lambda\rangle \right\}\nonumber\\
					&=G(\eta)\left\langle \mv_2^2\f{\partial \psi_{b,2}}{\partial \mv_1}-\mv_1\f{\partial (\mv_2 \psi_{b,2})}{\partial \mv_2},f_\lambda\right\rangle -(1-\lambda)\langle \psi_{b,2}, \nu f_\lambda\rangle-\lambda\langle \FL_0f_\lambda,\psi_{b,2}\rangle \nonumber\\
					&\quad +\f{u_\tau^0}{2T^0}G(\eta)\langle \mv_1\mv_2\psi_{b,2},f_\lambda\rangle +\langle \psi_{b,2},S\rangle=:\sum_{i=1}^5\mathcal{I}_{b,i}.
				\end{align}
				It is clear that
				\begin{align}\label{4.54}
					&\int_\eta^d \partial_\eta \left\{\rho^0(T^0)^2b_{2,\lambda}+\langle \mv_1\psi_{b,2},(\FI-\FP_0)f_\lambda\rangle \right\}\, dz\nonumber\\
					&=\langle \mv_1^2(\mv_2-u_\tau^0)\sqrt{\mu_0},f_\lambda(d)\rangle-[\rho^0(T^0)^2b_{2,\lambda}(\eta)+\langle \mv_1^2(\mv_2-u_\tau^0)\sqrt{\mu_0},(\FI-\FP_0)f_\lambda(\eta) \rangle],
				\end{align}
				and 
				\begin{align*}
					|\langle \mv_1^2(\mv_2-u_\tau^0)\sqrt{\mu_0},f_\lambda(d)\rangle|\leq C|f_\lambda(d)|_{L^2(\gamma_+)}^2.
				\end{align*}
				A direct calculation shows 
				\begin{align*}
					\begin{split}
						\left|\int_\eta^d(\mathcal{I}_{b,1}+\mathcal{I}_{b,4})\,dz\right|\leq \int_\eta^d|G(z)|\cdot \|f_\lambda\|_{L^2_\mv}\,dz\leq \left(\int_\eta^d G^2(z)\, dz\right)^{\f12}\cdot \|f_\lambda \|_{L^2_{\eta,\mv}},\\
						\left|\int_\eta^d\mathcal{I}_{b,2}\,dz\right|\leq (1-\lambda) \int_\eta^d \|f_\lambda\|_{L^2_\mv}\,dz\leq (1-\lambda)d^{\f12}\cdot \|f_\lambda\|_{L^2_{\eta,\mv}},\\
						\left|\int_\eta^d\mathcal{I}_{b,3}\,dz\right|\leq \int_\eta^d \|(\FI-\FP_0)f_\lambda\|_{L^2_\mv}\,dz\leq d^{\f12}\cdot \left\{\int_0^d\|(\FI-\FP_0)f_\lambda\|_\nu^2\, d\eta\right\}^{\f12},	
					\end{split}
				\end{align*}
				and 
				\begin{align}\label{4.49}
					\left|\int_\eta^d\mathcal{I}_2^5\,dz\right|\leq \int_\eta^d \|S\|_{L^2_\mv}\,dz\leq d^{\f12}\cdot \|S\|_{L^2_{\eta,\mv}}.
				\end{align}
				
				Thus, by integrating \eqref{5.32} over $[\eta,d]$, it follows from \eqref{4.54}--\eqref{4.49} that 
				\begin{align}\label{4.49-1}
					|b_{2,\lambda}(\eta)|&\leq C|f_\lambda(d)|_{L^2(\gamma_+)}+C\|(\FI-\FP_0)f_\lambda(\eta)\|_\nu\nonumber\\
					&\quad +Cd^{\f12}\cdot \|S\|_{L^2_{\eta,\mv}}+Cd^{\f12}\cdot \left\{\int_0^d\|(\FI-\FP_0)f_\lambda\|_\nu^2\, d\eta\right\}^{\f12}\nonumber\\
					&\quad +C\left\{(1-\lambda)d^{\f12}+ \left(\int_\eta^d G^2(z)\, dz\right)^{\f12}\right\}\|f_\lambda\|_{L^2_{\eta,\mv}}.
				\end{align}
				Subsequently, integrating \eqref{4.49-1} over $[0,d]$ and noting $1-\lambda\leq \v$, one gets
				\begin{align}\label{5.33}
					\int_0^d|b_{2,\lambda}|^2\,d\eta&\leq\, Cd|f_\lambda(d)|_{L^2(\gamma_+)}^2+Cd^2\int_0^d\|(\FI-\FP_0)f_\lambda\|_\nu^2\,d\eta\nonumber\\
					&\quad +Cd^2\|S\|_{L^2_{\eta,\mv}}^2+C\v^{2-2\mathfrak{a}}\|f_{\lambda}\|_{L^2_{\eta,\mv}}^2,
				\end{align}
				where one has used $d=\v^{-\mathfrak{a}}$ and 
				\begin{align*}
					\int_0^d\int_\eta^dG^2(z)dzd\eta\leq C\v^{4-2\mathfrak{a}}.
				\end{align*}
				
				{\noindent \it Step 2.2 Estimate on $c_\lambda$.} Taking $\psi_c=\mv_1(|\mv-\bar{\mathfrak{u}}^0|^2-4T^0)\sqrt{\mu_0}$, we obtain
				\begin{align*}
					\langle \mv_1 f_{\lambda}, \psi_c\rangle &=	\langle \mv_1 \FP_0f_{\lambda}, \psi_c\rangle+	\langle \mv_1 (\FI-\FP_0)f_{\lambda}, \psi_c\rangle\\
					&=8\rho^0(T^0)^3c_{\lambda}+\langle \mv_1 \psi_c, (\mathbf{I}-\mathbf{P}_0)f_\lambda \rangle,
				\end{align*}  
				where we have used
				\begin{align*}
					&\int_{\R^2}\mv_1^2(|\mv-\bar{\mathfrak{u}}^0|^2-4T^0)\mu_0(\mv)d\mv=0,\\
					&\int_{\R^2}\mv_1^2(|\mv-\bar{\mathfrak{u}}^0|^2-2T^0)(|\mv-\bar{\mathfrak{u}}^0|^2-4T^0)\mu_0(\mv)d\mv=8\rho^0(T^0)^3.
				\end{align*}
				
				Multiplying \eqref{5.23-0} by $\psi_c$ and integrating on $\R^2$, one has that
				\begin{align}\label{5.34}
					&\partial_\eta \left\{8\rho^0(T^0)^3c_{\lambda}+\langle \mv_1\psi_c,(\FI-\FP_0)f_\lambda\rangle \right\}\nonumber\\
					&=G(\eta)\left\langle \mv_2^2\f{\partial \psi_c}{\partial \mv_1}-\mv_1\f{\partial (\mv_2 \psi_c)}{\partial \mv_2},f_\lambda\right\rangle -(1-\lambda)\langle \psi_c, \nu f_\lambda\rangle-\lambda\langle \FL_0f_\lambda,\psi_c\rangle \nonumber\\
					&\quad +\f{u_\tau^0}{2T^0}G(\eta)\langle \mv_1\mv_2\psi_c,f_\lambda\rangle+\langle \psi_c,S\rangle=:\sum_{i=1}^5\mathcal{I}_{c,i}.
				\end{align}
				It is clear that
				\begin{align}\label{4.52}
					&\int_\eta^d \partial_\eta \left\{8\rho^0(T^0)^3c_{\lambda}+\langle \mv_1\psi_c ,(\FI-\FP_0)f_\lambda\rangle \right\}\, dz\nonumber\\
					&=\langle \mv_1\psi_c,f_\lambda(d)\rangle-[8\rho^0(T^0)^3c_{\lambda}(\eta)+\langle \mv_1\psi_c,(\FI-\FP_0)f_\lambda(\eta)\rangle ].
				\end{align}
				A direct calculation shows 
				\begin{align*}
					\begin{split}
						\left|\int_\eta^d(\mathcal{I}_{c,1}+\mathcal{I}_{c,4})\,dz\right|\leq \int_\eta^d|G(z)|\cdot \|f_\lambda\|_{L^2_\mv}\,dz\leq \left(\int_\eta^d G^2(z)\, dz\right)^{\f12}\cdot \|f_\lambda \|_{L^2_{\eta,\mv}},\\
						\left|\int_\eta^d\mathcal{I}_{c,2}\,dz\right|\leq (1-\lambda) \int_\eta^d \|f_\lambda\|_{L^2_\mv}\,dz\leq (1-\lambda)d^{\f12}\cdot \|f_\lambda\|_{L^2_{\eta,\mv}},\\
						\left|\int_\eta^d\mathcal{I}_{c,3}\,dz\right|\leq \int_\eta^d \|f_\lambda\|_{L^2_\mv}\,dz\leq d^{\f12}\cdot \left\{\int_0^d\|(\FI-\FP_0)f_\lambda\|_\nu^2\, d\eta\right\}^{\f12},
					\end{split}			
				\end{align*}
				and 
				\begin{align}\label{4.53}
					\left|\int_\eta^d\mathcal{I}_{c,5}\,dz\right|\leq \int_\eta^d \|S\|_{L^2_\mv}\,dz\leq d^{\f12}\cdot \|S\|_{L^2_{\eta,\mv}}.
				\end{align}
				
				Then, by integrating \eqref{5.34} over $[\eta,d]$, it follows from \eqref{4.52}--\eqref{4.53} that
				\begin{align}\label{4.53-1}
					|c_{\lambda}(\eta)|&\leq C|f_\lambda(d)|_{L^2(\gamma_+)}+C\|(\FI-\FP_0)f_\lambda(\eta)\|_\nu\nonumber\\
					&\quad +Cd^{\f12}\cdot \|S\|_{L^2_{\eta,\mv}}+Cd^{\f12}\cdot \left\{\int_0^d\|(\FI-\FP_0)f_\lambda\|_\nu^2\, d\eta\right\}^{\f12}\nonumber\\
					&\quad +C\left\{(1-\lambda)d^{\f12}+ \left(\int_\eta^d G^2(z)\, dz\right)^{\f12}\right\}\|f_\lambda\|_{L^2_{\eta,\mv}}.
				\end{align}
				Next, integrating \eqref{4.53-1} over $[0,d]$ and noting $1-\lambda\leq \v$, one gets
				\begin{align}\label{5.35}
					\int_0^d|c_{\lambda}|^2\,d\eta&\leq\, Cd|f_\lambda(d)|_{L^2(\gamma_+)}^2+Cd^2\int_0^d\|(\FI-\FP_0)f_\lambda\|_\nu^2\,d\eta\nonumber\\
					&\quad +Cd^2\|S\|_{L^2_{\eta,\mv}}^2+C\v^{2-2\mathfrak{a}}\|f_{\lambda}\|_{L^2_{\eta,\mv}}^2.
				\end{align}
				{\noindent \it Step 2.3 Estimate on $a_\lambda$.} Taking $\psi_a =-\mv_1(|\mv-\bar{\mathfrak{u}}^0|^2-8T^0)\sqrt{\mu_0}$, one has that 
				\begin{align*}
					\langle \mv_1 f_{\lambda}, \psi_a\rangle &=	\langle \mv_1 \FP_0f_{\lambda}, \psi_a\rangle+	\langle \mv_1 (\FI-\FP_0)f_{\lambda}, \psi_a\rangle\\
					&=4\rho^0(T^0)^2a_{\lambda}+\langle \mv_1 \psi_a, (\mathbf{I}-\mathbf{P}_0)f_\lambda \rangle,
				\end{align*} 
				where one has used
				\begin{align*}
					&\int_{\R^2}\mv_1^2(|\mv-\bar{\mathfrak{u}}^0|^2-8T^0)\mu_0(\mv)d\mv=-4\rho^0(T^0)^2,\\
					&\int_{\R^2}\mv_1^2(|\mv-\bar{\mathfrak{u}}^0|^2-2T^0)(|\mv-\bar{\mathfrak{u}}^0|^2-8T^0)\mu_0(\mv)d\mv=0.
				\end{align*}
				Similarly, one obtains
				\begin{align}\label{5.37}
					\int_0^d|a_{\lambda}|^2\,d\eta&\leq\, Cd|f_\lambda(d)|_{L^2(\gamma_+)}^2+Cd^2\int_0^d\|(\FI-\FP_0)f_\lambda\|_\nu^2\,d\eta\nonumber\\
					&\quad +Cd^2\|S\|_{L^2_{\eta,\mv}}^2+C\v^{2-2\mathfrak{a}}\|f_{\lambda}\|_{L^2_{\eta,\mv}}^2.
				\end{align}
				
				{\noindent \it Step 2.4 Estimate on $b_{1,\lambda}$.}	Multiplying \eqref{5.23-0} by $\sqrt{\mu_0}$, one has that
				\begin{align*}
					\rho^0T^0\partial_\eta b_{1,\lambda}+G(\eta)\rho^0T^0 b_{1,\lambda}=-(1-\lambda)\langle \sqrt{\mu_0},\nu f_\lambda\rangle +\int_{\R^2}S\sqrt{\mu_0}\, d\mv,
				\end{align*}
				which yields that
				\begin{align}\label{4.56-1}
					\rho^0T^0\partial_\eta\left\{b_{1,\lambda} e^{-W(\eta)}\right\}=&-(1-\lambda)\langle \sqrt{\mu_0},\nu f_\lambda\rangle e^{-W(\eta)}\nonumber\\
					&+e^{-W(\eta)}\int_{\R^2}S \sqrt{\mu_0}\, d\mv  .
				\end{align}
				Due to the specular boundary condition at $\eta=0$, it holds that $b_{1,\lambda}(0)=0$. Then, integrating \eqref{4.56-1} over $[0,\eta]$, one gets that 
				\begin{align*}
					e^{-W(\eta)}\rho^0T^0b_{1,\lambda}(\eta)=&-(1-\lambda)\int_0^\eta\langle \sqrt{\mu_0},\nu f_\lambda\rangle e^{-W(z)}dz\\
					&+\int_0^\eta e^{-W(z)}\int_{\R^2}S(z)\cdot \sqrt{\mu_0}\, d\mv \, dz,
				\end{align*}
				which implies that
				\begin{align*}
					|b_{1,\lambda}(\eta)|\leq C(1-\lambda)\eta^{\f12}\|f_\lambda\|_{L^2_{\eta,\mv}}+C\eta^{\f12}\|S\|_{L^2_{\eta,\mv}}.
				\end{align*}
				Hence we have 
				\begin{align}\label{5.39}
					\int_0^d|b_{1,\lambda}|^2d\eta\leq C(1-\lambda)^2d^2\|f_\lambda\|_{L^2_{\eta,\mv}}^2+Cd^2\|S\|_{L^2_{\eta,\mv}}^2.
				\end{align}
				{\noindent \it Step 2.5.} Noting $1-\lambda\leq \v$ and $d=\v^{-\mathfrak{a}}$ with $\mathfrak{a}<\f23,\v\ll1$, one has from \eqref{5.33}, \eqref{5.35}, \eqref{5.37} and \eqref{5.39} that
				\begin{align}\label{5.40}
					\int_0^d|(a_\lambda,b_{\lambda},c_{\lambda})|^2\,d\eta \leq& \, Cd|f(d)|_{L^2(\gamma_+)}^2+Cd^2\int_0^d\|(\FI-\FP_0)f_\lambda\|_\nu^2\,d\eta +Cd^2\|S\|_{L^2_{\eta,\mv}}^2\nonumber\\
					&+C\v^{2-2\mathfrak{a}}\int_0^d|(a_\lambda,b_{\lambda},c_{\lambda})|^2\,d\eta\nonumber\\
					\leq &\, Cd|f(d)|_{L^2(\gamma_+)}^2+Cd^2\int_0^d\|(\FI-\FP_0)f_\lambda\|_\nu^2\,d\eta +Cd^2\|S\|_{L^2_{\eta,\mv}}^2.
				\end{align}
				
				Noting $\lambda\geq 1-\v$, we have from \eqref{5.31-2} that
				\begin{align*}
					\int_0^d\left\|\left(\mathbf{I}-\mathbf{P}_0\right) f_\lambda\right\|_\nu^2\, d\eta+|f_\lambda(d)|_{L^2(\gamma_+)}^2 \leq C \v^{2-\mathfrak{a}}\left\|w \mf_\lambda\right\|_{L_{\eta,\mv}^{\infty}}^2+C\|f_\lambda\|_{L^2_{\eta,\mv}}\cdot \|S\|_{L^2_{\eta,\mv}},
				\end{align*}
				which, together with \eqref{5.40}, yields that
				\begin{align}\label{5.19-1}
					\left\|f_\lambda\right\|_{L^2_{\eta,\mv}}^2 \leq C \v^{2-3\mathfrak{a}} \cdot \left\|w \mf_\lambda\right\|_{L_{\eta,\mv}^{\infty}}^2+Cd^4\|S\|_{L_{\eta,\mv}^2}^2, \quad 1-\lambda \leq \v.
				\end{align}
				{\noindent \it Step 3. Conclusion.} Combining \eqref{5.19-1} and  \eqref{5.32-0}, we conclude \eqref{5.24-0} holds. Therefore the proof of that Lemma \ref{lem1.2-1} is completed.
			\end{proof}
		
		Then we aim to get the $\lambda$-independent $L^\infty$ estimate.	Define $K_{M,w}:=w(\mv)K_Mw(u)^{-1}.$
			\begin{Lemma}\label{lem1.2}
				Assume $\beta>3,\|\nu^{-1}w\bar{S}\|_{L^\infty_{\eta,\mv}}<\infty$. For any given $\lambda\in[0,1]$, let $f_\lambda$ be the solution of \eqref{5.23-1}. Let $\mf_\lambda=\f{\sqrt{\mu_0}}{\sqrt{\mu_M}}f_\lambda$ with $\|w\mf_\lambda\|_{L^\infty_{\eta,\mv}}+|w\mf_\lambda|_{L^\infty(\gamma_+)}<\infty$. Then it holds that 
				\begin{align}\label{4.27}
					\|w\mf_\lambda\|_{L^\infty_{\eta,\mv}}+|w\mf_\lambda|_{L^\infty(\gamma_+)}\leq C\|f_\lambda\|_{L^2_{\eta,\mv}}+C\|\nu^{-1}w\bar{S}\|_{L^\infty_{\eta,\mv}},
				\end{align}
				where the constant $C>0$ is independent of $\lambda$ and $d$.
			\end{Lemma}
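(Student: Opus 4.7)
The plan is to adapt the double-Duhamel $L^2\!-\!L^\infty$ scheme of \cite{Guo-2010-ARMA} to the geometrically-corrected equation \eqref{5.2} with the extra term involving $G(\eta)\mv_1\mv_2$, and to show that the constant in \eqref{4.27} does not depend on $\lambda\in[0,1]$ nor on $d$. First, rewrite \eqref{5.23-1} in terms of $\mf_\lambda=(\sqrt{\mu_0}/\sqrt{\mu_M})f_\lambda$ and then multiply by the weight $w$; with $h_\lambda=w\mf_\lambda$ we obtain a transport-type equation
\[
\mv_1\partial_\eta h_\lambda+G(\eta)\!\left(\mv_2^2\tfrac{\partial h_\lambda}{\partial \mv_1}-\mv_1\mv_2\tfrac{\partial h_\lambda}{\partial \mv_2}\right)+\nu(\mv)h_\lambda = \lambda K_{M,w}h_\lambda + w\bar{S} + (\text{lower order terms}),
\]
the lower order terms being controlled by the smallness of $G(\eta)\sim\v^2$ on $[0,d]$ exactly as in Lemma \ref{lem1.2-1}. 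Integrating along the backward characteristic \eqref{5-6} up to the first boundary time $\mathfrak{t}_1$ (using the specular reflection $h_\lambda(0,\mv)|_{\mv_1>0}=h_\lambda(0,-\mv_1,\mv_2)$ and the zero incoming condition at $\eta=d$) yields the mild formulation
\[
|h_\lambda(\eta,\mv)| \le C\|\nu^{-1}w\bar S\|_{L^\infty_{\eta,\mv}} + C\,\mathbf{1}_{\{\text{reflection}\}}|h_\lambda(0,v_1)|e^{-\nu(\mv)(\mathfrak t-\mathfrak t_1)} + \lambda\int_{\mathfrak t_1}^{\mathfrak t}e^{-\nu(\mv)(\mathfrak t-s)}|K_{M,w}h_\lambda|(X(s),V(s))\,ds,
\]
and after a finite number of reflections (controlled by $e^{-\nu(\mv)(\mathfrak{t}-\mathfrak{t}_k)}\le 1/8$, as in the argument leading to \eqref{1.52}--\eqref{1.53}) the boundary contribution is absorbed.

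Next I would iterate the mild formulation once more: substitute the same representation for $h_\lambda(X(s),V(s))$ inside the $K_{M,w}$-integral to obtain a double integral involving $h_\lambda$ at a further past point $(X',V')$. Using the decay of $k_{M,w}(\mv,u)$ from Lemma \ref{lem4.1}, and splitting the $(\mv,u)$-integration domain, I would remove three small regions: (i) $|\mv|\ge \mathfrak N$ (for which the weight $w$ yields $1/\mathfrak N$); (ii) $|u-\mv|\le \bar m$ or $|u|\ge 2\mathfrak N$ (small-measure region in $u$); and (iii) the grazing-type set $\{|\mv_1|\le m\}\cup\{|\mv_1/\mv_2|\le m\}$, whose $\mv$-measure is $O(m)$. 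On the complement, the trajectory satisfies, by \eqref{1-52-2}, the uniform lower bound $|V_{cl,1}(s)|\ge m/2$, which is the heart of the argument.

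The main step, and the technically hardest, is the change of variables $ds\, d\mv \to dX\, dV$ on the non-removed region, which converts the $L^\infty$ control of the double integral into the $L^2$ norm $\|f_\lambda\|_{L^2_{\eta,\mv}}$. Because the backward flow is not explicit, I would perform it in two stages: use $|dX/ds|=|V_{cl,1}|\ge m/2$ to change $ds \to dX$ (with $V_{cl}$ treated as a function of $(X,\mv)$ via the invariants \eqref{5-7}), then check that the map $\mv\mapsto V_{cl}(X,\mv)$ has Jacobian bounded away from zero on the non-removed region, using $V_1^2+V_2^2=|\mv|^2$ and $V_2 e^{-W(X)}=\mv_2 e^{-W(\eta)}$. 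This yields
\[
\int_{\mathfrak t_1}^{\mathfrak t}\int e^{-\nu(\mv)(\mathfrak t-s)}|K_{M,w}h_\lambda|\,ds \;\lesssim\; C_{m,\mathfrak N}\,\|f_\lambda\|_{L^2_{\eta,\mv}} + o(1)\bigl(\|h_\lambda\|_{L^\infty_{\eta,\mv}}+|h_\lambda|_{L^\infty(\gamma_+)}\bigr),
\]
where $o(1)$ comes from the measure-smallness of the removed regions and can be absorbed into the LHS by first choosing $\mathfrak N$ large, then $m$ small. Taking $L^\infty$ over $(\eta,\mv)$ and over $\gamma_+$ gives \eqref{4.27}, with all constants independent of $\lambda$ and $d$ because the only $d$-dependence entered through $G(\eta)$ which is already pointwise of size $O(\v^2)$. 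The continuity of $f_\lambda$ away from the grazing set follows from that of $S$ and the continuity of the backward characteristics off $\{\mv_1=0\}$.
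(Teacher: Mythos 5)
Your plan is essentially the paper's: conjugate to $\mf_\lambda=\tfrac{\sqrt{\mu_0}}{\sqrt{\mu_M}}f_\lambda$, integrate the weighted quantity $h_\lambda=w\mf_\lambda$ along the geometrically-corrected backward characteristics \eqref{5-6} with $k=k(\eta,\mv)$ reflections chosen so that $e^{-\nu(\mathfrak t-\mathfrak t_k)}\le 1/8$, reduce via the kernel smoothing of Lemma \ref{lem4.1} to a local $L^2_\mv$ norm, and then convert the $K_M$-Duhamel integral into $\|f_\lambda\|_{L^2_{\eta,\mv}}$ by removing the small-measure sets $\{|\mv|\ge\mathfrak N\}$, $\{|\mv_1|\le m\}$, $\{|\mv_1/\mv_2|\le m\}$ and performing the two-stage change of variables $ds\to dX$ (using the lower bound $|V_{cl,1}|\ge m/2$ from the invariants \eqref{5-7}) and then $\mv\to V_{cl}$ with Jacobian bounded below. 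The paper organizes this as $\|\mf_\lambda\|_{L^\infty_{\beta,\zeta}}\lesssim \|\mf_\lambda\|_{L^\infty L^2_\zeta}+\|\bar S\|$ followed by a single Duhamel estimate for the $L^\infty L^2_\zeta$ norm, whereas you phrase it as a literal double-Duhamel substitution, but these are the same mechanism and the constants end up $\lambda$- and $d$-independent either way.

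One conceptual slip worth correcting: you write that the $h_\lambda$-equation carries ``lower order terms'' from the geometric correction that are ``controlled by the smallness of $G(\eta)\sim\v^2\,$ exactly as in Lemma \ref{lem1.2-1}.'' In fact, after conjugating by $\sqrt{\mu_0}/\sqrt{\mu_M}$ the problematic multiplicative term $-\tfrac{u_\tau^0}{2T^0}G(\eta)\mv_1\mv_2\,f_\lambda$ is cancelled exactly by the action of the vector field $\mv_2^2\partial_{\mv_1}-\mv_1\mv_2\partial_{\mv_2}$ on $\sqrt{\mu_0}/\sqrt{\mu_M}$ (note that this vector field annihilates $|\mv|^2$, hence also $\mu_M$ and the weight $w$), so equation \eqref{5.2} for $\mf_\lambda$ and the equation for $h_\lambda$ contain \emph{no} residual zero-order terms. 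This is the structural reason the $L^\infty$-bound \eqref{4.27} is uniform in $d$ and in $\v$, and it is precisely the opposite of what happens in the $L^2$-estimate of Lemma \ref{lem1.2-1}, where no such cancellation is available and one really does need the smallness $G(\eta)\lesssim\v^2$ on $[0,d]$ together with the $L^\infty$-norm to close. Keeping that distinction straight is important for understanding why the two lemmas together close the continuity argument without any spurious powers of $\v^{-1}$.
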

			\begin{proof}
				We divide the proof into several steps. \\
				{\it Step 1.} The equation for $\mf_\lambda$ is 
				\begin{align}\label{5.23-2}
					\begin{cases}
					 \dis \mv_1 \partial_\eta \mathbf{f}_\lambda+G(\eta)\left(\mv_2^2 \frac{\partial \mf_\lambda}{\partial \mv_1}-\mv_1 \mv_2 \frac{\partial \mf_\lambda }{\partial \mv_2}\right)+\nu \mathbf{f}_\lambda -\lambda K_M \mathbf{f}_\lambda=\bar{S},\\
					 \dis \mf_\lambda(0,\mv_1,\mv_2)|_{\mv_1>0}=\mf_\lambda (0,-\mv_1,\mv_2), \quad \mf_\lambda(d,\mv)|_{\mv_1<0}=0.
					 \end{cases}
				\end{align}
			Along the characteristics, \eqref{5.23-2} is rewritten as
				\begin{align*}
					\f{d\mf_\lambda }{ds}+\nu \mf_\lambda-\lambda K_M\mf_\lambda=\bar{S}.
				\end{align*}
				Let $h=w\mf_\lambda$, the equation of $h$ is
				\begin{align*}
					\f{dh}{ds}+\nu h-\lambda K_{M,w} h=w\bar{S},
				\end{align*}
				which yields
				\begin{align*}
					\f{d}{ds}\left\{he^{\nu s}\right\}-\lambda K_{M,w}h\cdot e^{\nu s}=w\bar{S}\cdot e^{\nu s}.
				\end{align*}
				
				For {\it Case 1.1}, we have
				\begin{align}\label{22.1}
					h(\eta,\mv)=&\int_{\mathfrak{t}_1}^{\mathfrak{t}} \lambda K_{M,w}h(X_{cl}(s),V_{cl}(s))\cdot e^{-\nu(\mathfrak{t}-s)}ds\nonumber\\
					&+\int_{\mathfrak{t}_1}^{\mathfrak{t}}w\bar{S}(X_{cl}(s),V_{cl}(s))\cdot e^{-\nu(\mathfrak{t}-s)}ds.
				\end{align} 
				For {\it Case 1.2}, we obtain
				\begin{align}\label{22.2}
					h(\eta,\mv)=&\int_{\mathfrak{t}_2}^{\mathfrak{t}} \lambda K_{M,w}h(X_{cl}(s),V_{cl}(s))\cdot e^{-\nu(\mathfrak{t}-s)}ds\nonumber\\
					&+\int_{\mathfrak{t}_2}^{\mathfrak{t}}w\bar{S}(X_{cl}(s),V_{cl}(s))\cdot e^{-\nu(\mathfrak{t}-s)}ds.
				\end{align}
				For {\it Case 2}, one has  $E_1= V_2^2(\eta_+),0<\eta_+\leq d.$ After $k$ times collision with $\eta=0$, one has that
				\begin{align*}
					h(\eta,\mv)e^{\nu \mathfrak{t}}&=h(0,v_1)e^{\nu \mathfrak{t}_1}+\int_{\mathfrak{t}_1}^{\mathfrak{t}}(\lambda K_{M,w}h+w\bar{S})e^{\nu s}ds\nonumber\\
					&=h(0,v_2)e^{\nu \mathfrak{t}_2}+\int_{\mathfrak{t}_2}^{\mathfrak{t}}(\lambda K_{M,w}h+w\bar{S})e^{\nu s}ds\nonumber\\
					&=...\nonumber\\
					&=h(0,v_k)e^{\nu \mathfrak{t}_k}+\int_{\mathfrak{t}_k}^{\mathfrak{t}}(\lambda K_{M,w}h+w\bar{S})e^{\nu s}ds,
				\end{align*}
				which yields that
				\begin{align}\label{22.3}
					h(\eta,\mv)=h(0,v_k)e^{-\nu(\mathfrak{t}-\mathfrak{t}_k)}+\sum_{l=1}^k\int_{\mathfrak{t}_l}^{\mathfrak{t}_{l-1}}(\lambda K_{M,w}h+w\bar{S})e^{-\nu(\mathfrak{t}-s)}ds.
				\end{align} 
				
				To control the first term on the RHS of \eqref{22.3}, we choose $k\sim \f{1}{\eta_+(\eta,\mv)}k_0$ with $k_0\gg 1$, then one obtains
				\begin{align*}
					\dis 	e^{-\nu(\mathfrak{t}-\mathfrak{t}_k)}\leq e^{-(k-1)\nu(\mathfrak{t}_{l}-\mathfrak{t}_{l+1})}\leq \exp\{-(k-1)\int_{\mathfrak{t}_{l+1}}^{\mathfrak{t}_l}|V_1|ds\} = e^{-2(k-1)\eta_+}\leq \f18,
				\end{align*}
				which, together with \eqref{22.1}--\eqref{22.3}, yields that
				\begin{align}\label{4.31}
					\|h\|_{L^\infty_{\eta,\mv}}+|h|_{L^\infty(\gamma_+)}\leq \f18|h|_{L^\infty(\gamma_+)}+C\lambda\| K_{M,w}h\|_{L^\infty_{\eta,\mv}}+C\|\nu^{-1}w\bar{S}\|_{L^\infty_{\eta,\mv}}.
				\end{align}
				Noting \eqref{5.2-0}, one gets from \eqref{4.31} that
				\begin{align}\label{22.6}
					\|\mf_\lambda\|_{L^\infty L^\infty_{\beta,\zeta}}&\leq C\lambda  \|K_M\mf_\lambda\|_{L^\infty L^\infty_{\beta,\zeta}}+C\|\nu^{-1}w\bar{S}\|_{L^\infty_{\eta,\mv}}\nonumber\\
					&\leq C\lambda \|\mf_\lambda\|_{L^\infty L^\infty_{\beta-1,\zeta}}+C\|\nu^{-1}w\bar{S}\|_{L^\infty_{\eta,\mv}}\nonumber\\
					&...\nonumber\\
					&\leq C\lambda \|K_M\mf_\lambda\|_{L^\infty L^\infty_{0,\zeta}}+C\|\nu^{-1}w\bar{S}\|_{L^\infty_{\eta,\mv}}\nonumber\\
					&\leq C\lambda\|\mf_\lambda\|_{L^\infty L^2_\zeta}+C\|\nu^{-1}w\bar{S}\|_{L^\infty_{\eta,\mv}},
				\end{align}
				where we have used the notations $\dis \|\mf_\lambda\|_{L^\infty L^\infty_{\beta,\zeta}}=\|(1+|\mv|^2)^{\f{\beta}{2}}e^{\zeta|\mv|^2}\mf_\lambda\|_{L^\infty_{\eta,\mv}}+|(1+|\mv|^2)^{\f{\beta}{2}}e^{\zeta|\mv|^2}\mf_\lambda|_{L^\infty(\gamma_+)}$ and  $$\dis \|\mf_\lambda\|_{L^\infty L^2_\zeta}^2=\left| \int_{\R^2}e^{2\zeta|\mv|^2}|\mf_\lambda|^2(\eta,\mv)d\mv\right|_{L^\infty_\eta}.$$

				{\it Step 2.} By the mild formulation, one has
				\begin{align}
					\mf_\lambda(\eta,\mv)&=\int_{\mathfrak{t}_1}^{\mathfrak{t}}e^{-\nu(\mv)(\mathfrak{t}-s)}\lambda K_M\mf_\lambda\,ds+\int_{\mathfrak{t}_1}^{\mathfrak{t}}e^{-\nu(\mv)(\mathfrak{t}-s)}\bar{S}\, ds\quad \text{in \it Case 1.1, }\label{4.31-1}\\
					\mf_\lambda(\eta,\mv)&=\int_{\mathfrak{t}_2}^{\mathfrak{t}}e^{-\nu(\mv)(\mathfrak{t}-s)}\lambda K_M\mf_\lambda\,ds+\int_{\mathfrak{t}_2}^{\mathfrak{t}}e^{-\nu(\mv)(\mathfrak{t}-s)}\bar{S}\, ds\quad \text{in \it Case 1.2, }\label{4.31-2}
				\end{align}
				and 
				\begin{align}\label{5.10-0}
					\mf_\lambda(\eta,\mv)&=\mf_\lambda(\eta_k,v_k)e^{-\nu(\mv)(\mathfrak{t}-\mathfrak{t}_k)}+ \int_{\mathfrak{t}_k}^{\mathfrak{t}}e^{-\nu(\mv)(\mathfrak{t}-s)}\lambda K_M\mf_\lambda\,ds\nonumber\\
					&\quad+\int_{\mathfrak{t}_k}^{\mathfrak{t}}e^{-\nu(\mv)(\mathfrak{t}-s)}\bar{S}ds,\quad \text{in \it Case 2}.
				\end{align}		
				
				In {\it Case 2}, for the first term on the RHS of \eqref{5.10-0}, $\eta_k$ is actually $0$ and we have to control
				$\dis	\int_{\R^2}e^{2\zeta|\mv|^2}|\mf_\lambda|^2(0,v_k)e^{-2\nu(\mathfrak{t}-\mathfrak{t}_k)}d\mv.$ We know from \eqref{22.5} that
				\begin{align}
					\eta_+=\f{1}{\v^2}\f{|\mv_1|^2/|\mv_2|^2}{\left(1+\sqrt{1+|\mv_1|^2/|\mv_2^2}\right)\sqrt{1+|\mv_1|^2/|\mv_2|^2}}+\f{|\mv_2|}{|\mv|}\eta.
				\end{align} 
				We observe that $\eta_+$ increases with $\dis|\mv_1/\mv_2|.$ For $|\mv_1/\mv_2|\geq m$, it holds that $\eta_+\geq \f{m^2}{4\v^2}$. Then we choose $k=C(m)k_0$ with $k_0\gg1$ such that
				\begin{align}\label{4.37}
					e^{-\nu(\mathfrak{t}-\mathfrak{t}_k)}\leq m.
				\end{align}  
				Now we take $k=C(m)k_0$ for {\it Case 2}  in \eqref{5.10-0}. If $|\mv|\geq \mathfrak{N}$, it is clear that 
				\begin{align}\label{5.11-0}
					\int_{\R^2}\mathbf{1}_{\{|\mv|\geq \mathfrak{N}\}}e^{2\zeta|\mv|^2}|\mf_\lambda|^2(0,v_k)e^{-2\nu(\mathfrak{t}-\mathfrak{t}_k)}d\mv\leq \f{1}{\mathfrak{N}^\beta}\|\mf_\lambda\|_{L^\infty L^\infty_{\beta,\zeta}}^2.
				\end{align}
				For $|\mv_1/\mv_2|\geq m, |\mv|\leq \mathfrak{N}$, one has from \eqref{4.37} that
				\begin{align*}
					\int_{\R^2}\mathbf{1}_{\{|\mv_1/\mv_2|\geq m,|\mv|\leq \mathfrak{N}\}}e^{2\zeta|\mv|^2}|\mf_\lambda|^2(0,v_k)e^{-2\nu(\mathfrak{t}-\mathfrak{t}_k)}d\mv\leq m\|\mf_\lambda\|_{L^\infty L^\infty_{\beta,\zeta}}^2.
				\end{align*}
				For $|\mv_1/\mv_2|\leq m, |\mv|\leq \mathfrak{N}$, one has $|\mv_1|\leq m\mathfrak{N}$, the integral domain is small and
				\begin{align}\label{5.11-1}
					\int_{\R^2}\mathbf{1}_{\{|\mv_1/\mv_2|\leq m, |\mv|\leq \mathfrak{N}\}}e^{2\zeta|\mv|^2}|\mf_\lambda|^2(0,v_k)e^{-2\nu(\mathfrak{t}-\mathfrak{t}_k)}d\mv\leq m\mathfrak{N}\|\mf_\lambda\|_{L^\infty L^\infty_{\beta,\zeta}}^2.
				\end{align}
				Thus we obtain from \eqref{4.31-1}--\eqref{5.11-1} that 
				\begin{align}\label{4.40}
					\|\mf_\lambda\|_{L^\infty L^2_\zeta}&\leq\left(\sqrt{m}+\sqrt{m\mathfrak{N}}+\f{1}{\mathfrak{N}^{\f{\beta}{2}}}\right)\|\mf_\lambda\|_{L^\infty L^\infty_{\beta,\zeta}}+\lambda \left\|\int_{\mathfrak{t}_k}^{\mathfrak{t}}e^{-\nu(\mv)(\mathfrak{t}-s)}|K_M\mf_\lambda|\,ds\right\|_{L^\infty L^2_\zeta}\nonumber\\
					&+\left\|\int_{\mathfrak{t}_k}^{\mathfrak{t}}e^{-\nu(\mv)(\mathfrak{t}-s)}|\bar{S}|ds\right\|_{L^\infty L^2_\zeta},
				\end{align}
				where $k=1$ for {\it Case 1.1,} $k=2$ for {\it Case 1.2} and $k=C(m)k_0$ for {\it Case 2.} 
				
				A direct calculation shows 
				\begin{align*}
					\left\|\int_{\mathfrak{t}_{k}}^{\mathfrak{t}}e^{-\nu(\mv)(\mathfrak{t}-s)}|\bar{S}|ds\right\|_{L^2_\zeta}&=\left\{\int_{\R^2}e^{2\zeta|\mv|^2}\left[\int_{\mathfrak{t}_k}^{\mathfrak{t}}e^{-\nu(\mv)(\mathfrak{t}-s)}|\bar{S}|ds\right]^2d\mv\right\}^{\f12}\nonumber\\
					&\leq \|\nu^{-1}w\bar{S}\|_{L^\infty_{\eta,\mv}}\left\{\int_{\R^2}e^{2\zeta|\mv|^2}\left[\int_{\mathfrak{t}_k}^{\mathfrak{t}}e^{-\nu(\mv)(\mathfrak{t}-s)}\nu w^{-1}ds\right]^2d\mv\right\}^{\f12}\\
					&\leq C\|\nu^{-1}w\bar{S}\|_{L^\infty_{\eta,\mv}}\left\{\int_{\R^2}(1+|\mv|^2)^{-\beta}d\mv\right\}^{\f12}\\
					&\leq C\|\nu^{-1}w\bar{S}\|_{L^\infty_{\eta,\mv}},
				\end{align*}
				which, together with \eqref{4.40}, yields that
				\begin{align}\label{5.12}
					\|\mf_\lambda\|_{L^\infty L^2_\zeta}\leq &\left(\sqrt{m}+\sqrt{m\mathfrak{N}}+\f{1}{\mathfrak{N}^{\f{\beta}{2}}}\right)\|\mf_\lambda\|_{L^\infty L^\infty_{\beta,\zeta}}+C\|\nu^{-1}w\bar{S}\|_{L^\infty_{\eta,\mv}}\nonumber\\
					&+C\lambda\left|\left\{\int_{\R^2}e^{2\zeta|\mv|^2}\left\{\int_{\mathfrak{t}_k}^{\mathfrak{t}}e^{-\nu(\mv)(\mathfrak{t}-s)}|K_M\mf_\lambda|ds\right\}^2d\mv\right\}^{\f12}\right|_{L^\infty_{\eta}}.
				\end{align}
				
				 {\it Step 3.} We now denote $\dis\mathcal{J}:=\int_{\R^2}e^{2\zeta|\mv|^2}\left\{\int_{\mathfrak{t}_k}^{\mathfrak{t}}e^{-\nu(\mv)(\mathfrak{t}-s)}|K_M\mf_\lambda|(X_{cl}(s),V_{cl}(s))ds\right\}^2d\mv$ and split the integral into 4 parts.\\
				\noindent{\it Case i.} $|\mv|\geq \mathfrak{N}$. In this case, we have
				\begin{align}\label{22.7}
					\mathcal{J}_1=&\int_{\R^2}\mathbf{1}_{|\mv|\geq \mathfrak{N}}e^{2\zeta|v|^2}\{\cdot\cdot\cdot \}^2\, d\mv\nonumber\\
					\leq&C\|K_M\mf_\lambda\|^2_{L^\infty L^\infty_{\beta,\zeta}}\int_{\R^2}\mathbf{1}_{\{|\mv|\geq \mathfrak{N}\}}\f{1}{(1+|\mv|^2)^\beta}\left\{\int_{\mathfrak{t}_k}^{\mathfrak{t}}e^{-\nu(\mv)(\mathfrak{t}-s)}ds\right\}^2d\mv\nonumber\\
					\leq&\f{C}{\mathfrak{N}^{\beta}}\|K_M\mf_\lambda\|^2_{L^\infty L^\infty_{\beta,\zeta}}\int_{\R^2}\f{1}{(1+|\mv|^2)^{\f{\beta}{2}}\nu^2(\mv)}d\mv\nonumber\\
					\leq&\f{C}{\mathfrak{N}^{\beta}}\|K_M\mf_\lambda\|^2_{L^\infty L^\infty_{\beta,\zeta}}\leq \f{C}{\mathfrak{N}^{\beta}}	\|\mf_\lambda\|_{L^\infty L^\infty_{\beta,\zeta}}^2.
				\end{align}
				\noindent{\it Case ii.} $|\mv|\leq \mathfrak{N}, |\mv_1|\geq m,|\mv_1/\mv_2|\geq m$. In this case, one has that
				\begin{align}\label{22.8}
					\mathcal{J}_2=&\int_{\R^2}\mathbf{1}_{\{|\mv|\leq \mathfrak{N}, |\mv_1|\geq m,,|\mv_1/\mv_2|\geq m\}}e^{2\zeta|v|^2}\{\cdot\cdot\cdot \}^2\, d\mv\\
					&\leq C\mathfrak{N}e^{2\zeta \mathfrak{N}^2}\sum_{l=1}^k\int_{\R^2}\int_{\mathfrak{t}_l}^{\mathfrak{t}_{l-1}}\mathbf{1}_{\{|\mv|\leq \mathfrak{N}, |\mv_1|\geq m,,|\mv_1/\mv_2|\geq m\}}\f{(K_M\mf_\lambda)^2}{\nu^2}(X_{cl}(s),V_{cl}(s))dsd\mv.\nonumber
				\end{align} 
				Let $z=X_{cl}(s)$, i.e.
				\begin{align*}
					z=\eta_{l-1}-\int_s^{\mathfrak{t}_{l-1}}V_1d\tau,\quad s\in(\mathfrak{t}_l,\mathfrak{t}_{l-1}].
				\end{align*}
				Noting $\eta_k=0$ for all $k\geq 1$ and $\eta_0=\eta$, one has that
				\begin{align*}
					\left|\f{dz}{ds}\right|=|V_1|,
				\end{align*}
				where
				\begin{align*}
					|V_1|=\sqrt{|\mv|^2-\mv_2^2\left(e^{-2W(\eta)+2W(X(s))}\right)}, \quad V_2=\mv_2 e^{-W(\eta)+W(X(s))}.
				\end{align*}
				Since $|\mv_1|\geq m, |\mv|\leq \mathfrak{N},$ noting the smallness of $\v$, we obtain
				\begin{align*}
					|V_1|^2\geq m^2+|\mv_2|^2\left\{1-\left(\f{1-\v^2\eta}{1-\v^2X(s)}\right)^2\right\} 
					\geq m^2-3\mathfrak{N}^2\v^{2-\mathfrak{a}}\geq \f{m^2}{4}. 
				\end{align*}
				Then we first take the change of variable $ds\to dz$ and have 
				\begin{align*}
					\left|\f{ds}{dz}\right| \leq \f{2}{m}.
				\end{align*}

				A direct calculation shows
				\begin{align*}
					\left|\f{\partial V_{cl}}{\partial \mv}\right|&=\f{|\mv_1|e^{-W(\eta)+W(z)}}{\sqrt{|\mv|^2-\mv_2^2e^{-2W(\eta)+2W(z)}}}=\f{|\mv_1|e^{-W(\eta)+W(z)}}{|V_1|}\nonumber\\
					&=\f{e^{-W(\eta)+W(z)}}{\sqrt{1+|\mv_2/\mv_1|^2\left\{1-\left(\f{1-\v^2\eta}{1-\v^2z}\right)^2\right\} }}\geq \f{1}{2\sqrt{1+3\f{\v^{2-\mathfrak{a}}}{m^2}}}\geq \f{1}{3},
				\end{align*}
				which further yields that
				\begin{align*}
					\left|\f{\partial \mv}{\partial V}\right|=\f{|V_1|}{|\mv_1|e^{-W(\eta)+W(z)}}\leq  3.
				\end{align*}
				Then we take $d\mv\to dV$ and obtain from \eqref{22.8} that
				\begin{align}\label{22.9}
					\mathcal{J}_2&\leq C(m)\f{\mathfrak{N}e^{2\zeta \mathfrak{N}^2}}{m}\left\{\int_{\R^2}\int_0^d\f{(K_M\mf_\lambda)^2(z,V)}{\nu^2}dzdV\right\}\nonumber\\
					&\leq C(m)\f{\mathfrak{N}e^{2\zeta \mathfrak{N}^2}}{m}\|\nu^{-1}K_M\mf_\lambda\|_{L^2L^2}^2.
				\end{align}
				\noindent{\it Case iii.}  $|\mv|\leq \mathfrak{N},\ |\mv_1|\geq m,|\mv_1/\mv_2|\leq m$. In this case, the integral domain of $\mv_1$ is small, then one gets
				\begin{align*}
					\mathcal{J}_3\leq Cm\mathfrak{N}\|K_M\mf_\lambda\|^2_{L^\infty L^\infty_{\beta,\zeta}}\leq Cm\mathfrak{N}	\|\mf_\lambda\|_{L^\infty L^\infty_{\beta,\zeta}}^2.
				\end{align*}
				\noindent{\it Case iv.} $|\mv|\leq \mathfrak{N},\ |\mv_1|\leq m$. In this case, the integral domain of $\mv_1$ is also small, then one has
				\begin{align}\label{22.10}
					\mathcal{J}_4\leq Cm\|K_M\mf_\lambda\|^2_{L^\infty L^\infty_{\beta,\zeta}}\leq Cm	\|\mf_\lambda\|_{L^\infty L^\infty_{\beta,\zeta}}^2.
				\end{align}
				
			One gets from \eqref{22.7} and  \eqref{22.9}--\eqref{22.10} that
				\begin{align}\label{22.11}
					\mathcal{J}\leq C_{m,\mathfrak{N}}\|\nu^{-1}K_M\mf_\lambda\|^2_{L^2_{\eta,\mv}}+C(m+m\mathfrak{N}+\f{1}{\mathfrak{N}^\beta})	\|\mf_\lambda\|_{L^\infty L^\infty_{\beta,\zeta}}^2.
				\end{align}
			Substituting \eqref{22.11} to \eqref{5.12} and \eqref{22.6}, one has
				\begin{align*}
					\|\mf_\lambda\|_{L^\infty L^\infty_{\beta,\zeta}}\leq& C_{m,\mathfrak{N}}\lambda \|\nu^{-1}K_M\mf_\lambda\|_{L^2_{\eta,\mv}}\nonumber\\
					&+C\left(\sqrt{m}+\sqrt{m\mathfrak{N}}+\f{1}{\mathfrak{N}^{\f{\beta}{2}}}\right)	\|\mf_\lambda\|_{L^\infty L^\infty_{\beta,\zeta}}+C\|\nu^{-1}w\bar{S}\|_{L^\infty_{\eta,\mv}},
				\end{align*}
				which, together with choosing $\mathfrak{N}\gg 1$ and $m\ll\f{1}{\mathfrak{N}}$, yields that
				\begin{align*}
					\|w\mf_\lambda\|_{L^\infty_{\eta,\mv}}+|w\mf_\lambda|_{L^\infty(\gamma_+)}&\leq C\|\nu^{-1}K_M\mf_\lambda\|_{L^2_{\eta,\mv}}+C\|\nu^{-1}w\bar{S}\|_{L^\infty_{\eta,\mv}}\nonumber\\
					&\leq C\|f_\lambda\|_{L^2_{\eta,\mv}}+C\|\nu^{-1}w\bar{S}\|_{L^\infty_{\eta,\mv}}.
				\end{align*}
				Therefor the proof of Lemma \ref{lem1.2} is finished.
			\end{proof}
		
			Combining Lemmas \ref{lem1.2-1}--\ref{lem1.2} and noting $\v\ll1,\mathfrak{a}<\f23$, we have the following result.
			\begin{Corollary}
				Assume $\|\nu^{-1}w\bar{S}\|_{L^\infty_{\eta,\mv}}<\infty, \beta>3$. For any given $\lambda\in[0,1]$, let $f_\lambda$ be the solution of \eqref{5.23-1} with $\dis \|f_\lambda\|_{L^2_{\eta,\mv}}<\infty$. Let $\mf_\lambda=\f{\sqrt{\mu_0}}{\sqrt{\mu_M}}f_{\lambda}$ with $\|w\mf_\lambda\|_{L^\infty_{\eta,\mv}}+|w\mf_\lambda|_{L^\infty(\gamma_+)}<\infty$. It holds that		
				\begin{align}\label{5.24}
					\left\|f_\lambda\right\|_{L_{\eta,\mv}^2}+ \left\|w \mf_\lambda\right\|_{L_{\eta,\mv}^{\infty}}+|w\mf_\lambda|_{L^\infty(\gamma_+)}\leq C_{ \v,d}\left\|\nu^{-1} w \bar{S}\right\|_{L_{\eta,\mv}^\infty},
				\end{align}
				where $C_{\v,d}=\max\{\v^{-1}d,d^3\}$ does not depend on $\lambda$.
			\end{Corollary}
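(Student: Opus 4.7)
The corollary is a direct bootstrap combining Lemma \ref{lem1.2-1} and Lemma \ref{lem1.2}. First I would substitute the $L^\infty$ bound of Lemma \ref{lem1.2}, namely
\[
\|w\mf_\lambda\|_{L^\infty_{\eta,\mv}}+|w\mf_\lambda|_{L^\infty(\gamma_+)} \leq C\|f_\lambda\|_{L^2_{\eta,\mv}} + C\|\nu^{-1}w\bar S\|_{L^\infty_{\eta,\mv}},
\]
into the $L^2$ bound of Lemma \ref{lem1.2-1},
\[
\|f_\lambda\|_{L^2_{\eta,\mv}}^2 \leq C\v^{\min\{2-3\mathfrak{a},\,1-\mathfrak{a}\}}\|w\mf_\lambda\|_{L^\infty_{\eta,\mv}}^2 + C\max\{\v^{-2},d^4\}\|S\|_{L^2_{\eta,\mv}}^2.
\]
The restriction $\mathfrak{a}<\tfrac23$ makes the two exponents $2-3\mathfrak{a}$ and $1-\mathfrak{a}$ strictly positive, so for $\v$ small enough the prefactor $C\v^{\min\{2-3\mathfrak{a},1-\mathfrak{a}\}}$ is less than $\tfrac12$, which allows me to absorb the $\|f_\lambda\|_{L^2_{\eta,\mv}}^2$ term coming from squaring the $L^\infty$ estimate.

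Next I would convert $\|S\|_{L^2_{\eta,\mv}}$ into a bound involving $\|\nu^{-1}w\bar S\|_{L^\infty_{\eta,\mv}}$. Using $S=\tfrac{\sqrt{\mu_M}}{\sqrt{\mu_0}}\bar S$ together with \eqref{1.23}, one obtains
\[
\|S\|_{L^2_{\eta,\mv}}^2 \leq \|\nu^{-1}w\bar S\|_{L^\infty_{\eta,\mv}}^2 \int_0^d\!\!\int_{\R^2} \f{\mu_M}{\mu_0}\,\nu^2 w_\beta^{-2}\, d\mv\, d\eta \leq Cd\,\|\nu^{-1}w\bar S\|_{L^\infty_{\eta,\mv}}^2,
\]
where the $\mv$-integral is finite since $\beta>3$ and $\zeta<\tfrac{1}{4T_M}$, and the constant is independent of $\lambda$.

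Combining the two steps yields $\|f_\lambda\|_{L^2_{\eta,\mv}}^2 \leq C\bigl(1+d\max\{\v^{-2},d^4\}\bigr)\|\nu^{-1}w\bar S\|_{L^\infty_{\eta,\mv}}^2 \leq C\max\{\v^{-2}d,\,d^5\}\|\nu^{-1}w\bar S\|_{L^\infty_{\eta,\mv}}^2$. Feeding this back into Lemma \ref{lem1.2} gives the same order bound for $\|w\mf_\lambda\|_{L^\infty_{\eta,\mv}}+|w\mf_\lambda|_{L^\infty(\gamma_+)}$, and then enlarging the constant to the looser $C_{\v,d}=\max\{\v^{-1}d,\,d^3\}$ (valid because $d=\v^{-\mathfrak{a}}\geq 1$) produces the stated inequality.

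\textbf{Main point requiring care.} The substantive analytical work has already been carried out in the two lemmas; the only delicate point in this step is verifying that the smallness factor $C\v^{\min\{2-3\mathfrak{a},1-\mathfrak{a}\}}$ from Lemma \ref{lem1.2-1} indeed dominates the $\|f_\lambda\|_{L^2_{\eta,\mv}}^2$ contribution produced by Lemma \ref{lem1.2}. This is precisely where the technical restriction $\mathfrak{a}<\tfrac23$ (together with $\v\ll 1$) plays its role; without it the absorption would fail and one could not close the estimate uniformly in $\lambda\in[0,1]$.
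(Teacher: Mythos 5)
Your proposal is correct and takes exactly the route the paper intends: the paper's own ``proof'' of this Corollary is a single sentence (``Combining Lemmas \ref{lem1.2-1}--\ref{lem1.2} and noting $\v\ll1,\mathfrak{a}<\f23$''), and you have filled in precisely the details that sentence suppresses, namely substituting the $L^\infty$ bound into the $L^2$ bound, absorbing via the factor $\v^{\min\{2-3\mathfrak a,1-\mathfrak a\}}\ll1$, converting $\|S\|_{L^2_{\eta,\mv}}$ to $\|\nu^{-1}w\bar S\|_{L^\infty_{\eta,\mv}}$ (costing a factor $d^{1/2}$), and feeding back into Lemma \ref{lem1.2}. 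Two minor remarks: the $\mv$-integral $\int_{\R^2}\frac{\mu_M}{\mu_0}\nu^2w_\beta^{-2}\,d\mv$ is finite already because of the Gaussian factor $e^{-2\zeta|\mv|^2}$ in $w_\beta^{-2}$ (the hypothesis $\beta>3$ is not what makes it converge, contrary to your parenthetical); and the constant you actually obtain, $\max\{\v^{-1}d^{1/2},d^{5/2}\}$, is slightly sharper than the stated $C_{\v,d}=\max\{\v^{-1}d,d^3\}$, which is harmless since $d=\v^{-\mathfrak a}\geq1$ and only the $\lambda$-independence of $C_{\v,d}$ is used downstream.
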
 
			\begin{Lemma}\label{lem1.3}
				Let $\beta>3$ and $ \|\nu^{-1}w\bar{S}\|_{L^\infty_{\eta,\mv}}<\infty$. Then there is a unique solution $f$ to equation \eqref{5.1} (or equivalently a unique solution $\mf$ to \eqref{5.2} with $\mf=\f{\sqrt{\mu_0}}{\sqrt{\mu_M}}f$) satisfying 
				\begin{align}\label{4.71-1}
					\left\|\f{\sqrt{\mu_0}}{\sqrt{\mu_M}}wf\right\|_{L^\infty_{\eta,\mv}}+\left|\f{\sqrt{\mu_0}}{\sqrt{\mu_M}}wf\right|_{L^\infty(\gamma_+)}<C_{\v,d}\|\nu^{-1}w\bar{S}\|_{L^\infty_{\eta,\mv}},
				\end{align}
				where the constant $C_{\v,d}$ only depends on $\v$ and $d$. Moreover, if $S$ is continuous in $[0,d]\times \R^2$, then $f$ is continuous away from grazing set $\{(\eta,\mv):\eta=0,\mv_1=0\}$.
			\end{Lemma}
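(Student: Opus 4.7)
The plan is to establish Lemma \ref{lem1.3} by a standard continuity (homotopy) argument in the parameter $\lambda\in[0,1]$, taking existence for $\mathcal{L}_0$ from Lemma \ref{lem1.1} as the base case and using the $\lambda$-uniform a priori bound \eqref{5.24} to extend solvability from $\lambda=0$ to $\lambda=1$ in finitely many steps. Define
\begin{equation*}
\Lambda := \bigl\{\lambda\in[0,1]:\ \mathcal{L}_\lambda^{-1}S\text{ exists (with specular BC at }\eta=0,\text{ zero incoming at }\eta=d)\text{ for all admissible }S\bigr\}.
\end{equation*}
Lemma \ref{lem1.1} shows $0\in\Lambda$; the goal is $1\in\Lambda$.

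First, I would prove $\Lambda$ is relatively open to the right. Fix $\lambda_0\in\Lambda$ and $\kappa>0$ small. I rewrite $\mathcal{L}_{\lambda_0+\kappa}f=S$ as
\begin{equation*}
\mathcal{L}_{\lambda_0}f \;=\; S+\kappa K f,
\end{equation*}
and iterate $\mathcal{L}_{\lambda_0}f^{(i+1)}=S+\kappa K f^{(i)}$ with $f^{(0)}\equiv 0$. Each $f^{(i+1)}$ exists because $\lambda_0\in\Lambda$ and the source $S+\kappa K f^{(i)}$ is admissible: the kernel bound of Lemma \ref{lem4.1} gives $\|\nu^{-1}w\overline{Kf^{(i)}}\|_{L^\infty_{\eta,\mv}}\leq C\,(\|w\mathbf{f}^{(i)}\|_{L^\infty_{\eta,\mv}}+|w\mathbf{f}^{(i)}|_{L^\infty(\gamma_+)})$. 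The difference $g^{(i)}:=f^{(i+1)}-f^{(i)}$ satisfies $\mathcal{L}_{\lambda_0}g^{(i)}=\kappa Kg^{(i-1)}$ with vanishing boundary data, so the uniform estimate \eqref{5.24} together with Lemma \ref{lem4.1} yields
\begin{equation*}
\|g^{(i)}\|_{L^2_{\eta,\mv}}+\|w\mathbf{g}^{(i)}\|_{L^\infty_{\eta,\mv}}+|w\mathbf{g}^{(i)}|_{L^\infty(\gamma_+)} \;\leq\; \kappa\, C\, C_{\v,d}\,\bigl(\|w\mathbf{g}^{(i-1)}\|_{L^\infty_{\eta,\mv}}+|w\mathbf{g}^{(i-1)}|_{L^\infty(\gamma_+)}\bigr),
\end{equation*}
where $\mathbf{g}^{(i)}=\tfrac{\sqrt{\mu_0}}{\sqrt{\mu_M}}g^{(i)}$. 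Choosing $\kappa=\kappa_0:=(2CC_{\v,d})^{-1}$, which depends only on $\v$ and $d$ and \emph{not} on $\lambda_0$, makes the iteration a strict contraction; the limit solves $\mathcal{L}_{\lambda_0+\kappa_0}f=S$, so $\lambda_0+\kappa_0\in\Lambda$.

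Since $\kappa_0$ is independent of the starting point, at most $\lceil 1/\kappa_0\rceil$ successive extensions cover $[0,1]$, giving $1\in\Lambda$ and producing the desired solution $f$ of \eqref{5.1}. Estimate \eqref{4.71-1} is then the $\lambda=1$ case of \eqref{5.24}. Uniqueness follows by applying \eqref{5.24} to the difference of two solutions with $S\equiv 0$. For the continuity of $f$ away from the grazing set, I would proceed inductively along the iteration: at each step, if $S$ is continuous on $[0,d]\times\R^2$ and $f^{(i)}$ is continuous off the grazing set, then so is the effective source $S+\kappa Kf^{(i)}$ (the integral operator $K$ smooths in $\mv$), whence Lemma \ref{lem1.1} delivers continuity of $f^{(i+1)}$ off the grazing set, and uniform convergence in $L^\infty_{w_\beta}$ transfers this to the limit.

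The step I expect to be the main obstacle is ensuring that the constant $C_{\v,d}$ in \eqref{5.24} is genuinely $\lambda$-uniform, since only such uniformity guarantees that $\kappa_0$ does not shrink as $\lambda_0$ increases toward $1$, and therefore that the chain closes in finitely many steps. This uniformity is already built into Lemmas \ref{lem1.2-1}--\ref{lem1.2} (their constants depend only on $\v$ and $d$ through the smallness $\v^{2-\mathfrak{a}}$ of $G$ and the truncation length $d=\v^{-\mathfrak{a}}$, not on $\lambda$), but care is needed when verifying that the $L^2$ argument of Lemma \ref{lem1.2-1}, in which the macroscopic projection is controlled through the good term $\lambda c_0\int_0^d\|(\FI-\FP_0)f_\lambda\|_\nu^2\,d\eta$, still applies when the source of the iterate is $\kappa Kg^{(i-1)}$ rather than the original $S$; this is harmless because the $L^2$ test-function argument uses only $\|\cdot\|_{L^2_{\eta,\mv}}$ of the source, with no orthogonality hypothesis.
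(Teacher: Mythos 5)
Your proposal is correct and follows essentially the same route as the paper: a contraction/continuation argument in $\lambda$, using Lemma \ref{lem1.1} as the base case and the $\lambda$-uniform a priori estimate \eqref{5.24} to guarantee a uniform step size, then iterating to reach $\lambda=1$. The paper phrases the first step with the operator $T_\lambda f=\mathcal{L}_0^{-1}\{\lambda Kf+S\}$ and the continuation steps with $T_{\lambda_*+\lambda}f=\mathcal{L}_{\lambda_*}^{-1}\{\lambda Kf+S\}$, but the contraction constants and the reason they stay bounded away from $0$ are exactly the ones you identify.
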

			\begin{proof}
				Define Banach space:
				\begin{align*}
					\mathbf{X}:=&\left\{f=f(\eta,\mv)| \f{\sqrt{\mu_0}}{\sqrt{\mu_M}}wf\in L^\infty([0,d]\times \R^2)\cap L^\infty(\gamma_+)\right.\nonumber\\
					&\left.\text{and} \quad f(0,\mv)|_{\gamma_-}=f(0,-\mv_1,\mv_2),\, f(d,\mv)|_{\gamma_-}=0 \right\}.
				\end{align*}
				We denote a linear operator $T_\lambda$ as
				\begin{align*}
					T_\lambda f :=\mathcal{L}_0^{-1}\{\lambda K f+S\}.
				\end{align*}
				For any $f_1,f_2\in \mathbf{X}$, we have from Lemma \ref{lem1.1} that
				\begin{align*}
					\left\|\f{\sqrt{\mu_0}}{\sqrt{\mu_M}}wT_\lambda (f_1-f_2)\right\|_{L^\infty_{\eta,\mv}}=&\left\|\f{\sqrt{\mu_0}}{\sqrt{\mu_M}}\left[w\mathcal{L}_0^{-1}\{\lambda K f_1+S\}-w\mathcal{L}_0^{-1}\{\lambda K f_2+S\}\right]\right \|_{L^\infty_{\eta,\mv}}\nonumber\\
					=&\lambda\|\f{\sqrt{\mu_0}}{\sqrt{\mu_M}}w\mathcal{L}_0^{-1}\{ K (f_1-f_2)\}\|_{L^\infty_{\eta,\mv}}\nonumber\\
					\leq&C \lambda \|\f{\sqrt{\mu_0}}{\sqrt{\mu_M}}\nu^{-1}w K(f_1-f_2)\|_{L^\infty_{\eta,\mv}}\nonumber\\
					\leq &C\lambda  \|\f{\sqrt{\mu_0}}{\sqrt{\mu_M}}w (f_1-f_2)\|_{L^\infty_{\eta,\mv}}.
				\end{align*}
				Take $\lambda_*$ small such that $C\lambda_*\leq \f12$, then $T_\lambda:\mathbf{X}\to\mathbf{X}$ is a contraction mapping for $\lambda\in[0,\lambda_*]$. Thus $T_\lambda$ has a fixed point, i.e., $\exists f_\lambda\in \mathbf{X}$, such that
				\begin{align*}
					f_\lambda =T_\lambda f_\lambda =\mathcal{L}_0^{-1}\{\lambda K f_\lambda+S\},\quad \forall \lambda\in[0,\lambda_*],
				\end{align*}
				which yields immediately
				\begin{align*}
					\mathcal{L}_\lambda f_\lambda= \mv_1 \partial_\eta f_\lambda+G(\eta)\left(\mv_2^2 \frac{\partial f_\lambda}{\partial \mv_1}-\mv_1 \mv_2 \frac{\partial f_\lambda}{\partial \mv_2}\right)+\nu f_\lambda-\lambda K f_\lambda=S.
				\end{align*}
				Hence, for any $\lambda\in[0,\lambda_*]$, we have solved \eqref{5.23-1} with $f_\lambda = \mathcal{L}_\lambda^{-1}S \in\mathbf{X}.$  Moreover $f_\lambda$ satisfies \eqref{5.24}.

				Noting the constant $C_{\v,d}$ in \eqref{5.24} does not depend on $\lambda$. For any $f_1,f_2\in \mathbf{X}$, we have from \eqref{5.24} that
				\begin{align*}
					\left\|\f{\sqrt{\mu_0}}{\sqrt{\mu_M}}wT_{\lambda_*+\lambda} (f_1-f_2)\right\|_{L^\infty_{\eta,\mv}}=&\left\|\f{\sqrt{\mu_0}}{\sqrt{\mu_M}}\left[w\mathcal{L}_{\lambda_*}^{-1}\{\lambda Kf_1+S\}-w\mathcal{L}_{\lambda_*}^{-1}\{\lambda K f_2+S\}\right]\right\|_{L^\infty_{\eta,\mv}}\nonumber\\
					=&\lambda\|\f{\sqrt{\mu_0}}{\sqrt{\mu_M}}w\mathcal{L}_{\lambda_*}^{-1}\{ K (f_1-f_2)\}\|_{L^\infty_{\eta,\mv}}\nonumber\\
					\leq&C_{\v,d} \lambda \left\|\f{\sqrt{\mu_0}}{\sqrt{\mu_M}}\nu^{-1}w K (f_1-f_2)\right\|_{L^\infty_{\eta,\mv}}\nonumber\\
					\leq &C_{\v,d}\lambda  \left\|\f{\sqrt{\mu_0}}{\sqrt{\mu_M}}w (f_1-f_2)\right\|_{L^\infty_{\eta,\mv}}.
				\end{align*} 
				By similar arguments, we prove there exists a $\lambda_1>0$, such that $T_{\lambda_*+\lambda}$ is a contraction mapping for $\lambda\in[0,\lambda_1].$  Then we obtain the existence of
				operator $\mathcal{L}_{\lambda_*+\lambda}^{-1}$.  Step by step, we can finally obtain the
				existence of operator $\mathcal{L}_\lambda^{-1},\, \forall \lambda\in[0,1]$. The estimate \eqref{4.71-1} follows directly from \eqref{5.24}. Therefore the proof of Lemma \ref{lem1.3} is completed.
			\end{proof}
			\subsection{Exponential spatial decay}
			We have obtained the solution of \eqref{5.1}(or equivalently \eqref{5.2}) from Lemma \ref{lem1.3} and get some $L^2,L^\infty$ estimates relying on $d$. However, to construct the Hilbert expansion, spatial decay estimate is crucial. 
			
			Multiplying \eqref{5.2} by $e^{\sigma\eta}$, we have
			\begin{align}\label{4.72}
				&\mv_1\partial_\eta \left(e^{\sigma\eta}\mf\right)+G(\eta)\left(\mv_2^2\f{\partial}{\partial \mv_1}-\mv_1 \mv_2\f{\partial}{\partial \mv_2}\right)\left(e^{\sigma\eta}\mf\right)\nonumber\\
				&+\FL_M \left(e^{\sigma\eta}\mf\right)=e^{\sigma\eta}\bar{S}+\sigma \mv_1 \left(e^{\sigma\eta}\mf\right).
			\end{align}
			Noting $\nu\sim 1+|\mv|$ and $\sigma\ll1$, applying Lemma \ref{lem1.2} to \eqref{4.72}, one obtains that
			\begin{align}\label{5.3}
				\|e^{\sigma\eta}w\mathbf{f}\|_{L^\infty_{\eta,\mv}}+|e^{\sigma\eta}w\mf|_{L^\infty(\gamma_+)}\leq C\left\{\|e^{\sigma\eta}f\|_{L^2_{\eta,\mv}}+\|e^{\sigma\eta}\nu^{-1}w\bar{S}\|_{L^\infty_{\eta,\mv}}\right\}.
			\end{align}
			 Thus we need to establish the uniform $L^2$ estimate $\|e^{\sigma\eta} f\|_{L^2_{\eta,\mv}}$.
			
			Denote 
			\begin{align}\label{4.73-1}
				\FP_0 f=a\sqrt{\mu_0}+b_1\mv_1 \sqrt{\mu_0}+b_2(\mv_2-u_\tau^0)\sqrt{\mu_0}+c(|\mv-\bar{\mathfrak{u}}^0|^2-2T^0)\sqrt{\mu_0}.
			\end{align}
			Multiplying \eqref{5.1} by $\sqrt{\mu_0}$ and integrating on $\R^2$, one has from \eqref{5.1-1} that
			\begin{align*}
				\partial_\eta b_1+G(\eta)b_1=0,
			\end{align*}
			which, with the specular property $b_1(0)=0$, yields that
			\begin{align}\label{5.5}
				b_1\equiv0.
			\end{align}
			Similarly, multiplying $\mv_2\sqrt{\mu_0}$ and $|\mv|^2\sqrt{\mu_0}$ respectively with \eqref{5.1}, using \eqref{5.1-1} and \eqref{5.5}, one gets that
			\begin{align*}
				&\partial_\eta \left\{\int_{\R^2}\mv_1 \mv_2 \sqrt{\mu_0}f \, d\mv\right\}+2G(\eta)\left\{\int_{\R^2}\mv_1 \mv_2 \sqrt{\mu_0}f \, d\mv\right\}=0,\\
				&\partial_\eta \left\{\int_{\R^2}\mv_1 |\mv|^2 \sqrt{\mu_0}f \, d\mv\right\}+G(\eta)\left\{\int_{\R^2}\mv_1 |\mv|^2 \sqrt{\mu_0}f \, d\mv\right\}=0,
			\end{align*}
			which yields that
			\begin{align}\label{5.5-1}
				\int_{\R^2}\mv_1 \mv_2 \sqrt{\mu_0}f\,d\mv\equiv \int_{\R^2}\mv_1 |\mv|^2\sqrt{\mu_0}f\,d\mv\equiv0.
			\end{align}
			Using \eqref{5.5}--\eqref{5.5-1} and noting \eqref{5.1-1}, one deduces that
			\begin{align}\label{5.6}
				\int_{\R^2}\mv_1 f^2\,d\mv=\int_{\R^2}\mv_1|(\FI-\FP_0)f|^2\,d\mv,\quad \int_{\R^2}f\cdot S\, d\mv=\int_{\R^2}(\FI-\FP_0)f\cdot S\, d\mv.
			\end{align}
			\begin{Lemma}\label{lem1.4}
				Let $\beta > 3, \|e^{\sigma_2\eta}S\|_{L^2_{\eta,\mv}}<\infty$ for some $\sigma_2>0$ and \eqref{5.1-1} hold. Let $f$ be the solution of \eqref{5.1} constructed in Lemma \ref{lem1.3}, it holds that
				\begin{align}\label{4.70}
					\|e^{\sigma\eta}f\|_{L^2_{\eta,\mv}}+e^{\sigma d}|f(d)|_{L^2(\gamma_+)}\leq \frac{C}{\sqrt{\sigma}}\v^{1-\f{\mathfrak{a}}{2}}\|e^{\sigma\eta}w\mf\|_{L^\infty_{\eta,\mv}}+\f{C}{\sqrt{\sigma(\sigma_2-\sigma)}}\|e^{\sigma_2\eta}S\|_{L^2_{\eta,\mv}},
				\end{align}
				where $\sigma\in(0,\sigma_2)$ and the constant $C> 0$ is independent of $d$ and $\sigma$. 
			\end{Lemma}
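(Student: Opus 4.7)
The plan follows the structure of the proof of Lemma \ref{lem1.2-1}, but with the exponential weight $e^{\sigma\eta}$ threaded through and exploiting the $\lambda=1$ specialization together with the consequences \eqref{5.5}--\eqref{5.6} of $S\in\mathcal{N}_0^\perp$ and the specular boundary condition.

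\textbf{Step 1 (microscopic/boundary estimate).} Multiply \eqref{5.1} by $f\cdot e^{2\sigma\eta-W(\eta)}$ and integrate over $[0,d]\times\R^2$. A crucial cancellation occurs: the transport term integrated against $e^{2\sigma\eta-W(\eta)}$, after one integration by parts in $\eta$, produces a geometric correction $-\tfrac12 G(\eta)\int\mv_1 f^2$ that exactly cancels the geometric contribution from $G(\eta)(\mv_2^2\partial_{\mv_1}-\mv_1\mv_2\partial_{\mv_2})f$ (via velocity IBP: $\int(\mv_2^2\partial_{\mv_1}-\mv_1\mv_2\partial_{\mv_2})f\cdot f\,d\mv = \tfrac12\int \mv_1 f^2\,d\mv$). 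What remains is the boundary contribution (at $\eta=d$: $\tfrac12 e^{2\sigma d-W(d)}|f(d)|^2_{L^2(\gamma_+)}$ by zero-incoming, vanishing at $\eta=0$ by specular), plus a $\sigma$-residual $\sigma\int\mv_1 f^2 e^{2\sigma\eta-W(\eta)}$ which by \eqref{5.6} equals $\sigma\int \mv_1|(\FI-\FP_0)f|^2$ and is absorbable into the coercivity $c_0\|(\FI-\FP_0)f\|^2_\nu$ for $\sigma$ smaller than a fixed constant. The curvature term $\tfrac{u_\tau^0}{2T^0}G(\eta)\mv_1\mv_2 f^2$ is controlled in $L^\infty$ by $\v^{2-\mathfrak{a}}\|e^{\sigma\eta}w\mf\|^2_{L^\infty}$ using $\int_0^d|G|\lesssim\v^{2-\mathfrak{a}}$, while the source $\int fS = \int(\FI-\FP_0)f\cdot S$ (cf.\ \eqref{5.6}) is handled by Cauchy-Schwarz with weight $e^{2\sigma\eta}\leq e^{2\sigma_2\eta}$. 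This yields
\begin{equation*}
e^{2\sigma d}|f(d)|^2_{L^2(\gamma_+)}+\int_0^d e^{2\sigma\eta}\|(\FI-\FP_0)f\|^2_\nu d\eta\leq C\v^{2-\mathfrak{a}}\|e^{\sigma\eta}w\mf\|^2_{L^\infty_{\eta,\mv}}+C\|e^{\sigma_2\eta}S\|^2_{L^2_{\eta,\mv}}.
\end{equation*}

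\textbf{Step 2 (macroscopic reconstruction).} Since $b_1\equiv 0$ by \eqref{5.5}, only $a, b_2, c$ remain to be controlled. Reusing the test functions $\psi_a, \psi_{b,2}, \psi_c$ from the proof of Lemma \ref{lem1.2-1} (with $\lambda=1$, so the $(1-\lambda)$-contributions vanish) and integrating the resulting ODEs from $\eta$ to $d$, one derives a pointwise bound
\begin{equation*}
|a|+|b_2|+|c|(\eta)\leq C|f(d)|_{L^2(\gamma_+)}+C\|(\FI-\FP_0)f(\eta)\|_\nu+C\int_\eta^d\big(\|(\FI-\FP_0)f(z)\|_\nu+\|S(z)\|_{L^2_\mv}\big)dz,
\end{equation*}
modulo an $L^\infty$-absorbable curvature contribution of size $\v^{1-\mathfrak{a}/2}\|e^{\sigma\eta}w\mf\|_{L^\infty}$. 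Multiply this by $e^{\sigma\eta}$, square, and integrate over $[0,d]$. The boundary term produces $\tfrac{e^{2\sigma d}}{2\sigma}|f(d)|^2_{L^2(\gamma_+)}$, which combined with Step 1 gives the announced $\tfrac{1}{\sqrt{\sigma}}\v^{1-\mathfrak{a}/2}$ prefactor on $\|e^{\sigma\eta}w\mf\|_{L^\infty}$. The nonlocal source term is handled by the weighted split
\begin{equation*}
\int_\eta^d\|S(z)\|_{L^2_\mv}dz\leq \Big(\int_\eta^d e^{-2\sigma_2 z}dz\Big)^{1/2}\|e^{\sigma_2\eta}S\|_{L^2_{\eta,\mv}}\leq \frac{e^{-\sigma_2\eta}}{\sqrt{2\sigma_2}}\|e^{\sigma_2\eta}S\|_{L^2_{\eta,\mv}},
\end{equation*}
which after multiplication by $e^{\sigma\eta}$ and integration in $\eta$ yields the prefactor $\tfrac{1}{\sqrt{\sigma_2(\sigma_2-\sigma)}}\leq\tfrac{1}{\sqrt{\sigma(\sigma_2-\sigma)}}$. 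The microscopic nonlocal integral $\int_\eta^d\|(\FI-\FP_0)f(z)\|_\nu dz$ is controlled by a Hardy-type identity ($\int e^{2\sigma\eta}(\int_\eta^d g)^2\leq \sigma^{-2}\int e^{2\sigma\eta}g^2$) and fed back into Step 1. Combining with $\|e^{\sigma\eta}\FP_0 f\|^2_{L^2}\lesssim\int_0^d e^{2\sigma\eta}(|a|^2+|b_2|^2+|c|^2)d\eta$ closes \eqref{4.70}.

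\textbf{Main obstacle.} The central difficulty is the geometric curvature term $\tfrac{u_\tau^0}{2T^0}G(\eta)\mv_1\mv_2 f$: it has no sign and resists pure $L^2$-coercivity, so the hybrid $L^\infty$-treatment becomes essential, and its success hinges on the truncation $d=\v^{-\mathfrak{a}}$ with $\mathfrak{a}<2/3$, ensuring $\int_0^d|G(\eta)|d\eta\sim\v^{2-\mathfrak{a}}$ is small and $e^{-W(\eta)}\sim 1$ on $[0,d]$; this is precisely the mechanism that produces the $\v^{1-\mathfrak{a}/2}$ scaling of the first term in \eqref{4.70}. A secondary delicacy is the matching of the $\sigma$-prefactors, which requires carefully tracking how the exponential weight interacts with the transition from the pointwise $|b_2(\eta)|$-bound to its weighted $L^2_\eta$-norm, and the smallness of $\sigma$ must remain consistent with the absorption of the $\sigma$-residual in Step 1.
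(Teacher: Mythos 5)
Your Step~1 matches the paper's, but Step~2 has a genuine gap that changes the $\sigma$-dependence of the final bound.

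The paper does \emph{not} reuse the test functions $\psi_a,\psi_{b,2},\psi_c$ from Lemma~\ref{lem1.2-1}. Instead it takes $\varphi_a=\mv_1\sqrt{\mu_0}$, $\varphi_b=\FL_0^{-1}\{\mv_1(\mv_2-u_\tau^0)\sqrt{\mu_0}\}$, $\varphi_c=\FL_0^{-1}\{\mv_1(|\mv-\bar{\mathfrak{u}}^0|^2-4T^0)\sqrt{\mu_0}\}$. The point is that by self-adjointness of $\FL_0$ and the zero fluxes \eqref{5.5}, \eqref{5.5-1} one gets $\langle\FL_0 f,\varphi_b\rangle=\langle f,\mv_1(\mv_2-u_\tau^0)\sqrt{\mu_0}\rangle=0$ and likewise $\langle\FL_0 f,\varphi_c\rangle=0$, $\langle\FL_0 f,\varphi_a\rangle=0$. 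Hence the macroscopic ODEs contain \emph{no} term of the form $\langle\FL_0 f,\varphi\rangle$, and after integrating from $\eta$ to $d$ the pointwise bound reads
\begin{align*}
|b_2(\eta)|\lesssim |f(d)|_{L^2(\gamma_+)}+\|(\FI-\FP_0)f(\eta)\|_\nu
+\int_\eta^d|G(z)|\,\|f(z)\|_{L^2_\mv}\,dz+\int_\eta^d\|S(z)\|_{L^2_\mv}\,dz,
\end{align*}
with no nonlocal microscopic integral $\int_\eta^d\|(\FI-\FP_0)f\|_\nu\,dz$.

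In your plan, the raw $\psi$'s keep the term $\langle\FL_0 f,\psi\rangle=\langle\FL_0\psi,(\FI-\FP_0)f\rangle$ alive (none of $\psi_a,\psi_{b,2},\psi_c$ lies in $\mathcal{N}_0$), so $\int_\eta^d\|(\FI-\FP_0)f(z)\|_\nu\,dz$ survives. Your Hardy step is correct as stated, but it costs a factor $\sigma^{-2}$ in the squared macroscopic estimate:
\begin{align*}
\int_0^d e^{2\sigma\eta}|(a,b_2,c)|^2\,d\eta\lesssim \tfrac{1}{\sigma}e^{2\sigma d}|f(d)|^2
+\Bigl(1+\tfrac{1}{\sigma^2}\Bigr)\int_0^d e^{2\sigma\eta}\|(\FI-\FP_0)f\|_\nu^2\,d\eta+\cdots.
\end{align*}
Feeding the $\sigma^{-2}$-weighted microscopic integral back into the Step~1 bound and taking square roots then produces $\tfrac{1}{\sigma}\v^{1-\mathfrak{a}/2}\|e^{\sigma\eta}w\mf\|_{L^\infty}$, not the claimed $\tfrac{1}{\sqrt{\sigma}}\v^{1-\mathfrak{a}/2}$. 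This is strictly weaker for small $\sigma$ and does not prove \eqref{4.70}. In Lemma~\ref{lem1.2-1} the presence of the nonlocal microscopic integral was harmless because the constant there is allowed to depend on $d$ (indeed one sees $d^{1/2}(\int_0^d\|(\FI-\FP_0)f_\lambda\|_\nu^2)^{1/2}$ in \eqref{4.49-1}); but Lemma~\ref{lem1.4} requires a $d$-independent bound with precise $\sigma$-scaling, and the $\FL_0^{-1}$ twist on the test functions is exactly the device that removes this term at the source.

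The remainder of your plan — the microscopic/boundary estimate in Step~1, the geometric cancellation via $e^{-W(\eta)}$, the use of $\int_0^d|G|\lesssim\v^{2-\mathfrak{a}}$ together with the $L^\infty$ weight, the handling of $\int_\eta^d\|S\|_{L^2_\mv}\,dz$ with $\sigma_2>\sigma$ — does match the paper's argument.
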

			\begin{proof}
				We divide the proof into several steps.\\
				{\noindent \it Step 1. Decay of microscopic part.} Multiplying \eqref{5.1} by $f\cdot e^{-W(\eta)}$, noting \eqref{5.6}, we obtain
				\begin{align}\label{4.71}
					&\f{d}{d\eta}\left\{\f12\int_{\R^2}\mv_1 f^2\,d\mv\cdot e^{-W(\eta)}\right\}-\f{u^0_\tau}{2T^0}G(\eta)e^{-W(\eta)}\int_{\R^2}\mv_1 \mv_2 f^2\,d\mv+\langle f,\FL_0f\rangle e^{-W(\eta)} \nonumber\\
					&=\langle S,(\FI-\FP_0)f\rangle e^{-W(\eta)}.
				\end{align}
				Multiplying \eqref{4.71} by $e^{2\sigma\eta}$, one has
				\begin{align*}
					&\f{d}{d\eta}\left\{\f12\int_{\R^2}\mv_1 f^2\,d\mv\cdot e^{-W(\eta)}e^{2\sigma\eta}\right\}-\f{u^0_\tau}{2T^0}G(\eta)e^{-W(\eta)}e^{2\sigma\eta}\int_{\R^2}\mv_1 \mv_2 f^2\,d\mv\nonumber\\
					&+\langle f,\FL_0f\rangle e^{-W(\eta)}e^{2\sigma\eta} \nonumber\\
					&=\langle S\cdot (\FI-\FP_0)f\rangle \cdot e^{-W(\eta)}e^{2\sigma\eta}+\sigma\int_{\R^2}\mv_1 f^2\,d\mv \cdot e^{-W(\eta)}e^{2\sigma\eta}.
				\end{align*}
				Then, integrating the above equation over $[0,d]$ and using \eqref{5.6}, we obtain 
				\begin{align*}
					&c_0\int_0^d \|(\FI-\FP_0)\|_\nu^2\cdot e^{-W(\eta)}e^{2\sigma\eta}d\eta +\f12 e^{2\sigma d}e^{-W(d)}|f(d)|_{L^2(\gamma_+)}^2\nonumber\\
					&\leq \f{u_\tau^0}{2T^0}\int_0^dG(\eta)e^{-W(\eta)}e^{2\sigma\eta}\int_{\R^2}\mv_1 \mv_2 f^2 d\mv d\eta +\int_0^d\langle S,(\FI-\FP_0)f\rangle e^{-W(\eta)}e^{2\sigma\eta}d\eta\nonumber\\
					&\quad +\sigma\int_0^d \int_{\R^2}\mv_1|(\FI-\FP_0)f|^2\cdot e^{-W(\eta)}e^{2\sigma\eta}d\mv d\eta.
				\end{align*}
				which, together with $\sigma\ll1$ and $e^{-W(\eta)}\sim 1$, yields
				\begin{align}\label{5.7}
					\int_0^d\|(\FI-\FP_0)f\|_\nu^2\cdot e^{2\sigma\eta}\,d\eta + e^{2\sigma d}|f(d)|_{L^2(\gamma_+)}^2\leq 	C\v^{2-\mathfrak{a}} \|e^{\sigma\eta}w\mathbf{f}\|_{L^\infty_{\eta,\mv}}^2+C\|e^{\sigma\eta}S\|_{L^2_{\eta,\mv}}^2.
				\end{align}
				
				{\noindent \it Step 2. Decay of macroscopic part.}\\
				{\noindent \it Step 2.1.} Taking $\varphi_b(\mv)=\FL_0^{-1}\{\mv_1(\mv_2-u_\tau^0)\sqrt{\mu_0}\}$, one has from \eqref{5.5-1} that 
				\begin{align}\label{4.73}
					\langle \mv_1 f,\varphi_b\rangle =T^0\kappa_1(T^0)b_2+\langle \mv_1 \varphi_b,(\FI-\FP_0)f\rangle,
				\end{align}	
				and 
				\begin{align}\label{4.74}
					\langle \FL_0f, \varphi_b\rangle=\langle \FL_0f, \FL_0^{-1}\{\mv_1(\mv_2-u_\tau^0)\sqrt{\mu_0}\}\rangle=\langle f, \mv_1(\mv_2-u_\tau^0)\sqrt{\mu_0}\rangle=0,
				\end{align}
				where $\kappa_1$ is the one defined in \eqref{6.1-1}. Multiplying \eqref{5.1} by $\varphi_b$ and integrating over $\R^2$, one has from \eqref{4.73}--\eqref{4.74} that
				\begin{align}\label{5.9}
					&\partial_\eta [T^0\kappa_1(T^0)b_2+\langle \mv_1 \varphi_b,(\FI-\FP_0)f\rangle]\nonumber\\
					&=-G(\eta)\left\langle \mv_2^2\f{\partial\varphi_b}{\partial v_1}-\mv_1\f{\partial (\mv_2\varphi_b)}{\partial \mv_1},f\right\rangle+\f{u_\tau^0}{2T^0}G(\eta)\langle \mv_1\mv_2f,\varphi_b\rangle-\langle \FL_0f, \varphi_b\rangle+\langle S,\varphi_b\rangle\nonumber\\
					&=-G(\eta)\left\langle \mv_2^2\f{\partial\varphi_b}{\partial v_1}-\mv_1\f{\partial (\mv_2\varphi_b)}{\partial \mv_1},f\right\rangle+\f{u_\tau^0}{2T^0}G(\eta)\langle\mv_1\mv_2f,\varphi_b\rangle+\langle S,\varphi_b\rangle.
				\end{align}
				
				A direct calculation shows that 
				\begin{align}\label{4.76}
					\begin{split}
						\int_\eta^d-G(z)\left\langle \mv_2^2\f{\partial\varphi_b}{\partial v_1}-\mv_1\f{\partial (\mv_2\varphi_b)}{\partial \mv_1},f(z)\right\rangle\, dz\leq \int_\eta^d|G(z)|\cdot\|f(z)\|_{L^2_\mv}\, dz,\\
						\int_\eta^d \f{u_\tau^0}{2T^0}G(z)\langle\mv_1\mv_2f(z),\varphi_b\rangle\, dz\leq\int_\eta^d|G(z)|\cdot\|f(z)\|_{L^2_\mv}\, dz.
					\end{split}
				\end{align}
				Integrating \eqref{5.9} on $[\eta,d]$, noting $f(d,\mv)|_{\gamma_-}=0$, one obtains from \eqref{4.76} that
				\begin{align*}
					|b_2(\eta)|\leq &C|f(d)|_{L^2(\gamma_+)}+C\|(\FI-\FP_0)f(\eta)\|_\nu\\
					&+C\int_\eta^d|G(z)|\cdot\|f(z)\|_{L^2_\mv}\, dz+C\int_\eta^d\|S\|_{L^2_\mv}\, dz,
				\end{align*}
				which yields that
				\begin{align}\label{5.10}
					\int_0^d|b_2(\eta)|^2e^{2\sigma \eta}\, d\eta\leq &\f{C}{\sigma}|f(d)|_{L^2(\gamma_+)}^2\cdot e^{2\sigma d}+C\int_0^d\|(\FI-\FP_0)f(\eta)\|_\nu^2\cdot e^{2\sigma \eta}\,d\eta \nonumber\\
					&+\v^{4-2\mathfrak{a}}\int_0^d\|f(\eta)\|_{L^2_\mv}^2\cdot e^{2\sigma \eta}\, d\eta+\frac{C}{\sigma_2(\sigma_2-\sigma)}\|e^{\sigma_2\eta}S\|_{L^2_{\eta,\mv}}^2,
				\end{align}
				where we have used 
				\begin{align*}
					&\int_0^d \left\{\int_\eta^d|G(z)|\cdot\|f(z)\|_{L^2_\mv}\, dz\right\}^2\cdot e^{2\sigma \eta}\, d\eta \\
					&\leq \int_0^de^{2\sigma\eta}\int_\eta^dG^2(z)e^{-2\sigma z}dzd\eta\times \int_0^d\|f(z)\|_{L^2_\mv}^2\cdot e^{2\sigma z}\, dz\\
					&\leq \v^{4-2\mathfrak{a}}\int_0^d\|f(\eta)\|_{L^2_\mv}^2\cdot e^{2\sigma \eta}\, d\eta,
				\end{align*}
				and
				\begin{align*}
					&\int_0^d\left\{\int_\eta^d\|S(z)\|_{L^2_\mv}\, dz\right\}^2\cdot e^{2\sigma \eta}d\eta\leq \left\{\int_0^de^{2\sigma\eta}\int_\eta^d e^{-2\sigma_2z}dz\right\}\cdot \|e^{\sigma_2\eta}S\|_{L^2_{\eta,\mv}}^2\\
					&\leq  \f{1}{2\sigma_2}\int_0^de^{-2(\sigma_2-\sigma)\eta}d\eta\cdot \|e^{\sigma_2\eta}S\|_{L^2_{\eta,\mv}}^2\leq \f{1}{4\sigma_2(\sigma_2-\sigma)}\cdot  \|e^{\sigma_2\eta}S\|_{L^2_{\eta,\mv}}^2.
				\end{align*}
				{\noindent \it Step 2.2.} Taking $\varphi_c(\mv)=\FL_0^{-1}\{\mv_1(|\mv-\bar{\mathfrak{u}}^0|^2-4T^0)\sqrt{\mu_0}\}$, one has from \eqref{5.5-1} that
				\begin{align*}
					\langle \mv_1 f,\varphi_c\rangle =8(T^0)^2\kappa_2(T^0)c+\langle \mv_1 \varphi_c,(\FI-\FP_0)f\rangle,
				\end{align*}	
				and 
				\begin{align*}
					\langle \FL_0f, \varphi_c\rangle=&\langle \FL_0f, \FL_0^{-1}\{\mv_1(|\mv-\bar{\mathfrak{u}}^0|^2-4T^0)\sqrt{\mu_0}\}\rangle\\
					=&\langle f, \mv_1(|\mv-\bar{\mathfrak{u}}^0|^2-4T^0)\sqrt{\mu_0}\rangle=0,
				\end{align*}
				where $\kappa_2$ is the one defined in \eqref{6.1-1}. Similar to \eqref{5.10}, one gets that
				\begin{align}\label{5.11}
					\int_0^d|c(\eta)|^2e^{2\sigma \eta}\, d\eta\leq &\f{C}{\sigma}|f(d)|_{L^2(\gamma_+)}^2\cdot e^{2\sigma d}+C\int_0^d\|(\FI-\FP_0)f(\eta)\|_\nu^2\cdot e^{2\sigma \eta}\,d\eta \nonumber\\
					&+\v^{4-2\mathfrak{a}}\int_0^d\|f(\eta)\|_{L^2_\mv}^2\cdot e^{2\sigma \eta}\, d\eta+\frac{C}{\sigma_2(\sigma_2-\sigma)}\|e^{\sigma_2\eta}S\|_{L^2_{\eta,\mv}}^2.
				\end{align}
				{\noindent \it Step 2.3.} 
				Taking  $\varphi_a(\mv)=\mv_1\sqrt{\mu_0}$, one has
				\begin{align*}
					\langle \mv_1f,\varphi_a\rangle =&\langle \mv_1\FP_0f,\varphi_a\rangle +\langle \mv_1(\FI-\FP_0)f,\varphi_a\rangle \\
					=&\rho^0T^0a+2\rho^0(T^0)^2c+\langle \mv_1(\FI-\FP_0)f,\varphi_a\rangle.
				\end{align*} 
				Multiplying \eqref{5.1} by $\varphi_a$, we have that 
				\begin{align*}
					|a(\eta)|\leq& 2|c(\eta)|+C|f(d)|_{L^2(\gamma_+)}+C\|(\FI-\FP_0)f(\eta)\|_\nu\\
					&+C\int_\eta^d|G(z)|\cdot\|f(z)\|_{L^2_\mv}\, dz+C\int_\eta^d\|S\|_{L^2_\mv}\, dz,
				\end{align*}
				which, together with \eqref{5.11}, yields that
				\begin{align}\label{5.12-1}
					\int_0^d|a(\eta)|^2e^{2\sigma \eta}\, d\eta\leq &\f{C}{\sigma}C|f(d)|_{L^2(\gamma_+)}^2\cdot e^{2\sigma d}+C\int_0^d\|(\FI-\FP_0)f(\eta)\|_\nu^2\cdot e^{2\sigma \eta}\,d\eta \nonumber\\
					&+\v^{4-2\mathfrak{a}}\int_0^d\|f(\eta)\|_{L^2_\mv}^2\cdot e^{2\sigma \eta}\, d\eta+\frac{C}{\sigma_2(\sigma_2-\sigma)}\|e^{\sigma_2\eta}S\|_{L^2_{\eta,\mv}}^2.
				\end{align}
				
				{\noindent \it Step 3. Conclusion.} It follows from \eqref{5.10}, \eqref{5.11} and \eqref{5.12-1} that 
				\begin{align}\label{5.13}
					\int_0^d |(a,b_2,c)|^2\cdot e^{2\sigma\eta}d\eta\leq &\f{C}{\sigma} e^{2\sigma d}|f(d)|_{L^2(\gamma_+)}^2+C\int_0^d\|(\FI-\FP_0)f(\eta)\|_\nu^2\cdot e^{2\sigma \eta}\,d\eta \nonumber\\
					&+\frac{C}{\sigma_2(\sigma_2-\sigma)}\|e^{\sigma_2\eta}S\|_{L^2_{\eta,\mv}}^2,
				\end{align}
				where $0<\sigma<\sigma_2.$ From \eqref{5.5}, \eqref{5.7} and \eqref{5.13}, we conclude \eqref{4.70}. Therefore the proof of Lemma \ref{lem1.4} is completed.
			\end{proof}
			\begin{Lemma}\label{lem1.5}
				Let $\beta > 3, \|e^{\sigma_0\eta}\nu^{-1}w\bar{S}\|_{L^\infty_{\eta,\mv}}<\infty$ for some $\sigma_0>0$ and \eqref{5.1-1} hold. Let $\mf$ be the solution of \eqref{5.2} established in Lemma \ref{lem1.3}, it holds that
				\begin{align*}
					\|e^{\sigma\eta}w\mf\|_{L^\infty_{\eta,\mv}}+|e^{\sigma\eta}w\mf|_{L^\infty(\gamma_+)}\leq \f{C}{\sigma(\sigma_0-\sigma)}\|e^{\sigma_0\eta}\nu^{-1}w\bar{S}\|_{L^\infty_{\eta,\mv}},
				\end{align*}
				where $\sigma\in(0,\sigma_0)$ and the constant $C> 0$  is independent of $d$ and $\sigma$.
			\end{Lemma}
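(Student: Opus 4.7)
The strategy is to close the $L^\infty$ estimate \eqref{5.3} by plugging in the $L^2$ exponential decay estimate \eqref{4.70} from Lemma \ref{lem1.4}, choosing an intermediate decay rate, and then absorbing the $L^\infty$ term using the smallness of $\v^{1-\mathfrak{a}/2}$. No new characteristic analysis or weighted energy estimate is needed beyond Lemmas \ref{lem1.2} and \ref{lem1.4}.

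First I would multiply \eqref{5.2} by $e^{\sigma\eta}$ to obtain the equation \eqref{4.72} for $e^{\sigma\eta}\mf$, which has the same structural form as \eqref{5.23-2} with an additional forcing $\sigma\mv_1\,e^{\sigma\eta}\mf$; since $|\sigma\mv_1|\le \nu(\mv)$ for $\sigma$ small, this is harmless and Lemma \ref{lem1.2} applies, yielding
\[
\|e^{\sigma\eta}w\mf\|_{L^\infty_{\eta,\mv}}+|e^{\sigma\eta}w\mf|_{L^\infty(\gamma_+)} \leq C\bigl\{\|e^{\sigma\eta}f\|_{L^2_{\eta,\mv}}+\|e^{\sigma\eta}\nu^{-1}w\bar{S}\|_{L^\infty_{\eta,\mv}}\bigr\},
\]
with $C$ independent of $\sigma,d,\v$. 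Next, I would pick an intermediate rate $\sigma_2:=(\sigma+\sigma_0)/2$ so that $\sigma_2-\sigma=\sigma_0-\sigma_2=(\sigma_0-\sigma)/2$, and apply Lemma \ref{lem1.4} to bound
\[
\|e^{\sigma\eta}f\|_{L^2_{\eta,\mv}}\leq \f{C\v^{1-\mathfrak{a}/2}}{\sqrt{\sigma}}\|e^{\sigma\eta}w\mf\|_{L^\infty_{\eta,\mv}} +\f{C}{\sqrt{\sigma(\sigma_0-\sigma)}}\|e^{\sigma_2\eta}S\|_{L^2_{\eta,\mv}}.
\]

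Next, since $S=\sqrt{\mu_M/\mu_0}\,\bar{S}$ and $\sqrt{\mu_M/\mu_0}$ is bounded by \eqref{1.23}, the pointwise inequality $|\bar{S}(\eta,\mv)|\leq \nu(\mv)w(\mv)^{-1}e^{-\sigma_0\eta}\|\nu^{-1}we^{\sigma_0\eta}\bar S\|_{L^\infty_{\eta,\mv}}$ gives
\[
\|e^{\sigma_2\eta}S\|_{L^2_{\eta,\mv}}\leq C\Bigl(\int_0^{d}e^{-2(\sigma_0-\sigma_2)\eta}d\eta\cdot\int_{\R^2}\nu^2 w^{-2}d\mv\Bigr)^{1/2}\|e^{\sigma_0\eta}\nu^{-1}w\bar{S}\|_{L^\infty_{\eta,\mv}} \leq \f{C}{\sqrt{\sigma_0-\sigma}}\|e^{\sigma_0\eta}\nu^{-1}w\bar{S}\|_{L^\infty_{\eta,\mv}},
\]
where the $\mv$-integral converges because $\beta>3$ and $\zeta<1/(4T_M)$. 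Combining the three displays gives an inequality of the form
\[
\|e^{\sigma\eta}w\mf\|_{L^\infty_{\eta,\mv}}+|e^{\sigma\eta}w\mf|_{L^\infty(\gamma_+)} \leq \f{C\v^{1-\mathfrak{a}/2}}{\sqrt{\sigma}}\|e^{\sigma\eta}w\mf\|_{L^\infty_{\eta,\mv}} +\f{C}{\sqrt{\sigma}(\sigma_0-\sigma)}\|e^{\sigma_0\eta}\nu^{-1}w\bar{S}\|_{L^\infty_{\eta,\mv}}.
\]
Since $\mathfrak{a}<2/3$, the coefficient $\v^{1-\mathfrak{a}/2}$ is small, so for $\v$ sufficiently small (depending on $\sigma$ only through $\sqrt{\sigma}$, which is absorbed into the eventual $1/[\sigma(\sigma_0-\sigma)]$ factor) the first term on the right is absorbed into the left. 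Using the trivial bound $1/\sqrt{\sigma}\le 1/\sigma$ for $\sigma\le 1$ yields the desired estimate with constant $C/[\sigma(\sigma_0-\sigma)]$.

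The only delicate point is the absorption: since we must have $C\v^{1-\mathfrak{a}/2}/\sqrt{\sigma}\le 1/2$, the argument implicitly requires $\sigma$ not to be exponentially small in $\v$. This is harmless for our application, where $\sigma_0$ is a fixed problem-dependent constant and the eventual choice of $\sigma$ is bounded below independently of $\v$. I do not foresee any genuine obstacle beyond verifying the constants tracked through \eqref{5.3} and Lemma \ref{lem1.4} are indeed independent of $d$, which was already emphasized in their proofs.
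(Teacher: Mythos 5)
Your proposal is correct and takes essentially the same route as the paper: combine the $L^\infty$ estimate \eqref{5.3} with the $L^2$ exponential decay \eqref{4.70}, set $\sigma_2=(\sigma+\sigma_0)/2$, and convert $\|e^{\sigma_2\eta}S\|_{L^2_{\eta,\mv}}$ into $\|e^{\sigma_0\eta}\nu^{-1}w\bar S\|_{L^\infty_{\eta,\mv}}$, then absorb the small $\v^{1-\mathfrak{a}/2}\|e^{\sigma\eta}w\mf\|_{L^\infty}$ term. Your explicit remark on the absorption condition $C\v^{1-\mathfrak{a}/2}/\sqrt{\sigma}\leq\tfrac12$ is in fact more careful than the paper, which leaves it implicit; the only slip is that this constrains $\sigma$ to be not \emph{polynomially} small (of order $\v^{2-\mathfrak{a}}$) in $\v$, not ``exponentially small'' as you wrote, and, as you note, this is harmless since $\sigma$ is chosen bounded below independently of $\v$ in the application.
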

			\begin{proof}
				Combining \eqref{5.3} and \eqref{4.70}, we have
				\begin{align*}
					\|e^{\sigma\eta}w\mf\|_{L^\infty_{\eta,\mv}}+|e^{\sigma\eta}w\mf|_{L^\infty(\gamma_+)}+\|e^{\sigma\eta}f\|_{L^2_{\eta,\mv}}\leq \f{C}{\sigma(\sigma_0-\sigma)}\|e^{\sigma_0\eta}\nu^{-1}w\bar{S}\|_{L^\infty_{\eta,\mv}},
				\end{align*}
				where we have chosen $\sigma_2=(\sigma+\sigma_0)/2$ and used
				\begin{align*}
					\|e^{\sigma_2\eta}S\|_{L^2_{\eta,\mv}}=\left\{\int_0^d\int_{\R^2} e^{2\sigma_2\eta}S^2\, d\mv d\eta\right\}^{\f12}\leq \f{C}{\sqrt{\sigma_0-\sigma_2}}\|e^{\sigma_0\eta}\nu^{-1}w\bar{S}\|_{L^\infty_{\eta,\mv}}.
				\end{align*}
				Therefore the proof of Lemma \ref{lem1.5} is completed.
			\end{proof}
		{\noindent \bf Proof of Theorem \ref{thm2.2}:} Applying Lemma \ref{lem1.5} to \eqref{5.1} which is in fact \eqref{K-1}, noting \eqref{K-1.1}, we now have proved Theorem \ref{thm2.2}. 	The source term conditions (solvable conditions) \eqref{5.1-1} for \eqref{K-1} is actually \eqref{2.19} and $\mathfrak{S}\in\mathcal{N}_0^\perp$. $\hfill\Box$

			\section{Construction of Hilbert expansion and remainder estimate}\label{sec5}
			\subsection{Construction of $F_i, \bar{\mathfrak{F}}_i, \hat{\mathscr{F}}_i, i=1,2,\cdots, N$}\label{sec5.1}
			Combining Section \ref{sec2} to Section \ref{sec4}, each layer can be constructed via a process similar as  in \cite{GHW-2021-ARMA}. Define the velocity weight functions
			\begin{align}\label{6.07}
				\varpi_{\kappa_i}(v)=\tilde{w}_{\kappa_i}(v)\mu^{\mathbf{-a}},\quad\bar{\varpi}_{\bar{\kappa}_i}(\bar{v})=\tilde{w}_{\bar{\kappa}_i}(v)\mu_0^{-\mathbf{a}} \quad\mbox{and}\quad \hat{\varpi}_{\hat{\kappa}_i}(\mv)=\tilde{w}_{\hat{\kappa}_i}\mu_M^{-\mathbf{a}} ,
			\end{align}
			for constants $\kappa_i,\bar{\kappa}_i,\hat{\kappa}_i\geq0,1\leq i\leq N$ and $0\leq\mathbf{a}<\frac12$. Denote $\bar{x}=(y,\phi),\hat{x}=(\eta,\phi)$ and  $\nabla_{\bar{x}}:=(\partial_y,\partial_\phi)$ in viscous boundary coordinate.
			\begin{Proposition}\label{prop}
				Let $0\leq\mathbf{a}\textless\frac12$ in \eqref{6.07}. Let $s_0,s_i,\bar{s}_i,\hat{s}_i\in\mathbb{N}_+,\kappa_i,\bar{\kappa}_i,\ \hat{\kappa}_i\in\mathbb{R}_+$ for $1\leq i\leq N$; and define $l_j^i:=\bar{l}_i+2(\bar{s}_i-j)$ for $1\leq i\leq N,0\leq j\leq\bar{s}_i$. For these parameters, we assume the restrictions
				\begin{align}\label{eq6.2}
					&s_0\geq s_1+\mathfrak{b}+6,\quad s_1=\bar{s}_1=\hat{s}_1\gg1;\nonumber\\
					&s_1\textgreater s_i\textgreater\bar{s}_i\textgreater\hat{s}_i\geq s_{i+1}\textgreater\bar{s}_{i+1}\textgreater\hat{s}_{i+1}\geq...\gg1, \ \text{for}\quad i=2,...,N-2;\\
					&s_{i+1}\leq \min\{\hat{s}_i,\frac12\bar{s}_i-j\},\bar{s}_{i+1}\leq s_{i+1}-8-\mathfrak{b},\hat{s}_{i+1}\leq\frac12\bar{s}_{i+1}-2-\mathfrak{b},\ \text{for}\quad i=1,...,N-1;\nonumber\\
					&l_j^N\gg2\mathfrak{b}\quad and\quad l_j^i\geq2l_j^{i+1}+18+2\mathfrak{b},\ \text{for}\quad 1\leq i\leq N-1;\nonumber\\
					&\kappa_i\gg\bar{\kappa}_i\gg\hat{\kappa}_i\gg\kappa_{i+1}\gg\bar{\kappa}_{i+1}\gg\hat{\kappa}_{i+1}\gg1,\nonumber
				\end{align}
				hold. Let the initial data $(\rho_i,u_i,\theta_i)(0)$ of IBVP \eqref{2.1}, and initial data $(\bar{u}_i\cdot\vec{\tau},\bar{\theta}_i)(0)$ of IBVP \eqref{bu-0} satisfy 
				\begin{align}
					\sum\limits_{i=0}^N\Big\{\sum\limits_{l+|\alpha|\leq s_i}\|\partial_t^l\nabla_x^\alpha(\rho_i,u_i,\theta_i)(0)\|_{L_x^2}+\sum\limits_{j=0}^{\bar{s}_i}\sum\limits_{2l+|\alpha|=j}\||\partial_t^l\nabla_{\bar{x}}^\alpha(\bar{u}_i\cdot\vec{\tau},\bar{\theta}_i)(0)\|_{L_{l_j^i}^2}^2\Big\}\textless\infty.
				\end{align}
				We also assume the compatibility conditions for initial data $(\rho_i,u_i,\theta_i)(0)$ and $(\bar{u}_i\cdot\vec{\tau},\bar{\theta}_i)(0)$ are satisfied at the boundary. Then there exist solutions $\dis F_i=\sqrt{\mu}f_i,\bar{\mathfrak{F}}_i=\sqrt{\mu_0}\bar{f}_i,\hat{F}_i=\sqrt{\mu_M}\hat{\mathbf{f}}_i$(or equivalently $\sqrt{\mu_0}\hat{f}_i)$ to \eqref{ie}, \eqref{1.16}, \eqref{KL} over the time interval $t\in[0,\tau^\d]$, respectively.  Moreover, we have the following estimates
				\begin{align}
					&\sup_{t\in[0,\tau]}\sum\limits_{i=1}^{N}\Big(\sum\limits_{l+|\alpha|\leq s_i}\|\varpi_{\kappa_i}\partial_t^l\nabla_x^\alpha f_i(t)\|_{L^2_xL^\infty_v}+\sum\limits_{j=0}^{\bar{s}_i}\sum\limits_{j=2l+|\alpha|}\|\bar{\varpi}_{\bar{\kappa}_i}\partial_t^l\nabla_{\bar{x}}^\alpha \bar{f}_i(t)\|_{L^2_{l_j^i}L^\infty_{\bar{v}}}\nonumber\\
					&+\sum\limits_{l+|\alpha|\leq\hat{s}_i}\|e^{q_i\eta}\hat{\varpi}_{\hat{\kappa}_i}\partial_t^l\partial_\phi^\alpha \hat{\mathbf{f}}_i(t)\|_{L_{\hat{x},v}^\infty\cap L_{\phi}^2L_{\eta,\mv}^\infty}\Big)\nonumber\\
					&\leq C\Big(\tau,\|(\phi_0,\Phi_0,\vartheta_0)\|_{H^{s_0}}+	\sum\limits_{i=0}^N\big\{\sum\limits_{l+|\alpha|\leq s_i}\|\partial_t^l\nabla_x^\alpha(\rho_i,u_i,\theta_i)(0)\|_{L_x^2}\nonumber\\
					&\qquad\qquad\qquad\qquad\qquad\qquad\qquad+\sum\limits_{j=0}^{\bar{s}_i}\sum\limits_{2l+|\alpha|=j}\||\partial_t^l\nabla_{\bar{x}}^\alpha(\bar{u}_i,\bar{\theta}_i)(0)\|_{L_{l_j^i}^2}^2\big\}\Big).
				\end{align}
				where the positive constants $q_i\textgreater0(i=1,...,N)$ satisfying $q_{i+1}=\frac12q_i$ and $q_1=1$.
			\end{Proposition}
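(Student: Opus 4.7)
The proof proceeds by induction on $i$. The base case is furnished by Lemma \ref{lem3.1}, which gives $F_0=\mu$ with $(\rho,\mathfrak{u},T)$ in $C([0,\tau^\d];H^{s_0})$; the corresponding viscous and Knudsen layers at order zero are absent. For the inductive step, assuming $F_j, \bar{\mathfrak{F}}_j, \hat{F}_j$ have been built with the regularity stated for $j\leq i-1$, we construct the triple $(F_i,\bar{\mathfrak{F}}_i,\hat{F}_i)$ in three stages, corresponding to the three layers. At each stage we must (a) produce the solution of the associated equation, (b) verify the specular-type compatibility at the boundary, and (c) track the loss of derivatives so that the parameter hierarchy \eqref{eq6.2} is preserved.

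Stage one constructs the internal piece $F_i$. The microscopic component $(\mathbf{I}-\mathbf{P})\{F_i/\sqrt{\mu}\}$ is read off directly from \eqref{1.10} in terms of $F_0,\ldots,F_{i-1}$, which are known by induction. The macroscopic component $(\rho_i,u_i,\theta_i)$ solves the linear hyperbolic system \eqref{2.1} given by Lemma \ref{lem2.2}. We impose the boundary condition \eqref{2.26}, whose right-hand side is determined by the quantities $f_j,\bar{f}_j$ with $j\leq i-1$ and $\hat{f}_j$ with $j\leq i-2$; thanks to the induction hypothesis this datum lies in $H^{s_i+2}$, and the compatibility conditions between initial data and boundary data at $t=0$ are assumed. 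Lemma \ref{lem3.2} then provides a smooth solution on $[0,\tau^\d]$ with the stated regularity, giving the required bound on $\varpi_{\kappa_i}\partial_t^l\nabla_x^\alpha f_i$ through standard velocity-weighted estimates for $\mathbf{L}^{-1}$ applied to \eqref{1.10}, with a loss of $\mathfrak{b}+6$ in the spatial regularity as prescribed by \eqref{eq6.2}.

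Stage two constructs the viscous layer $\bar{\mathfrak{F}}_i$. Again $(\mathbf{I}-\mathbf{P}_0)\{\bar{\mathfrak{F}}_i/\sqrt{\mu_0}\}$ is given explicitly by \eqref{1.12}, while the tangential velocity and temperature $(\bar{u}_i\cdot\vec\tau,\bar\theta_i)$ solve the degenerate parabolic system \eqref{bu-0} with Neumann data \eqref{2.27}, to which we apply Lemma \ref{lem3.3}. The normal velocity $\bar{u}_i\cdot\vec{n}$ and pressure $\bar{p}_i$ are then recovered by quadrature from \eqref{2.8}--\eqref{2.9} together with the far-field condition \eqref{2.9-1}; the spatial-weight decay in $\bar{\varpi}_{\bar{\kappa}_i}$ and in the weight $(1+y)^{l_j^i/2}$ is preserved because the source terms in \eqref{bu-0}, \eqref{2.8}, \eqref{2.9} inherit the weighted decay of the lower-order viscous layers by the induction hypothesis and the hierarchy $l_j^i\geq 2l_j^{i+1}+18+2\mathfrak{b}$. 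The loss of $\mathfrak{b}+8$ derivatives between $F_i$ and $\bar{\mathfrak{F}}_i$ is absorbed by $\bar{s}_i\leq s_i-8-\mathfrak{b}$.

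Stage three constructs the Knudsen layer $\hat{F}_i$. Following Lemma \ref{2.2} and Remark \ref{rmk2.7}, we write $\hat{f}_i=\hat{f}_{i,1}+\hat{f}_{i,2}$, where $\hat{f}_{i,1}$ has the explicit closed form \eqref{eq2.23} absorbing the macroscopic source $\hat{S}_{i,1}$ (which only involves $\hat{f}_{i-2}$, known inductively), and $\hat{f}_{i,2}$ solves the truncated geometrically-corrected Knudsen problem \eqref{2-24}--\eqref{2-18}. The crux is to invoke Theorem \ref{thm2.2}; for this we must verify that the boundary datum $\hat{g}_i$ defined in \eqref{2.25} satisfies the three orthogonality relations \eqref{2.23}, which is precisely what the boundary conditions \eqref{2.26}--\eqref{2.27} for $F_i$ and $\bar{\mathfrak{F}}_i$ were designed to enforce, and that the regularity bound \eqref{2-27} for $\hat{g}_i$ holds. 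The former is a direct calculation from the construction in Stages one and two; the latter follows because the boundary values of $f_i, \bar{f}_i, \hat{f}_{i,1}$ have the Gaussian decay inherited through $\varpi_{\kappa_i},\bar{\varpi}_{\bar{\kappa}_i},\hat{\varpi}_{\hat{\kappa}_i}$ with $\kappa_i\gg\bar{\kappa}_i\gg\hat{\kappa}_i$. Applying Theorem \ref{thm2.2} with $\beta$ chosen from $\hat{\kappa}_i$ yields $\hat{f}_{i,2}$ with the exponential space-decay rate $q_i$; finally $\hat{F}_i=\hat{F}_i^{raw}$ is truncated via \eqref{1.34} to produce $\hat{\mathscr{F}}_i$ on the whole disk, the truncation error being of arbitrarily high order in $\v$ thanks to this exponential decay. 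Time and tangential derivatives of $\hat{F}_i$ are obtained by differentiating \eqref{1.18-0} and re-applying Theorem \ref{thm2.2}, which is legitimate because the geometric-correction term retains the structure of the equation (as explained in the last Remark preceding the main theorem). The anticipated main obstacle is the bookkeeping that ensures the solvability conditions \eqref{2.23} hold exactly at each order while simultaneously respecting all three regularity hierarchies in \eqref{eq6.2}; the losses $\mathfrak{b}+2$ from Taylor expansions, $2$ from the change of scale $\eta=y/\v$, and a further drop in the weight $l_j^i$ at each iteration must all fit together, which is why the constraints $s_{i+1}\leq\min\{\hat{s}_i,\tfrac12\bar{s}_i-j\}$, $\hat{s}_{i+1}\leq\tfrac12\bar{s}_{i+1}-2-\mathfrak{b}$ are imposed. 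Once these inequalities are verified step by step, the iteration closes up to $i=N$ and the claimed estimate follows by summing the individual bounds obtained at each stage.
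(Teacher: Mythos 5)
Your proposal is correct and follows essentially the same route as the paper: an induction on the order $i$ in which the internal expansion, viscous layer, and Knudsen layer are constructed in turn via Lemma \ref{lem3.2}, Lemma \ref{lem3.3}, and Theorem \ref{thm2.2}, with the boundary conditions \eqref{2.26}--\eqref{2.27} enforcing the solvability relations \eqref{2.23}, and the derivative/weight hierarchy \eqref{eq6.2} absorbing the Taylor-remainder and $\mathbf{L}^{-1}$ losses at each stage. The paper's proof is terser because it delegates most of this bookkeeping to \cite{GHW-2021-ARMA}, concentrating only on the two points that are new in the disk setting (namely, that the geometric correction keeps the $\partial_t^l\partial_\phi^j\hat{f}_k$ equations within the scope of Theorem \ref{thm2.2}, and the verification of \eqref{2-27}), both of which you address explicitly.
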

			\begin{proof}
				Firstly, we aim to obtain the tangential and time derivatives estimates for $\hat{\mf}_k$ (or equivalently $\hat{f}_k$). We study the equation of $\partial_t^{i}\partial_\phi^{j}(\sqrt{\mu_0}\hat{f}_k).$ Using similar methods in Section \ref{sec4}, one gets that
				\begin{align}\label{K-2}
					& \sum_{i+j\leq \hat{s}_k}\|\hat{\varpi}_{\hat{\kappa}_k} e^{q_k\eta}\partial_t^{i}\partial_\phi^{j}\hat{\mf}_k(t,\phi,\cdot,\cdot)\|_{L^\infty_{\eta,\mv}}
					+\|\hat{\varpi}_{\hat{\kappa}_k} \partial_t^{i}\partial_\phi^{j}\hat{\mf}_k(t,\phi,0,\cdot)\|_{L^\infty_\mv}<\infty.
				\end{align}
				Such an  estimate \eqref{K-2} is enough for us to establish the higher order Hilbert expansion. Moreover, taking $L^\infty_{\phi}\cap L^2_{\phi}$ over \eqref{K-2}, one obtains
				\begin{align*}
					& \sum_{i+j\leq \hat{s}_k} \sup_{t\in[0,\tau^\d]} \Big\{\|\hat{\varpi}_{\hat{\kappa}_k}e^{q_k\eta}\partial_t^{i}\partial_\phi^{j}\hat{\mf}_k(t)\|_{L^\infty_{\phi}\cap L^2_{\phi}L^\infty_{\eta,\mv}}+\|\hat{\varpi}_{\hat{\kappa}_k} \partial_t^{i}\partial_\phi^{j}\hat{\mf}_k(t,\cdot,0,\cdot)\|_{L^\infty_{\phi}\cap L^2_{\phi}L^\infty_{\mv}}\Big\}<\infty.
				\end{align*}
				
				Secondly, we explain how \eqref{2-27} holds when constructing the Knudsen layer $\hat{f}_k=\hat{f}_{k,1}+\hat{f}_{k,2}.$ Assume we have obtained $f_k,\bar{f}_k$ and $\hat{f}_{k,1}$. 
				We can regularly obtain the velocity-weighted norm of  $f_k,\bar{f}_k$ and $\hat{f}_{k,1}$ and also the velocity derivative norm of $\FP f_k, \FP_0\bar{f}_k$ and $\hat{f}_{k,1}$. Noting \eqref{1.10} and \eqref{1.12}, we also get the weighted velocity derivative norm of $(\FI-\FP)f_k$ and $(\FI-\FP_0)\bar{f}_k$ using the estimate of $\FL^{-1}$ in \cite{Jiang}. Then from the expression of $\hat{g}_k$ in \eqref{2.25}, we have \eqref{2-27} for each layer.
				
				The rest proof of Proposition \ref{prop} is almost the same as \cite{GHW-2021-ARMA} and we omit the details here. Then the proof of Proposition \ref{prop} is finished.
			\end{proof}

	  \begin{remark}
	  	Recall Remark \ref{rmk2.4}, since the Knudsen layer
	  	solutions $\hat{F}_i$ are only defined in domain near the boundary of disk, it is necessary to extend $\hat{F}_i$ into the whole disk by introducing $\hat{\mathscr{F}}_i=\Upsilon(\v^{\mathfrak{a}}\eta)\hat{F}_i$, see also \eqref{1.34}. Due to the exponential space decay on $\eta$ and the cut-off function, it is direct to know that the error has a very high order $\v$-decay rate.
	  \end{remark}

		\subsection{$L^2-L^\infty$ estimate of remainder}
	For later use,	we first recall $F_i,\bar{\mathfrak{F}}_i, \hat{\mathscr{F}}_i$ constructed in section \ref{sec5.1},  and the equation \eqref{re} for  remainder  $F_R^\v$. 
	
	Define 
	\begin{align}
	 f_R^\v:=\f{1}{\sqrt{\mu}}F^\v_R\quad \mbox{and}\quad  h^\v_R:=\f{\tilde{w}_\mathbf{k}}{\sqrt{\mu_M}}F_R^\v=\tilde{w}_\mathbf{k}\f{\sqrt{\mu}}{\sqrt{\mu_M}}f_R^\v,
	\end{align}
	then we have from \eqref{re} that
		\begin{align}\label{6.04}
			\begin{cases}
			\dis	\partial_t f^\v_R+v\cdot\nabla_x f^\v_R +\f{1}{\v^2}\FL f^\v_R=-\f{\left\{\partial_t+v\cdot\nabla_x\right\}\sqrt{\mu}}{\sqrt{\mu}}f^\v_R+S_R,\\
			\dis	f^\v_R(x,v)|_{\gamma_-}=f^\v_R(x,R_xv),
			\end{cases}
		\end{align}
		and 
		\begin{align}\label{6.05}
			\begin{cases}
			\dis	\partial_t h^\v_R+v\cdot\nabla_x h^\v_R +\f{1}{\v^2}\tilde{\nu} h^\v_R-\f{1}{\v^2}\tilde{K}_{M,\tilde{w}}h^\v_R=\tilde{w}_{\mathbf{k}}\bar{S}_R,\\
			\dis	h^\v_R(x,v)|_{\gamma_-}=h^\v_R(x,R_xv).
			\end{cases}
		\end{align}
		Recall $\FL$ in \eqref{1.11-1}, we have  
		\begin{align*}
			\FL g&=\tilde{\nu}(v) g-\tilde{K}g=\tilde{\nu}(v) g-(\tilde{K}_1g-\tilde{K}_2g)\\
			&=\tilde{\nu}(v) g(v)-\left(\int_{\R^2}\tilde{k}_1(v, u)g(u)\, du-\int_{\R^2}\tilde{k}_2(v,u)g(u)\, du\right),
		\end{align*} 
		where $	\tilde{\nu}(v)=\int_{\R^2}|v-u|\mu(u)\,du\sim 1+|v|$ and $\tilde{\nu}\geq \tilde{\nu}_0$ with $\tilde{\nu}_0$ a positive constant, and
		\begin{align*}
			\tilde{K}_1g&=\int_{\R^2}\tilde{k}_1(v,u)g(u)\, du=\int_{\R^2}|v-u|\sqrt{\mu(v)}\sqrt{\mu(u)}g(u)\, du\\
			\tilde{K}_2g&=\int_{\R^2}\tilde{k}_2(v,u)g(u)\, du=2\int_{\R^2}|v-u|\sqrt{\mu(u)}\sqrt{\mu(v')}g(u')\, du.
		\end{align*}	
		Denote $\dis \tilde{K}_Mg=\f{\sqrt{\mu}}{\sqrt{\mu_M}}\tilde{K}(\f{\sqrt{\mu_M}}{\sqrt{\mu}}g)=\int_{\R^2}\tilde{k}_M(v,u)g(u)\, du$ and  $\dis\tilde{K}_{M,\tilde{w}}h=\f{\sqrt{\mu}}{\sqrt{\mu_M}}\tilde{w}_{\mathbf{k}}\tilde{K}(\f{\sqrt{\mu_M}}{\sqrt{\mu}}\f{h}{\tilde{w}_{\mathbf{k}}})$. Similar to Lemma \ref{lem4.1}, one has
		\begin{align}\label{6-7}
			\int_{\R^2}\tilde{k}_M(v,u)(1+|u|^2)^{\f{\beta}{2}}e^{\zeta|u|^2}\,du\leq C(1+|v|^2)^{\f{\beta-1}{2}}e^{\zeta|v|^2},\quad \beta\geq 0,\zeta<\f{1}{4T}.
		\end{align}
		We divide $S_R$ into three parts as $S_R=S_{R,1}+S_{R,2}+S_{R,3}$:
		\begin{align}\label{6.06}
			S_{R,1}&=\v^3\Gamma (f^\v_R,f^\v_R),\nonumber\\
			S_{R,2}&=\sum_{i=1}^N\v^{i-2}\left\{\Gamma\left(f^\v_R,\f{F_i+\bar{\mathfrak{F}}_i+\hat{\mathscr{F}}_i}{\sqrt{\mu}}\right)+\Gamma\left(\f{F_i+\bar{\mathfrak{F}}_i+\hat{\mathscr{F}}_i}{\sqrt{\mu}},f^\v_R\right)\right\},\\
			S_{R,3}&=\f{R^\v+\bar{R}^\v+\hat{R}^\v}{\sqrt{\mu}},\nonumber
		\end{align}
		where 
		\begin{align*}
			\Gamma(g_1,g_2)=\f{1}{\sqrt{\mu}}Q(\sqrt{\mu}g_1,\sqrt{\mu}g_2).
		\end{align*}
	Furthermore, $\bar{S}_R=\f{\sqrt{\mu}}{\sqrt{\mu_M}}S_R$ can be relatively divided into $\bar{S}_{R,1},\bar{S}_{R,2}$ and $\bar{S}_{R,3}$.\\ 
		
		We intend to establish the estimate of remainder $F_R^\v$ by using the $L^2-L^\infty$ framework.
		\subsubsection{$L^2$-estimate}
		\begin{Lemma}\label{lem6.1}
			Let $0<\f{1}{2\iota}(1-\iota)<\mathbf{a}<\f12,\mathbf{k}\geq 6,N\geq6$ and $\mathfrak{b}\geq 5.$ Let $\tau^\d>0$ be the life span of compressible Euler solution obtained in Lemma \ref{lem3.1}. For $t\in[0,\tau^\d]$, there exists a suitably small constant $\v_0 > 0$ such that for all $\v\in  (0, \v_0)$, the following estimate holds:
			\begin{align}\label{6.1}
				&\f{d}{dt}\|f^\v_R\|_{L^2_{x,v}}^2+\f{c_0}{\v^2}\int_{B_1}\|(\FI-\FP)f^\v_R\|_{\tilde{\nu}}^2\, dx\nonumber\\
				&\leq C\left\{1+\v^{\min\{2\mathbf{k}-5,8\}}\|h^\v_R(t)\|_{L^\infty_{x,v}}^2\right\}\cdot(\|f^\v_R(t)\|_{L^2_{x,v}}^2+1).
			\end{align}
		\end{Lemma}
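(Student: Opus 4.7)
The plan is to derive the estimate by taking the standard $L^2$ energy identity for \eqref{6.04}: multiply the equation by $f^\v_R$ and integrate over $B_1\times\R^2$. The transport term $\int v\cdot\nabla_x f^\v_R\cdot f^\v_R\,dxdv$ becomes a boundary integral on $\gamma$, and the specular reflection condition $f^\v_R|_{\gamma_-}(x,v)=f^\v_R(x,R_xv)$ makes it vanish by the change $v\mapsto R_xv$. The coercivity of $\FL$ on $\mathcal{N}^\perp$ then yields $\f{c_0}{\v^2}\int\|(\FI-\FP)f^\v_R\|_{\tilde\nu}^2\,dx$ on the left. The drift term $\f{\{\partial_t+v\cdot\nabla_x\}\sqrt{\mu}}{\sqrt{\mu}}f^\v_R$ is controlled in absolute value by $C(1+|v|^2)|f^\v_R|$ with a constant depending on the smooth Euler solution, producing at most $C\|f^\v_R\|_{L^2}^2$ after an inner product with $f^\v_R$ (splitting $f^\v_R=\FP f^\v_R+(\FI-\FP)f^\v_R$ and using that the polynomial weight is harmless against $\FP$ and is absorbed through the $\tilde\nu$-norm against $(\FI-\FP)$ via Cauchy--Schwarz).

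Next I would estimate the three source contributions. For $S_{R,1}=\v^3\Gamma(f^\v_R,f^\v_R)$, since $\Gamma(\cdot,\cdot)\in\mathcal{N}^\perp$, only $(\FI-\FP)f^\v_R$ pairs against it. Grad's bilinear estimate gives
\begin{equation*}
\Big|\int\Gamma(f^\v_R,f^\v_R)\cdot(\FI-\FP)f^\v_R\,dv\Big|\leq C\|\tilde w_\beta f^\v_R\|_{L^\infty_v}\,\|f^\v_R\|_{\tilde\nu}\,\|(\FI-\FP)f^\v_R\|_{\tilde\nu},
\end{equation*}
and the conversion $\tilde w_\beta f^\v_R=\tilde w_{\beta-\mathbf k}\f{\sqrt{\mu_M}}{\sqrt{\mu}}h^\v_R$ together with \eqref{1.23} and $\mathbf k\geq 6\geq\beta$ gives $\|\tilde w_\beta f^\v_R\|_{L^\infty_v}\leq C\|h^\v_R\|_{L^\infty}$. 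After the $x$-integration, Cauchy--Schwarz and Young's inequality produce $\f{\eta}{\v^2}\|(\FI-\FP)f^\v_R\|_{\tilde\nu}^2+C\v^8\|h^\v_R\|_{L^\infty}^2(\|f^\v_R\|_{L^2}^2+\|(\FI-\FP)f^\v_R\|_{\tilde\nu}^2)$, the last piece absorbed by taking $\v$ small. For $S_{R,2}$, the smooth profile factor $(F_i+\bar{\mathfrak F}_i+\hat{\mathscr F}_i)/\sqrt{\mu}$ is uniformly bounded (in an appropriate weighted $L^\infty_v$) by the construction of Section \ref{sec5.1}; again orthogonality restricts the test function to $(\FI-\FP)f^\v_R$, so
\begin{equation*}
\Big|\v^{i-2}\!\!\int\!\Gamma(\cdots)(\FI-\FP)f^\v_R\Big|\leq C\v^{i-2}\|f^\v_R\|_{\tilde\nu}\|(\FI-\FP)f^\v_R\|_{\tilde\nu}.
\end{equation*}
The critical case $i=1$ is handled by Cauchy--Schwarz: the cross term $C\v^{-1}\|\FP f^\v_R\|_{L^2}\|(\FI-\FP)f^\v_R\|_{\tilde\nu}$ is split by Young as $\f{c_0}{10\v^2}\|(\FI-\FP)f^\v_R\|_{\tilde\nu}^2+C\|\FP f^\v_R\|_{L^2}^2$, while the microscopic--microscopic piece $C\v^{-1}\|(\FI-\FP)f^\v_R\|_{\tilde\nu}^2$ is absorbed into $\f{c_0}{\v^2}\|(\FI-\FP)f^\v_R\|_{\tilde\nu}^2$ by smallness of $\v$. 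Cases $i\geq 2$ give better $\v$-powers and are easier.

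For $S_{R,3}=(R^\v+\bar R^\v+\hat R^\v)/\sqrt{\mu}$ I would estimate $\|S_{R,3}\|_{L^2}$ using the bounds on $F_i,\bar{\mathfrak F}_i,\hat{\mathscr F}_i$ from Proposition \ref{prop}. The interior remainder $R^\v$ carries a factor $\v^{N-6}$ with smooth, velocity-weighted bounds; the viscous remainder $\bar R^\v$ carries $\v^{N-6}$ or $\v^{\mathfrak b-5}$, and $y$-decay against the Gaussian weight $\sqrt{\mu_0}$ makes the Taylor remainders integrable with an extra $\v^{1/2}$ from the $y=(1-r)/\v$ rescaling; the Knudsen remainder $\hat R^\v$ contains the $\Upsilon(\v^\mathfrak{a}\eta)$ cut-off, and its derivative $\partial_z\Upsilon$ is supported on $\eta\sim \v^{-\mathfrak a}$, where the exponential space decay $e^{-q_i\eta}$ obtained in Theorem \ref{thm2.2} beats any negative power of $\v$. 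The conclusion is $\|S_{R,3}\|_{L^2}\leq C$ (with the constant from Section \ref{sec5.1}), so $|\int S_{R,3}\cdot f^\v_R|\leq C(1+\|f^\v_R\|_{L^2}^2)$.

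The main obstacle is the case $i=1$ in $S_{R,2}$, where the prefactor $\v^{-1}$ is a genuine singularity: the success rests on (i) the algebraic fact $\Gamma(\cdot,\cdot)\in\mathcal{N}^\perp$, reducing the test function to the microscopic component where the coercive $\v^{-2}$-dissipation is available, and (ii) being able to move one factor in Grad's estimate into $L^\infty_v$ at the cost only of $\|h^\v_R\|_{L^\infty}$ (producing the $\v^{\min\{2\mathbf k-5,8\}}$ factor once one more Young's inequality is applied in the nonlinear case); the remaining subtlety is the careful tracking of $\v$-powers against the Taylor and cut-off remainders in $\hat R^\v$, which is handled by the exponential $\eta$-decay.
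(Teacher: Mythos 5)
Your overall strategy coincides with the paper's: a single $L^2$ energy identity for \eqref{6.04}, cancellation of the boundary term by specularity, coercivity of $\FL$ on $\mathcal{N}^\perp$, and then estimating the drift term and the three source pieces $S_{R,1},S_{R,2},S_{R,3}$ against $\|f^\v_R\|_{L^2}^2$, $\|h^\v_R\|_{L^\infty}^2$, and the dissipation. Your treatment of $S_{R,1}$, $S_{R,2}$, and $S_{R,3}$ is sound and matches the paper's, modulo the slightly different bookkeeping through Grad's bilinear estimate.

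The gap is in the drift term. You claim that
\begin{equation*}
\int_{B_1}\int_{\R^2}\Big|\tfrac{\{\partial_t+v\cdot\nabla_x\}\sqrt{\mu}}{\sqrt{\mu}}\Big|\,|f^\v_R|^2\,dv\,dx
\end{equation*}
produces ``at most $C\|f^\v_R\|_{L^2}^2$'' after splitting $f^\v_R=\FP f^\v_R+(\FI-\FP)f^\v_R$. This is not correct as stated. The coefficient $\{\partial_t+v\cdot\nabla_x\}\sqrt{\mu}/\sqrt{\mu}$ is a degree-$3$ polynomial in $v$ (not degree $2$: $v\cdot\nabla_x$ introduces one extra power), so the quantity to control is $\int(1+|v|)^3|f^\v_R|^2\,dv$. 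The macroscopic and cross pieces are indeed fine because $\FP f^\v_R$ carries a Gaussian. But the purely microscopic piece $\int(1+|v|)^3|(\FI-\FP)f^\v_R|^2\,dv$ is not bounded by $C\|(\FI-\FP)f^\v_R\|_{L^2_v}^2$ nor by $C\|(\FI-\FP)f^\v_R\|_{\tilde\nu}^2$: you would need $(1+|v|)^3\lesssim\v^{-2}\tilde\nu(v)\sim\v^{-2}(1+|v|)$, which only holds for $|v|\lesssim\v^{-1}$. The paper therefore splits the integral at $|v|=\lambda/\v$: for $|v|\leq\lambda/\v$ the weight is absorbed into $\f{\lambda}{\v^2}\int\|(\FI-\FP)f^\v_R\|_{\tilde\nu}^2\,dx$ plus $C_\lambda\|f^\v_R\|_{L^2}^2$, and for $|v|\geq\lambda/\v$ one must fall back on the weighted $L^\infty$ norm $h^\v_R=\tilde{w}_{\mathbf{k}}F^\v_R/\sqrt{\mu_M}$, giving precisely the $\v^{2\mathbf{k}-5}\|h^\v_R\|_{L^\infty}^2$ term. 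That is the origin of the exponent $\min\{2\mathbf{k}-5,8\}$ in \eqref{6.1} (for $\mathbf{k}=6$ the drift contribution $\v^7$ is the weaker of the two, not your $\v^8$ from $S_{R,1}$). Without the velocity cut-off and the accompanying $L^\infty$ absorption, your drift estimate does not close; the claim ``at most $C\|f^\v_R\|_{L^2}^2$'' is false because the weight is unbounded in $v$.
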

		\begin{proof}
			Multiplying \eqref{6.04} by $f^\v_R$ and integrating on $B_1\times\R^2$, one has that
			\begin{align}\label{6.2}
				&\f{d}{dt}\|f^\v_R\|_{L^2_{x,v}}^2+\f{c_0}{\v^2}\int_{B_1}\|(\FI-\FP)f^\v_R\|_{\tilde{\nu}}^2\, dx\nonumber\\
				&\leq \int_{B_1}\int_{\R^2}(1+|v|)^3|f^\v_R(x,v)|^2dvdx+\int_{B_1}\int_{\R^2}|f^\v_R\cdot S_R(x,v)|dvdx.
			\end{align} 
			As in \cite{GHW-2021-ARMA}, we have
			\begin{align}\label{6.2-0}
				&\int_{B_1}\int_{\R^2}(1+|v|)^3|f^\v_R(x,v)|^2dvdx\nonumber\\
				&\leq \int_{B_1}\int_{|v|\leq \f{\lambda}{\v}}(1+|v|)^3|f^\v_R(x,v)|^2dvdx+ \int_{B_1}\int_{|v|\geq \f{\lambda}{\v}}(1+|v|)^3|f^\v_R(x,v)|^2dvdx\nonumber\\
				&\leq \f{\lambda}{\v^2}\int_{B_1}\|(\FI-\FP)f^\v_R\|_{\tilde{\nu}}^2\, dx+C_\lambda(\|f^\v_R\|_{L^2_{x,v}}^2+\v^{2\mathbf{k}-5}\|h^\v_R\|_{L^\infty_{x,v}}^2).
			\end{align}
			It is clear that
			\begin{align*}
				&\int_{B_1}\int_{\R^2}|f^\v_R\cdot S_{R,1}|dvdx=\v^3\int_{B_1}\int_{\R^2}|f^\v_R\cdot\Gamma(f^\v_R,f^\v_R)|dvdx\nonumber\\
				&=\v^3\int_{B_1}\int_{\R^2}|(\FI-\FP)f^\v_R\cdot\Gamma(f^\v_R,f^\v_R)|dvdx\nonumber\\
				&\leq \f{\lambda}{\v^2}\int_{B_1}\|(\FI-\FP)f_R^\v\|_{\tilde{\nu}}^2\, dx+C_\lambda\v^8\|h^\v_R\|_{L^\infty_{x,v}}^2\|f^\v_R\|_{L^2_{x,v}}^2.
			\end{align*}
			Noting the discussion of $F_i,\bar{\mathfrak{F}}_i$ and $\hat{F}_i$ in Proposition \ref{prop} and recall \eqref{1.34}, we get 
			\begin{align*}
				&\int_{B_1}\int_{\R^2}|f^\v_R\cdot S_{R,2}|dvdx\nonumber\\
				&=\sum_{i=1}^N\v^{i-2}\int_{B_1}\int_{\R^2}\left|(\FI-\FP)f^\v_R\cdot \Gamma\left(f^\v_R,\f{F_i+\bar{\mathfrak{F}}_i+\hat{\mathscr{F}}_i}{\sqrt{\mu}}\right)\right|\,dvdx\nonumber\\
				&\leq \f{C}{\v}\int_{B_1}\|(\FI-\FP)f^\v_R\|_{\tilde{\nu}}\cdot\|f^\v_R\|_{L^2_v}\, dx\leq\f{\lambda}{\v^2}\int_{B_1}\|(\FI-\FP)f_R^\v\|_{\tilde{\nu}}^2\, dx+C_\lambda\|f^\v_R\|_{L^2_{x,v}}^2,
			\end{align*}
			where we have used \eqref{1.23} and obtained the following facts
			\begin{align*}
				\left|\tilde{w}_{\mathbf{k}}\f{\bar{\mathscr{F}_i}}{\sqrt{\mu}}\right|&\leq |\tilde{w}_{\mathbf{k}}\mu_0^{-\mathbf{a}}\bar{f}_i|\cdot \f{\mu_0^{\f12+\mathbf{a}}}{\mu^{\f12}}\leq  |\tilde{w}_{\mathbf{k}}\mu_0^{-\mathbf{a}}\bar{f}_i|\cdot\mu_M^{(\f12+\mathbf{a})\iota-\f12},\\
				\left|\tilde{w}_{\mathbf{k}}\f{\hat{F}_i}{\sqrt{\mu}}\right|&\leq |\tilde{w}_{\mathbf{k}}\mu_M^{-\mathbf{a}}\hat{\mathbf{f}}_i|\cdot \f{\mu_M^{\f12+\mathbf{a}}}{\mu^{\f12}}\leq  |\tilde{w}_{\mathbf{k}}\mu_M^{-\mathbf{a}}\hat{\mathbf{f}}_i|\cdot\mu_M^{\mathbf{a}}.
			\end{align*}
			From Proposition \ref{prop}, noting \eqref{eq1.21} and \eqref{eq1.22}, one deduces that $S_{R,3}\in L^2(B_1\times\R^2)$ and
			\begin{align}\label{6.2-1}
				\int_{B_1}\int_{\R^2}|f^\v_R\cdot S_{R,3}|dvdx\leq C(\v^{N-6}+\v^{\mathfrak{b}-5})\|f^\v_R\|_{L^2_{x,v}}.
			\end{align}
			Combining \eqref{6.2}--\eqref{6.2-1}, taking $\lambda$ small enough, one concludes Lemma \ref{lem6.1}. Therefore the proof of Lemma \ref{lem6.1} is completed.
		\end{proof}
		
\subsubsection{$L^\infty$-estimate}
To study \eqref{6.05}, we consider the change of variables. Let $$x=(r\cos\phi,r\sin\phi),\quad  \bar{\mv}=(\bar{\mv}_{1},\bar{\mv}_{2})=(v\cdot \vec{n},v\cdot\vec{\tau})$$ where  $\vec{n}=(\cos\phi,\sin\phi),\vec{\tau}=(-\sin\phi,\cos\phi)$. Then it holds that
\begin{align*}
	v\cdot \nabla_x=\bar{\mv}_{1}\partial_r+\f{\bar{\mv}_{2}}{r}(\partial_\phi+\bar{\mv}_{2}\partial_{\bar{\mv}_{1}}-\bar{\mv}_{1}\partial_{\bar{\mv}_{2}}).
\end{align*}
Now \eqref{6.05} is rewritten as
\begin{align}\label{6.3}
	\partial_t h^\v_R+\bar{\mv}_{1}\partial_rh^\v_R+\f{\bar{\mv}_{2}}{r}(\partial_\phi+\bar{\mv}_{2}\partial_{\bar{\mv}_{1}}-\bar{\mv}_{1}\partial_{\bar{\mv}_{2}})h^\v_R+\f{1}{\v^2}\tilde{\nu} h^\v_R-\f{1}{\v^2}\tilde{K}_{M,\tilde{w}}h^\v_R=\tilde{w}_{\mathbf{k}}\bar{S}_R,
\end{align}
and the boundary is divided into three parts:
\begin{align*}
\begin{split}
	\tilde{\gamma}_-=\{(r,\phi,\bar{\mv}_{1},\bar{\mv}_{2}):r=1,\bar{\mv}_{1}<0\},\\
	\tilde{\gamma}_0=\{(r,\phi,\bar{\mv}_{1},\bar{\mv}_{2}):r=1,\bar{\mv}_{1}=0\},\\
	\tilde{\gamma}_+=\{(r,\phi,\bar{\mv}_{1},\bar{\mv}_{2}):r=1,\bar{\mv}_{1}>0\}.
\end{split}	
\end{align*}
\begin{Definition}\label{def6.1}
	Define the backward characteristic for \eqref{6.3} as
	\begin{align}\label{6.4}
		\begin{cases}
		\dis	\f{d\mathcal{X}}{ds}=\mathcal{V}_1,\f{d\Phi}{ds}=\f{\mathcal{V}_2}{\mathcal{X}}, \f{d \mathcal{V}_1}{ds}=\f{\mathcal{V}_2^2}{\mathcal{X}},\f{d \mathcal{V}_2}{ds}=-\f{\mathcal{V}_1 \mathcal{V}_2}{\mathcal{X}},\\
		\smallskip
		\smallskip
		\dis	(\mathcal{\mathcal{X}},\Phi,\mathcal{V}_1,\mathcal{V}_2)(t)=(r,\phi,\bar{\mv}_{1},\bar{\mv}_{2}).
		\end{cases}
	\end{align}
\end{Definition}
Along the backward characteristic line, we have
\begin{align}\label{6.4-0}
	\mathcal{V}_2\cdot \mathcal{X}=\bar{\mv}_{2} r,\quad |\mathcal{V}_1|^2+|\mathcal{V}_2|^2=|\bar{\mv}|^2=|v|^2,
\end{align}
which yields 
\begin{align}\label{6.5}
	\mathcal{V}_2(s)=\bar{\mv}_{2}\f{r}{\mathcal{X}(s)},\quad |\mathcal{V}_1|=\sqrt{|v|^2-\bar{\mv}_{2}^2\f{r^2}{\mathcal{X}^2(s)}}.
\end{align}

 For each $(t,r,\phi,\bar{\mv}_{1},\bar{\mv}_{2})\notin \{\tilde{\gamma}_0\cup\tilde{\gamma}_-\}$ and $|\bar{\mv}|\neq 0$, we define its backward time $t_{\mathbf{b}}(t,r,\phi,\bar{\mv}_{1},\bar{\mv}_{2})\geq0$ as
\begin{align*}
	t_{\mathbf{b}}(t,r,\phi,\bar{\mv}_{1},\bar{\mv}_{2}):=\sup\left\{z\geq0,\, r-\int_{\tau}^{t}\mathcal{V}_1(s) ds<1,\ 0\leq t-\tau\leq z\right\}.
\end{align*}
Let $(r,\phi,\bar{\mv}_{1},\bar{\mv}_{2})\in [0,1]\times[0,2\pi)\times \R^2\backslash\{\gamma_0\cup\gamma_-\},\ (t_0,r_0,\phi_0,\mathbf{v}_0)=(t,r,\phi,\bar{\mv})$, and inductively define for $k\geq0$ that
\begin{equation*}
	(t_{k+1},r_{k+1})=(t_{k}-t_{\mathbf{b}}(r_k,\mathbf{v}_k),1).
\end{equation*}
It follows from \eqref{6.5} that $|\mathcal{V}_1|$ increases with $\mathcal{X}$, and 
\begin{align*}
	\mathbf{v}_{k+1,1}=\sqrt{|v|^2-\bar{\mv}_{2}^2r^2},\quad |\mathbf{v}_{k+1,2}|=|\bar{\mv}_{2}| r.
\end{align*}
Define the cycle as
\begin{equation}\label{6.6}
	\left\{\begin{aligned}
		&\mathcal{X}_{cl}(s;t,r,\phi,\bar{\mv})=\mathbf{1}_{(t_1,t]}\left\{r-\int_s^t\mathcal{V}_{cl,1}\, d\tau\right\}+\sum\limits_{k\geq 1}\mathbf{1}_{(t_{k+1},t_k]}(s)\left\{1-\int_{s}^{t_k} \mathcal{V}_{cl,1}d\tau\right\},\\
		&\Phi_{cl}(s;t,r,\phi,\bar{\mv})=\left(\phi-\int_s^t\f{\mathcal{V}_{cl,2}}{\mathcal{X}_{cl}}(\tau)\, d\tau\right)\mod 2\pi,\\
		&\mathcal{V}_{cl,2}(s;t,r,\phi,\bar{\mv})=\bar{\mv}_{2}\f{r}{\mathcal{X}_{cl}(s)}.
	\end{aligned}\right.
\end{equation}

Next, we make some analysis on the behavior of above backward characteristic line.\\

{\it Case a. $\bar{\mv}_{2}\cdot r=0$.} Then one has $\mathcal{V}_{cl,2}\equiv 0$ and $|\mathcal{V}_{cl,1}|\equiv |\bar{\mv}_{1}|$. The backward characteristic line cycles between $\mathcal{X}=0$ and $1$ with constant speed $|\bar{\mv}_{1}|$. If $\bar{\mv}_{1}>0$, the backward characteristic line moves to $\mathcal{X}=0$ first and 
\begin{align*}
	\mathcal{V}_{cl,1}(s;t,r,\phi,\bar{\mv})=\sum_{k\geq 0}\mathbf{1}_{(t_{k,0},t_k]}(s)\bar{\mv}_{1}+\sum_{k\geq 0}\mathbf{1}_{(t_{k+1},t_{k,0}]}(s)(-\bar{\mv}_{1}).
\end{align*}
If $\bar{\mv}_{1} \leq 0$, the line comes to $\mathcal{X}=1$ first and 
\begin{align*}
	\mathcal{V}_{cl,1}(s;t,r,\phi,\bar{\mv})=\mathbf{1}_{(t_1,t]}\bar{\mv}_{1}+\sum_{k\geq 1}\mathbf{1}_{(t_{k,0},t_k]}(s)\bar{\mv}_{1}+\sum_{k\geq 1}\mathbf{1}_{(t_{k+1},t_{k,0}]}(s)(-\bar{\mv}_{1}),
\end{align*}
where we denote $t_{k,0}\in[t_{k+1},t_k]$ to be the time of backward characteristic reaching $\mathcal{X}=0$. The reason $\bar{\mv}_{1}$ changes its sign at this point is actually that the normal vector $n$ turns to the opposite direction. \\

{\it Case b. $\bar{\mv}_{2}\cdot r\neq 0$.} Then there exists a $x_+\in(0,1]$ such that $|\mathcal{V}_1|=0$ at this position and the backward characteristic line will cycle in $x\in[x_+,1]$. We have from \eqref{6.5} that
\begin{align}\label{6.5-1}
	\f{r^2}{x_+^2}=1+\f{\bar{\mv}_{1}^2}{\bar{\mv}_{2}^2}.
\end{align}
For later use, we denote $t_{k,0}\in(t_{k+1},t_k]$ to be the time so that $\mathcal{V}_1(t_{k,0})=0$ or equivalently $\mathcal{X}=x_+$. \\
If $\bar{\mv}_{1}>0$, one has 
\begin{align*}
	\mathcal{V}_{cl,1}(s;t,r,\phi,\bar{\mv})&=\sum\limits_{k\geq 0}\mathbf{1}_{(t_{k,0},t_{k}]}(s)\sqrt{|v|^2-\bar{\mv}_{2}^2\f{r^2}{\mathcal{X}^2(s)}}
	\nonumber\\
	&\quad+ \sum\limits_{k\geq 0}\mathbf{1}_{(t_{k+1},t_{k,0}]}(s)(-1)\sqrt{|v|^2-\bar{\mv}_{2}^2\f{r^2}{\mathcal{X}^2(s)}}.
\end{align*}
If $\bar{\mv}_{1}\leq 0$, one obtains
\begin{align*}
	&\mathcal{V}_{cl,1}(s;t,r,\phi,\bar{\mv})=\mathbf{1}_{(t_1,t]}(-1)\sqrt{|v|^2-\bar{\mv}_{2}^2\f{r^2}{\mathcal{X}^2(s)}}\nonumber\\
	&\quad+\sum\limits_{k\geq 1}\mathbf{1}_{(t_{k,0},t_{k}]}(s)\sqrt{|v|^2-\bar{\mv}_{2}^2\f{r^2}{\mathcal{X}^2(s)}}+\sum\limits_{k\geq 1}\mathbf{1}_{(t_{k+1},t_{k,0}]}(s)(-1)\sqrt{|v|^2-\bar{\mv}_{2}^2\f{r^2}{\mathcal{X}^2(s)}}.
\end{align*}
The backward characteristic in polar coordinates are indeed consistent with those in original coordinates. See the figures below for instance.
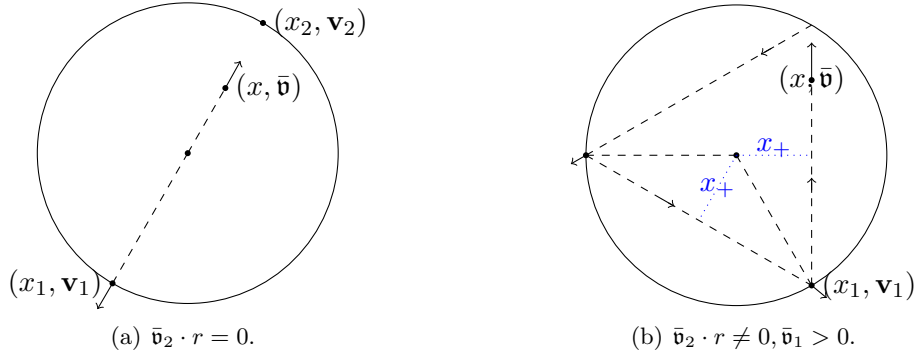
\begin{figure}[H]\label{fig3}
	\centering 
	\subfigure[$\bar{\mathfrak{v}}_2\cdot r=0$.]{
		\begin{tikzpicture}
			\draw (0,0) circle (2);
			\fill[black]  (0,0) circle (.04);
			\fill[black] (0.5,{sqrt(0.75)}) circle (.04);
			\node [right] at (0.5,{sqrt(0.75)}) {$(x,\bar{\mathfrak{v}})$};
			\draw[->]  (0.5,{sqrt(0.75)})--(0.7,{sqrt(1.5)});
			\draw[dashed]  (0,0)--(0.5,{sqrt(0.75)});
			\draw[dashed] (-1,{-sqrt(3)})--(0,0);
			\fill (-1,{-sqrt(3)}) circle (.04);
			\draw[->] (-1,{-sqrt(3)})--(-1.2,{-sqrt(4.3)});
			\node [left] at (-1,{-sqrt(3)}) {$(x_1,\mathbf{v}_1)$};
			\fill (1,{sqrt(3)}) circle (.04);
			\node [right] at (1,{sqrt(3)}) {$(x_2,\mathbf{v}_2)$};
	\end{tikzpicture}}
	\qquad\qquad\qquad
	\subfigure[$\bar{\mathfrak{v}}_2\cdot r\neq0,\bar{\mv}_1>0$.]{	\begin{tikzpicture}
			\draw (0,0) circle (2);
			\fill[black] (0,0) circle (.04);
			\fill[black]  (1,1) circle (.04);
			\draw[->]  (1,1)--(1,1.5);
			\node at (1,1) {$(x,\bar{\mathfrak{v}})$};
			\draw[dashed]  (1,{-sqrt(3)})--(1,1);
			\draw[dashed] (0,0)--(1,{-sqrt(3)});
			\draw[->] (1,{-sqrt(3)})--(1.2,{-sqrt(3.6)});
			\fill[black]  (1,{-sqrt(3)}) circle (.04);
			\node[right] at (1,{-sqrt(3)}) {$(x_1,\mathbf{v}_1)$};
			\draw[dashed] (-2,0)--(1,{-sqrt(3)});
			\draw[dashed] (-2,0)--(0,0);
			\draw[->] (-2,0)--(-2.2,-0.115);
			\draw[dashed] (-2,0)--(1,{sqrt(3)});
			\draw[blue,dotted] (0,0)--(1,0);
			\node at (0.5,0.1) {$\color{blue}x_+$};
			\draw[blue,dotted] (0,0)--({-0.5},{-sqrt(0.75)});
			\node at ({-0.25},{-0.43}) {$\color{blue}x_+$};
			\draw[->] (1,-0.5)--(1,-0.3);
			\draw[->] (-1,-0.577)--(-0.827,-0.677);
			\draw[->] (0.5,1.443)--(0.327,1.343);
			\fill (-2,0) circle(.04);
	\end{tikzpicture}}
	\caption{Backward characteristics in unit disk.}
\end{figure}
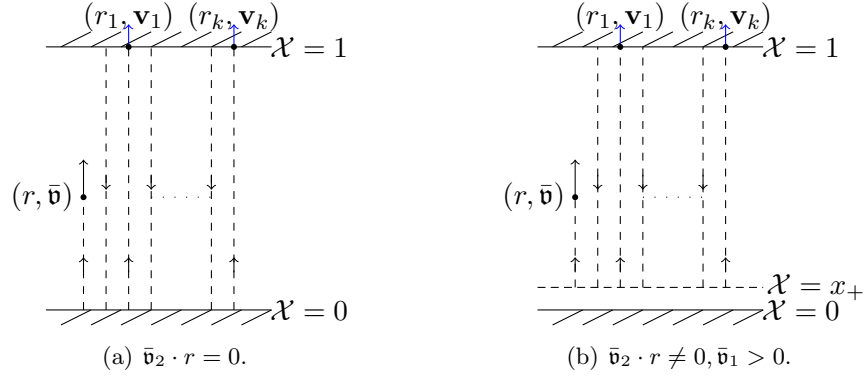
\begin{figure}[H]\label{fig3-1}
	\centering
	\subfigure[$\bar{\mv}_2\cdot r=0.$]{
		\begin{tikzpicture}
			\draw (0,0.5)--(3,0.5);
			\draw (0,4)--(3,4);
			\node at (3.5,0.5) {$\mathcal{X}=0$};
			\node at (3.5,4) {$\mathcal{X}=1$};
			\draw (0.2,4)--(0.6,4.2);
			\draw (0.6,4)--(1.0,4.2);
			\draw (1.0,4)--(1.4,4.2);
			\draw (1.4,4)--(1.8,4.2);
			\draw (1.8,4)--(2.2,4.2);
			\draw (2.2,4)--(2.6,4.2);
			\draw (2.6,4)--(3.0,4.2);
			\draw (0.2,0.3)--(0.6,0.5);
			\draw (0.6,0.3)--(1.0,0.5);
			\draw (1.0,0.3)--(1.4,0.5);
			\draw (1.4,0.3)--(1.8,0.5);
			\draw (1.8,0.3)--(2.2,0.5);
			\draw (2.2,0.3)--(2.6,0.5);
			\draw (2.6,0.3)--(3.0,0.5);
			\draw[->] (0.5,2)--(0.5,2.5);		
			\fill[black] (0.5,2)circle (.04);
			\node [left] at (0.5,2) {$(r,\bar{\mathfrak{v}})$};
			\draw[dashed] (0.5,0.5)--(0.5,2);
			\draw[dashed] (0.8,0.5)--(0.8,4);
			\draw[blue,->] (1.1,4)--(1.1,4.3);	
			\fill (1.1,4) circle (.04);
			\node  at (1.1,4.4) {$(r_1,\mathbf{v}_1)$};
			\draw[dashed] (1.1,0.5)--(1.1,4);
			\draw[dashed] (1.4,0.5)--(1.4,4);
			\draw[loosely dotted] (1.4,2)--(2.2,2);
			\draw[dashed] (2.2,0.5)--(2.2,4);
			\draw[blue,->] (2.5,4)--(2.5,4.3);
			\fill (2.5,4)circle (.04);
			\node  at (2.5,4.4) {$(r_k,\mathbf{v}_k)$};
			\draw[dashed] (2.5,0.5)--(2.5,4);
			\draw[->] (0.5,1)--(0.5,1.2);
			\draw[->] (0.8,2.3)--(0.8,2.1);
			\draw[->] (1.1,1)--(1.1,1.2);
			\draw[->] (1.4,2.3)--(1.4,2.1);
			\draw[->] (2.2,2.3)--(2.2,2.1);
			\draw[->] (2.5,1)--(2.5,1.2);
	\end{tikzpicture}}
	\qquad\qquad
	\subfigure[$\bar{\mv}_2\cdot r\neq0, \bar{\mv}_1>0.$]{
		\begin{tikzpicture}
			\draw (0,0.5)--(3,0.5);
			\draw (0,4)--(3,4);
			\node at (3.5,0.5) {$\mathcal{X}=0$};
			\node at (3.5,4) {$\mathcal{X}=1$};
			\draw (0.2,4)--(0.6,4.2);
			\draw (0.6,4)--(1.0,4.2);
			\draw (1.0,4)--(1.4,4.2);
			\draw (1.4,4)--(1.8,4.2);
			\draw (1.8,4)--(2.2,4.2);
			\draw (2.2,4)--(2.6,4.2);
			\draw (2.6,4)--(3.0,4.2);
			\draw (0.2,0.3)--(0.6,0.5);
			\draw (0.6,0.3)--(1.0,0.5);
			\draw (1.0,0.3)--(1.4,0.5);
			\draw (1.4,0.3)--(1.8,0.5);
			\draw (1.8,0.3)--(2.2,0.5);
			\draw (2.2,0.3)--(2.6,0.5);
			\draw (2.6,0.3)--(3.0,0.5);
			\draw[densely dashed] (0,0.8)--(3,0.8);
			\node at (3.7,0.8) {$\mathcal{X}=x_+$};
			\draw[->] (0.5,2)--(0.5,2.5);		
			\fill[black] (0.5,2)circle (.04);
			\node [left] at (0.5,2) {$(r,\bar{\mathfrak{v}})$};
			\draw[dashed] (0.5,0.8)--(0.5,2);
			\node  at (1.1,4.4) {$(r_1,\mathbf{v}_1)$};
			\draw[dashed] (0.8,0.8)--(0.8,4);
			\draw[dashed] (1.1,0.8)--(1.1,4);
			\draw[blue,->] (1.1,4)--(1.1,4.3);
			\fill (1.1,4)circle (.04);	
			\draw[dashed] (1.4,0.8)--(1.4,4);
			\draw[loosely dotted] (1.4,2)--(2.2,2);
			\draw[dashed] (2.2,0.8)--(2.2,4);
			\draw[blue,->] (2.5,4)--(2.5,4.3);
			\fill (2.5,4)circle (.04);
			\node  at (2.5,4.4) {$(r_k,\mathbf{v}_k)$};
			\draw[dashed] (2.5,0.8)--(2.5,4);
			\draw[->] (0.5,1)--(0.5,1.2);
			\draw[->] (0.8,2.3)--(0.8,2.1);
			\draw[->] (1.1,1)--(1.1,1.2);
			\draw[->] (1.4,2.3)--(1.4,2.1);
			\draw[->] (2.2,2.3)--(2.2,2.1);
			\draw[->] (2.5,1)--(2.5,1.2);
	\end{tikzpicture}}
	\caption{The backward progress in a slab.}
\end{figure}

\begin{Lemma}\label{lem6.2-0}
	For $\bar{\mv}_1\leq 0$, $\bar{\mv}_2\cdot r\neq 0$ and $t_{1,0}\leq s\leq t_1$, one has 
	\begin{align}\label{6-32}
		\dis	\mathcal{X}\left |\f{\partial(\mathcal{X},\Phi)}{\partial \bar{\mv}}(s)\right | 
		=&\f{|\bar{\mv}_1|}{|\bar{\mv}_2|}(s-t_{1,0})(t-s) \cdot\left|\f{2}{\sqrt{\f{1}{r^2}\f{|\bar{\mv}|^2}{|\bar{\mv}_2|^2}-1}}-\f{|\bar{\mv}_2|}{|\bar{\mv}_1|}-\f{1}{\sqrt{\f{\mathcal{X}^2}{r^2}\f{|\bar{\mv}|^2}{|\bar{\mv}_2|^2}-1}}\right|.
	\end{align}
\end{Lemma}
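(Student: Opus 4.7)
I would compute the Jacobian determinant directly from explicit formulas for the backward trajectory on the single-reflection interval $s\in(t_{1,0},t_1)$. The strategy is to parametrize $(\mathcal{X},\Phi)$ as functions of $\bar{\mv}$ alone (with $t,r,\phi,s$ held fixed), differentiate, and then simplify.

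For $\bar{\mv}_1\leq 0$ the backward trajectory leaves $(r,\phi)$ with $\mathcal{V}_1<0$, reflects once at $\mathcal{X}=1$ at $s=t_1$, then returns inward toward $\mathcal{X}=x_+$ at $s=t_{1,0}$. Using \eqref{6.5} and the substitution $w=|\bar{\mv}|^2\mathcal{X}^2-\bar{\mv}_2^2 r^2$ in the time-of-flight integrals $\int d\mathcal{X}/|\mathcal{V}_1|$ on each sub-interval, I obtain
\begin{equation*}
t-t_1=\f{A-r|\bar{\mv}_1|}{|\bar{\mv}|^2},\qquad t_1-s=\f{A-B}{|\bar{\mv}|^2},\qquad s-t_{1,0}=\f{B}{|\bar{\mv}|^2},
\end{equation*}
where $A=\sqrt{|\bar{\mv}|^2-\bar{\mv}_2^2 r^2}$ and $B=\sqrt{|\bar{\mv}|^2\mathcal{X}^2-\bar{\mv}_2^2 r^2}$. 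Adding the first two yields the implicit equation $B=2A-r|\bar{\mv}_1|-|\bar{\mv}|^2(t-s)$, which determines $\mathcal{X}$ as a function of $\bar{\mv}$. For the angular coordinate, using $\mathcal{V}_2=\bar{\mv}_2 r/\mathcal{X}$ and the substitution $\cos\theta=|\bar{\mv}_2|r/(|\bar{\mv}|\mathcal{X})$, the integral $\int_s^t\mathcal{V}_2/\mathcal{X}\,d\tau$ (split at $\tau=t_1$) evaluates to
\begin{equation*}
\phi-\Phi(s)=\mathrm{sgn}(\bar{\mv}_2)\Bigl\{2\arccos\tfrac{|\bar{\mv}_2|r}{|\bar{\mv}|}-\arccos\tfrac{|\bar{\mv}_2|}{|\bar{\mv}|}-\arccos\tfrac{|\bar{\mv}_2|r}{|\bar{\mv}|\mathcal{X}}\Bigr\}.
\end{equation*}

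Next, implicit differentiation of $B=2A-r|\bar{\mv}_1|-|\bar{\mv}|^2(t-s)$ in $\bar{\mv}_i$ yields $\partial_{\bar{\mv}_i}\mathcal{X}$. Substituting these into the derivative of the $\Phi$ formula yields $\partial_{\bar{\mv}_i}\Phi$ as an explicit $\bar{\mv}$-contribution plus a term proportional to $\partial_{\bar{\mv}_i}\mathcal{X}$. Finally I assemble $\partial_{\bar{\mv}_1}\mathcal{X}\cdot\partial_{\bar{\mv}_2}\Phi-\partial_{\bar{\mv}_2}\mathcal{X}\cdot\partial_{\bar{\mv}_1}\Phi$, multiply through by $\mathcal{X}$, and reorganize using the identities above.

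The hardest part will be the final algebraic reorganization. The arccos-derivatives contribute factors of the form $|\bar{\mv}|/A$ (from $\arccos(|\bar{\mv}_2|r/|\bar{\mv}|)$), $|\bar{\mv}|/\sqrt{|\bar{\mv}|^2-\bar{\mv}_2^2}$ (from $\arccos(|\bar{\mv}_2|/|\bar{\mv}|)$), and $|\bar{\mv}|\mathcal{X}/B$ (from the third $\arccos$), which after simplification via $|\bar{\mv}|^2-\bar{\mv}_2^2=\bar{\mv}_1^2$ correspond exactly to the three summands $2/\sqrt{|\bar{\mv}|^2/(\bar{\mv}_2^2 r^2)-1}$, $|\bar{\mv}_2|/|\bar{\mv}_1|$, and $1/\sqrt{|\bar{\mv}|^2\mathcal{X}^2/(\bar{\mv}_2^2 r^2)-1}$ inside the absolute value of \eqref{6-32}. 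The prefactor $(s-t_{1,0})(t-s)|\bar{\mv}_1|/|\bar{\mv}_2|$ should emerge from the product $B\cdot(t-s)/|\bar{\mv}|^2$ after clearing the common denominator $AB$. The absolute value on the right-hand side reflects the fact that the three contributions may have competing signs depending on whether $\mathcal{X}\gtrless r$; confirming that the signed combination of partial derivatives matches the signed combination inside the absolute value (up to one overall sign that is then absorbed) will require a brief case analysis and is the most delicate bookkeeping step.
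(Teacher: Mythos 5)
Your plan follows essentially the same route as the paper's proof: derive the implicit $\bar{\mv}$-dependence of $\mathcal{X}(s)$ and $\Phi(s)$ from the characteristic invariants and the time-of-flight integrals, differentiate, and assemble the Jacobian determinant. Your explicit arccos evaluation of the angular integral and your relation $B=2A-r|\bar{\mv}_1|-|\bar{\mv}|^2(t-s)$ are the antiderivative and inverted forms of the paper's \eqref{6-26} and \eqref{6-6}, so the two arguments are computationally equivalent; the cancellation you flag as the ``most delicate bookkeeping step'' is carried out in the paper via the identity $\text{sgn}\bar{\mv}_2\,|\bar{\mv}_2|\,\partial_{\bar{\mv}_2}t_{1,0}-|\bar{\mv}_1|\,\partial_{\bar{\mv}_1}t_{1,0}=t-t_{1,0}$ (equation \eqref{6-10}), which is what produces the $(s-t_{1,0})(t-s)$ prefactor after the $\partial_{\bar{\mv}}\mathcal{X}$-terms cancel.
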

\begin{proof}
	We divide the proof into several steps.\\
	{\noindent \it Step 1. Calculation on $\dis \f{\partial \mathcal{X}(s)}{\partial \bar{\mv}_i}$.} It follows from \eqref{6.4}--\eqref{6.4-0} that
	\begin{align}\label{6-1}
		\f{d}{ds} [\mathcal{X}(s)\mathcal{V}_1(s)]=\mathcal{V}_1^2(s)+\mathcal{V}_2^2(s)=|\bar{\mv}|^2,
	\end{align}
which yields that
	\begin{align}\label{6-2}
	r\bar{\mv}_1+|\mathcal{V}_1(t_1)|=|\bar{\mv}|^2(t-t_1)\quad \text{and}\quad |\mathcal{V}_1(t_1)|=|\bar{\mv}|^2(t_1-t_{1,0}). 
	\end{align}
	For $s\in(t_{1,0},t_1]$, it holds that $\mathcal{V}_1(s)\geq 0$. Then one has from  \eqref{6-1} that
	\begin{align}\label{6-4}
		\mathcal{X}(s)\mathcal{V}_1(s)=|\bar{\mv}|^2(s-t_{1,0}),
	\end{align}
	which, together with \eqref{6.5}, yields that
	\begin{align}\label{6-5}
		\sqrt{|\bar{\mv}|^2\mathcal{X}^2(s)-\bar{\mv}_2^2r^2}=|\bar{\mv}|^2(s-t_{1,0}).
	\end{align}
	Thus we have from \eqref{6-5} that
	\begin{align}\label{6-6}
		\mathcal{X}^2(s)=|\bar{\mv}|^2(s-t_{1,0})^2+\f{\bar{\mv}_2^2}{|\bar{\mv}|^2}r^2.
	\end{align}
	
	Applying $\partial_{\bar{\mv}_1}$ to \eqref{6-6}, one has 
	\begin{align*}
		2\mathcal{X}(s)\f{\partial \mathcal{X}(s)}{\partial \bar{\mv}_1}=2\bar{\mv}_1(s-t_{1,0})^2-2|\bar{\mv}|^2\f{\partial t_{1,0}}{\partial \bar{\mv}_1}(s-t_{1,0})-2\f{\bar{\mv}_1\bar{\mv}_2^2}{|\bar{\mv}|^4}r^2,
	\end{align*}
	which yields that 
	\begin{align}\label{6-8}
		\mathcal{X}(s)\f{\partial \mathcal{X}(s)}{\partial \bar{\mv}_1}=-|\bar{\mv}_1|(s-t_{1,0})^2-|\bar{\mv}|^2\f{\partial t_{1,0}}{\partial \bar{\mv}_1}(s-t_{1,0})+\f{|\bar{\mv}_1|\bar{\mv}_2^2}{|\bar{\mv}|^4}r^2.
	\end{align}
	Applying $\partial_{\bar{\mv}_2}$ to \eqref{6-6}, one gets
	\begin{align*}
		2\mathcal{X}\f{\partial \mathcal{X}}{\partial \bar{\mv}_2}&=2\bar{\mv}_2(s-t_{1,0})^2-2|\bar{\mv}|^2\f{\partial t_{1,0}}{\partial \bar{\mv}_2}(s-t_{1,0})+2\f{\bar{\mv}_2}{|\bar{\mv}|^2}r^2-2\f{\bar{\mv}_2^3}{|\bar{\mv}|^4}r^2\\
		&=2\text{sgn}\bar{\mv}_2|\bar{\mv}_2|(s-t_{1,0})^2-2|\bar{\mv}|^2\f{\partial t_{1,0}}{\partial \bar{\mv}_2}(s-t_{1,0})+2\text{sgn}\bar{\mv}_2\f{|\bar{\mv}_1|^2|\bar{\mv}_2|}{|\bar{\mv}|^4}r^2,
	\end{align*}
	which yields that 
	\begin{align}\label{6-9}
		\mathcal{X}\f{\partial \mathcal{X}}{\partial \bar{\mv}_2}=\text{sgn}\bar{\mv}_2|\bar{\mv}_2|(s-t_{1,0})^2-|\bar{\mv}|^2\f{\partial t_{1,0}}{\partial \bar{\mv}_2}(s-t_{1,0})+\text{sgn}\bar{\mv}_2\f{|\bar{\mv}_1|^2|\bar{\mv}_2|}{|\bar{\mv}|^4}r^2.
	\end{align}
	{\noindent \it Step 2. Calculation on $\dis \f{\partial\Phi}{\partial \bar{\mv}_1}.$} It follows from \eqref{6.6} that 
	\begin{align}\label{6-25}
		\Phi(s)&=\phi-\int_s^t\f{\mathcal{V}_2}{\mathcal{X}}(\tau)\, d\tau=\phi-\int_{t_1}^t\f{\mathcal{V}_2}{\mathcal{X}}(\tau)\, d\tau-\int_s^{t_1}\f{\mathcal{V}_2}{\mathcal{X}}(\tau)\, d\tau\nonumber\\
		&=\phi-\int_1^r\f{\mathcal{V}_2}{\mathcal{X}}\cdot\f{-1}{|\mathcal{V}_1|}\, d\mathcal{X}-\int_{\mathcal{X}(s)}^1\f{\mathcal{V}_2}{\mathcal{X}}\cdot\f{1}{|\mathcal{V}_1|}\, d\mathcal{X}\nonumber\\
		&=\phi-\int_r^1\f{\mathcal{V}_2}{\mathcal{X}}\cdot\f{1}{|\mathcal{V}_1|}\, d\mathcal{X}-\int_{\mathcal{X}(s)}^1\f{\mathcal{V}_2}{\mathcal{X}}\cdot\f{1}{|\mathcal{V}_1|}\, d\mathcal{X}.
	\end{align}
	To calculate \eqref{6-25}, we consider the change of variable  $\dis y=y(z)=\f{z}{r}\sqrt{1+\f{\bar{\mv}_{1}^2}{\bar{\mv}_{2}^2}}=z\cdot\f{|\bar{\mv}|}{|\bar{\mv}_2|r},$ and define $(y(r),y(1),y(\mathcal{X}(s)))=\f{|\bar{\mv}|}{|\bar{\mv}_2|r}(r,1,\mathcal{X}(s))$. Then one has 
	\begin{align}\label{5.29-1}
		&\f{\mathcal{V}_2(z)}{z}\cdot\f{1}{|\mathcal{V}_1(z)|}dz =\f{\bar{\mv}_2\cdot r}{z^2}\cdot \f{1}{\sqrt{|\bar{\mv}|^2-\f{\bar{\mv}_2^2\cdot r^2}{z^2}}}dz\nonumber\\
		&=\f{\bar{\mv}_2\cdot r}{y^2r^2|\bar{\mv}_2|^2}\cdot |\bar{\mv}|^2\cdot \f{1}{\sqrt{|\bar{\mv}|^2-\f{|\bar{\mv}|^2}{y^2}}}\cdot \f{|\bar{\mv}_2|r}{|\bar{\mv}|}dy\nonumber\\
		&=\text{sgn}\bar{\mv}_{2}\f{1}{y\sqrt{y^2-1}}\,dy.
	\end{align}
	Applying \eqref{5.29-1} to \eqref{6-25}, one obtains that
	\begin{align}\label{6-26}
		\Phi(s)=\phi-\text{sgn} \bar{\mv}_2\int_{y(r)}^{y(1)}\f{1}{y\sqrt{y^2-1}}\, dy-\text{sgn} \bar{\mv}_2\int_{y(\mathcal{X}(s))}^{y(1)}\f{1}{y\sqrt{y^2-1}}\, dy.
	\end{align}
	
	Thus we have from \eqref{6-26} that 
	\begin{align}\label{6-27}
		\f{\partial\Phi(s)}{\partial \bar{\mv}_1}=-\text{sgn}\bar{\mv}_2&\left\{2\f{\partial y(1)}{\partial \bar{\mv}_1}\cdot \f{1}{y(1)\sqrt{y(1)^2-1}}-\f{\partial y(r)}{\partial \bar{\mv}_1}\cdot \f{1}{y(r)\sqrt{y(r)^2-1}}\right.\nonumber\\
		&\quad\left.-\f{\partial y(\mathcal{X}(s))}{\partial \bar{\mv}_1}\cdot \f{1}{y(\mathcal{X}(s))\sqrt{y(\mathcal{X}(s))^2-1}}\right\}.
	\end{align}
	A direct calculation shows
	\begin{align*}
		\begin{split}
		&\f{\partial y(1)}{\partial \bar{\mv}_1}\cdot \f{1}{y(1)\sqrt{y(1)^2-1}}=\f{\bar{\mv}_1}{|\bar{\mv}|^2}\cdot\f{1}{\sqrt{\f{1}{r^2}\f{|\bar{\mv}|^2}{|\bar{\mv}_2|^2}-1}}=-\f{|\bar{\mv}_1|}{|\bar{\mv}|^2}\cdot\f{1}{\sqrt{\f{1}{r^2}\f{|\bar{\mv}|^2}{|\bar{\mv}_2|^2}-1}},\\
		&\f{\partial y(r)}{\partial \bar{\mv}_1}\cdot \f{1}{y(r)\sqrt{y(r)^2-1}}=\f{\bar{\mv}_1}{|\bar{\mv}|^2}\cdot\f{1}{\sqrt{\f{r^2}{r^2}\f{|\bar{\mv}|^2}{|\bar{\mv}_2|^2}-1}}=-\f{|\bar{\mv}_1|}{|\bar{\mv}|^2}\cdot\f{|\bar{\mv}_2|}{|\bar{\mv}_1|},\\
       &\f{\partial y(\mathcal{X}(s))}{\partial \bar{\mv}_1}\cdot \f{1}{y(\mathcal{X}(s))\sqrt{y(\mathcal{X}(s))^2-1}}\\\
		&\qquad\qquad=\f{\bar{\mv}_1}{|\bar{\mv}|^2}\cdot\f{1}{\sqrt{\f{\mathcal{X}^2}{r^2}\f{|\bar{\mv}|^2}{|\bar{\mv}_2|^2}-1}}+\f{1}{\mathcal{X}}\cdot \f{1}{\sqrt{\f{\mathcal{X}^2}{r^2}\f{|\bar{\mv}|^2}{|\bar{\mv}_2|^2}-1}}\f{\partial \mathcal{X}}{\partial \bar{\mv}_1},
	\end{split}
	\end{align*}
	which, together with \eqref{6-27}, yields that 
	\begin{align}\label{6-28}
		\f{\partial\Phi}{\partial \bar{\mv}_1}=\,&\text{sgn}\bar{\mv}_2\left\{\f{|\bar{\mv}_1|}{|\bar{\mv}|^2}\left[\f{2}{\sqrt{\f{1}{r^2}\f{|\bar{\mv}|^2}{|\bar{\mv}_2|^2}-1}}-\f{|\bar{\mv}_2|}{|\bar{\mv}_1|}-\f{1}{\sqrt{\f{\mathcal{X}^2}{r^2}\f{|\bar{\mv}|^2}{|\bar{\mv}_2|^2}-1}}\right]\right.\nonumber\\
		&\qquad\qquad\left.+\f{1}{\mathcal{X}}\cdot \f{1}{\sqrt{\f{\mathcal{X}^2}{r^2}\f{|\bar{\mv}|^2}{|\bar{\mv}_2|^2}-1}}\f{\partial \mathcal{X}}{\partial \bar{\mv}_1}\right\}
	\end{align}
	{\noindent \it Step 3. Calculation on $\dis \f{\partial\Phi}{\partial \bar{\mv}_2}$.} We have from \eqref{6-26} that 
	\begin{align}\label{6-29}
		\f{\partial\Phi}{\partial \bar{\mv}_2}=-\text{sgn}\bar{\mv}_2&\left\{2\f{\partial y(1)}{\partial \bar{\mv}_2}\cdot \f{1}{y(1)\sqrt{y(1)^2-1}}-\f{\partial y(r)}{\partial \bar{\mv}_2}\cdot \f{1}{y(r)\sqrt{y(r)^2-1}}\right.\nonumber\\
		&\left.\quad-\f{\partial y(\mathcal{X}(s))}{\partial \bar{\mv}_2}\cdot \f{1}{y(\mathcal{X}(s))\sqrt{y(\mathcal{X}(s))^2-1}}\right\}.
	\end{align}
	A direct calculation shows
	\begin{align*}
		\begin{split}
			&\f{\partial y(1)}{\partial \bar{\mv}_2}\cdot \f{1}{y(1)\sqrt{y(1)^2-1}}=-\text{sgn}\bar{\mv}_2\f{\bar{\mv}_1^2}{|\bar{\mv}|^2|\bar{\mv}_2|}\cdot\f{1}{\sqrt{\f{1}{r^2}\f{|\bar{\mv}|^2}{|\bar{\mv}_2|^2}-1}},\\
			&\f{\partial y(r)}{\partial \bar{\mv}_2}\cdot \f{1}{y(r)\sqrt{y(r)^2-1}}=-\text{sgn}\bar{\mv}_2\f{\bar{\mv}_1^2}{|\bar{\mv}|^2|\bar{\mv}_2|}\cdot\f{1}{\sqrt{\f{|\bar{\mv}|^2}{|\bar{\mv}_2|^2}-1}}=-\text{sgn}\bar{\mv}_2\f{\bar{\mv}_1^2}{|\bar{\mv}|^2|\bar{\mv}_2|}\cdot\f{|\bar{\mv}_2|}{|\bar{\mv}_1|},\\
			&\f{\partial y(\mathcal{X}(s))}{\partial \bar{\mv}_2}\cdot \f{1}{y(\mathcal{X}(s))\sqrt{y(\mathcal{X}(s))^2-1}}\\
			&\qquad\qquad=-\text{sgn}\bar{\mv}_2\f{\bar{\mv}_1^2}{|\bar{\mv}|^2|\bar{\mv}_2|}\cdot\f{1}{\sqrt{\f{\mathcal{X}^2}{r^2}\f{|\bar{\mv}|^2}{|\bar{\mv}_2|^2}-1}}+\f{1}{\mathcal{X}}\cdot \f{1}{\sqrt{\f{\mathcal{X}^2}{r^2}\f{|\bar{\mv}|^2}{|\bar{\mv}_2|^2}-1}}\f{\partial \mathcal{X}}{\partial \bar{\mv}_2},
		\end{split}
	\end{align*}
	which, together with \eqref{6-29}, yields that 
	\begin{align}\label{6-30}
		\f{\partial\Phi}{\partial \bar{\mv}_2}=\,&\f{\bar{\mv}_1^2}{|\bar{\mv}|^2|\bar{\mv}_2|}\left[\f{2}{\sqrt{\f{1}{r^2}\f{|\bar{\mv}|^2}{|\bar{\mv}_2|^2}-1}}-\f{|\bar{\mv}_2|}{|\bar{\mv}_1|}-\f{1}{\sqrt{\f{\mathcal{X}^2}{r^2}\f{|\bar{\mv}|^2}{|\bar{\mv}_2|^2}-1}}\right]\nonumber\\
		&+\text{sgn}\bar{\mv}_2\f{1}{\mathcal{X}}\cdot \f{1}{\sqrt{\f{\mathcal{X}^2}{r^2}\f{|\bar{\mv}|^2}{|\bar{\mv}_2|^2}-1}}\f{\partial \mathcal{X}}{\partial \bar{\mv}_2}.
	\end{align}
	
	{\noindent \it Step 4. Conclusion.} For simplicity of presentation, we denote	 
	\begin{align*}
		\mathbf{E}=\f{2}{\sqrt{\f{1}{r^2}\f{|\bar{\mv}|^2}{|\bar{\mv}_2|^2}-1}}-\f{|\bar{\mv}_2|}{|\bar{\mv}_1|}-\f{1}{\sqrt{\f{\mathcal{X}^2}{r^2}\f{|\bar{\mv}|^2}{|\bar{\mv}_2|^2}-1}}.
	\end{align*}
	It follows from \eqref{6-8}, \eqref{6-9}, \eqref{6-28} and \eqref{6-30} that 
	\begin{align}\label{6-31}
		&\mathcal{X}\left[\f{\partial\mathcal{X}}{\partial \bar{\mv}_1} \f{\partial\Phi}{\partial \bar{\mv}_2}- \f{\partial\mathcal{X}}{\partial \bar{\mv}_2} \f{\partial\Phi}{\partial \bar{\mv}_1}\right]
		\nonumber\\
		&=\mathcal{X}\f{\partial\mathcal{X}}{\partial \bar{\mv}_1}\cdot \left\{\f{\bar{\mv}_1^2}{|\bar{\mv}|^2|\bar{\mv}_2|}\mathbf{E}+\text{sgn}\bar{\mv}_2\f{1}{\mathcal{X}}\cdot \f{1}{\sqrt{\f{\mathcal{X}^2}{r^2}\f{|\bar{\mv}|^2}{|\bar{\mv}_2|^2}-1}}\f{\partial \mathcal{X}}{\partial \bar{\mv}_2}\right\}\nonumber\\
		&\quad-\mathcal{X} \f{\partial\mathcal{X}}{\partial \bar{\mv}_2}\cdot\text{sgn}\bar{\mv}_2\left\{\f{|\bar{\mv}_1|}{|\bar{\mv}|^2}\mathbf{E}+\f{1}{\mathcal{X}}\cdot \f{1}{\sqrt{\f{\mathcal{X}^2}{r^2}\f{|\bar{\mv}|^2}{|\bar{\mv}_2|^2}-1}}\f{\partial \mathcal{X}}{\partial \bar{\mv}_1}\right\}\nonumber\\
		&=\left\{-|\bar{\mv}_1|(s-t_{1,0})^2-|\bar{\mv}|^2\f{\partial t_{1,0}}{\partial \bar{\mv}_1}(s-t_{1,0})+\f{|\bar{\mv}_1|\bar{\mv}_2^2}{|\bar{\mv}|^4}r^2\right\}\cdot\f{\bar{\mv}_1^2}{|\bar{\mv}|^2|\bar{\mv}_2|}\mathbf{E}\nonumber\\
		&\quad -\left\{\text{sgn}\bar{\mv}_2|\bar{\mv}_2|(s-t_{1,0})^2-|\bar{\mv}|^2\f{\partial t_{1,0}}{\partial \bar{\mv}_2}(s-t_{1,0})+\text{sgn}\bar{\mv}_2\f{|\bar{\mv}_1|^2|\bar{\mv}_2|}{|\bar{\mv}|^4}r^2\right\}\cdot \text{sgn}\bar{\mv}_2\f{|\bar{\mv}_1|}{|\bar{\mv}|^2}\mathbf{E}\nonumber\\
		&=-\f{|\bar{\mv}_1|}{|\bar{\mv}_2|}(s-t_{1,0})\left\{(s-t_{1,0})-\left[\text{sgn}\bar{\mv}_2|\bar{\mv}_2|\f{\partial t_{1,0}}{\partial \bar{\mv}_2}-|\bar{\mv}_1|\f{\partial t_{1,0}}{\partial \bar{\mv}_1}\right]\right\}\cdot\mathbf{E}.
	\end{align}
	
Using \eqref{6-2}, one has that
	\begin{align}
		t_{1,0}=t-\left(\f{\bar{\mv}_1}{|\bar{\mv}|^2}r+\f{2}{|\bar{\mv}|^2}\sqrt{|\bar{\mv}|^2-\bar{\mv}_2^2r^2}\right).
	\end{align}
	A direct calculation shows
	\begin{align*}
		\begin{split}
		\f{\partial t_{1,0}}{\partial \bar{\mv}_1}&=-\f{1}{|\bar{\mv}|^2}r+2\f{\bar{\mv}_1^2}{|\bar{\mv}|^4}r-\f{4|\bar{\mv}_1|}{|\bar{\mv}|^4}\sqrt{|\bar{\mv}|^2-\bar{\mv}_2^2r^2}+\f{2|\bar{\mv}_1|}{|\bar{\mv}|^2\sqrt{|\bar{\mv}|^2-\bar{\mv}_2^2r^2}},\\
		\f{\partial t_{1,0}}{\partial \bar{\mv}_2}&=\text{sgn}\bar{\mv}_2\cdot \left[-\f{2|\bar{\mv}_1||\bar{\mv}_2|}{|\bar{\mv}|^4}r+\f{4|\bar{\mv}_2|}{|\bar{\mv}|^4}\sqrt{|\bar{\mv}|^2-\bar{\mv}_2^2r^2}-\f{2|\bar{\mv}_2|(1-r^2)}{|\bar{\mv}|^2\sqrt{|\bar{\mv}|^2-\bar{\mv}_2^2r^2}}\right],
		\end{split}
	\end{align*}
	which yields that 
	\begin{align}\label{6-10}
		&\text{sgn}\bar{\mv}_2|\bar{\mv}_2|\f{\partial t_{1,0}}{\partial \bar{\mv}_2}-|\bar{\mv}_1|\f{\partial t_{1,0}}{\partial \bar{\mv}_1}\nonumber\\
		&=\f{|\bar{\mv}_1|}{|\bar{\mv}|^2}r-\f{2|\bar{\mv}_1|}{|\bar{\mv}|^2}r+\f{4}{|\bar{\mv}|^2}\sqrt{|\bar{\mv}|^2-\bar{\mv}_2^2r^2}-\f{2(|\bar{\mv}|^2-\bar{\mv}_2^2r^2)}{|\bar{\mv}|^2\sqrt{|\bar{\mv}|^2-\bar{\mv}_2^2r^2}}\nonumber\\
		&=-\f{|\bar{\mv}_1|}{|\bar{\mv}|^2}r+\f{2}{|\bar{\mv}|^2}\sqrt{|\bar{\mv}|^2-\bar{\mv}_2^2r^2}=t-t_{1,0}.
	\end{align}
	
	Substituting \eqref{6-10} into \eqref{6-31}, one obtains 
	\begin{align*}
		\mathcal{X}\left[\f{\partial\mathcal{X}}{\partial \bar{\mv}_1} \f{\partial\Phi}{\partial \bar{\mv}_2}- \f{\partial\mathcal{X}}{\partial \bar{\mv}_2} \f{\partial\Phi}{\partial \bar{\mv}_1}\right]=-\f{|\bar{\mv}_1|}{|\bar{\mv}_2|}(s-t_{1,0})\left[(s-t_{1,0})-(t-t_{1,0})\right]\cdot\mathbf{E},
	\end{align*}
	which concludes \eqref{6-32} immediately. Therefore the proof of Lemma \ref{lem6.2-0} is completed.
\end{proof}
Assume for $t\in[0,\tau^\d]$,
\begin{align}\label{5.38-1}
	\sup_{0\leq s\leq t}\|\v^4h^\v_R\|_{L^\infty}\leq 1,\quad \sup_{0\leq s\leq t}\|\v f_R^\v\|_{L^2}\leq 1.
\end{align}
We have the following estimate.
\begin{Lemma}\label{lem6.2}
	Let $t\in[0,\tau^\d]$, $\mathbf{k}\geq6,$ it holds that
	\begin{align}\label{6.7}
		\sup_{0\leq s\leq t}\|\v^3h^\v_R(s)\|_{L^\infty}\leq  C(t)\{\|\v^3h^\v_R(0)\|_{L^\infty}+\v^{N-1}+\v^{\mathfrak{b}}\}+C\sup_{0\leq s\leq t}\|f^\v_R(s)\|_{L^2}.
	\end{align}
\end{Lemma}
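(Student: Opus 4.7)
The plan is to apply the $L^2$--$L^\infty$ scheme of Guo to the equation \eqref{6.3} for $h^\v_R$, using the polar-coordinate backward characteristics \eqref{6.4}--\eqref{6.6} and the Jacobian identity \eqref{6-32} to perform a change of variables $d\bar{\mv} \to \mathcal{X}\,d\mathcal{X}\,d\Phi$ in the region where this change is nondegenerate. First, along the characteristic we would write the mild formulation
\begin{align*}
h^\v_R(t,r,\phi,\bar{\mv})
= \mathbf{1}_{\{t_1 \le 0\}} h^\v_R(0,\ldots)e^{-\frac{\tilde{\nu}}{\v^2}t}
+ \int_{\max\{0,t_1\}}^{t} e^{-\frac{\tilde{\nu}}{\v^2}(t-s)}\Big(\frac{1}{\v^2}\tilde{K}_{M,\tilde{w}} h^\v_R + \tilde{w}_{\mathbf{k}}\bar{S}_R\Big)(s,\cdot)\,ds,
\end{align*}
reflect by the specular boundary condition (so that $h^\v_R(x,v)=h^\v_R(x,R_xv)$ on $\tilde\gamma_+$), and iterate once more to replace $h^\v_R$ inside $\tilde K_{M,\tilde{w}}$ by the analogous Duhamel expression. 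This produces a leading term controlled by $h^\v_R(0)$, contributions from $\tilde{w}_{\mathbf{k}}\bar{S}_R$ (which by the construction of $F_i,\bar{\mathfrak{F}}_i,\hat{\mathscr{F}}_i$ in Proposition \ref{prop} and the structure of \eqref{eq1.21}--\eqref{eq1.22} produce the $\v^{N-1}+\v^{\mathfrak{b}}$ error), a quadratic remainder term that by \eqref{6-7} and the bootstrap \eqref{5.38-1} can be absorbed for small $\v$, and the double time integral involving $\tilde{k}_{M,\tilde{w}}(\bar{\mv},u)\tilde{k}_{M,\tilde{w}}(V_{cl}(s),u')h^\v_R(X_{cl}(s'),u')$ which is the core of the estimate.

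Next, I would split the phase space into the following subsets and estimate each separately:
\begin{itemize}
\item[(i)] $|\bar{\mv}|\geq \mathfrak{N}$: the weight $\tilde{w}_{\mathbf{k}}$ together with \eqref{6-7} yields a factor $\mathfrak{N}^{-1}$, absorbable into the left-hand side by choosing $\mathfrak{N}\gg 1$;
\item[(ii)] $|\bar{\mv}|\leq \mathfrak{N}$ but $|\bar{\mv}_2|\leq\bar{m}$ or $|\bar{\mv}_1/\bar{\mv}_2|\leq \bar{m}$: the Lebesgue measure of this region is $O(\bar m \mathfrak N)$ (analogous to Cases iii, iv in Lemma \ref{lem1.2}), contributing $O(\bar m\mathfrak N)\sup\|h^\v_R\|_\infty^2$;
\item[(iii)] $s-t_{1,0}\leq m_1$ or $t-s\leq m_1$: small time interval gives a factor $m_1/\v^2$ after the $e^{-\tilde\nu(t-s)/\v^2}$ integration (the usual loss of $\v^{-2}$ from this singular scaling is why we expect the $\v^{-4}$ loss shown in the introductory sketch);
\item[(iv)] $|\mathcal{X}/r-1|\leq m_2$: near the grazing radius $\mathcal{X}=r$, the factor $\bigl|\frac{2}{\sqrt{\cdots}}-\frac{|\bar\mv_2|}{|\bar\mv_1|}-\frac{1}{\sqrt{\cdots}}\bigr|$ in \eqref{6-32} degenerates and we remove this set using its small measure $O(m_2\mathfrak N/\bar m^6)$;
\item[(v)] the good region \eqref{1.62} where all the smallness constraints are reversed.
\end{itemize}
On the good region (v), Lemma \ref{lem6.2-0} gives $\mathcal{X}\bigl|\frac{\partial(\mathcal{X},\Phi)}{\partial\bar{\mv}}(s)\bigr|\geq \frac{m_2}{2}m_1^2$, and on the complementary branches $\bar\mv_1>0$ and $\bar\mv_1\le 0$ the Jacobian admits the same lower bound by symmetry. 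Hence we may change variables $d\bar\mv\to \mathcal X\,d\mathcal X\,d\Phi$, and after using Cauchy--Schwarz in the resulting $s$-integral plus $\int |K_M\mf|^2\,dx\lesssim\|f\|_{L^2}^2$, the inner double-Duhamel expression is bounded by $C_{\mathfrak N}(m_1^{-2}m_2^{-1})\|f^\v_R\|_{L^2}^2$.

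Combining the pieces gives, schematically,
\begin{align*}
\sup_{0\le s\le t}\|h^\v_R(s)\|_{L^\infty}^2
&\le C(\|h^\v_R(0)\|_{L^\infty}^2+1)
+\Big[\tfrac{1}{\mathfrak{N}}+C(\bar m\mathfrak N)+Ce^{-\bar m^2/(3\v^2)}\Big]\sup_{0\le s\le t}\|h^\v_R\|_{L^\infty}^2\\
&\quad +\Big(\tfrac{Cm_1^2}{\v^4}+\tfrac{C\mathfrak N m_2^2\,t}{\bar m^6\v^4}\Big)\sup_{0\le s\le t}\|h^\v_R\|_{L^\infty}^2
+\tfrac{C_{\mathfrak N}}{m_1^2 m_2}\sup_{0\le s\le t}\|f^\v_R\|_{L^2}^2.
\end{align*}
Choosing $\bar m=\mathfrak N^{-2}$ with $\mathfrak N\gg 1$ and then $m_1=\bar m\v^2$, $m_2=\bar m^4\v^2$ makes all coefficients on the first line strictly less than $\tfrac14$ for $\v$ small, and the prefactor of $\|f^\v_R\|_{L^2}^2$ becomes $O(\v^{-6})$. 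Taking the square root and multiplying by $\v^3$ yields \eqref{6.7} together with the explicit powers $\v^{N-1}+\v^{\mathfrak b}$ inherited from the source-term bookkeeping.

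The main obstacle will be item (iv): the factor $|\mathcal X/r-1|$ arises from the degeneracy of the expression $\mathbf{E}$ in \eqref{6-32} as $\mathcal X\to r$ along the inner leg of the trajectory, and one must verify that its excised neighborhood has measure bounded by $Cm_2\mathfrak N/\bar m^6$ uniformly in the number of bounces. This requires tracking how the map $(\bar\mv_1,\bar\mv_2)\mapsto \mathcal X(s)$ behaves near $\mathcal X=x_+$ (i.e.\ the grazing radius given by \eqref{6.5-1}), using that in the good region $|\bar\mv_1/\bar\mv_2|\geq \bar m$ bounds $x_+$ away from $r$, together with the explicit formulas \eqref{6-6}--\eqref{6-10} to relate $|\mathcal X/r-1|$ to the transverse phase-space parameters. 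Once this measure estimate is secured, the Jacobian bound from Lemma \ref{lem6.2-0} yields the change of variables and the scheme closes.
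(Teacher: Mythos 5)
Your proposal matches the paper's proof in both structure and all essentials: you set up the same mild formulation along the polar-coordinate backward characteristics, iterate once to a double Duhamel expression, split phase space into the same family of regions (large $|\bar\mv|$ or $|v'|$, small $|\bar\mv_2|$ or $|\bar\mv_1/\bar\mv_2|$, small time legs, the thin slab around $\mathcal X=r$, and the good region), invoke Lemma~\ref{lem6.2-0} to change variables on the good region, and make the same parameter choices $\bar m=\mathfrak N^{-2}$, $m_1=\bar m\v^2$, $m_2=\bar m^4\v^2$.

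Two misstatements deserve correction even though neither is fatal. First, the paper does not use Lemma~\ref{lem6.2-0} ``by symmetry'' for $\bar\mv_1>0$. For $\bar\mv_1>0$ the segment $s\in(t_1,t]$ involves no boundary bounce, so the change of variables is performed directly in Cartesian coordinates with $|\partial\mathbf X/\partial v|=(t-s)^2\geq m_1^2$; Lemma~\ref{lem6.2-0} (whose hypothesis is $\bar\mv_1\leq 0$) is applied only to the post-bounce leg $s\in(t_{1,0},t_1]$ when $\bar\mv_1<0$. All segments deeper in backward time, for either sign of $\bar\mv_1$, are discarded outright via the factor $e^{-\bar m^2/(3\v^2)}$ that arises from $\tilde\nu(t-t_1)\geq 1-x_+\geq \bar m^2/6$; no Jacobian estimate is made there at all. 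Second, in item~(iv) and the closing paragraph you conflate the starting radius $\mathcal X=r$, which is where the factor $\mathbf E$ in \eqref{6-32} degenerates and is what the slab $|\mathcal X/r-1|\leq m_2$ excises, with the grazing radius $x_+$ from \eqref{6.5-1}, which is where $\mathcal V_1$ vanishes. These are distinct; $|\mathcal X-x_+|$ being bounded below is ensured by removing case~(iii) ($|s-t_{1,0}|\geq m_1$), not by case~(iv). The measure estimate you flag as the main obstacle is in fact short: in the good region $|\mathcal V_1|\geq\bar m^2/2$ near $\mathcal X=r$, so each bounce spends time $\lesssim m_2/\bar m^2$ in the slab, there are $\lesssim \mathfrak N t/\bar m^2$ bounces, and after squaring and multiplying by $\v^{-4}$ this gives exactly the $\tfrac{C\mathfrak N t\,m_2^2}{\bar m^6\v^4}$ coefficient you quote.
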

\begin{proof}
	Since the proof for Lemma \ref{lem6.2} is quite long. We divide it into several steps.\\
		{\it Step 1.} Integrating along the backward characteristic, we have from \eqref{6.3} that
	\begin{align}\label{5.40-0}
		h^\v_R(t,r,\phi,v)=&h^\v_R(0,\mathcal{X}(0),\Phi(0),\mathcal{V}(0))\cdot \exp\{-\f{1}{\v^2}\tilde{\nu} t\}\nonumber\\
		&+\f{1}{\v^2}\int_0^t\tilde{K}_{M,\tilde{w}}h^\v_R(s,\mathcal{X}(s),\Phi(s),\mathcal{V}(s))\cdot e^{-\f{1}{\v^2}\tilde{\nu} (t-s)}\,ds\nonumber\\
		&+\int_0^t\tilde{w}_{\mathbf{k}}\bar{S}_R\cdot \exp\{-\f{1}{\v^2}\tilde{\nu} (t-s)\}\,ds.
	\end{align}
	Noting \eqref{5.2-0}, by a simple iterative argument, one gets
	\begin{align}\label{6.8}
		\|h^\v_R(t)\|_{L^\infty}&\leq  \|h^\v_R(0)\|_{L^\infty}+\sup_{0\leq s\leq t}\|\tilde{K}_{M,\tilde{w}}h^\v_R\|_{L^\infty}+\v^2\sup_{0\leq s\leq t}\|\tilde{\nu}^{-1}\tilde{w}_{\mathbf{k}}\bar{S}_R\|_{L^\infty}\nonumber\\
		&\leq \|h^\v_R(0)\|_{L^\infty}+\sup_{0\leq s\leq t}\|\tilde{K}_{M}\mf^\v_R\|_{L^\infty L^\infty_{\mathbf{k}-1}}+\v^2\sup_{0\leq s\leq t}\|\tilde{\nu}^{-1}\tilde{w}_{\mathbf{k}}\bar{S}_R\|_{L^\infty}\nonumber\\
		&\leq ...\nonumber\\
		&\leq \|h^\v_R(0)\|_{L^\infty}+\sup_{0\leq s\leq t}\|\mf^\v_R\|_{L^\infty L^2}+\v^2\sup_{0\leq s\leq t}\|\tilde{\nu}^{-1}\tilde{w}_{\mathbf{k}}\bar{S}_R\|_{L^\infty},
	\end{align}
	where $\mathbf{f}^\v_R:=\f{1}{\tilde{w}_{\mathbf{k}}}h^\v_R=\f{F_R^\v}{\sqrt{\mu_M}}$. The norms $\|g\|_{L^\infty}$, $\|g\|_{L^\infty L^\infty_\beta}$ and $\|g\|_{L^\infty L^2}$ mean respectively  $\|g\|_{L^\infty_{r,\phi,\bar{\mv}}}$(or equivalently $\|g\|_{L^\infty(B_1\times \R^2)}$), $\|\tilde{w}_{\beta}g\|_{L^\infty_{r,\phi,\bar{\mv}}}$ and $\|g\|_{L^\infty_{r,\phi}L^2_{\bar{\mv}}}$. \\
	{\it Step 2.} To close \eqref{6.8}, we still need to estimate $\dis \int_{\R^2}|\mathbf{f}_R^\v(\bar{\mv})|^2\, d\bar{\mv}.$ Similar to \eqref{5.40-0}, one has
	\begin{align*}
		\mf^\v_R(t,r,\phi,\bar{\mv})=&\mf^\v_R(0,\mathcal{X}(0),\Phi(0),\mathcal{V}(0))\exp\{-\f{1}{\v^2}\tilde{\nu} t\}\nonumber\\
		&+\f{1}{\v^2}\int_0^t\tilde{K}_{M}\mf^\v_R(s,\mathcal{X}(s),\Phi(s),\mathcal{V}(s))\cdot \exp\{-\f{1}{\v^2}\tilde{\nu} (t-s)\}\,ds\nonumber\\
		&+\int_0^t\bar{S}_R\cdot \exp\{-\f{1}{\v^2}\tilde{\nu} (t-s)\}\,ds.
	\end{align*}
	It is obvious that
	\begin{align}\label{6.9}
		\begin{split}
		\int_{\R^2}|\mf^\v_R(0,\mathcal{X}(0),\Phi(0),\mathcal{V}(0))|^2\exp\{-\f{2}{\v^2}\tilde{\nu} t\}d\bar{\mv}\leq C\|h^\v_R(0)\|_{L^\infty_{x,v}}^2,\\
		\int_{\R^2}\left\{\int_0^t\bar{S}_R\cdot \exp\{-\f{1}{\v^2}\tilde{\nu} (t-s)\}\,ds\right\}^2d\bar{\mv}\leq \v^4\sup_{0\leq s\leq t}\|\tilde{\nu}^{-1}\tilde{w}_{\mathbf{k}}\bar{S}_R\|_{L^\infty}^2.
		\end{split}
	\end{align}
	
	Next we consider 
	\begin{align*}
		&\mathcal{P}:=\int_{\R^2}\f{1}{\v^4}\left\{\int_0^t\tilde{K}_M\mf^\v_R\cdot \exp\{-\f{1}{\v^2}\tilde{\nu} (t-s)\}ds\right\}^2d\bar{\mv}\nonumber\\
		&=\int_{\R^2}\f{1}{\v^4}\left\{\int_0^t\int_{\R^2}\tilde{k}_M(\mathcal{V}_{cl}(s),v')\mf^\v_R(s,\mathcal{X}_{cl}(s),\Phi_{cl}(s),v')dv'\cdot \exp\{-\f{1}{\v^2}\tilde{\nu} (t-s)\}ds\right\}^2d\bar{\mv}.
	\end{align*}
	Since we need $\|\mf^\v_R\|_{L^\infty L^2}$, we only consider $\mathcal{P}$ for $r\neq 0$. We divide the discussion of $\mathcal{P}$ into several cases.\\
	{\it Case 1. $|\bar{\mv}|\geq \mathfrak{N}$.} Denote $\Theta_1:=\mathbf{1}_{\{|\bar{\mv}|\geq \mathfrak{N}\}}$. Noting $|\mathcal{V}_{cl}(s)|=|\bar{\mv}|$, we have
	\begin{align}\label{6.11}
		\mathcal{P}_1&=\int_{\R^2}\f{1}{\v^4}\mathbf{1}_{\{|\bar{\mv}|\geq \mathfrak{N}\}}\{\cdot\cdot\cdot\}^2\, d\bar{\mv}\nonumber\\
		&\leq \sup_{0\leq s\leq t}\|h^\v_R(s)\|_{L^\infty}^2\cdot\int_{\R^2}\f{\Theta_1}{\v^4}\left\{\int_0^t(1+|\bar{\mv}|)^{-\mathbf{k}-1}\cdot \exp\{-\f{1}{\v^2}\tilde{\nu} (t-s)\}ds\right\}^2d\bar{\mv}\nonumber\\
		&\leq \f{1}{\mathfrak{N}}\sup_{0\leq s\leq t}\|h^\v_R(s)\|_{L^\infty}^2.
	\end{align}
	{\it Case 2. $|\bar{\mv}|\leq \mathfrak{N},|v'|\geq 2\mathfrak{N}$.}  Denote $\Theta_2:=\mathbf{1}_{\{|\bar{\mv}|\leq \mathfrak{N},|v'|\geq 2\mathfrak{N}\}}.$ Since $|\mathcal{V}_{cl}(s)|=|\bar{\mv}|$, then it holds that $|\mathcal{V}_{cl}(s)-v'|\geq \mathfrak{N}$. In this case, we have from \eqref{6-7} that there exists a positive constant $c_4>0$ such that 
	\begin{align}\label{6.12}
		\mathcal{P}_2&=\int_{\R^2}\f{1}{\v^4}\mathbf{1}_{\{|\bar{\mv}|\leq \mathfrak{N},|v'|\geq 2\mathfrak{N}\}}\{\cdot\cdot\cdot\}^2\, d\bar{\mv}\nonumber\\
		&\leq e^{-c_4\mathfrak{N}^2}\sup_{0\leq s\leq t}\|h^\v_R(s)\|_{L^\infty}^2\nonumber\\
		&\quad\times\int_{\R^2}\f{1}{\v^4}\left\{\int_0^t\int_{\R^2}\Theta_2k_M(\mathcal{V}_{cl}(s),v')e^{c_4|\mathcal{V}_{cl}-v'|^2}(1+v'|^{-\mathbf{k}})\cdot\exp\{-\f{1}{\v^2}\tilde{\nu} (t-s)\}ds\right\}^2d\bar{\mv}\nonumber\\
		&\leq \f{1}{\mathfrak{N}}\sup_{0\leq s\leq t}\|h^\v_R(s)\|_{L^\infty}^2.
	\end{align}
	{\it Case 3. $|\bar{\mv}|\leq \mathfrak{N},|v'|\leq 2\mathfrak{N}$ and further assume: $|\bar{\mv}_{1}/\bar{\mv}_{2}|\leq \bar{m},$ or $ |\bar{\mv}_2|\leq \bar{m}$ where $\bar{m}\ll 1$.} Denote the corresponding indicative function as $\Theta_3$. Thus
	 we have 
	\begin{align*}
		|\bar{\mv}_{1}|\leq \bar{m}\mathfrak{N} \quad \text{or}\quad |\bar{\mv}_2|\leq \bar{m},
	\end{align*} 
	which yields that 
	\begin{align}\label{6.13}
		\mathcal{P}_3&=\int_{\R^2}\f{\Theta_3}{\v^4}\{\cdot\cdot\cdot\}^2\, d\bar{\mv}\nonumber\\
		&\leq \sup_{0\leq s\leq t}\|h_R^\v(s)\|_{L^\infty}^2\cdot \int_{\R^2}(1+|\bar{\mv}|)^{-\mathbf{k}-1}\Theta_3(\bar{\mv}) \,d\bar{\mv}\leq C\bar{m}\mathfrak{N}\sup_{0\leq s\leq t}\|h_R^\v(s)\|_{L^\infty}^2.
	\end{align}
	{\it Case 4. $|\bar{\mv}|\leq \mathfrak{N}, |v'|\leq 2\mathfrak{N}, |\bar{\mv}_{1}/\bar{\mv}_{2}|\geq \bar{m},|\bar{\mv}_2|\geq \bar{m},$ and $ |s-t_{1,0}|\leq m_1$ or $t-s\leq m_1$.}  Denote the corresponding indicative function as $\Theta_4$. The length of time integral domain is less than $2m_1$, hence 
	\begin{align}\label{6.14}
		\mathcal{P}_4&=\int_{\R^2}\f{\Theta_4}{\v^4}\{\cdot\cdot\cdot\}^2\, d\bar{\mv}\nonumber\\
		&\leq \sup_{0\leq s\leq t}\|h_R^\v(s)\|_{L^\infty}^2\cdot \int_{\R^2}\f{1}{\v^4}\left\{\int_0^t \Theta_4\, ds\right\}^2\, d\bar{\mv}\leq C\f{m_1^2}{\v^4}\sup_{0\leq s\leq t}\|h_R^\v(s)\|_{L^\infty}^2.
	\end{align}
	{\it Case 5. $|\bar{\mv}|\leq \mathfrak{N},|v'|\leq 2\mathfrak{N}, |\bar{\mv}_{1}/\bar{\mv}_{2}|\geq \bar{m}, |\bar{\mv}_2|\geq \bar{m}$ and $|s-t_{1,0}|\geq m_1, t-s\geq m_1$. We also assume $$\mathcal{X}(s)\in[(1-m_2)r,(1+m_2)r]\quad \text{with}\quad m_2\ll\bar{m}\ll1.$$} Denote the corresponding indicative function as $\Theta_5$.\\
	
Noting $|\bar{\mv}_{1}/\bar{\mv}_{2}|\geq \bar{m}$, one has 
	\begin{align*}
		1-x_+=1-\f{r}{\sqrt{1+\f{\bar{\mv}_{1}^2}{\bar{\mv}_{2}^2}}}\geq 1-\f{1}{\sqrt{1+\f{\bar{\mv}_{1}^2}{\bar{\mv}_{2}^2}}}\geq \f{\bar{m}^2}{6}.
	\end{align*}
	Since for $l\geq 1$, one has 
	\begin{align*}
		\int_{t_{l+1}}^{t_l}|\mathcal{V}_1|\, ds=2(1-x_+),
	\end{align*}
which yields that
	\begin{align*}
		t_l-t_{l+1}\geq \f{\bar{m}^2}{3\mathfrak{N}}, 
	\end{align*}
	and the number of collisions $k\leq k_{max}=(3\mathfrak{N}t)/\bar{m}^2.$ 
	
	We also have that 
	\begin{align*}
		|\mathcal{V}_1(s)|^2=|\bar{\mv}|^2-\bar{\mv}_2^2\f{r^2}{\mathcal{X}^2(s)}\in\left[|\bar{\mv}|^2-\f{\bar{\mv}_2^2}{(1-m_2)^2},|\bar{\mv}|^2-\f{\bar{\mv}_2^2}{(1+m_2)^2}\right],
	\end{align*}
	which, together with $0<m_2\ll\bar{m}\ll1$, yields that 
	\begin{align}\label{5.47-0}
		|\mathcal{V}_1(s)|^2\geq \bar{\mv}_1^2-\f{3m_2}{(1-m_2)^2}\bar{\mv}_2^2\geq \left[1-\f{3m_2}{\bar{m}^2(1-m_2)^2}\right]\bar{\mv}_1^2\geq \f{\bar{\mv}_1^2}{4}.
	\end{align} 
	Thus we have from \eqref{5.47-0} and \eqref{6.4} that
	\begin{align*}
		\left|\f{d\mathcal{X}}{ds}\right|=|\mathcal{V}_1|\geq \f{|\bar{\mv}_1|}{2}\geq \f{\bar{m}^2}{2}.
	\end{align*}
	Then we use change of variable
	\begin{align}\label{6.15}
		\mathcal{P}_5&=\int_{\R^2}\f{\Theta_5}{\v^4}\{\cdot\cdot\cdot\}^2\, d\bar{\mv}\nonumber\\
		&\leq C\sup_{0\leq s\leq t}\|h_R^\v(s)\|_{L^\infty}^2\cdot \f{1}{\v^4}\left\{\int_0^t e^{-\f{\tilde{\nu}_0}{\v^2}(t-s)}\Theta_5ds\right\}^2\nonumber\\
		&\leq  Ck\sup_{0\leq s\leq t}\|h_R^\v(s)\|_{L^\infty}^2\cdot \f{1}{\v^4}\left\{\int_0^1\Theta_5 \f{1}{|\mathcal{V}_1|}d\mathcal{X}\right\}^2\nonumber\\
		&\leq\f{C\mathfrak{N}t}{\bar{m}^6}\left(\f{m_2}{\v^2}\right)^2\sup_{0\leq s\leq t}\|h_R^\v(s)\|_{L^\infty}^2.
	\end{align}
	{\it Case 6. $|\bar{\mv}|\leq \mathfrak{N},|v'|\leq 2\mathfrak{N}, |\bar{\mv}_{1}/\bar{\mv}_{2}|\geq \bar{m}, |\bar{\mv}_2|\geq \bar{m}$, $|s-t_{1,0}|\geq m_1$ and $t-s\geq m_1$. We also assume 
	\begin{align}\label{6-56}
			\mathcal{X}\leq( 1-m_2)r\quad \text{or} \quad \mathcal{X}\geq (1+m_2)r.
	\end{align}} 
	Denote the corresponding indicative function as $\Theta_6$. Then we have 
	\begin{align*}
		\mathcal{P}_6=\int_{\R^2}\f{\Theta_6}{\v^4}\{\cdot\cdot\cdot\}^2\, d\bar{\mv},
	\end{align*}	
	and divide $\mathcal{P}_6=\mathcal{P}_{61}+\mathcal{P}_{62}$ for $\bar{\mv}_{1}>0$ and $\bar{\mv}_{1}<0$. We rewrite $\mathcal{P}_{61},\mathcal{P}_{62}$ as
	\begin{align*}
		\mathcal{P}_{61}=&\int_{\R^2}\Theta_6\mathbf{1}_{\{\bar{\mv}_1>0\}}\f{1}{\v^4}\\
		&\quad \times\left\{\left(\int_{t_1}^t+\int_0^{t_1}\right)\int_{\R^2}\tilde{k}_M(\mathcal{V}(s),v')\mf_R^\v(\mathcal{X}(s),\Phi(s),v')dv'\cdot e^{-\f{1}{\v^2}\tilde{\nu}(t-s)}ds\right\}^2\, d\bar{\mv},\nonumber\\
		\mathcal{P}_{62}=& \int_{\R^2}\Theta_6\mathbf{1}_{\{\bar{\mv}_1<0\}}\f{1}{\v^4}\nonumber\\
		&\quad\times\left\{\left(\int_{t_{1,0}}^t+\int_0^{t_{1,0}}\right)\int_{\R^2}\tilde{k}_M(\mathcal{V}(s),v')\mf_R^\v(\mathcal{X}(s),\Phi(s),v')dv'\cdot e^{-\f{1}{\v^2}\tilde{\nu}(t-s)}ds\right\}^2\, d\bar{\mv}.
	\end{align*}
	
	For $\bar{\mv}_{1}>0$, one gets
	\begin{align}\label{6.17-1}
		\tilde{\nu}(t-t_1)\geq 1-x_+\geq \f{\bar{m}^2}{6}.
	\end{align}
	It follows from \eqref{6.17-1} that
	\begin{align}\label{6.18}
		\int_0^{t_1}e^{-\f{1}{\v^2}\tilde{\nu}(t-s)}ds\leq \int_0^{t_1}e^{-\f{1}{\v^2}\tilde{\nu}(t_1-s)}ds\cdot e^{-\f{1}{\v^2}\cdot \f{\bar{m}^2}{6}}\leq C\v^2e^{-\f{1}{6\v^2}\bar{m}^2},
	\end{align}
which yields that
	\begin{align}\label{6.19}
		\mathcal{P}_{61}&\leq \int_{\R^2}\Theta_6\mathbf{1}_{\{\bar{\mv}_{1}>0\}}\f{1}{\v^4}\left\{\int_{t_1}^{t}\int_{\R^2}\tilde{k}_M(\mathcal{V}(s),v')\mf_R^\v(\mathcal{X}(s),\Phi(s),v')dv'\cdot e^{-\f{1}{\v^2}\tilde{\nu}(t-s)}ds\right\}^2d\bar{\mv}\nonumber\\
		&\qquad\qquad\qquad\qquad \qquad +Ce^{-\f{1}{3\v^2}\bar{m}^2}\|h_R^\v\|_{L^\infty}^2\nonumber\\
		&=:\mathcal{P}_{61R}+Ce^{-\f{1}{3\v^2}\bar{m}^2}\|h_R^\v\|_{L^\infty}^2.
	\end{align}
	
	For $\bar{\mv}_{1}<0$, one has 
	\begin{align*}
		\tilde{\nu}(t-t_{1,0})\geq 1-x_+\geq \f{\bar{m}^2}{6},
	\end{align*}
	then similar to \eqref{6.18}--\eqref{6.19}, one has 
	\begin{align}\label{6.20}
		\mathcal{P}_{62}&\leq \int_{\R^2}\Theta_6\mathbf{1}_{\{\bar{\mv}_{1}<0\}}\f{1}{\v^4}\left\{\int_{t_{1,0}}^{t}\int_{\R^2}\tilde{k}_M(\mathcal{V}(s),v')\mf_R^\v(s,\mathcal{X}(s),\Phi(s),v')dv'\cdot e^{-\f{1}{\v^2}\tilde{\nu}(t-s)}ds\right\}^2d\bar{\mv}\nonumber\\
		&\qquad\qquad\qquad\qquad  +Ce^{-\f{1}{3\v^2}\bar{m}^2}\|h_R^\v\|_{L^\infty}^2\nonumber\\
		&=:\mathcal{P}_{62R}+Ce^{-\f{1}{3\v^2}\bar{m}^2}\|h_R^\v\|_{L^\infty}^2.
	\end{align}
	
	{\it \underline{Estimate for $\mathcal{P}_{61R}$}}. We  rewrite $\mf_R^\v(s,\mathcal{X}(s),\Phi(s),v')$ as $\mf_R^\v(s,\mathbf{X}(s),v')$, with 
	\begin{align}
		\mathbf{X}_1(s)=\mathcal{X}(s)\cos \Phi(s),\quad \mathbf{X}_2(s)=\mathcal{X}(s)\sin \Phi(s).
	\end{align}
Noting for $\bar{\mv}_1>0$ and $s\geq t_1$, there is no collision with the boundary. Then it is better to use the original coordinate. From \eqref{6.05}, we actually have
	\begin{align*}
		\mathbf{X}_1(s)=r\cos\phi-v_1(t-s),\quad \mathbf{X}_2(s)=r\sin\phi-v_2(t-s),
	\end{align*}
	which yields that 
	\begin{align}\label{6-66}
		\left|\f{\partial{\mathbf{X}}}{{\partial v}}\right|=(t-s)^2\geq m_1^2.
	\end{align}
	Hence one obtains
	\begin{align}\label{6.26}
		\mathcal{P}_{61R}&=\int_{\R^2}\Theta_6\mathbf{1}_{\{\bar{\mv}_{1}>0\}}\f{1}{\v^4}\left\{\int_{t_1}^{t}\int_{\R^2}\tilde{k}_M(\mathcal{V}(s),v')\mf_R^\v(s,\mathcal{X}(s),\Phi(s),v')dv'\cdot e^{-\f{1}{\v^2}\tilde{\nu}(t-s)}ds\right\}^2d\bar{\mv}\nonumber\\ 
		&\leq \int_{\R^2}\Theta_6\mathbf{1}_{\{\bar{\mv}_{1}>0\}}\f{1}{\v^2}\int_{t_1}^{t}\left\{\int_{\R^2}\tilde{k}_M(\mathcal{V}(s),v')\mf_R^\v(s,\mathcal{X}(s),\Phi(s),v')dv'\right\}^2\cdot e^{-\f{1}{\v^2}\tilde{\nu}_0(t-s)}dsd\bar{\mv}\nonumber\\
		&\leq C_\mathfrak{N}\int_{\R^2}\Theta_6\mathbf{1}_{\{\bar{\mv}_{1}>0\}}\f{1}{\v^2}\int_{t_1}^{t}\left\{\int_{\R^2}|f_R^\v(s,\mathbf{X}(s),v')|^2dv'\right\}\cdot e^{-\f{1}{\v^2}\tilde{\nu}_0(t-s)}dsdv\nonumber\\
		&\leq \f{C_{\mathfrak{N}}}{m_1^2}\int_{\R^2}\Theta_6\mathbf{1}_{\{\bar{\mv}_{1}>0\}}\f{1}{\v^2}\int_{t_1}^t\left\{\int_{\R^2}|f_R^\v(s,\mathbf{X},v')|^2dv'\right\}\cdot e^{-\f{1}{\v^2}\tilde{\nu}_0(t-s)}\,  d\mathbf{X}\, ds\nonumber\\
		&\leq \f{C_{\mathfrak{N}}}{m_1^2}\sup_{0\leq s\leq t}\|f_R^\v(s)\|_{L^2}^2.
	\end{align}
	
	{\it \underline{Estimate for $\mathcal{P}_{62R}$}}.	
	For $\bar{\mv}_{1}<0$ and $s\in(t_1,t],$ since there is no collision with $\partial B_1$, then by similar arguments in \eqref{6-66}--\eqref{6.26}, we have 
	\begin{align}
		\int_{\R^2}\Theta_6\mathbf{1}_{\{\bar{\mv}_1<0\}}\f{1}{\v^4}\left\{\int_{t_1}^t(\cdot \cdot\cdot)\right\}^2\, d\bar{\mv}\leq \f{C_\mathfrak{N}}{m_1^2}\sup_{0\leq s\leq t}\|f_R^\v(s)\|_{L^2}^2.
	\end{align}
	
	For $\bar{\mv}_1<0,s\in(t_{1,0},t_1]$ and $\bar{\mv}_2\cdot r\neq 0$, we use Lemma \ref{lem6.2-0} to obtain 
	\begin{align}\label{6-67}
		&\mathcal{X}\left |\f{\partial(\mathcal{X},\Phi)}{\partial \bar{\mv}}(s)\right |\nonumber\\
		&=\f{|\bar{\mv}_1|}{|\bar{\mv}_2|}(s-t_{1,0})(t-s) \times\left(\f{|\bar{\mv}_2|}{|\bar{\mv}_1|}+\f{1}{\sqrt{\f{\mathcal{X}^2}{r^2}\f{|\bar{\mv}|^2}{|\bar{\mv}_2|^2}-1}}-\f{2}{\sqrt{\f{1}{r^2}\f{|\bar{\mv}|^2}{|\bar{\mv}_2|^2}-1}}\right)\nonumber\\
		&=	(s-t_{1,0})(t-s)\times\left( 1+\f{1}{\sqrt{\f{\mathcal{X}^2}{r^2}(1+\f{|\bar{\mv}_2|^2}{|\bar{\mv}_1|^2})-\f{|\bar{\mv}_2|^2}{|\bar{\mv}_1|^2}}}-\f{2}{\sqrt{\f{1}{r^2}(1+\f{|\bar{\mv}_2|^2}{|\bar{\mv}_1|^2})-\f{|\bar{\mv}_2|^2}{|\bar{\mv}_1|^2}}}\right).
	\end{align}
	
	For the case of $\mathcal{X}^2/r^2\geq (1+m_2)^2$, it holds 
	\begin{align*}
		\f{\mathcal{X}^2}{r^2}\left(1+\f{\bar{\mv}_{2}^2}{\bar{\mv}_{1}^2}\right)-\f{\bar{\mv}_{2}^2}{\bar{\mv}_{1}^2}\geq (1+m_2)^2,
	\end{align*}
	which yields that
	\begin{align}\label{6.24}
		&1+\f{1}{\sqrt{\f{\mathcal{X}^2}{r^2}(1+\f{|\bar{\mv}_2|^2}{|\bar{\mv}_1|^2})-\f{|\bar{\mv}_2|^2}{|\bar{\mv}_1|^2}}}-\f{2}{\sqrt{\f{1}{r^2}(1+\f{|\bar{\mv}_2|^2}{|\bar{\mv}_1|^2})-\f{|\bar{\mv}_2|^2}{|\bar{\mv}_1|^2}}}\nonumber\\
		&\geq 1-\f{1}{\sqrt{\f{1}{r^2}(1+\f{|\bar{\mv}_2|^2}{|\bar{\mv}_1|^2})-\f{|\bar{\mv}_2|^2}{|\bar{\mv}_1|^2}}}\geq1-\f{1}{1+m_2}\geq \f{m_2}{2}.
	\end{align}
	
	For the case of $\f{\mathcal{X}^2}{r^2}\leq (1-m_2)^2,$ one has 
	\begin{align*}
		\f{\mathcal{X}^2}{r^2}\left(1+\f{\bar{\mv}_{2}^2}{\bar{\mv}_{1}^2}\right)-\f{\bar{\mv}_{2}^2}{\bar{\mv}_{1}^2}\leq (1-m_2)^2,\quad 
		\f{1}{r^2}\left(1+\f{\bar{\mv}_{2}^2}{\bar{\mv}_{1}^2}\right)-\f{\bar{\mv}_{2}^2}{\bar{\mv}_{1}^2}\geq 1,
	\end{align*}
	which yields that
	\begin{align}\label{6-70}
		&1+\f{1}{\sqrt{\f{\mathcal{X}^2}{r^2}(1+\f{|\bar{\mv}_2|^2}{|\bar{\mv}_1|^2})-\f{|\bar{\mv}_2|^2}{|\bar{\mv}_1|^2}}}-\f{2}{\sqrt{\f{1}{r^2}(1+\f{|\bar{\mv}_2|^2}{|\bar{\mv}_1|^2})-\f{|\bar{\mv}_2|^2}{|\bar{\mv}_1|^2}}}\nonumber\\
		&\geq\f{1}{\sqrt{\f{\mathcal{X}^2}{r^2}(1+\f{|\bar{\mv}_2|^2}{|\bar{\mv}_1|^2})-\f{|\bar{\mv}_2|^2}{|\bar{\mv}_1|^2}}}-\f{1}{\sqrt{\f{1}{r^2}(1+\f{|\bar{\mv}_2|^2}{|\bar{\mv}_1|^2})-\f{|\bar{\mv}_2|^2}{|\bar{\mv}_1|^2}}}\nonumber\\
		&\geq \f{1}{1-m_2}-1\geq m_2.
	\end{align}
	Hence it follows from \eqref{6-67}--\eqref{6-70} that  
	\begin{align}\label{6.28-1}
		\mathcal{X}\left |\f{\partial(\mathcal{X},\Phi)}{\partial \bar{\mv}}(s)\right |\geq \f{m_2}{2}(s-t_{1,0})(t-s)\geq \f{m_2}{2}m_1^2.
	\end{align}
Using \eqref{6-66} and \eqref{6.28-1}, similar to \eqref{6.26}, one has 
	\begin{align*}
		\mathcal{P}_{62R}\leq \f{C_\mathfrak{N}}{ m_1^2m_2}\sup_{0\leq s\leq t}\|f_R^\v(s)\|_{L^2}^2,
	\end{align*}
	which, together with \eqref{6.19}, \eqref{6.20} and \eqref{6.26}, yields that
	\begin{align}\label{6.29}
		\mathcal{P}_6\leq \f{C_\mathfrak{N}}{ m_1^2m_2}\sup_{0\leq s\leq t}\|f_R^\v(s)\|_{L^2}^2+Ce^{-\f{1}{3\v^2}\bar{m}^2}\sup_{0\leq s\leq t}\|h_R^\v(s)\|_{L^\infty}^2.
	\end{align}

	We have from \eqref{6.11}--\eqref{6.14}, \eqref{6.15} and \eqref{6.29} that 
	\begin{align*}
		\mathcal{P}\leq & \left\{\f{1}{\mathfrak{N}}+C\left(\bar{m}\mathfrak{N}+Ce^{-\f{1}{3\v^2}\bar{m}^2}\right)\right\}\sup_{0\leq s\leq t}\|\tilde{w}_{\mathbf{k}}\mf^\v_R(s)\|_{L^\infty}^2\nonumber\\
		&+\left(Cm_1^2+\f{C\mathfrak{N}t}{\bar{m}^6}m_2^2\right)\cdot\f{1}{\v^4}\sup_{0\leq s\leq t}\|\tilde{w}_{\mathbf{k}}\mf^\v_R(s)\|_{L^\infty}^2+\f{C_\mathfrak{N}}{ m_1^2m_2}\sup_{0\leq s\leq t}\|f^\v_R(s)\|_{L^2}^2.
	\end{align*}
	Choosing $\bar{m}=\f{1}{\mathfrak{N}^2}$ and $\mathfrak{N}>1$ suitably large, and $m_1=\bar{m}\v^2, m_2=\bar{m}^4\v^2$, one obtains that 
	\begin{align*}
		\mathcal{P}\leq o(1)\sup_{0\leq s\leq t}\|h_R^\v(s)\|_{L^\infty}^2+\f{C_\mathfrak{N}}{\v^6}\sup_{0\leq s\leq t}\|f_R^\v(s)\|_{L^2}^2,
	\end{align*}
	which, together with \eqref{6.9}, yields that
	\begin{align}\label{5.62-1}
		\|\mf^\v_R\|_{L^\infty L^2}^2\leq&  C\|h^\v_R(0)\|_{L^\infty}^2+o(1) \sup_{0\leq s\leq t}\|h_R^\v(s)\|_{L^\infty}^2+\f{C_\mathfrak{N}}{\v^6}\sup_{0\leq s\leq t}\|f^\v_R(s)\|_{L^2}^2\nonumber\\
		&+C\v^4\sup_{0\leq s\leq t}\|\tilde{\nu}^{-1}\tilde{w}_{\mathbf{k}}\bar{S}_R(s)\|_{L^\infty}^2.
	\end{align}
	
	Substituting \eqref{5.62-1} into \eqref{6.8}, one has 
	\begin{align}\label{6.30}
		\|h^\v_R\|_{L^\infty}\leq  C\|h^\v_R(0)\|_{L^\infty}+\f{C}{\v^3}\sup_{0\leq s\leq t}\|f^\v_R(s)\|_{L^2}+\v^2\sup_{0\leq s\leq t}\|\tilde{\nu}^{-1}\tilde{w}_{\mathbf{k}}\bar{S}_R(s)\|_{L^\infty}.
	\end{align}
	Noting the definition of $\bar{S}_R$ in \eqref{6.06}, one has 
	\begin{align*}
		|\tilde{\nu}^{-1}\tilde{w}_{\mathbf{k}}\bar{S}_{R,1}|&\leq C\v^3\|h^\v_R\|_{L^\infty}^2,\\
		|\tilde{\nu}^{-1}\tilde{w}_{\mathbf{k}}\bar{S}_{R,2}|&\leq C\v^{-1}\|h^\v_R\|_{L^\infty},\\
		|\tilde{\nu}^{-1}\tilde{w}_{\mathbf{k}}\bar{S}_{R,3}|&\leq C(\v^{N-6}+\v^{\mathfrak{b}-5}),
	\end{align*}
	which together with \eqref{6.30} and \eqref{5.38-1} yields \eqref{6.7}. Therefore the proof of Lemma \ref{lem6.2} is finished. 
\end{proof}

{\noindent \bf Proof of Theorem \ref{thm}:} Applying \eqref{6.7} to \eqref{6.1}, one has 
\begin{align*}
	&\f{d}{dt}\|f^\v_R\|_{L^2}^2+\f{c_0}{\v^2}\int_{B_1}\|(\FI-\FP)f^\v_R\|_{\tilde{\nu}}^2\, dx\nonumber\\
	&\leq C(\|f^\v_R\|_{L^2}^2+1)\times \left\{1+\v^{\min\{2\mathbf{k}-5,8\}}\left[\|h^\v_R(0)\|_{L^\infty}^2+\f{C}{\v^6}\sup_{0\leq s\leq t}\|f^\v_R(s)\|_{L^2}^2\right]\right\}.
\end{align*}
Since $\mathbf{k}\geq 6$, we use Gronwall inequality and get 
\begin{align}\label{6.31}
	\|f_R^\v\|_{L^2}^2\leq C\exp\left\{\left(1+\v\sup_{0\leq s\leq t}\|f^\v_R(s)\|_{L^2}^2\right)t \right\}\leq C(t).
\end{align}
It follows from \eqref{6.31} and \eqref{6.7} that
\begin{align*}
	\sup_{0\leq s\leq t}(\v^3\|h_R^\v(s)\|_{L^\infty})\leq C(t)\{1+\v^3\|h_R^\v(0)\|_{L^\infty}\}\leq C(t).
\end{align*} 
Taking $\v\leq \v_0$ for some small $\v_0\ll 1$, we close the {\it a priori} assumption \eqref{5.38-1} directly. Therefore the proof of Theorem \ref{thm} is completed. $\hfill\Box$

	\section{Appendix}
\subsection{Proof of Lemma \ref{lem2.3}} We divide the proof into several steps.\\
{\it Step 1.} For later use, we define the corresponding Burnett functions:
\begin{align}\label{7.88}
	\begin{split}
		\mathcal{A}_{11}^0=\frac{1}{2T^0}\left\{(\bar{v}\cdot \vec{n})^2-(\bar{v}\cdot\vec{\tau}-u_\tau^0)^2\right\}\sqrt{\mu_0},\quad\mathcal{A}_{12}^0=\frac{1}{T^0}\left\{(\bar{v}\cdot \vec{n})(\bar{v}\cdot\vec{\tau}-u_\tau^0)\right\}\sqrt{\mu_0},\\
		\mathcal{B}_1^0=\frac{1}{2\sqrt{2T^0}}(\bar{v}\cdot \vec{n})(\frac{|\bar{v}-\bar{\mathfrak{u}}^0|^2}{T^0}-4)\sqrt{\mu_0},\quad\mathcal{B}_2^0=\frac{1}{2\sqrt{2T^0}}(\bar{v}\cdot\vec{\tau}-u_\tau^0)(\frac{|\bar{v}-\bar{\mathfrak{u}}^0|^2}{T^0}-4)\sqrt{\mu_0},
	\end{split}
\end{align}
and $\mathcal{A}_{21}^0:=\mathcal{A}_{12}^0,\mathcal{A}_{22}^0:=-\mathcal{A}_{11}^0.$ A direct calculation shows
\begin{align*}
	\langle \mathcal{A}_{ij}^0,\mathcal{A}_{ij}^0\rangle =\rho^0,\quad \langle \mathcal{B}_i^0,\mathcal{B}_i^0\rangle =\rho^0,
\end{align*}
and $\mathcal{A}_{11}^0,\mathcal{A}_{12}^0,\mathcal{B}_1^0,\mathcal{B}_2^0$ are orthogonal to each other and belong to $\mathcal{N}_0^\perp$. We also define
\begin{align}\label{6.1-1}
	\langle \mathcal{A}_{ij}^0,\mathbf{L}_0^{-1}\mathcal{A}_{ij}^0\rangle =\frac{\kappa_1(T^0)}{T^0},\quad \langle \mathcal{B}_i^0,\mathbf{L}_0^{-1}\mathcal{B}_i^0\rangle =\frac{\kappa_2(T^0)}{T^0}.
\end{align}
Noting \eqref{1.12-0}, one has that 
\begin{align}\label{7-1}
	\begin{split}
	&\langle 1,\bar{\mathfrak{F}}_k\rangle =\bar{\rho}_k,\quad \langle \bar{v}\cdot\vec{\tau},\bar{\mathfrak{F}}_k\rangle =\bar{\rho}_ku_\tau^0+\rho^0\bar{u}_k\cdot\vec{\tau},\quad \langle \bar{v}\cdot \vec{n},\bar{\mathfrak{F}}_k\rangle =\rho^0\bar{u}_k\cdot \vec{n},\\
	&\langle |\bar{v}-\bar{\mathfrak{u}}^0|^2,\bar{\mathfrak{F}}_k\rangle =\rho^0\bar{\theta}_k+2\bar{\rho}_kT^0=2\bar{p}_k,\\
	&\langle (\bar{v}\cdot \vec{n})^2,\bar{\mathfrak{F}}_k\rangle =\bar{p}_k+T^0\langle \mathcal{A}_{11}^0,(\FI-\FP_0)\left\{\f{\bar{\mathfrak{F}}_k}{\sqrt{\mu_0}}\right\}\rangle ,\\
	&\langle (\bar{v}\cdot\vec{\tau})^2,\bar{\mathfrak{F}}_k\rangle =\bar{p}_k +\bar{\rho}_k|u_\tau^0|^2+2\rho^0 u_\tau^0\bar{u}_k\cdot\vec{\tau}-T^0\langle \mathcal{A}_{11}^0,(\FI-\FP_0)\left\{\f{\bar{\mathfrak{F}}_k}{\sqrt{\mu_0}}\right\}\rangle ,\\
	&\langle (\bar{v}\cdot \vec{n})(\bar{v}\cdot\vec{\tau}),\bar{\mathfrak{F}}_k\rangle =\rho^0 u_\tau^0(\bar{u}_k\cdot \vec{n})+T^0\langle\mathcal{A}_{12}^0,(\FI-\FP_0)\left\{\f{\bar{\mathfrak{F}}_k}{\sqrt{\mu_0}}\right\}\rangle.
	\end{split}
\end{align}

The equation  \eqref{1.16} is formally written as
\begin{align*}
	(\bar{v}\cdot \vec{n})\partial_y \bar{\mathfrak{F}}_{k+1}=Q(\cdot,\cdot)-\partial_t\bar{\mathfrak{F}}_k+\mathcal{H}^\v(y)\partial_\phi\left\{(\bar{v}\cdot \vec{\tau})\bar{\mathfrak{F}}_k\right\}+\mathcal{H}^\v(y)(\bar{v}\cdot \vec{n})\bar{\mathfrak{F}}_k.
\end{align*}
Integrating \eqref{1.16} over $\bar{v}$, one has that
\begin{align}\label{(1)}
	\partial_y\left\{\int_{\R^2}(\bar{v}\cdot \vec{n})\bar{\mathfrak{F}}_{k+1}\, d\bar{v}\right\}=&-\partial_t\left\{\int_{\R^2} \bar{\mathfrak{F}}_k \,d\bar{v}\right\}+\mathcal{H}^\v(y)\partial_\phi\left\{\int_{\R^2}(\bar{v}\cdot\vec{\tau})\bar{\mathfrak{F}}_k\, d\bar{v}\right\}\nonumber\\
	&+\mathcal{H}^\v(y)\int_{\R^2}(\bar{v}\cdot \vec{n})\bar{\mathfrak{F}}_k \,d\bar{v}.
\end{align}
Multiplying \eqref{1.16} by $\bar{v}\cdot\vec{\tau}$, one gets that
\begin{align}\label{(2)}
	\partial_y\left\{\int_{\R^2} (\bar{v}\cdot \vec{n})(\bar{v}\cdot\vec{\tau})\bar{\mathfrak{F}}_{k+1}\,d\bar{v}\right\}&= -\partial_t\left\{\int_{\R^2}(\bar{v}\cdot\vec{\tau})\bar{\mathfrak{F}}_k\,d\bar{v}\right\}+\mathcal{H}^\v(y)\partial_\phi\left\{\int_{\R^2} (\bar{v}\cdot\vec{\tau})^2\bar{\mathfrak{F}}_k\, d\bar{v}\right\} \nonumber\\
	&\quad +2\mathcal{H}^\v(y)\int_{\R^2}(\bar{v}\cdot \vec{n})(\bar{v}\cdot \vec{\tau})\bar{\mathfrak{F}}_k\, d\bar{v}.
\end{align}
Considering $\eqref{(1)}\times(- u_\tau^0)+\eqref{(2)}$, we have
\begin{align*}
	&\partial_y\left\{\int_{\R^2}(\bar{v}\cdot \vec{n})(\bar{v}\cdot\vec{\tau} -u_\tau^0)\bar{\mathfrak{F}}_{k+1}\, d\bar{v}\right\}\nonumber\\
	&=-\partial_t\left\{\int_{\R^2}(\bar{v}\cdot\vec{\tau}-u_\tau^0)\bar{\mathfrak{F}}_k\,d\bar{v}\right\}-(\partial_tu_\tau^0)\int_{\R^2}\bar{\mathfrak{F}}_k\, d\bar{v}\nonumber\\
	&\quad -\mathcal{H}^\v(y)u_\tau^0\partial_\phi\left\{\int_{\R^2} (\bar{v}\cdot\vec{\tau})\bar{\mathfrak{F}}_k\, d\bar{v}\right\}+\mathcal{H}^\v(y)\partial_\phi\left\{\int_{\R^2} (\bar{v}\cdot\vec{\tau})^2\bar{\mathfrak{F}}_k\, d\bar{v}\right\} \nonumber\\
	&\quad -\mathcal{H}^\v(y)u_\tau^0\int_{\R^2}(\bar{v}\cdot \vec{n})\bar{\mathfrak{F}}_k \,d\bar{v}+2\mathcal{H}^\v(y)\int_{\R^2}(\bar{v}\cdot \vec{n})(\bar{v}\cdot \vec{\tau})\bar{\mathfrak{F}}_k\, d\bar{v},
\end{align*}
which, together with \eqref{7-1}, yields that  
\begin{align}\label{(a)}
	&\rho^0\partial_t(\bar{u}_k\cdot\vec{\tau})+(\bar{u}_k\cdot\vec{\tau})\cdot\left\{\partial_t\rho^0-\mathcal{H}^\v(y)u_\tau^0\partial_\phi\rho^0-2\mathcal{H}^\v(y)\rho^0\partial_\phi u_\tau^0 \right\}\nonumber\\
	&-\mathcal{H}^\v(y)\rho^0u_\tau^0\partial_\phi(\bar{u}_k\cdot\vec{\tau})+\bar{\rho}_k\cdot\left\{\partial_t u_\tau^0-\mathcal{H}^\v(y) u_\tau^0\partial_\phi  u_\tau^0\right\}-\mathcal{H}^\v(y)\partial_\phi\bar{p}_k-\mathcal{H}^\v(y)\rho^0u_\tau^0(\bar{u}_k\cdot \vec{n})\nonumber\\
	&+\partial_y\langle (\bar{v}\cdot \vec{n})(\bar{v}\cdot\vec{\tau}-u_\tau^0),\bar{\mathfrak{F}}_{k+1}\rangle =\bar{W}_{k-1},
\end{align}
where
\begin{align}\label{7.4}
	\bar{W}_{k-1}=2\mathcal{H}^\v(y)\langle (\bar{v}
	\cdot \vec{\tau}-u_\tau^0)(\bar{v}\cdot \vec{n}),\bar{\mathfrak{F}}_k\rangle-\f{1}{2}\mathcal{H}^\v(y)\partial_\phi \langle (\bar{v}\cdot\vec{\tau}-u_\tau^0)^2-(\bar{v}\cdot \vec{n})^2,\bar{\mathfrak{F}}_k\rangle.
\end{align}

It follows from \eqref{1.12} that 
\begin{align}\label{7.10}
	(\FI-\FP_0)\bar{f}_k=&-\FL_0^{-1}\{(\FI-\FP_0)(\bar{v}\cdot \vec{n})\partial_y\mathbf{P}_0\bar{f}_k\}\nonumber\\
	&+\mathbf{L_0}^{-1}\left\{\frac{-y}{\sqrt{\mu_0}}\left[Q(\partial_r\mu_0,\sqrt{\mu_0}\FP_0\bar{f}_k)+Q(\sqrt{\mu_0}\FP_0\bar{f}_k,\partial_r\mu_0)\right]\right\}\nonumber\\
	&+\mathbf{L_0}^{-1}\left\{\frac{1}{\sqrt{\mu_0}}\left[Q(\sqrt{\mu_0}f_1^0,\sqrt{\mu_0}\mathbf{P}_0\bar{f}_k)+Q(\sqrt{\mu_0}\FP_0\bar{f}_k,\sqrt{\mu_0}f_1^0)\right]\right\}\nonumber\\
	&+\mathbf{L_0}^{-1}\left\{\frac{1}{\sqrt{\mu_0}}\left[Q(\sqrt{\mu_0}\bar{f}_1,\sqrt{\mu_0}\FP_0\bar{f}_k)+Q(\sqrt{\mu_0}\FP_0\bar{f}_k,\sqrt{\mu_0}\FP_0\bar{f}_1)\right]\right\}\nonumber\\
	&+\bar{J}_{k-1}=:\bar{P}_1+\bar{P}_2+\bar{P}_3+\bar{P}_4+\bar{J}_{k-1},
\end{align}
where
\begin{align}\label{7.55}
	\bar{J}_{k-1}=&-\mathbf{L_0}^{-1}\left\{(\FI-\FP_0)(\bar{v}\cdot \vec{n})\partial_y(\FI-\FP_0)\bar{f}_k\right\}\nonumber\\
	&-\mathbf{L_0}^{-1}\left\{(\FI-\FP_0)\left[\frac{\partial_t\bar{\mathfrak{F}}_{k-1}}{\sqrt{\mu_0}}-\mathcal{H}^\v(y)\frac{(\bar{v}\cdot\vec{\tau})\partial_\phi\bar{\mathfrak{F}}_{k-1}}{\sqrt{\mu_0}}\right]\right\}\nonumber\\
	&+\mathbf{L_0}^{-1}\left\{\frac{-y}{\sqrt{\mu_0}}[Q(\partial_r\mu_0,\sqrt{\mu_0}
	\{\FI-\FP_0\}\bar{f}_k)+Q(\sqrt{\mu_0}\{\FI-\FP_0\}\bar{f}_k,\partial_r\mu_0)]\right\}\nonumber\\
	&+\mathbf{L_0}^{-1}\left\{\frac{1}{\sqrt{\mu_0}}[Q(\sqrt{\mu_0}f_1^0,\sqrt{\mu_0}\{\FI-\FP_0\}\bar{f}_k)+Q(\sqrt{\mu_0}\{\FI-\FP_0\}\bar{f}_k,\sqrt{\mu_0}f_1^0)]\right\}\nonumber\\
	&+\mathbf{L_0}^{-1}\left\{\frac{1}{\sqrt{\mu_0}}[Q(\sqrt{\mu_0}\bar{f}_1,\sqrt{\mu_0}\{\FI-\FP_0\}\bar{f}_k)+Q(\sqrt{\mu_0}\FP_0\bar{f}_k,\sqrt{\mu_0}\{\FI-\FP_0\}\bar{f}_1)]\right\}\nonumber\\
	&+\mathbf{L_0}^{-1}\left\{\frac{(-y)^l}{\sqrt{\mu_0}}\sum\limits_{l+j=k+1,\atop l\geq2}[Q(\partial_r^l\mu_0,\sqrt{\mu_0}\bar{f}_j)+Q(\sqrt{\mu_0}\bar{f}_j,\partial_r^l\mu_0)]\right\}\nonumber\\
	&+\mathbf{L_0}^{-1}\left\{\frac{(-y)^l}{\sqrt{\mu_0}}\sum\limits_{l+i+j=k+1,\atop l,i\geq 1}[Q(\partial_r^lF_i^0,\sqrt{\mu_0}\bar{f}_j)+Q(\sqrt{\mu_0}\bar{f}_j,\partial_r^lF_i^0)]\right\}\nonumber\\
	&+\mathbf{L_0}^{-1}\left\{\sum\limits_{i+j=k+1,\atop i\geq2}\frac{1}{\sqrt{\mu_0}}[Q(F_i^0,\sqrt{\mu_0}\bar{f}_j)+Q(\sqrt{\mu_0}\bar{f}_j,F_i^0)]\right\}\nonumber\\
	&+\mathbf{L_0}^{-1}\left\{\sum\limits_{i+j=k+1,\atop i,j\geq2}\frac{1}{\sqrt{\mu_0}}[Q(\sqrt{\mu_0}\bar{f}_i,\sqrt{\mu_0}\bar{f}_j)+Q(\sqrt{\mu_0}\bar{f}_j,\sqrt{\mu_0}\bar{f}_i)]\right\}.
\end{align}

It is clear that
\begin{align}\label{7.6}
	\langle (\bar{v}\cdot\vec{\tau}-u_\tau^0)(\bar{v}\cdot \vec{n}),\bar{\mathfrak{F}}_{k+1}\rangle =T^0\left\langle \mathcal{A}_{12}^0,(\FI-\FP_0)\left\{\frac{\bar{\mathfrak{F}}_{k+1}}{\sqrt{\mu_0}}\right\}\right\rangle,
\end{align}
and
\begin{align}\label{7.7}
	&T^0	\langle\mathcal{A}_{12}^0 ,\bar{P}_1\rangle\nonumber\\
	&=-T^0\left\langle (\bar{v}\cdot \vec{n})\left[\frac{\partial_y\bar{\rho}_k}{\rho^0}+\partial_y\bar{u}_k\cdot\frac{\bar{v}-\bar{\mathfrak{u}}^0}{T^0}+\partial_y\bar{\theta}_k\cdot\frac{1}{4T^0}\left(\frac{|\bar{v}-\bar{\mathfrak{u}}^0|^2}{T^0}-2\right)\right]\sqrt{\mu_0},\mathbf{L}_0^{-1}\mathcal{A}_{12}^0\right\rangle \nonumber\\
	&=-T^0\partial_y(\bar{u}_k\cdot\vec{\tau})\langle \mathcal{A}_{12}^0,\mathbf{L}_0^{-1}\mathcal{A}_{12}^0\rangle=-\kappa_1(T^0)\partial_y(\bar{u}_k\cdot\vec{\tau}).
\end{align}

Noting
\begin{align*}
	\FP_0g=\left\{\frac{a}{\rho^0}+b\cdot\frac{\bar{v}-\bar{\mathfrak{u}}^0}{T^0}+\frac{c}{4T^0}\left(\frac{|\bar{v}-\mathbf{\bar{u}}^0|^2}{T^0}-2\right)\right\}\sqrt{\mu_0},
\end{align*}
we know from \cite{Guo2006} that
\begin{align*}
	\mathbf{L}_0^{-1}\left\{\frac{1}{\sqrt{\mu_0}}[Q(\sqrt{\mu_0}\mathbf{P}_0g,\sqrt{\mu_0}\mathbf{P}_0\bar{f}_k)+Q(\sqrt{\mu_0}\mathbf{P}_0\bar{f}_k,\sqrt{\mu_0}\mathbf{P}_0g)]\right\}=(\mathbf{I-P}_0)\left\{\frac{\mathbf{P}_0g\cdot \mathbf{P}_0\bar{f}_k}{\sqrt{\mu_0}}\right\},
\end{align*}
and 
\begin{align*}
	(\FI-\FP_0)\left\{\frac{\mathbf{P}_0g\cdot\mathbf{P}_0\bar{f}_k}{\sqrt{\mu_0}}\right\}=&\frac{(b\cdot \vec{n})(\bar{u}_k\cdot \vec{n})-(b\cdot\vec{\tau})(\bar{u}_k\cdot\vec{\tau})}{T^0}\mathcal{A}_{11}^0+\frac{(b\cdot \vec{n})(\bar{u}_k\cdot\vec{\tau})+(b\cdot\vec{\tau})(\bar{u}_k\cdot \vec{n})}{T^0}\mathcal{A}_{12}^0\nonumber\\
	&+\frac{1}{\sqrt{2T^0}}\left\{\frac{\bar{\theta}_k}{T^0}(b\cdot \vec{n})\mathcal{B}_1^0+\frac{\bar{\theta}_k}{T^0}(b\cdot\vec{\tau})\mathcal{B}_2^0+\frac{c}{T^0}(\bar{u}_k\cdot \vec{n})\mathcal{B}_1^0+\frac{c}{T^0}(\bar{u}_k\cdot\vec{\tau})\mathcal{B}_2^0\right\}\nonumber\\
	&+\frac{c\cdot\bar{\theta}_k}{16(T^0)^2}(\FI-\FP_0)\left\{\left(\frac{|\bar{v}-\bar{\mathfrak{u}}^0|^2}{T^0}-4\right)^2\sqrt{\mu_0}\right\}.
\end{align*}
Using \eqref{2.4}, one has
\begin{align*}
	&T^0\left\langle \mathcal{A}_{12}^0,\bar{P}_2+\bar{P}_3+\bar{P}_4\right\rangle \nonumber\\
	&=\rho^0[(\bar{u}_k\cdot\vec{\tau})(-y\partial_r\bar{\mathfrak{u}}^0\cdot \vec{n}-u_1\cdot \vec{n})+(\bar{u}_k\cdot \vec{n})(-y\partial_r \bar{\mathfrak{u}}^0\cdot\vec{\tau}-u^0_1\cdot\vec{\tau}+\bar{u}_1\cdot\vec{\tau})],
\end{align*}
which, together with \eqref{7.6}--\eqref{7.7}, yields that
\begin{align}\label{7.8}
	&\partial_y\langle (\bar{v}\cdot\vec{\tau}-u_\tau^0)(\bar{v}\cdot \vec{n}),\bar{\mathfrak{F}}_{k+1}\rangle\nonumber\\ &=-\kappa_1(T^0)\partial_{yy}(\bar{u}_k\cdot\vec{\tau})+\rho^0\partial_y[(\bar{u}_k\cdot\vec{\tau})(-y\partial_r\bar{\mathfrak{u}}^0\cdot \vec{n}-u^0_1\cdot \vec{n})\nonumber\\
	&\quad+(\bar{u}_k\cdot \vec{n})(-y\partial_r \bar{\mathfrak{u}}^0\cdot \vec{\tau}-u^0_1\cdot\vec{\tau}+\bar{u}_1\cdot\vec{\tau})]+T^0\partial_y\langle \mathcal{A}_{12}^0,\bar{J}_{k-1}\rangle\\
	&=-\kappa_1(T^0)\partial_{yy}(\bar{u}_k\cdot\vec{\tau})+\rho^0\{\partial_y(\bar{u}_k\cdot\vec{\tau})\}\cdot (-y\partial_r\bar{\mathfrak{u}}^0\cdot \vec{n}-u^0_1\cdot \vec{n})-\rho^0(\partial_r\bar{\mathfrak{u}}^0\cdot \vec{n})(\bar{u}_k\cdot\vec{\tau})\nonumber\\
	&\quad+\rho^0\partial_y(\bar{u}_k\cdot \vec{n})(-y\partial_r \bar{\mathfrak{u}}^0\cdot \vec{\tau}-u^0_1\cdot\vec{\tau}+\bar{u}_1\cdot\vec{\tau})-\rho^0 (\partial_r \bar{\mathfrak{u}}^0\cdot \vec{\tau})(\bar{u}_k\cdot \vec{n})+T^0\partial_y\langle \mathcal{A}_{12}^0,\bar{J}_{k-1}\rangle,\nonumber
\end{align}
where we have used
\begin{align*}
	\partial_r\mu=\left\{\frac{\partial_r\rho}{\rho}+\partial_r\bar{\mathfrak{u}}\cdot\frac{\bar{v}-\bar{\mathfrak{u}}}{T}+\frac{\partial_r T}{2T}\left(\frac{|\bar{v}-\bar{\mathfrak{u}}|^2}{T}-2\right)\right\}\mu,
\end{align*}
and $\partial_r\bar{\mathfrak{u}}^0=\partial_r\bar{\mathfrak{u}}|_{r=1}$. Combining \eqref{(a)} and \eqref{7.8}, one has that
\begin{align}\label{7.9}
	&\rho^0\partial_t(\bar{u}_k\cdot\vec{\tau})-\kappa_1(T^0)\partial_{yy}(\bar{u}_k\cdot\vec{\tau})\nonumber\\
	&-\mathcal{H}^\v(y)\rho^0u_\tau^0\partial_\phi(\bar{u}_k\cdot\vec{\tau})+\rho^0(-y\partial_r\bar{\mathfrak{u}}^0\cdot \vec{n}-u^0_1\cdot \vec{n})\partial_y(\bar{u}_k\cdot\vec{\tau})\nonumber\\
	&+(\bar{u}_k\cdot\vec{\tau})\cdot\left\{\partial_t\rho^0-\mathcal{H}^\v(y)u_\tau^0\partial_\phi\rho^0-2\mathcal{H}^\v(y)\rho^0\partial_\phi u_\tau^0-\rho^0\partial_r\bar{\mathfrak{u}}^0\cdot \vec{n} \right\}\nonumber\\
	&+\bar{\rho}_k\cdot\left\{\partial_t u_\tau^0-\mathcal{H}^\v(y) u_\tau^0\partial_\phi  u_\tau^0\right\}-\mathcal{H}^\v(y)\partial_\phi\bar{p}_k-\left(\mathcal{H}^\v(y)u_\tau^0+\partial_r\bar{\mathfrak{u}}^0\cdot\vec{\tau}\right)\rho^0(\bar{u}_k\cdot \vec{n})\nonumber\\
	&+\rho^0(-y\partial_r \bar{\mathfrak{u}}^0\cdot \vec{\tau}-u^0_1\cdot\vec{\tau}+\bar{u}_1\cdot\vec{\tau})\partial_y(\bar{u}_k\cdot \vec{n})=\bar{W}_{k-1}-T^0\partial_y\langle \mathcal{A}_{12}^0,\bar{J}_{k-1}\rangle.
\end{align}

We rewrite the Euler equations \eqref{1.9} in polar coordinate
\begin{align*}
	&\partial_t\rho+\partial_r(\rho\mathfrak{u}\cdot \vec{n})+\f{1}{r}\partial_\phi(\rho\mathfrak{u}\cdot\vec{\tau})+\f{1}{r}\rho(\mathfrak{u}\cdot \vec{n})=0,\\
	&\partial_t\mathfrak{u}\cdot\vec{\tau}+(\mathfrak{u}\cdot \vec{n})\partial_r\mathfrak{u}\cdot\vec{\tau}+\f{1}{r}(\mathfrak{u}\cdot\vec{\tau})\partial_\phi(\mathfrak{u}\cdot\vec{\tau})+\f{1}{\rho r}\partial_\phi(\rho T)=0,\\
	&\partial_tT+\partial_r(T\mathfrak{u}\cdot \vec{n})+\f{1}{r}\partial_\phi(T\mathfrak{u}\cdot\vec{\tau})+\f{1}{r}T(\mathfrak{u}\cdot \vec{n})=0,
\end{align*}
which yields that
\begin{align}\label{7.11}
	&\partial_t\rho^0=\rho^0\partial_r\bar{\mathfrak{u}}^0\cdot \vec{n}+\rho^0\partial_\phi u_\tau^0+u_\tau^0\partial_\phi\rho^0,\nonumber\\
	&\partial_tu_\tau^0=u_\tau^0\partial_\phi u_\tau^0+\partial_\phi T^0+\f{T^0}{\rho^0}\partial_\phi\rho^0,\nonumber\\
	&\partial_tT^0=T^0\partial_r\bar{\mathfrak{u}}^0\cdot \vec{n}+T^0\partial_\phi u_\tau^0+u_\tau^0\partial_\phi T^0.
\end{align}
Combining \eqref{7.9}--\eqref{7.11}, noting the definition of $\bar{p}_k$, we get
\begin{align*}
	&\rho^0\partial_t(\bar{u}_k\cdot\vec{\tau})-\kappa_1(T^0)\partial_{yy}(\bar{u}_k\cdot\vec{\tau})\nonumber\\
	&-\mathcal{H}^\v(y)\rho^0u_\tau^0\partial_\phi(\bar{u}_k\cdot\vec{\tau})+\rho^0(-y\partial_r\bar{\mathfrak{u}}^0\cdot \vec{n}-u^0_1\cdot \vec{n})\partial_y(\bar{u}_k\cdot\vec{\tau})\nonumber\\
	&+(\bar{u}_k\cdot\vec{\tau})\cdot\left\{\left(1-\mathcal{H}^\v(y)\right)u_\tau^0\partial_\phi\rho^0+\left(1-2\mathcal{H}^\v(y)\right)\rho^0\partial_\phi u_\tau^0 \right\}\nonumber\\
	&-\f{\rho^0}{2T^0}\bar{\theta}_k\cdot\left\{\left(1-\mathcal{H}^\v(y)\right) u_\tau^0\partial_\phi  u_\tau^0+\partial_\phi T^0+\f{T^0}{\rho^0}\partial_\phi\rho^0\right\}\nonumber\\
	&+\f{\bar{p}_k}{T^0}\cdot\left\{\left(1-\mathcal{H}^\v(y)\right) u_\tau^0\partial_\phi  u_\tau^0+\partial_\phi T^0+\f{T^0}{\rho^0}\partial_\phi\rho^0\right\}-\mathcal{H}^\v(y)\partial_\phi\bar{p}_k\nonumber\\
	&-\left(\mathcal{H}^\v(y)u_\tau^0+\partial_r\bar{\mathfrak{u}}^0\cdot\vec{\tau}\right)\rho^0(\bar{u}_k\cdot \vec{n})+\rho^0(-y\partial_r \bar{\mathfrak{u}}^0\cdot \vec{\tau}-u^0_1\cdot\vec{\tau}+\bar{u}_1\cdot\vec{\tau})\partial_y(\bar{u}_k\cdot \vec{n})\nonumber\\
	&=\bar{W}_{k-1}-T^0\partial_y\langle \mathcal{A}_{12}^0,\bar{J}_{k-1}\rangle,
\end{align*} 
which concludes $\eqref{bu-0}_1$.\\
{\it Step 2.}
Multiplying \eqref{1.16} by $|\bar{v}-\bar{\mathfrak{u}}^0|^2$, one has 
\begin{align*}
	&\partial_y\left\{\int_{\R^2}(\bar{v}\cdot \vec{n})|\bar{v}-\bar{\mathfrak{u}}^0|^2\bar{\mathfrak{F}}_{k+1}\,d\bar{v}\right\}\nonumber\\
	&=-\partial_t\left\{\int_{\R^2}|\bar{v}-\bar{\mathfrak{u}}^0|^2\bar{\mathfrak{F}}_k\, d\bar{v}\right\}+\int_{\R^2}\{\partial_t(|\bar{v}-\bar{\mathfrak{u}}^0|^2)\}\bar{\mathfrak{F}}_k\, d\bar{v}\nonumber\\
	&\quad -\mathcal{H}^\v(y)\int_{\R^2}\left\{\partial_\phi\left(|\bar{v}-\bar{\mathfrak{u}}^0|^2\right)\right\}(\bar{v}\cdot\vec{\tau})\bar{\mathfrak{F}}_k\, d\bar{v}\nonumber\\
	&\quad +\mathcal{H}^\v(y)\int_{\R^2}(\bar{v}\cdot \vec{n})|\bar{v}-\bar{\mathfrak{u}}^0|^2\bar{\mathfrak{F}}_k\, d\bar{v}+\mathcal{H}^\v(y)\partial_\phi\left\{\int_{\R^2}(\bar{v}\cdot\vec{\tau})|\bar{v}-\bar{\mathfrak{u}}^0|^2\bar{\mathfrak{F}}_k\, d\bar{v}\right\}.
\end{align*}
Noting
\begin{align*}
	\partial_\phi \left(|\bar{v}-\bar{\mathfrak{u}}^0|^2\right)=2u_\tau^0\bar{v}\cdot \vec{n}-2\partial_\phi u_\tau^0(\bar{v}\cdot\vec{\tau}-u_\tau^0),\quad \partial_t\left(|\bar{v}-\bar{\mathfrak{u}}^0|^2\right)=-2\partial_tu_\tau^0(\bar{v}\cdot\vec{\tau}-u_\tau^0),
\end{align*}
we further derive
\begin{align}\label{(3)}
	&\partial_y\left\{\int_{\R^2}(\bar{v}\cdot \vec{n})|\bar{v}-\bar{\mathfrak{u}}^0|^2\bar{\mathfrak{F}}_{k+1}\,d\bar{v}\right\}\nonumber\\
	&=-\partial_t\left\{\int_{\R^2}|\bar{v}-\bar{\mathfrak{u}}^0|^2\bar{\mathfrak{F}}_k\, d\bar{v}\right\}-2(\partial_tu_\tau^0)\int_{\R^2}(\bar{v}\cdot\vec{\tau}-u_\tau^0)\bar{\mathfrak{F}}_k\,d\bar{v}\nonumber\\
	&\quad -2\mathcal{H}^\v(y)\int_{\R^2}u_\tau^0(\bar{v}\cdot \vec{n})(\bar{v}\cdot\vec{\tau})\bar{\mathfrak{F}}_k\, d\bar{v} +2\mathcal{H}^\v(y)\int_{\R^2}(\partial_\phi u_\tau^0)(\bar{v}\cdot\vec{\tau})(\bar{v}\cdot\vec{\tau}-u_\tau^0)\bar{\mathfrak{F}}_k\, d\bar{v}\nonumber\\
	&+\mathcal{H}^\v(y)\int_{\R^2}(\bar{v}\cdot \vec{n})|\bar{v}-\bar{\mathfrak{u}}^0|^2\bar{\mathfrak{F}}_k\, d\bar{v}+\mathcal{H}^\v(y)\partial_\phi\left\{\int_{\R^2}(\bar{v}\cdot\vec{\tau})|\bar{v}-\bar{\mathfrak{u}}^0|^2\bar{\mathfrak{F}}_k\, d\bar{v}\right\}.
\end{align}
Considering $(-4T^0)\times\eqref{(1)}+\eqref{(3)}$, one obtains 
\begin{align*}
	&\partial_y\left\{\int_{\R^2}(\bar{v}\cdot \vec{n})(|\bar{v}-\bar{\mathfrak{u}}^0|^2-4T^0)\bar{\mathfrak{F}}_{k+1}\,d\bar{v}\right\}\nonumber\\
	&=-\partial_t\left\{\int_{\R^2}(|\bar{v}-\bar{\mathfrak{u}}^0|^2-4T^0)\bar{\mathfrak{F}}_k\, d\bar{v}\right\}-2(\partial_tu_\tau^0)\int_{\R^2}(\bar{v}\cdot\vec{\tau}-u_\tau^0)\bar{\mathfrak{F}}_k\,d\bar{v}-4(\partial_tT^0)\int_{\R^2}\bar{\mathfrak{F}}_k\, d\bar{v}\nonumber\\
	&\quad -2\mathcal{H}^\v(y)\int_{\R^2}u_\tau^0(\bar{v}\cdot \vec{n})(\bar{v}\cdot\vec{\tau})\bar{\mathfrak{F}}_k\, d\bar{v}+2\mathcal{H}^\v(y)\int_{\R^2}\partial_\phi u_\tau^0(\bar{v}\cdot\vec{\tau})(\bar{v}\cdot\vec{\tau}-u_\tau^0)\bar{\mathfrak{F}}_k\, d\bar{v}\nonumber\\
	&\quad +\mathcal{H}^\v(y)\partial_\phi\left\{\int_{\R^2}(\bar{v}\cdot\vec{\tau}-u_\tau^0)(|\bar{v}-\bar{\mathfrak{u}}^0|^2-4T^0)\bar{\mathfrak{F}}_k\, d\bar{v}\right\}+4\mathcal{H}^\v(y)\partial_\phi T^0\int_{\R^2}(\bar{v}\cdot\vec{\tau})\bar{\mathfrak{F}}_k\, d\bar{v}\nonumber\\
	&\quad+\mathcal{H}^\v(y)\partial_\phi\left\{u_\tau^0\int_{\R^2}(|\bar{v}-\bar{\mathfrak{u}}^0|^2-4T^0)\bar{\mathfrak{F}}_k\, d\bar{v}\right\}+\mathcal{H}^\v(y)\int_{\R^2}(\bar{v}\cdot \vec{n})(|\bar{v}-\bar{\mathfrak{u}}^0|^2-4T^0)\bar{\mathfrak{F}}_k\, d\bar{v},
\end{align*}
which yields that
\begin{align}\label{(5)}
	&\partial_t\left\{-2\bar{p}_k+2\rho^0\bar{\theta}_k\right\}+4(\partial_tT^0)\bar{\rho}_k+2(\partial_tu_\tau^0)\rho^0\bar{u}_k\cdot\vec{\tau}\nonumber\\
	&+2\mathcal{H}^\v(y)\rho^0|u_\tau^0|^2(\bar{u}_k\cdot \vec{n})-2\mathcal{H}^\v(y)(\partial_\phi u_\tau^0)\left\{\bar{p}_k+\rho^0u_\tau^0(\bar{u}_k\cdot\vec{\tau})\right\}\nonumber\\
	&-4\mathcal{H}^\v(y)(\partial_\phi T^0)(\bar{\rho}_ku_\tau^0+\rho^0\bar{u}_k\cdot\vec{\tau})-\mathcal{H}^\v(y)\partial_\phi\left\{-2u_\tau^0\bar{p}_k+2\rho^0u_\tau^0\bar{\theta}_k\right\}\nonumber\\
	&+\partial_y\left\{\int_{\R^2}(\bar{v}\cdot \vec{n})(|\bar{v}-\bar{\mathfrak{u}}^0|^2-4T^0)\bar{\mathfrak{F}}_{k+1}\, d\bar{v}\right\}=\bar{H}_{k-1},
\end{align}
where
\begin{align}\label{7.12}
	\bar{H}_{k-1}=&-2\mathcal{H}^\v(y) u_\tau^0\langle(\bar{v}\cdot \vec{n})(\bar{v}\cdot\vec{\tau}-u_\tau^0),\bar{\mathfrak{F}}_k\rangle +\mathcal{H}^\v(y)\partial_\phi\langle(\bar{v}\cdot\vec{\tau}-u_\tau^0)(|\bar{v}-\bar{\mathfrak{u}}^0|^2-4T^0),\bar{\mathfrak{F}}_k\rangle \nonumber\\
	&+\mathcal{H}^\v(y)\langle(\bar{v}\cdot \vec{n})(|\bar{v}-\bar{\mathfrak{u}}^0|^2-4T^0),\bar{\mathfrak{F}}_k\rangle-\mathcal{H}^\v(y)\partial_\phi u_\tau^0\langle (\bar{v}\cdot\vec{\tau}-u_\tau^0)^2-(\bar{v}\cdot \vec{n})^2,\bar{\mathfrak{F}}_k\rangle.
\end{align}
To calculate $\partial_y\langle (\bar{v}\cdot \vec{n})(|\bar{v}-\bar{\mathfrak{u}}^0|^2-4T^0),\bar{\mathfrak{F}}_{k+1}\rangle $, we have from \eqref{7.10} that
\begin{align}\label{7.13}
	&\langle (\bar{v}\cdot \vec{n})(|\bar{v}-\bar{\mathfrak{u}}^0|^2-4T^0),\bar{\mathfrak{F}}_{k+1}\rangle\nonumber\\ &=(2T^0)^{\frac32}\left\langle \mathcal{B}^0_1,\{\FI-\FP_0\}\left\{\frac{\bar{\mathfrak{F}}_{k+1}}{\sqrt{\mu_0}}\right\}\right\rangle\nonumber \\
	&=(2T^0)^{\frac32}\langle \mathcal{B}^0_1,\bar{P}_1\rangle+(2T^0)^{\frac32}\langle \mathcal{B}^0_1,\bar{P}_2+\bar{P}_3+\bar{P}_4\rangle+(2T^0)^{\frac32}\langle \bar{J}_{k-1},\mathcal{B}_1^0\rangle\nonumber\\
	&=-2\kappa_2(T^0)\partial_y\bar{\theta}_k+2\rho^0\bar{\theta}_k(-y\partial_r\bar{\mathfrak{u}}^0\cdot \vec{n}-u^0_1\cdot \vec{n})\nonumber\\
	&\quad +2\rho^0(\bar{u}_k\cdot \vec{n})\cdot (-2y\partial_rT^0+\theta_1^0+\bar{\theta}_1)+(2T^0)^{\frac32}\langle \bar{J}_{k-1},\mathcal{B}_1^0\rangle.
\end{align}
Combining \eqref{(5)} and \eqref{7.13}, one has that
\begin{align*}
	&\rho^0\partial_t\bar{\theta}_k-\kappa_2(T^0)\partial_{yy}\bar{\theta}_k+\rho^0(-y\partial_r\bar{\mathfrak{u}}^0\cdot \vec{n}-u_1^0\cdot \vec{n})\partial_y\bar{\theta}_k\nonumber\\
	&-\mathcal{H}^\v(y)\rho^0u_\tau^0\partial_\phi\bar{\theta}_k+\bar{\theta}_k\left\{\partial_t\rho^0-\mathcal{H}^\v(y)\partial_\phi(\rho^0u_\tau^0)-\rho^0\partial_r\bar{\mathfrak{u}}^0\cdot \vec{n}\right\}\nonumber\\
	&+\rho^0(\bar{u}_k\cdot \vec{\tau})\left\{\partial_tu_\tau^0-2\mathcal{H}^\v(y)\partial_\phi T^0-\mathcal{H}^\v(y)u_\tau^0\partial_\phi u_\tau^0\right\}\nonumber\\
	&-\partial_t\bar{p}_k+\mathcal{H}^\v(y)u_\tau^0\partial_\phi\bar{p}_k+\bar{\rho}_k\left\{2\partial_tT^0-2\mathcal{H}^\v(y)u_\tau^0\partial_\phi T^0\right\}\nonumber\\
	&+\rho^0\left\{\mathcal{H}^\v(y)|u_\tau^0|^2-2\partial_rT^0\right\}(\bar{u}_k\cdot \vec{n})+\rho^0(-2y\partial_rT^0+\theta_1^0+\bar{\theta}_1)\partial_y(\bar{u}_k\cdot \vec{n})\nonumber\\
	&=\f{1}{2}\bar{H}_{k-1}-\f{1}{2}(2T^0)^{\f32}\partial_y\langle \mathcal{B}_1^0,\bar{J}_{k-1}\rangle,
\end{align*}
which together with \eqref{7.11} and the definition of $\bar{p}_k$ yields that
\begin{align*}
	&\rho^0\partial_t\bar{\theta}_k-\kappa_2(T^0)\partial_{yy}\bar{\theta}_k+\rho^0(-y\partial_r\bar{\mathfrak{u}}^0\cdot \vec{n}-u_1^0\cdot \vec{n})\partial_y\bar{\theta}_k-\mathcal{H}^\v(y)\rho^0u_\tau^0\partial_\phi\bar{\theta}_k\nonumber\\
	&+\bar{\theta}_k\left\{\left(1-\mathcal{H}^\v(y)\right)\partial_\phi(\rho^0u_\tau^0)-\f{\rho^0}{T^0}\left(1-\mathcal{H}^\v(y)\right)u_\tau^0\partial_\phi T^0-\rho^0\partial_\phi u_\tau^0-\rho^0\partial_r\bar{\mathfrak{u}}^0\cdot \vec{n}\right\}\nonumber\\
	&+\rho^0(\bar{u}_k\cdot \vec{\tau})\left\{\left(1-2\mathcal{H}^\v(y)\right)\partial_\phi T^0+\f{T^0}{\rho^0}\partial_\phi\rho^0+\left(1-\mathcal{H}^\v(y)\right)u_\tau^0\partial_\phi u_\tau^0\right\}\nonumber\\
	&-\partial_t\bar{p}_k+\mathcal{H}^\v(y)u_\tau^0\partial_\phi\bar{p}_k+\f{\bar{p}_k}{T^0}\left\{\left(2-2\mathcal{H}^\v(y)\right)u_\tau^0\partial_\phi T^0+2T^0\partial_\phi u_\tau^0+2T^0\partial_r\bar{\mathfrak{u}}^0\cdot \vec{n}\right\}\nonumber\\
	&+\rho^0\left\{\mathcal{H}^\v(y)|u_\tau^0|^2-2\partial_rT^0\right\}(\bar{u}_k\cdot \vec{n})+\rho^0(-2y\partial_rT^0+\theta_1^0+\bar{\theta}_1)\partial_y(\bar{u}_k\cdot \vec{n})\nonumber\\
	&=\f{1}{2}\bar{H}_{k-1}-\f{1}{2}(2T^0)^{\f32}\partial_y\langle \mathcal{B}_1^0,\bar{J}_{k-1}\rangle,
\end{align*}
which is exactly $\eqref{bu-0}_2$. $\hfill\Box$
\subsection{Proof of Lemma \ref{2.2}}
	Recall $\hat{S}_{k,1}$ in \eqref{eq2.22}, the condition $\eqref{eq2.24}_1$ can be expressed as
	\begin{align*}
		&\partial_\eta\left\{\int_{\R^2}\mv_1\sqrt{\mu_0}\hat{f}_{k,1}\, d\mv\right\}+G(\eta)\int_{\R^2}\mv_1\sqrt{\mu_0}\hat{f}_{k,1}\, d\mv=\rho^0\hat{a}_k,\\
		&\partial_\eta\left\{\int_{\R^2}\mv_1^2\sqrt{\mu_0}\hat{f}_{k,1}\, d\mv\right\}+G(\eta)\int_{\R^2}(\mv_1^2-\mv_2^2)\sqrt{\mu_0}\hat{f}_{k,1}\, d\mv=\rho^0\hat{b}_{k,1},\\
		&\partial_\eta\left\{\int_{\R^2}\mv_1 \mv_2\sqrt{\mu_0}\hat{f}_{k,1}\, d\mv\right\}+2G(\eta)\int_{\R^2}\mv_1 \mv_2\sqrt{\mu_0}\hat{f}_{k,1}\, d\mv=\rho^0(\hat{b}_{k,2}+u_\tau^0\hat{a}_k),\\
		&\partial_\eta\left\{\int_{\R^2}\mv_1 |v|^2\sqrt{\mu_0}\hat{f}_{k,1}\, d\mv\right\}+G(\eta)\int_{\R^2}\mv_1 |v|^2\sqrt{\mu_0}\hat{f}_{k,1}\, d\mv\nonumber\\
		&=\rho^0\left\{(2T^0+|u_\tau^0|^2)\hat{a}_k+2u_\tau^0\hat{b}_{k,2}+4T^0\hat{c}_k\right\}.
	\end{align*}
	Noting the construction of $\hat{f}_{k,1}$ in \eqref{eq2.23}, by serious calculation, one has that
	\begin{align}\label{7.14}
		&\partial_\eta\left\{\hat{A}_k\cdot e^{-W(\eta)}\right\}=\hat{a}_k\cdot e^{-W(\eta)},\nonumber\\
		&\partial_\eta\left\{\hat{D}_k\cdot e^{\f{|u_\tau^0|^2}{T^0}W(\eta)}\right\}=\f{\hat{b}_{k,1}}{T^0}\cdot e^{\f{|u_\tau^0|^2}{T^0}W(\eta)},\nonumber\\ 
		&\partial_\eta\left\{\left(\hat{B}_k+\f{u_\tau^0}{T^0}\hat{A}_k\right)\cdot e^{-2W(\eta)}\right\}=\left(\f{\hat{b}_{k,2}+u_\tau^0\hat{a}_k}{T^0}\right)\cdot e^{-2W(\eta)},\nonumber\\
		&\partial_\eta\left\{\left[8(T^0)^2\hat{C}_k+2u_\tau^0T^0\hat{B}_k+(4T^0+|u_\tau^0|^2)\hat{A}_k\right]\cdot e^{-W(\eta)}\right\}\nonumber\\
		&=\left\{(2T^0+|u_\tau^0|^2)\hat{a}_k+2u_\tau^0\hat{b}_{k,2}+4T^0\hat{c}_k\right\}\cdot e^{-W(\eta)}.
	\end{align}
	Assuming  $\hat{A}_{k}(d)=\hat{B}_k(d)=\hat{C}_k(d)=\hat{D}_k(d)\equiv 0$, integrating \eqref{7.14} on $[\eta,d]$, one can prove Lemma \ref{2.2}. $\hfill\Box$
	\subsection{Proof of Lemma \ref{lem4.1}}
			We first prove that $$k_M(\mv,u)e^{\zeta(|u|^2-|\mv|^2)}\cdot \frac{(1+|u|^2)^{\f{\beta}{2}}}{(1+|\mv|^2)^{\f{\beta}{2}}}\in L_u^1.$$  One has that
			\begin{align*}
				k_{M,i}(\mv,u)=  k_i(\mv,u)\cdot e^{\f{1}{4T^0}(|u-\bar{\fu}^0|^2-|\mv-\bar{\fu}^0|^2)}\cdot e^{\f{1}{4T_M}(|\mv|^2-|u|^2)},
			\end{align*}
			which yields that
			\begin{align*}
				k_{M,i}(\mv,u)\cdot e^{\zeta(|u|^2-|\mv|^2)}=k_i(\mv,u)\cdot e^{(\f{1}{4T_M}-\f{1}{4T^0}-\zeta)(|\mv|^2-|u|^2)}\cdot e^{\f{\bar{\fu}^0}{2T^0}\cdot (\mv-u)}.
			\end{align*}
			Noting $\frac{(1+|u|^2)^{\f{\beta}{2}}}{(1+|\mv|^2)^{\f{\beta}{2}}}\leq (1+|\mv-u|^2)^{\f{\beta}{2}}$, recall \eqref{5.0-1}, we get that 
			\begin{align*}
				&k_{M,1}(\mv,u)\cdot e^{\zeta(|u|^2-|\mv|^2)}\cdot \frac{(1+|u|^2)^{\f{\beta}{2}}}{(1+|\mv|^2)^{\f{\beta}{2}}}\\
				\leq &\, k_1(\mv, u)(1+|\mv-u|^2)^{\f{\beta}{2}}\cdot e^{(\f{1}{4T_M}-\f{1}{4T^0}-\zeta)(|\mv|^2-|u|^2)}\cdot e^{\f{\bar{\fu}^0}{2T^0}\cdot (\mv-u)}\\
				\lesssim&\,  (1+|\mv-u|)^{\beta+1} e^{-(\f{1}{2T^0}-\f{1}{4T_M}+\zeta)|\mv|^2} e^{-(\f{1}{4T_M}-\zeta)|u|^2}\cdot e^{\f{\bar{\fu}^0}{T^0}\mv}\quad \in L^1_u\cap L^2_u,
			\end{align*}
			which yields that 
			\begin{align*}
				\int_{\R^2}k_{M,1}(\mv,u)e^{\zeta(|u|^2-|\mv|^2)}\cdot \frac{(1+|u|^2)^{\f{\beta}{2}}}{(1+|\mv|^2)^{\f{\beta}{2}}}\, du \leq C(1+|\mv)^{-1}.
			\end{align*}
			
			A direct calculation gives
			\begin{align}\label{5.0-2}
				&k_{M,2}(\mv,u)e^{\zeta(|u|^2-|\mv|^2)}\cdot (1+|\mv-u|^2)^{\f{\beta}{2}}\nonumber\\
				=&k_2(\mv,u)(1+|\mv-u|^2)^{\f{\beta}{2}}\cdot e^{(\f{1}{4T_M}-\f{1}{4T^0}-\zeta)(|\mv-\bar{\fu}^0|^2-|u-\bar{\fu}^0|^2)}\cdot e^{(\f{1}{2T_M}-2\zeta)\bar{\fu}^0\cdot (\mv-u)}.
			\end{align}
			Noting \eqref{ta}, we have $$-\f{1}{4T^0}<\f{1}{4T_M}-\f{1}{4T^0}-\zeta<\f{1}{4T_M}-\f{1}{4T^0}<\f{1}{4T^0},$$
			then it follows from \eqref{5.0-1} and \cite{Gl} that 
			\begin{align*}
				&k_{M,2}(\mv,u)e^{\zeta(|u|^2-|\mv|^2)}\cdot \frac{(1+|u|^2)^{\f{\beta}{2}}}{(1+|\mv|^2)^{\f{\beta}{2}}}\\
				\lesssim &\, \exp\left\{-\f{1}{8T^0}\left(|\mv-u|^2+\f{\left||\mv-\bar{\mathfrak{u}}^0|^2-|u-\bar{\mathfrak{u}}^0|^2\right|^2}{|\mv-u|^2}\right)\right\}\\
				&\times \exp\left\{\left|\f{1}{4T_M}-\f{1}{4T^0}-\zeta\right|\cdot \left||\mv-\bar{\mathfrak{u}}^0|^2-|u-\bar{\mathfrak{u}}^0|^2\right|\right\}\exp\left\{(\f{1}{2T_M}-2\zeta)\bar{\fu}^0\cdot (\mv-u)\right\}\\
				\in  &L_u^1\cap L^2_u.
			\end{align*}  Then we only need to prove that 
			\begin{align*}
				\int_{\R^2}k_{M,2}(\mv,u)(1+|\mv-u|^2)^{\f{\beta}{2}}du\leq C(1+|\mv|)^{-1}.
			\end{align*}
			When $|\mv|\leq C_0$ with $C_0$ a constant satisfying $C_0\gg |u_\tau^0|$, the above equation obviously holds. If $|\mv|\geq C_0,$ we have from \eqref{5.0-1} and \eqref{5.0-2} that there exists a $c_1>0$ such that 
			\begin{align*}
				&\int_{\R^2}k_{M,2}(\mv,u)(1+|\mv-u|^2)^{\f{\beta}{2}}du\nonumber\\
				&\leq C\int_{\R^2}\exp\left\{-\f{c_1}{2}|\mv-u|^2-c_1\f{\left||\mv-\bar{\mathfrak{u}}^0|^2-|u-\bar{\mathfrak{u}}^0|^2\right|^2}{|\mv-u|^2}\right\}\, du\nonumber\\
				&\leq C\int_0^\infty\int_0^\pi r \exp\left\{-\f{c_1}{2}r^2-c_1(r+2|\mv-\bar{\mathfrak{u}}^0|\cos \theta)^2\right\}\,d\theta dr\nonumber\\
				&=C\int_0^\infty r\exp\left\{-\f{3c_1}{2}(r+\f43|\mv-\bar{\mathfrak{u}}^0|\cos\theta)^2\right\}dr\cdot\int_0^\pi \exp\left\{-\f{4}{3}c_1|\mv-\bar{\mathfrak{u}}^0|^2\cos^2\theta\right\}d\theta\nonumber\\
				&\leq C\int_0^{\f{\pi}{4}}\exp\left\{-\f{4}{3}c_1|\mv-\bar{\mathfrak{u}}^0|^2\cos^2\theta\right\}d\theta+C\int_{\f{\pi}{4}}^{\f{\pi}{2}}\exp\left\{-\f{4}{3}c_1|\mv-\bar{\mathfrak{u}}^0|^2\cos^2\theta\right\}\sin\theta \, d\theta\nonumber\\
				&\leq \f{C}{|\mv-\bar{\mathfrak{u}}^0|}.
			\end{align*} 
			Hence we can easily get \eqref{5.2-0}. Therefore the proof of Lemma \ref{lem4.1} is completed. $\hfill\Box$
			
\

		\noindent{\bf Acknowledgments.}  Feimin Huang's research is partially supported by National Key R\&D
		Program of China, grant No. 2021YFA1000800, and National Natural Sciences Foundation of
		China, grant No. 12288201. Yong Wang's research is partially supported by the National Natural Science Foundation of China, grants No. 12022114, No. 12288201, No. 12421001 and CAS Project for Young Scientists in Basic Research, grant No. YSBR-031. The authors would thank the anonymous referees for the valuable and helpful comments on the paper.
		
		\
		
		\noindent{\bf Conflict of interest.} The authors declare that they have no conflict of interest.
		

\begin{thebibliography}{99}
			\bibitem{Bardos-1986-CPAM}Bardos, C.,  Caflisch, R.E., Nicolaenko, B.: The Milne and Kramers problems for the Boltzmann equation
			of a hard sphere gas, Comm. Pure Appl. Math., 39: 323--352 (1986)
			
			\bibitem{Bardos}
		Bardos, C., Golse, F., Levermore, C.D.: Fluid dynamic limits of kinetic equations. I. Formal derivations, J. Stat. Phys. 63 (1991), no. 1-2, 323-344.
		
		\bibitem{Bardos-2}
		Bardos, C., Golse, F., Levermore, C.D.: Fluid dynamic limits of kinetic equations. II. Convergence proofs for the Boltzmann equation, Comm. Pure Appl. Math. 46 (1993), no. 5, 667-753.
		
		\bibitem{Bardos-Ukai} Bardos, C., Ukai, S.: The classical incompressible Navier-Stokes limit of the Boltzmann equation, Math. Models Methods Appl. Sci. 1 (1991), no. 2, 235-257.
		
		\bibitem{BG} Bernhoff, N., Golse, F.: On the boundary layer equations with phase transition in the kinetic theory of gases, Arch. Ration. Mech. Anal., 240(1): 51–98 (2021)
		
		\bibitem{Boltzmann-1872}  Boltzmann, L.: Weitere Studien \"{u}ber das W\"armegleichgewicht unter Gasmolek\"ulen. Sitzungs. Akad. Wiss. Wien 66 (1872), 275-370; translated as: Further studies on the thermal equilibrium of gas molecules, Kinetic theory, vol. 2, 88--174. Pergamon, London, 1966.
		
		\bibitem{Caflish}  Caflisch, R.E.: The fluid dynamic limit of the nonlinear Boltzmann equation, Comm.Pure Appl. Math. 33(1980), No.5, 651--666.
		
		\bibitem{CJK-2023-JDE}	 Cao, Y.B., Jang, J., Kim, C.: Passage from the Boltzmann equation with diffuse boundary to the incompressible Euler equation with heat convection, J. Differ. Equ. 366 (2023) 565--644
		
		\bibitem{CEMP} Caprino, S., Esposito, R., Marra, R., Pulvirenti, M.: Hydrodynamic limits of the Vlasov equation, Comm. Partial Differential Equations 18 (1993), no. 5--6, 805--820.
		
		\bibitem{Chen} Chen, S.X.: Initial boundary value problems for quasilinear symmetric hyperbolic systems with characteristic boundary, Front. Math. China 2(1), 87--102, 2007.
		
		\bibitem{Coron}  Coron, F., Golse, F., and Sulem, C.: A classification of well-posed kinetic layer problems, Comm. Pure Appl. Math. 41 (1988), pp. 409--435
		
		 \bibitem{DWY}
		Deng, S.J., Wang, W.K., Yu, S.H.: Pointwise convergence to Knudsen layers of the Boltzmann equation, Comm. Math. Phys.  281 (2008), no. 2, 287–347.
		
		
		\bibitem{Diper}  DiPerna, R.J., Lions, P.L.: On the Cauchy Problem for Boltzmann Equations: Global Existence and Weak Stability, On the Cauchy problem for Boltzmann equations: global existence and weak stability. Ann. of Math. (2) 130 (1989), no. 2, 321--366.
		
		\bibitem{DHWZ}  Duan, R.J., Huang, F.M., Wang, Y., Zhang, Z.: Effects of soft interaction and non-isothermal boundary upon long-time dynamics of rarefied gas, Arch. Rational. Mech. Anal., 234(2): 925--1006 (2019)
		
		\bibitem{Duan-2021-ARMA} Duan, R.J., Liu, S.Q.: The Boltzmann equation for uniform shear
		flow, Arch. Rational Mech. Anal., 242 (2021), pp. 1947--2002
		
		\bibitem{Duan-2023-JEMS} Duan, R.J., Liu, S.Q., Yang, T.: The Boltzmann equation for plane Couette flow, J. Eur. Math. Soc, 1--77, https://doi.org/10.4171/JEMS/1390
		
		\bibitem{ELM-1994} Esposito, R., Lebowitz J.L., Marra, R.: Hydrodynamic limit of the stationary Boltzmann equation in a slab, Comm. Math. Phys. 160 (1994), no. 1, 49--80.
		
		\bibitem{E-Guo-M} Esposito, R., Guo, Y., Marra, R.: Hydrodynamic limit of a kinetic gas flow past an obstacle, Comm. Math. Phys. 364 (2018), no. 2, 765--823.
		
		\bibitem{EKGM} Esposito, R., Kim, C., Guo, Y., Marra, R.: Non-isothermal boundary in the Boltzmann theory and Fourier law, Comm. Math. Phys., 323(1): 177--239 (2013)
		
		\bibitem{E-Guo-K-M} Esposito, R., Kim, C., Guo, Y., Marra, R.: Stationary solutions to the Boltzmann equation in the hydrodynamic limit, Ann. PDE 4 (2018), no. 1, pp. 1--119
		
		 \bibitem{EGMW}Esposito, R., Guo, Y., Marra, R., Wu, L.: Ghost effect from Boltzmann theory: expansion with remainder, Vietnam Journal of Mathematics(2024) 52:883--914
		
		\bibitem{Golse-Saint-Raymond} Golse, F.,   Saint-Raymond, L.: The Navier-Stokes limit of the Boltzmann equation for bounded collision kernels, Invent. Math. 155 (2004), no. 1, 81--161.
		
		\bibitem{Gl} Glassey, R.T.: The Cauchy Problem in Kinetic Theory, Society for Industrial and Applied Mathematics (SIAM), Philadelphia, 1996.
		
		\bibitem{GPS} Golse, F., Perthame, B., Sulem, C.: On a boundary layer problem for the nonlinear Boltzmann equation, Arch. Rational Mech. Anal., 103(1): 81--96 (1988)
		
		\bibitem{Grad} Grad, H.: Asymptotic equivalence of the Navier–Stokes and nonlinear Boltzmann equations, in Proc. Sympos. Applied Mathematics, Vol. 17 (Amer. Math. Soc., 1965), pp. 154--183.
		
		\bibitem{Guo2006}Guo, Y: Boltzmann diffusive limit beyond the Navier-Stokes approximation, Comm. Pure Appl. Math. 59 (2006), no. 5, 626--687.
		
		\bibitem{Guo-2010-ARMA}
		Guo, Y: Decay and continuity of the Boltzmann equation in Bounded domains, Arch. Rational. Mech. Anal. 197 (2010), 713--809.
		
		\bibitem{Guo Jang Jiang-1} Guo, Y., Jang, J., Jiang, N.: Local Hilbert expansion for the Boltzmann equation, Kinet. Relat. Models. 2 (2009), no. 1, 205--214.
		
		\bibitem{Guo-2010-CPAM} Guo, Y., Jang, J., Jiang, N.: Acoustic limit for the Boltzmann equation in optimal scaling, Commun. Pure Appl. Math. 63(3), 337--361, 2010
		
		\bibitem{Guo-2016-ARMA}  Guo, Y., Kim, C., Tonon, D., Trescases, A.:BV-regularity of the Boltzmann equation
		in non-convex domains, Arch. Rational Mech. Anal. 220 (2016) 1045--1093.
		
		\bibitem{Guo-2017-Inv}  Guo, Y., Kim, C., Tonon, D., Trescases, A.: Regularity of the Boltzmann equation in convex domains, Invent. math. (2017) 207:115--290.
		
		\bibitem{GHW-2021-ARMA} Guo, Y., Huang, F.M., Wang, Y.: Hilbert expansion of the  Boltzmann equation with specular boundary condition in half-space, Arch. Rational. Mech. Anal. 241(2021), 231--309.
		
		
		\bibitem{HJW} Huang, F.M., Jiang, Z.H., Wang, Y.: Boundary layer solution of the Boltzmann equation for specular boundary condition, Acta Math. Appl. Sin. Engl. Ser.39(2023), no.1, 65--94.
		
		\bibitem{HJWY}  Huang, F.M., Jiang, S., Wang, Y., Yang, T.: Zero dissipation limit of full compressible Navier Stokes equations with a Riemann initial data, Commun. Inf. Syst. 13(2), 211--246, 2013
		
		\bibitem{HW-2022-SIAM} Huang, F.M., Wang, Y.: Boundary layer solution of the Boltzmann equation for diffusive reflection boundary conditions in half-space, Siam J. Math. Anal, Vol.54, No.3, 3480--3534.
		
		\bibitem{HWY}Huang, F.M., Wang, Y., Yang, T.: Hydrodynamic limit of the Boltzmann equation
		with contact discontinuities, Commun. Math. Phys. 295, 293--326, 2010
		
		\bibitem{HWY2} Huang, F.M., Wang, Y., Yang, T.: Fluid dynamic limit to the Riemann solutions of
		Euler equations: I. Superposition of rarefaction waves and contact discontinuity, Kinet. Relat. Models 3, 685--728, 2010
		
		\bibitem{HWWY} Huang, F.M., Wang, Y., Wang, Y., Yang, T.: The limit of the Boltzmann equation
		to the Euler equations for Riemann problems, SIAM J. Math. Anal. 45(3), 1741--1811, 2013
		
		\bibitem{Jang-Jiang} Jang, J., Jiang, N.: Acoustic limit of the Boltzmann equation: classical solutions, Discrete Contin. Dyn. Syst. 25 (2009), no. 3, 869--882.
		
		\bibitem{Jang-Kim} Jang, J., Kim, C.: Incompressible Euler limit from Boltzmann equation with diffuse boundary condition for analytic data, Annals of PDE (2021) 7:22
		
		\bibitem{Jiang-2024-Trans}Jiang, N, Luo, Y.L., Tang, S.J.: Compressible Euler limit from Boltzmann equation with complete diffusive boundary condition in half-space, Trans. Amer. Math. Soc. 377 (2024), no. 8, 5323--5359.
		
		\bibitem{Jiang} Jiang, N, Luo, Y.L., Tang, S.J.: Grad-Caflisch type decay estimates of pseudo-inverse of linearized Boltzmann operator and application to Hilbert expansion of compressible Euler scaling, arXiv:2206.02677.
		
		\bibitem{Jiang-Masmoudi} Jiang, N., Masmoudi, N.: Boundary layers and incompressible Navier-Stokes-Fourier limit of the Boltzmann equation in bounded domain I, Comm. Pure Appl. Math. 70 (2017), no. 1, 90--171.
		
		\bibitem{Lachowicz} Lachowicz, M.: On the initial layer and the existence theorem for the nonlinear Boltzmann equation, Math. Methods Appl. Sci. 9 (1987), no. 3, 342--366.
		
		\bibitem{Liu-Yu}	Liu, Z.R., Yu,  H.J.: The diffusive limit of Boltzmann equation in torus, Nonlinearity 37(2024), no.7, Paper No. 075003, 40 pp.
		
		\bibitem{Masmoudi-Raymond}  Masmoudi, N., Saint-Raymond, L.:
		From the Boltzmann equation to the Stokes-Fourier system in a bounded domain, Comm. Pure Appl. Math. 56 (2003), no. 9, 1263--1293.
		
		\bibitem{Maxwell}Maxwell, J.C.: On the dynamical theory of gases, Philos. Trans. Roy. Soc. London Ser. A 157, 49-88, 1867. Reprinted in The scientific letters and papers of James Clerk Maxwell, vol. II, 1862-1873, 26--78. Dover, New York, 1965.
		
		\bibitem{Nishida}Nishida, T.: Fluid dynamical limit of the nonlinear Boltzmann equation to the level of the compressible Euler equation, Comm. Math. Phys. 61 (1978), no. 2, 119--148.
		
		\bibitem{Wu-2020-Arxiv} Ouyang, Z.M., Wu, L.:  Asymptotic analysis of Boltzmann equation in bounded domains, arXiv:2008.10507.
		
		\bibitem{Wu-2023-arxiv} Ouyang, Z.M., Wu, L.:  Diffusive limit of the Boltzmann equation in bounded domains, Comm. Math. Phys. 405 (2024), no. 12, Paper No. 279, 85 pp.
		
		\bibitem{Saint-Raymond2009} Saint-Raymond, L.: Hydrodynamic limits of the Boltzmann equation, Lecture Notes in Mathematics, 1971. Springer, Berlin (2009). https://doi.org/10.1007/978-3-540-92847-8
		
		\bibitem{Schochet} Schochet, S.: The compressible Euler equations in a bounded domain: existence of solutions and the incompressible limit, Commun. Math. Phys. 104, 49--75, 1986.
		
		\bibitem{Se}
		Secchi, P.: Linear symmetric hyperbolic systems with characteristic boundary, Math. Methods Appl. Sci. 18
		(1995), no. 11, 855--870.
		
		\bibitem{Sone} Sone, Y.: Molecular Gas Dynamics, Theory, Techniques, and Applications, Birkhäuser
		Boston Inc, Boston 2007
		
		\bibitem{Ukai-Asano} Ukai, S., Asano, K.: The Euler limit and the initial layer of the nonlinear Boltzmann equation, Hokkaido Math. J., 12 (1983), pp. 303--324.
		
		\bibitem{UYY} Ukai, S., Yang, T., Yu, S.H.: Nonlinear boundary layers of the Boltzmann equation: I. Existence., Comm. Math. Phys., 236: 373--393 (2003)
		
		\bibitem{UYY1}
		Ukai, S., Yang, T., Yu, S.H.: Nonlinear stability of boundary layers of the Boltzmann equation. I. The case $\mathscr{M}^\infty < 1$,  Comm. Math. Phys. 244 (2004), pp. 99--109
		
		\bibitem{WWZ} Wang, G.F., Wang, Y., Zhou, J.W.:  Hydrodynamic limit of the Boltzmann equation to the planar rarefaction wave in three dimensional space, SIAM J. Math. Anal.53(2021), no.4, 4692–4726.
		
		\bibitem{Wu-2016-JDE} Wu, L.: Hydrodynamic limit with geometric correction of stationary Boltzmann equation, J. Differential Equations 260 (2016) 7152--7249.
		
		\bibitem{GW-2015}Wu, L., Guo, Y.: Geometric correction for diffusive expansion of steady Neutron transport equation, Commun. Math. Phys. 336, 1473--1553 (2015).
		
		\bibitem{XZ}  Xin, Z.P., Zeng, H.H.: Convergence to the rarefaction waves for the nonlinear Boltzmann equation and compressible Navier–Stokes equations, J. Differ. Equ. 249,827--871, 2010
		
		\bibitem{Yu}  Yu, S.H.: Hydrodynamic limits with shock waves of the Boltzmann equations, Commun. Pure Appl. Math 58, 409--443, 2005
			
		\end{thebibliography}
	\end{document}